\newcommand{\dvol}{{\,d\operatorname{vol}}}
\newcommand\Symb{{\operatorname{Symb}}}
\newcommand\ADN{{Douglis-Nirenberg}}
\newtheorem{theorem}{Theorem}[section]
\newtheorem{lemma}[theorem]{Lemma}
\newtheorem{corollary}[theorem]{Corollary}
\newtheorem{proposition}[theorem]{Proposition}
\newtheorem{problem}[theorem]{Problem}
\theoremstyle{definition}
\newtheorem{definition}[theorem]{Definition}
\newtheorem{notation}[theorem]{Notation}
\newtheorem{assumption}[theorem]{Assumption}
\theoremstyle{remark}
\newtheorem{remark}[theorem]{Remark}
\renewcommand\sout{\bgroup\markoverwith
{\textcolor{red}{\rule[0.7ex]{3pt}{1.4pt}}}\ULon}
\newcommand\seq{\, = \,}
\newcommand\define{\mathrel{\ := \ }}
\newcommand\ede{\define}
\newcommand\Hom{\operatorname{Hom}}
\newcommand\End{\operatorname{End}}
\newcommand\supp{\operatorname{supp}}
\newcommand\Def{{\operatorname{Def}\,}}
\newcommand{\CC}{\mathbb C}
\newcommand{\II}{\mathbb{I}}
\newcommand{\RR}{\mathbb R}
\newcommand{\ZZ}{\mathbb Z}
\newcommand{\maA}{\mathcal A}
\newcommand{\maB}{\mathcal B}
\newcommand{\maC}{\mathcal C}
\newcommand{\maD}{\mathcal D}
\newcommand{\maF}{\mathcal F}
\newcommand{\maH}{\mathcal H}
\newcommand{\maL}{\mathcal L}
\newcommand{\maN}{\mathcal N}
\newcommand{\maP}{\mathcal P}
\newcommand{\maQ}{\mathcal Q}
\newcommand{\maS}{\mathcal S}
\newcommand{\maV}{\mathcal V}
\newcommand{\maW}{\mathcal W}
\newcommand{\CI}{\maC^{\infty}}
\newcommand{\CIc}{\maC^{\infty}_{\text{c}}}
\newcommand\pa{\partial}
\newcommand\cl{\operatorname{cl}}
\newcommand\bop{{\boldsymbol T_{\bsnu}}}
\newcommand\bopstar{{\boldsymbol T_{\bsnu}^{*}}}
\newcommand\bsu{\boldsymbol u}
\newcommand\bsw{\boldsymbol w}
\newcommand\bsnu{\boldsymbol \nu}
\newcommand\bsf{{\boldsymbol f}}
\newcommand\bsh{\boldsymbol h}
\newcommand\Dnu{\boldsymbol D_{\bsnu}}
\newcommand\bsL{\boldsymbol L}
\newcommand\bsXi{{\boldsymbol \Xi}}
\newcommand\bsK{{\boldsymbol K}}
\newcommand\bsS{{\boldsymbol S}}
\newcommand\bsP{{\boldsymbol P}}
\newcommand\Defstar{{\operatorname{Def}^{*}\,}}
\newcommand\tbop{{\tilde{\boldsymbol T}_{\bsnu}}}
\newcommand\tbopstar{{\tilde{\boldsymbol T}_{\bsnu}^{*}}}
\newcommand\loc{\operatorname{loc}}
\newcommand\comp{\operatorname{comp}}
\newcommand\JC{{\operatorname{\mathfrak L}}}
\newcommand\psdinv{{\bsXi^{(-1)}}}
\newcommand\Smbmn{{S^{m}(\RR^{n} \times \RR^{n})}}
\newcommand\pv{{\operatorname{p.\!\!v.}}}
\newcommand\bss{{\boldsymbol s}}
\newcommand\bst{{\boldsymbol t}}
\newcommand\jap[1]{\langle #1 \rangle}
\newcommand\cvector[2]{{\left(
  \begin{array}{c} {#1} \\ {#2}
  \end{array}
\right)}}
\numberwithin{equation}{section}
\begin{document}

\title[Well-posedness for a generalized Stokes operator]{Well-posedness
of a generalized Stokes operator on smooth bounded domains via layer-potentials}

\author[M. Kohr]{Mirela Kohr}
\address{Faculty of Mathematics and Computer Science,
Babe\c{s}-Bolyai University, 1 M. Kog\u{a}l\-niceanu Str., 400084
Cluj-Napoca, Romania} \email{mkohr@math.ubbcluj.ro}


\author[V. Nistor]{Victor Nistor}
\address{Universit\'{e} de Lorraine, CNRS, IECL, F-57000 Metz, France}
\email{victor.nistor@univ-lorraine.fr}

\author[W.L. Wendland]{Wolfgang L. Wendland}
\address{Institut f\"ur Angewandte Analysis und Numerische Simulation,
Universit\"at Stuttgart, Pfaffenwaldring 57, 70569 Stuttgart,
Germany}
\email{wendland@mathematik.uni-stuttgart.de}

\thanks{M.K. has been partially supported by
  AGC35124/31.10.2018. V.N. has been partially supported byANR grant
  OpART ANR-23-CE40-0016 https://victor-nistor.apps.math.cnrs.fr/}

\date\today

\subjclass[2000]{Primary 35R01; Secondary 76M.}

\date{\today}

\keywords{
The Stokes operator; Sobolev spaces; Stokes layer potentials; Deformation operator,
Dirichlet problem.
}

\dedicatory{In memory of Professor Gabriela Kohr, with deep respect}

\begin{abstract}
  We prove the invertibility of the relevant single and double
  layer potentials associated to some generalizations of the
  Stokes operator on bounded domains. In order to do that, we first
  develop an ``algebra tool kit'' to deal with limit and jump relations of
  layer operators. We do that first on $\RR^{n}$ for operators acting
  on a distribution supported on $\{x_{n} = 0\}$ and then in general
  on (possibly non-compact) manifolds. We use these results to study
  the limit and jump relations of the layer potential operators
  associated to our generalized Stokes operators.
\end{abstract}

\maketitle
\tableofcontents

\section{Introduction}

  Let $\Omega$ be a smooth domain in a Riemannian manifold $M$
  which is on one side of its boundary $\Gamma := \pa \Omega$.
  By a \emph{domain,} we shall always mean an open and connected subset.
  The condition that $\Omega$ be on one side of its boundary $\Gamma$ means
  that $\Gamma = \pa \Omega_{-}$, where $\Omega_{-} := M \smallsetminus \overline{\Omega}$.
  Let $TM$ is the tangent bundle to $M$,
  let $\Def : \CI(M; TM) \to \CI(M; T^{*}M \otimes T^{*}M)$
  be the deformation operator, and let $\bsL := 2 \Defstar \Def$ be the
  ``Deformation Laplacian,'' as usual. (These operators are reviewed in
  Section \ref{sec.Def}.) For suitable ``potentials''
  $V, V_{0} \in \CI(M)$, $V, V_{0} \ge 0$, we study the operator
  \begin{equation} \label{eq.def.bsXi}
    \bsXi \ede \bsXi _{V,V_0} \ede \left(\begin{array}{ccc}  \bsL + V & \nabla \\
    \nabla^* &  -V_0
    \end{array}
  \right) : \CI(M; TM \oplus \CC) \to \CI(M; TM \oplus \CC)\,.
  \end{equation}
  using the method of layer potentials. It follows from the definition that,
  if $V$ and $V_{0}$ vanish on $\Omega$, $\bsXi$ coincides with the Stokes operator
  $\bsXi_{0, 0}$ in $\Omega$. (This is permited by the assumptions of most of our results.)

  We define our layer potential operators under the natural (and not too restricting) assumption
  that $\bsXi_{V, V_{0}}$ has a {\it Moore-Penrose pseudoinverse} $\psdinv$, see Equation
  \eqref{eq.def.psdinv}. We then prove the usual jump relations for these layer potential operators
  for general (possibly non-compact) manifolds $M$. Let $\frac12 + \bsK$ and $\bsS$ be defined
  as the boundary limits of the {\it double} and {\it single} layer potentials
  $\maD_{\rm{ST}}$ and $\maS_{\rm{ST}}$ (see Subsections \ref{ssec.5.2} and \ref{ssec.Jump}).
  We prove, in general, that they are elliptic with self-adjoint principal symbols.
  If $\Gamma$ is {\it compact,} we prove that the (classical) limit operators
  $\frac12 + \bsK$ and $\bsS$ are Fredholm. In case $M$ itself is compact and connected,
  then we prove the invertibility of the single and double layer potential operators $\bsS$ and
  $\frac12 + \bsK$ on the space $L^2(\Gamma ;TM)$ or on the orthogonal complement $\{\bsnu\}^\perp$
  of $\CC \bsnu$, $\bsnu $ being the outer unit normal to $\Gamma $, under some additional
  assumptions on $V$ and $V_{0}$
  (these assumptions are such that they still allow $V$ and $V_{0}$ to vanish on
  $\Omega$, so our results can be used to study the ``true Stokes system'' formulated
  next). See Theorems \ref{thm.K2}, \ref{thm.K2bis} and \ref{thm.S}.

  Let
  \begin{equation*}
    U \ede \cvector{\bsu}{p} \in L^2(\Omega ; TM \oplus \CC)\,,
  \end{equation*}
  so that $\bsu$ is the ``vector part'' of $U$. We then consider
  the Dirichlet boundary value problem
  \begin{equation}\label{eq.bvp.Brinkman}
    \begin{cases}
      \ \bsXi U \seq 0 & \mbox{ in } \Omega\\
      \ \bsu \seq \bsh & \mbox{ on } \pa \Omega\,.
    \end{cases}
  \end{equation}
  When $V$ and $V_{0}$ vanish identically on $\Omega$, this is nothing but the classical
  Dirichlet problem for the Stokes operator. In view of the applications
  that we have in mind, we will consider also the case when $V$ and $V_{0}$ do not
  vanish identically on $\Omega$.

  When $\bsXi$ has a Moore-Penrose pseudo-inverse,
  we define the single and double layer potential
  operators $\maS_{\rm{ST}}$, $\bsS$, $\maD_{\rm{ST}}$, and $\bsK$
  associated to the operator $\bsXi$. We
  prove that they satisfy the usual mapping, symbol, and  ``jump'' properties
  (Theorem \ref{thm.jump.rel} and Theorem \ref{jump-conormal-dl}). This is the
  case when $M$ is compact, in which case we prove that
  $\bsXi$ is Fredholm (and hence it has a Moore-Penrose pseudo-inverse $\psdinv$).
  Then, using the jump relations
  and symbol properties of the layer potential operators, we prove that $S$
  and $\frac12 + K$ are invertible. As it is well known, this implies
  the solvability of the Dirichlet problem \eqref{eq.bvp.Brinkman} (see e.g.
  \cite{D-M, KMNW-2025, M-T}).

The paper is organized as follows. The second section is devoted to some preliminary results
most of them related to pseudodifferential operators and the Fourier transform.
We also develop
the standard machinery needed to obtain the limit and jump relations of
the layer potential operators on a flat space. These results are, for the most part known,
but they are spread out through literature. Our presentation is much more concise
and some of our statements are more general than the ones that one can find in the
literature. Other than a few basic properties of pseudodifferential
operators and a few technical points, our presentation is complete and can
be regarded as an introduction to the subject of limit and jump relations
for potential operators. (Complete proofs can be found in \cite{KMNW-2025}.)
The third section extends the results of the second section to the case
of Riemannian manifolds.
The fourth section introduces the deformation operator $\Def$,
the Stokes operator $\bsXi = \bsXi_{V, V_{0}}$ (see Equation \eqref{eq.def.bsXi}),
and also a few other basic differential operators. If $V$ and $V_{0}$ vanish, then
$\bsXi_{0,0}$ becomes the usual Stokes operator. We obtain some properties of these operators,
by using various integration by parts formulas that are also obtained.
We also discuss Green's formulas and some of their useful consequences for the
Stokes operator $\bsXi$.
In the fifth section we construct the corresponding
single and double layer potentials (Definition \ref{def.lp}) and obtain some mapping properties
as well as representation formulas of them. We also prove the jump relations of the single
and double layer potentials
(Theorem \ref{thm.K1} and Theorem \ref{thm.jump.rel}).
In the sixth section we obtain Fredholm and invertibility properties of the Stokes layer
potential operators in $L^2$-based Sobolev spaces (Theorem \ref{thm.K2} and Theorem \ref{thm.S}).
The last section is devoted to the well-posedness of the Dirichlet problem for the generalized
Stokes system in $L^2$-based Sobolev spaces on a smooth domain of a compact manifold. We use the
invertibility results of the layer potential operators established in the previous section. A
consequence of this result related to the jump of the conormal derivative of a double layer
potential across the boundary is also established.

\subsection*{Short overview of the main connected results}
The method of layer potentials has a main role in the analysis of elliptic boundary value
problems in Euclidean setting or on manifolds as they provide the explicit form of the corresponding
solutions in various function spaces. There is an extensive list of literature devoted to this subject
and let us mention the following monographs
\cite{Massimo-book, H-W, D-I-M-GHA, M-W, Varnhorn} among many other very valuable publications.
Let us also mention a few of the most relevant references related especially
to the Stokes and Navier-Stokes equations on various domains in the
Euclidean spaces and on Riemannian manifolds. Fabes, Kenig and Verchota \cite{Fa-Ke-Ve} studied
$L^2$-boundary value problems for the constant coefficient Stokes system in Lipschitz domains of
Euclidean spaces. They obtained mapping properties of the Stokes layer potential operators and
well-posedness results for the corresponding Dirichlet and Neumann problems by using Rellich
formulas and layer potential methods.
Further extensions of these results to $L^p$, Sobolev, Bessel potential, and Besov spaces, and
well-posedness results for the main boundary value problems for the Stokes system with constant
coefficients in arbitrary Lipschitz domains in $\mathbb R^n$, together with optimal ranges of $p$,
have been obtained by Mitrea and Wright \cite{M-W} using layer potential methods. Other boundary
valued problems for the Stokes system in Sobolev spaces on Lipschitz domains via layer potential
theoretical methods have also been studied in \cite{B-H, Med-CVEE-16, Russo, Sa-Se, Varnhorn-2004}.
Mapping properties for the constant-coefficient Stokes and Brinkman layer potentials in standard
and weighted Sobolev spaces on $\mathbb R^3$ have been obtained in \cite{K-L-M-W}.

Dahlberg, Kenig and Verchota \cite{Da-Ke-Ve} studied the Lam\'{e} system of the linear
elasticity by using a layer potential theoretical method.
Costabel \cite{Costabel88} used a layer potential approach in the analysis of elliptic boundary
value problems on Lipschitz domains in the Euclidean spaces. Let us also mention the contributions of Chandler-Wilde et al. in \cite{Chandler-Wilde-Perfekt, Chandler-Wilde-1} related to boundary integral equations on on locally-dilation-invariant Lipschitz domains and fractal screens.
M. Dalla Riva, M. Lanza de Cristoforis, and P. Musolino \cite{Massimo-book} have studied singularly
perturbed boundary value problems by using a functional analytic approach proposed by the second
named author. This method, which is based also on a layer potential analysis, has been used in the
study of various linear and nonlinear elliptic problems.
In their recent book \cite{D-I-M-GHA}, D. Mitrea, I. Mitrea and M. Mitrea have made a rigorous
interplay between Harmonic Analysis, Geometric Measure Theory, Function Space Theory, and Partial
Differential Equations, with many applications in the study of boundary problems for complex
coefficient elliptic systems in various geometric settings, including the
class of Lipschitz domains. The theory of Fredholm and layer potential operators in Euclidean
spaces and in Riemannian manifolds plays an important role in their book \cite{D-I-M-GHA}, and
also in our recent book \cite{KMNW-2025}.

Next we provide a brief overview of some of the significant contributions to elliptic boundary
value problems on compact manifolds, especially on those related to the Stokes and Navier-Stokes
systems that use layer potentials. Dindo\u{s} and Mitrea \cite{D-M} used the
mapping properties of Stokes layer potentials in Sobolev and Besov spaces to obtain the
well-posedness of the Poisson problem for the Stokes and Navier-Stokes systems with Dirichlet
boundary condition on $C^1$ and Lipschitz domains in compact Riemannian manifolds.
Well-posedness results for boundary value problems for the Laplace-Beltrami operator and
the Hodge-Laplacian on compact manifolds and various properties of the corresponding
boundary integral operators have been obtained by Mitrea and Taylor \cite{M-T1}. The authors
in \cite{K-W} and \cite{Khavinson-Putinar} have developed a variational approach
in order to construct the layer potentials for the Stokes system with $L^\infty $ coefficients
in Lipschitz domains on a compact Riemannian manifold.
Benavides, Nochetto, and Shakipov \cite{BNS} used a variational approach and studied the
$L^p$-based ($p\in (1,\infty )$) well-posedness and Sobolev regularity for the weak formulations
of the (stationary) tangent Stokes and tangent Navier-Stokes systems on a
compact and connected $d$-dimensional manifold without boundary of class $C^m$, $m\geq 2$,
embedded in $\mathbb R^{d+1}$, in terms of the regularity of the source terms and the manifold.
Perfekt and Putinar have studied layer potentials on polyhedral domains 
\cite{PutinarPerfekt1, PutinarPerfekt2}.

Gro\ss e, Kohr, and Nistor \cite{Grosse-Kohr-Nistor-23} proved the $L^2$-unique continuation
property for the deformation operator on manifolds with bounded geometry. Amann \cite{AmannFunctSp}
and Ammann, Gro{\ss}e, Nistor \cite{AGN-1, AGN-2} studied function spaces on manifolds with bounded 
geometry, and Gro{\ss}e and Nistor \cite{GN17} used uniform Shapiro-Lopatinski conditions to obtain 
well-posedness and regularity results for boundary value problems on manifolds with bounded geometry.

Lewis and Parenti \cite{LewisParenti} obtained the first results for the layer potentials
for the Laplace operator on manifolds with cylindrical ends. Mitrea and Nistor
\cite{Mitrea-Nistor} expanded these results and used a layer potential approach to
obtain the well-posedness of the Dirichlet problem for the Laplace operator on a
manifold with boundary and cylindrical ends. Mitrea and Nistor \cite{Mitrea-Nistor} and
Kohr, Nistor and Wendland \cite{KMNW-2025, KNW-22, KohrNistor-Stokes} developed an
essentially translation invariant pseudodifferential calculus on manifolds with
cylindrical ends, which is very useful to provide the invertibility and structure
of the Stokes operator and, as a consequence, the construction and the invertibility
of the Stokes layer potential operators on manifolds with cylindrical ends. This is
the purpose of a forthcoming paper of us.

An important operator in the structure of the Stokes and Navier-Stokes equations
on Riemannian manifolds is the deformation Laplacian $2\Def^*\Def$. In their seminal
paper \cite{Ebin-Marsd}, Ebin and Marsden mentioned that the convenient Laplace type
operator to describe the Navier-Stokes equation on closed Riemannian manifolds is the
deformation Laplacian. This is the choice that we also consider in the description of
the Stokes operator $\boldsymbol\Xi $ of Equation \eqref{eq.def.bsXi}.

In this paper, we allow our ambient manifold $M$ to be arbitrary, as long as this
is possible. Once this is not possible anymore, we assume that $M$ is compact (without
boundary).

\subsection*{Acknowledgements}
  We thank Massimo Lanza de Cristoforis, Dorina, Irina, and Marius Mitrea, Sergey
  Mikhailov, and Mihai Putinar for useful discussions.

\section{Normal lateral limits at $x_{n} = 0$ of pseudodifferential
operators on $\RR^{n}$}
\label{sec.sec2}


We shall need several results on the ``lateral normal limits'' at the boundary of
the values of a pseudodifferential operator. They are closely related to the
``jump relations'' that play such an important role in the study of layer potentials.
These results are developped in this section, in Sections \ref{sec.sec3}, and
in Subsection \ref{ssec.Jump}. The contents of this section are classical, see
\cite{KMNW-2025}, which we follow closely. See also \cite{H-W}.

\subsection{Motation: traces, normal lateral limits, and more}
\label{ssec.problem}
In the following, we let $\Gamma \subset \RR^{n}$ be the hyperplane
\begin{equation}\label{eq.def.Gamma}
  \Gamma \ede \{x = (x', x_{n})
  \in \RR^{n-1} \times \RR \mid x_{n} = 0 \}\,,
\end{equation}
with the induced Euclidean measure. Let $e_{n} := (0, \ldots, 0, 1) \in \RR^{n}$.
We parameterize $\Gamma$ and its translations
$\Gamma + \epsilon e_{n}$ via the diffeomorphism
$\RR^{n} \ni x' \mapsto (x', \epsilon) \in \Gamma + \epsilon e_{n}$.

We shall repeatedly use the {\it Fourier transform,} which in this paper is defined
for $f \in L^{1}(\RR^{n})$ by
\begin{equation}\label{eq.def.inv.F}
  \begin{gathered}
    \hat f(x) \seq \maF f(x) \ede \int_{\RR^{n}}
    e^{-\imath x \cdot \tau} f(\tau)
    \, d\tau\qquad \mbox{and hence}\\
    \maF^{-1} f(x) \seq \frac1{(2\pi)^{n}} \int_{\RR^{n}}
    e^{\imath x \cdot \tau} f(\tau)
    \, d\tau\,,
  \end{gathered}
\end{equation}
where $x \cdot \xi := \sum_{j=1}^{n} x_{j}  \xi_{j}$ and $\imath ^2=-1$.
We have that $\maF f, \maF^{-1}f \in \maC_{0}(\RR^{n})$.
Let $\maS(\RR^{n})$ denote the space of Schwartz functions (i.e. smooth rapidly decaying
functions at infinity) and let $\maS'(\RR^{n})$ be the dual of $\maS(\RR^{n})$.
Then we have isomorphisms $\maF : \maS(\RR^{n}) \to \maS(\RR^{n})$ and
$\maF : \maS'(\RR^{n}) \to \maS'(\RR^{n})$.
We shall use any of the following equivalent notation: $\maF (u) = \maF u = \hat u$.

For $x \in \RR^{n}$, we let, as usual
\begin{equation}\label{eq.def.jap}
    \jap{x} \ede \sqrt{1 + |x|^{2}} \seq \big ( 1 + x_{1}^{2} + \ldots
    + x_{n}^{2} \big)^{1/2}\,.
\end{equation}
We define then the Sobolev space $H^{s}(\RR^{n})$ as the space of tempered
distributions $u$ on $\RR^{n}$ such that $\jap{\xi}^{s} \hat u(\xi)$ is
square integrable. We let $\CIc(\RR^{n})$ denotes the set of smooth
functions with compact support in $\RR^{n}$. Let us record, for further use, the following
simple, well-known lemma (a variant of Lemma A.5.1 in \cite{KMNW-2025}).

\begin{lemma}\label{lemma.def.hdelta}
  Let $h$ be a locally integrable function on $\Gamma := \{x_{n} = 0\}
  \subset \RR^{n}$. Then we define the distribution $h \delta_{\Gamma}
  := \bsh \delta_{\Gamma}$ on $\RR^{n}$ by the formula
  \begin{equation*} 
    \langle h \delta_{\Gamma}, \phi \rangle \ede
    \langle h \delta_{\Gamma}, \phi \rangle \ede \int_{\RR^{n-1}}
    h(x') \phi(x') \, dx'\,, \qquad \forall\, \phi \in \CIc(\RR^{n})\,.
  \end{equation*}
  If $h \in L^{2}({\Gamma})$, then $h \delta_{\Gamma} \in H^{s'}(\RR^{n})$
  for all $s' < -\frac12$. This definition extends by duality to $h \in H^{s}(\Gamma; E)$,
  in which case $h \delta_{\Gamma} \in H^{s'}(\RR^{n}; E)$,
  where $s' = s -1/2$ if $s < 0$ and, otherwise, is arbitrary such that
  $s' < -1/2$.
\end{lemma}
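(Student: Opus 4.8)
The plan is to reduce everything to a one–dimensional integral on the Fourier side. First I would compute the Fourier transform of $h\delta_\Gamma$: writing $\xi = (\xi',\xi_n) \in \RR^{n-1}\times\RR$ and denoting by $\hat h$ the $(n-1)$–dimensional Fourier transform of $h$, an elementary computation using the multiplication formula $\int f\,\widehat g = \int \widehat f\, g$ (and, in the bundle–valued case, working componentwise, $E$ being a fixed finite–dimensional vector space since $\Gamma$ is a hyperplane) gives
\[
  \widehat{h\delta_\Gamma}(\xi',\xi_n) \seq \hat h(\xi')\,,
\]
so that $\widehat{h\delta_\Gamma}$ does not depend on $\xi_n$. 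Next, using $\jap{\xi}^2 = \jap{\xi'}^2 + \xi_n^2$ and Fubini, I would write
\[
  \|h\delta_\Gamma\|_{H^{s'}(\RR^n)}^2 \seq \int_{\RR^{n-1}} |\hat h(\xi')|^2
  \Big( \int_{\RR} \big(\jap{\xi'}^2 + \xi_n^2\big)^{s'}\, d\xi_n \Big)\, d\xi'\,,
\]
and evaluate the inner integral by the substitution $\xi_n = \jap{\xi'}\, t$, obtaining $C_{s'}\jap{\xi'}^{2s'+1}$ with $C_{s'} := \int_\RR (1+t^2)^{s'}\, dt$, which is finite \emph{exactly} when $s' < -\tfrac12$. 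Thus, for every $s' < -\tfrac12$,
\[
  \|h\delta_\Gamma\|_{H^{s'}(\RR^n)}^2 \seq C_{s'}\, \|h\|_{H^{s'+1/2}(\Gamma)}^2\,,
\]
so $h\mapsto h\delta_\Gamma$ is, up to the constant $C_{s'}^{1/2}$, an isometry $H^{s'+1/2}(\Gamma;E) \to H^{s'}(\RR^n;E)$.

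Granting this identity, the two assertions follow by elementary Sobolev bookkeeping. For $h \in L^{2}(\Gamma) = H^{0}(\Gamma)$ and any $s' < -\tfrac12$, we have $s := s' + \tfrac12 < 0$, hence $\|h\|_{H^{s}(\Gamma)} \le \|h\|_{H^{0}(\Gamma)} < \infty$, and the displayed identity gives $h\delta_\Gamma \in H^{s'}(\RR^n)$. For general $h \in H^{s}(\Gamma;E)$ the distribution $h\delta_\Gamma$ is defined by duality, $\langle h\delta_\Gamma,\phi\rangle := \langle h, \phi|_\Gamma\rangle_{\Gamma}$, the trace map $\phi\mapsto \phi|_\Gamma$ carrying $\CIc(\RR^n)$ (and $\maS(\RR^n)$) into $\CIc(\Gamma) \subset H^{-s}(\Gamma)$; this agrees with the original formula when $h$ is a function, and by density of $\CIc(\Gamma)$ in $H^{s'+1/2}(\Gamma)$ together with the isometry above it extends to a bounded map $H^{s'+1/2}(\Gamma;E)\to H^{s'}(\RR^n;E)$ for every $s' < -\tfrac12$. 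If $s < 0$, choosing $s' = s - \tfrac12 < -\tfrac12$ yields $h\delta_\Gamma \in H^{s-1/2}(\RR^n;E)$; if $s \ge 0$, then $s' + \tfrac12 < 0 \le s$ for every $s' < -\tfrac12$, so $H^{s}(\Gamma) \hookrightarrow H^{s'+1/2}(\Gamma)$ and again $h\delta_\Gamma \in H^{s'}(\RR^n;E)$.

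The only genuinely delicate points are (i) checking that $h\delta_\Gamma$ is a tempered distribution and that its Fourier transform is given by the formula above — immediate when $h$ is a function (for $h \in L^2(\Gamma)$, $|\langle h\delta_\Gamma,\phi\rangle| \le \|h\|_{L^2(\Gamma)}\|\phi|_\Gamma\|_{L^2(\Gamma)}$ controls a Schwartz seminorm), and for distributional $h$ handled by the duality definition plus a routine approximation argument — and (ii) the one–variable integrability threshold $\int_\RR (1+t^2)^{s'}\,dt < \infty \Leftrightarrow s' < -\tfrac12$, which is precisely the mechanism producing the loss of half a derivative; neither is a serious obstacle. As an alternative to the explicit computation, one may note that $h\mapsto h\delta_\Gamma$ is the adjoint of the trace map $\gamma\colon \phi\mapsto \phi|_\Gamma$ with respect to the $L^2$ pairings, so that the classical trace theorem $\gamma\colon H^{t}(\RR^n)\to H^{t-1/2}(\Gamma)$ (bounded for $t>\tfrac12$) dualizes to boundedness of $H^{\sigma}(\Gamma)\to H^{\sigma-1/2}(\RR^n)$ for $\sigma<0$; I would include this remark as well. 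In passing, the isometry also shows sharpness: for $s' \ge -\tfrac12$ one has $h\delta_\Gamma \notin H^{s'}(\RR^n)$ unless $h=0$.
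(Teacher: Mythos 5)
Your proof is correct, but it takes a genuinely different route from the paper. You compute $\widehat{h\delta_\Gamma}(\xi',\xi_n)=\hat h(\xi')$ (the tensor-product identity $h\delta_\Gamma = h\otimes\delta_0$, $\hat\delta_0=1$) and then evaluate the $\xi_n$-integral $\int_\RR(\langle\xi'\rangle^2+\xi_n^2)^{s'}d\xi_n = C_{s'}\langle\xi'\rangle^{2s'+1}$ explicitly, obtaining the exact identity $\|h\delta_\Gamma\|_{H^{s'}(\RR^n)}^2 = C\,\|h\|_{H^{s'+1/2}(\Gamma)}^2$, from which both assertions follow by the index bookkeeping you carry out (which matches the lemma's case distinction on the sign of $s$ exactly). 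The paper instead argues entirely by duality: for $u\in H^{-s'}(\RR^n)$ with $-s'>1/2$ the trace $u\vert_\Gamma\in H^{-s'-1/2}(\Gamma)$ depends continuously on $u$, so pairing with $h$ defines an element of $H^{-s'}(\RR^n)^*\simeq H^{s'}(\RR^n)$ — precisely the ``adjoint of the trace map'' remark you append at the end. Your computation buys more: an isometry (up to constants), hence the sharpness statement that $h\delta_\Gamma\notin H^{-1/2}(\RR^n)$ unless $h=0$, which the paper only alludes to elsewhere. The paper's softer argument buys portability: it uses the trace theorem as a black box and is exactly the proof that transfers verbatim to the manifold version (Lemma \ref{lemma.def.hdelta2}), where no global Fourier transform is available. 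Two minor points you should make explicit if this were to replace the paper's proof: the very first claim (that $h\delta_\Gamma$ is a distribution for merely locally integrable $h$, not a priori tempered) needs the one-line local estimate $|\langle h\delta_\Gamma,\phi\rangle|\le\|\phi\|_\infty\int_{B_R\cap\Gamma}|h|$ rather than anything Fourier-theoretic; and the equality of norms holds only up to the dimensional normalization constants relating the $n$- and $(n-1)$-dimensional Plancherel formulas under the paper's convention \eqref{eq.def.inv.F}.
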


\begin{proof}
  First of all, given $\phi \in \CIc(\RR^{n})$, let $R$
  be such that the support of $\phi$ is contained in the ball $B_{R}(0)$.
  Then the restriction of $h$
  to $\Gamma \cap B_{R}(0)$ is integrable, because the latter set is
  relatively compact. Then we have
  \begin{equation*} 
    |\langle h \delta_{\Gamma}, \phi \rangle|
    \ede \left | \int_{\RR^{n-1} \cap B_{R}(0)}
    h(x') \phi(x') \, dx' \right |\, \le \,
    \|\phi\|_{\infty }\int_{\RR^{n-1} \cap B_{R}(0)}
    |h(x')| \, dx' \,,
  \end{equation*}
  and hence the map $\CIc(\RR^{n}) \ni \phi \mapsto
  \langle h \delta_{\Gamma}, \phi \rangle$ is continuous, and, thus, it defines
  a distribution on $\RR^{n}$. The fact that $h \delta_{\Gamma} \in H^{s}(\RR^{n})$
  for all $s < -\frac12$ is easily seen as follows. Let $u \in H^{-s}(\RR^{n})$,
  then the restriction (or trace) $u\vert_{\Gamma}$ is defined, since
  $-s > 1/2$. Consequently, $\int_{\RR^{n-1}} u\vert_{\Gamma}(x', 0)h(x')\, dx'$
  is also defined and depends continuously on $u$, hence it defines an element
  in $H^{-s}(\RR^{n})^{*} \simeq H^{s}(\RR^{n})$, as claimed.

  Finally, to prove the last part, let $\phi \in H^{-s'}(\RR^{n})$
  with compact support, where $-s' > 1/2$. Then
  $\phi\vert_{\Gamma} \in H^{-s'-1/2}(\Gamma )$ and the pairing
  $\langle h , \phi\vert_{\Gamma}\rangle$ is defined if $-s' -1/2 \ge -s$.
  So we need both conditions $s' \le s - 1/2$ and $s' < -1/2$. If $s \ge 0$,
  we retain the condition $s' < -1/2$. If $s < 0$, we retain the other condition
  and let $s' = s-1/2$.
\end{proof}

We let $\Smbmn$ be the set of functions $a : \RR^{n} \times \RR^{n} \to \CC$
such that, for every pair
of multi-indices $\alpha =(\alpha _1,\ldots,\alpha _n),\,
\beta =(\beta _1,\ldots,\beta_n)\in \ZZ_{+}^{n}$, with
$\alpha _i,\,\beta _i\geq 0$ for $i\in \{1,\ldots,n\}$
and $|\beta |:=\beta _1+\ldots +\beta _n$, there exists
$C_{\alpha\beta} \ge 0$ such that
\begin{equation}\label{eq.est.Sm}
  |\pa_{x}^{\alpha} \pa_{y}^{\beta} a(x, \xi)|
  \le C_{\alpha \beta}\jap{\xi}^{m-|\beta|}\,.
\end{equation}
Then we define $a(x, D): \CIc(\RR^{n}) \to \CI(\RR^{n})$ as usual,
by the formula
\begin{equation}\label{eq.def.pseudos}
  a(x, D) u (x) \ede \frac1{(2 \pi)^{n}}
  \,\int_{\RR^{n}} e^{\imath x \cdot  \xi} a(x, \xi) \hat u( \xi) \, d \xi\,.
\end{equation}
It is a particular case of a \emph{pseudodifferential operator} on $\RR^{n}$.

\begin{remark}\label{rem.mpr}
  The last lemma and the mapping properties of the pseudodifferential operators of the
  form $a(x, D)$ with $a \in \Smbmn$
  show that (if $a$ has order $m$) then
  \begin{equation}\label{eq.def.maS_a}
    \maS_{a}h \ede a(x, D)(h \delta_{\Gamma}) \in H^{s - m}(\RR^{n})\,,
    \qquad \mbox{for all } s < -1/2\,.
  \end{equation}
  Thus, if $m < -1$, we can choose $s$ close to $-1/2$ such that
  $s-m > 1/2$, and hence the trace $(\maS_{a}h)\vert_{\Gamma}$ is defined.
  (It is known that this is not the case for $m = -1$, though. In this paper,
  this phenomenon is discussed in Theorem \ref{thm.main.jump0}.)
\end{remark}

This allows us to introduce the following definition.

\begin{definition}\label{def.limit.values0}
  Let $e_{n} = (0, 0, \ldots, 0, 1) \in \RR^{n}$ and $\Gamma + \epsilon e_{n}
  \ede \{x_{n} = \epsilon\} \subset \RR^{n}$. By
  $\tau_{-\epsilon} : H^{s}(\Gamma + \epsilon e_{n}) \to H^{s}(\Gamma)$, we denote
  the natural isometry induced by translation.
  For $u : \RR^{n} \to \CC$ smooth enough,
  we define
  \begin{equation*} 
  \begin{gathered}
    u_{\epsilon} \ede u \vert_{\Gamma + \epsilon e_{n}}\,, \quad
    a_{\epsilon}(x', D') h \ede
    \tau_{-\epsilon} \big[ a(x, D)(h \delta_{\Gamma}) \big]_{\epsilon}
    \,, \mbox{ and}\\
    \quad u_{\pm} \ede \lim_{\epsilon \searrow 0} \tau_{\mp \epsilon}
    u_{\pm \epsilon}
  \end{gathered}
\end{equation*}
whenever these definitions make sense. The limits $u_{\pm \epsilon}$ are called the {\em normal
lateral limits} of $u$.
\end{definition}

As we will see shortly, for suitable $a$ and $\epsilon$,
$a_{\epsilon}(x', D')$ is defined and is again a pseudodifferential
operator.

We let $\RR_{\pm}^{n} := \{x = (x', x_{n})
  \in \RR^{n-1} \times \RR \mid  \pm x_{n} > 0 \}$.
Let $s > 1/2$. If $u \in H_{\loc}^{s}(\RR_{\pm}^{n})$ we shall write
\begin{equation}
  \gamma_{\pm}(u) \ede \mbox{ the trace of } u \mbox{ at } \Gamma \,, \ \mbox{ and } \gamma_{\pm}(u)\in
  H^{s-1/2}(\Gamma)\,,
\end{equation}
using, of course, that $\Gamma = \pa \RR_{+}^{n} = \pa \RR_{-}^{n}$. If, furthermore,
$u \in H_{\loc}^{s}(\RR^{n})$, we shall write $u\vert_{\Gamma} =
\gamma_{+}(u) = \gamma_{-}(u)$. We have the following simple lemma relating
the concepts introduced so far.

\begin{lemma}\label{lemma.limits=traces}
  Let $u \in H_{\loc}^{s}(\RR_{+}^{n})$, $s > 1/2$. Then
  \begin{equation*}
    u_{+} \ede \lim_{t \searrow 0} \tau_{-t} u_{t} \seq \gamma_{+}(u)
    \in H_{\loc}^{s-1/2}(\Gamma)\,.
  \end{equation*}
  Similarly, $u_{-} \seq \gamma_{-}(u)$, if
  $u \in H_{\loc}^{s}(\RR_{-}^{n})$. In particular, if
  $a \in \Smbmn$ for some $m < -1$ and $h \in L^{2}(\RR^{n})$, then
  \begin{equation*}
    \big[ a(x, D)(h \delta_{\Gamma}) \big]_{+}
    \seq \big[ a(x, D)(h \delta_{\Gamma}) \big]_{-}
    \seq \big[ a(x, D)(h \delta_{\Gamma}) \big]_{\Gamma}\,.
  \end{equation*}
\end{lemma}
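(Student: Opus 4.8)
The plan is to reduce the statement to the well-known fact that the trace map on a half-space agrees with the normal lateral limit, and then to apply the mapping and regularity properties of pseudodifferential operators of sufficiently negative order that have already been recorded in Remark \ref{rem.mpr}.

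\textbf{Step 1: the half-space trace equals the lateral limit.} Suppose $u \in H_{\loc}^{s}(\RR_{+}^{n})$ with $s > 1/2$. For each $t > 0$ the slice $\Gamma + t e_{n} = \{x_{n} = t\}$ lies in the open half-space $\RR_{+}^{n}$, so the restriction $u_{t} := u\vert_{\Gamma + t e_{n}}$ is simply the (codimension one) trace of $u$ on an interior hyperplane, and after pulling back by $\tau_{-t}$ we obtain an element of $H_{\loc}^{s - 1/2}(\Gamma)$. The claim $u_{+} = \lim_{t \searrow 0} \tau_{-t} u_{t} = \gamma_{+}(u)$ is then the assertion that these interior slice-traces converge, as the slice approaches the boundary, to the boundary trace. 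I would prove this by a density argument: for $v \in \CI(\overline{\RR_{+}^{n}}) \cap H^{s}$ compactly supported in the $x'$-directions the convergence $\tau_{-t} v_{t} \to v\vert_{\Gamma}$ in $H^{s-1/2}$ is elementary (dominated convergence on the Fourier side, or the fundamental theorem of calculus in $x_{n}$ together with the bound $\|\tau_{-t} v_{t} - v\vert_{\Gamma}\|_{H^{s-1/2}} \le C\,t^{1/2}\|v\|_{H^{s}}$), and then the uniform boundedness of the maps $u \mapsto \tau_{-t}u_{t}$ from $H^{s}$ to $H^{s-1/2}$ (the standard trace inequality, uniform in $t$ because translation is an isometry) lets one pass to general $u$. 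The case $u \in H_{\loc}^{s}(\RR_{-}^{n})$ is identical after the reflection $x_{n} \mapsto -x_{n}$, giving $u_{-} = \gamma_{-}(u)$.

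\textbf{Step 2: the application to $a(x,D)(h\delta_{\Gamma})$.} Now take $a \in \Smbmn$ with $m < -1$ and $h \in L^{2}(\RR^{n})$; abbreviate $w := a(x,D)(h\delta_{\Gamma})$. By Lemma \ref{lemma.def.hdelta} we have $h\delta_{\Gamma} \in H^{s}(\RR^{n})$ for every $s < -1/2$, and the mapping property of $a(x,D)$ (as in Remark \ref{rem.mpr}) gives $w \in H^{s-m}(\RR^{n})$ for all such $s$. Since $m < -1$, we may pick $s \in (-1/2 + m + 1/2,\, -1/2)$, i.e. $s < -1/2$ but $s - m > 1/2$; hence $w \in H^{s'}(\RR^{n})$ with $s' := s - m > 1/2$. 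In particular $w$ lies in $H^{s'}_{\loc}(\RR_{+}^{n})$, in $H^{s'}_{\loc}(\RR_{-}^{n})$, and in $H^{s'}_{\loc}(\RR^{n})$ with $s' > 1/2$, so by Step 1 both one-sided normal lateral limits $w_{+}$ and $w_{-}$ exist and equal, respectively, $\gamma_{+}(w)$ and $\gamma_{-}(w)$; and because $w$ is actually in $H^{s'}_{\loc}(\RR^{n})$ (not merely on each side), the convention recorded just before the lemma gives $\gamma_{+}(w) = \gamma_{-}(w) = w\vert_{\Gamma}$. This is exactly the asserted chain of equalities $[a(x,D)(h\delta_{\Gamma})]_{+} = [a(x,D)(h\delta_{\Gamma})]_{-} = [a(x,D)(h\delta_{\Gamma})]_{\Gamma}$.

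\textbf{Main obstacle.} The only genuine content is Step 1, the identification of the interior slice-traces' limit with the boundary trace; everything in Step 2 is bookkeeping with Sobolev exponents already set up in the excerpt. Within Step 1, the delicate point is the uniformity in $t$ of the trace estimate (so that the density argument closes) — but since the slice $\{x_{n}=t\}$ is carried to $\{x_{n}=0\}$ by a translation, which is a unitary operator on every $H^{s}(\RR^{n})$, the constant in the trace inequality does not depend on $t$, and the obstacle evaporates. I would therefore present Step 1 compactly (noting that it is a standard fact about traces on parallel hyperplanes, essentially Fubini together with continuity of $t \mapsto e^{i t \xi_{n}}\hat u(\xi)$ in the appropriate weighted $L^2$ sense) and spend most of the written proof on Step 2, since that is where the hypotheses $m<-1$ and $h\in L^{2}$ are used.
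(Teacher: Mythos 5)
Your proposal is correct. Note that the paper itself gives no proof of this lemma: it is stated as a ``simple lemma'' with complete proofs deferred to \cite{KMNW-2025}, so there is nothing to compare against; your argument is the standard one and fills the gap appropriately. Step 1 (uniform-in-$t$ slice-trace bounds from translation invariance, convergence on a dense class of smooth functions, then density) is the right mechanism; the only point worth making explicit is that the uniform trace inequality is a full-space statement, so for $u \in H^{s}_{\loc}(\RR_{+}^{n})$ you should first localize and apply an extension operator $E : H^{s}(\RR_{+}^{n}) \to H^{s}(\RR^{n})$, observing that $u_{t} = (Eu)_{t}$ for $t>0$ and $\gamma_{+}(u) = (Eu)\vert_{\Gamma}$ --- your write-up uses this implicitly. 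Step 2 is exactly the bookkeeping of Remark \ref{rem.mpr} and Lemma \ref{lemma.def.hdelta}. Two trivial slips: the interval for $s$ should be $(m + \tfrac12, -\tfrac12)$ rather than $(m,-\tfrac12)$ (your parenthetical ``i.e.\ $s<-1/2$ but $s-m>1/2$'' states the correct condition, so nothing breaks), and the claimed rate $t^{1/2}$ in the auxiliary estimate for smooth $v$ is not needed and is not correct for all $s>1/2$ --- the dominated-convergence argument you also offer is the one to keep. Finally, $h \in L^{2}(\RR^{n})$ in the statement should be read as $h \in L^{2}(\Gamma)$, as your use of Lemma \ref{lemma.def.hdelta} already presumes.
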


Typically, in this paper, we shall work with functions (or sections of
vector bundles) to which the above lemma applies, in that case, we will not have to
distinguish between the limits $u_{\pm}$ and the traces $\gamma_{\pm}(u)$.
Of course, there exist important situations when the limits $u_{\pm}$ exist
but the traces $\gamma_{\pm}(u)$ do not exist. (The opposite arise, however, in
the case of an embedded hypersurface $\Gamma$ in a manifold, when the definition
of $u_{\epsilon}$ and $u_{\pm}$ requiers a \emph{tubular neighborhood} of $\Gamma$.)
We can now formulate the problem that we will deal with
in Sections \ref{sec.sec2} and \ref{sec.sec3}.

\begin{problem}\label{problem}
  Let $a \in \Smbmn$, we want to study the existence and the properties of the
  \emph{normal lateral limits} of Definition \eqref{def.limit.values0}:
  \begin{equation*}
    a_{\pm}(x', D') h \ede \big[ a(x, D)(h \delta_{\Gamma}) \big]_{\pm}\,,
  \end{equation*}
  and their relations to the traces
  $\gamma_{\pm} \big[a(x, D)(h \delta_{\Gamma})\big]$.
\end{problem}

Often in the literature, the \emph{non-tangential limits} at the boundary
are studied. Those are \emph{more general} than the \emph{normal lateral limits}
that we study in this paper, but for functions that are smooth enough,
they are the same. (They are the same for most of the applications in
this paper.) See \cite{KMNW-2025}. An extension of the above discussions to
manifolds is contained in Subsection \ref{ssec.ntn}.

A word now about the notation, we often parametrize $\Gamma$ with $\RR^{n-1}$.
Thus $\Gamma \subset \RR^{n}$, but $\Gamma \simeq
\RR^{n-1} \not \subset \RR^{n}$. This is the reason we need the isometries
$\tau_{t}$. We distinguish $\Gamma$ from
$\RR^{n-1}$ to make it easier to transition to an arbitrary
open domain with boundary $\Gamma$. However, in this paper, there will
be no situation when confusions can arise if we omit the identification
$\tau_{t}$ from the notation, so we shall do that from now on.

\subsection{Lateral limits of pseudodifferential operators of arbitrary orders}

We now begin our study of the \emph{normal lateral limits} $[a(x, D)u]_{\pm}$
of the values of a pseudodifferential operator $a(x, D)$, which
were defined in Definition \ref{def.limit.values0}
(see Subsection \ref{ssec.problem}, especially Problem \ref{problem}).
Recall that if $a(x, D)u$ is smooth enough on either of the half-spaces
$\RR_{\pm}^{n}$, then the corresponding normal lateral limit
$[a(x, D)u]_{\pm}$ coincides with the trace of $a(x, D)u$ on the boundary
of that half-space.

A complete and general treatment of normal lateral limits
is contained in the book by Hsiao and Wendland \cite{H-W}.
Here we only deal with the results (and calculations) needed to treat
our generalized Stokes operator. See also \cite{Hormander1, Taylor1} for
general results on distributions, Fourier transforms, and pseudodifferential
operators. Further background on pseudodifferential operators (including the
results not proved here) can be found in
one of the following books \cite{HAbelsBook, Hormander1, H-W, RT-book2010, Taylor,
Taylor2, Wong}. A quick introduction to some basic facts and definitions
geared towards our applications can be found in \cite{KNW-22}.
Our presentation is a complete and concise introduction to the subject of limit
and jump relations for potential operators on a half-space, the missing proofs can
be found in \cite{KMNW-2025}.

As recalled in the previous subsection, $\Smbmn$ denotes the set of \emph{order $m$ symbols}
on $\RR^{n}$. Similarly, $S^{m}_{\cl}(\RR^{n} \times \RR^{n})$ denotes the set of order $m$,
\emph{classical symbols} on $\RR^{n}$ (they consist of symbols that have expansions
in terms of homogeneous functions). The resulting pseudodifferential operator $a(x, D)$
is given by the usual formula \eqref{eq.def.pseudos}. If $b \in S^{m}(\RR^{n-1} \times \RR^{n-1})$,
we shall denote by $b(x', D')$ its associated operator. In general, symbols with a prime
(i.e. ${}'$) will refer to objects on $\RR^{n-1}$. The most often used example is that, if
$x \in \RR^{n}$, then $x = (x', x_{n})$ with $x' \in \RR^{n-1}$ and $x_{n} \in \RR$ its projections.

Let $\maF_{\xi_{n}}^{-1}$ denote the one-dimensional inverse Fourier transform in the variable
$\xi_{n} \in \RR$.

\begin{lemma} \label{lemma.Fsymbols}
  Let $a \in \Smbmn$, $m \in \RR$.
  \begin{enumerate}[\rm (1)]
    \item For any fixed $x \in \RR^{n}$, $  \xi' \in \RR^{n-1}$, the function
    $\phi_{x, \xi'}(  \xi_{n}) := a(x, \xi', \xi_{n})$ defines a tempered
    distribution on $\RR$ such that its inverse Fourier transform in $\xi_{n}$,
    $\maF_{\xi_{n}}^{-1} \phi_{x, \xi'}$, coincides
    with a smooth function outside 0.

    \item For $m < -1$, we have
    $\maF_{\xi_{n}}^{-1} \phi_{x, \xi'} \in \maC_{0}(\RR)$.

    \item Let $m =-1$ and $e_{n} = (0, \ldots, 0, 1) \in \RR^{n}$ and assume
    that $a$ is \emph{classical}. Then, for all $x \in \RR^{n}$ and $ \xi' \in \RR^{n-1}$,
    the following limits exist
    \begin{equation*}
      \JC_{\pm}(x) \seq \JC_{\pm}(a; x) \ede \lim_{\tau \to \pm \infty}
      \tau a(x,   \xi', \tau)  \seq  \pm \sigma_{-1}(a; x, \pm e_{n}) \in \CC\,.
    \end{equation*}

    \item Let us also assume that, for all $x \in \RR^{n}$,
    we have $\JC_{+}(x) = \JC_{-}(x)$. Then
    $\maF_{\xi_{n}}^{-1} \phi_{x,   \xi'} \in \maS'(\RR)$ is a function that is
    continuous everywhere, except maybe at $0 \in \RR$,
    with one-sided limits in $0$ given by
    \begin{equation*} 
      \maF_{\xi_{n}}^{-1} \phi_{x,   \xi'} (0\pm) \seq
      \pm \frac{\imath \JC_{+}(x)}2
      + \frac1{2 \pi}\, \int_{\RR} \frac{a(x,   \xi', \tau) +
      a(x, \xi', -\tau)}{2} \, d\tau\,.
    \end{equation*}
    Clearly, the condition $\JC_{+} = \JC_{-}$ is satisfied if
    $\sigma_{-1}(a)$ is odd.
  \end{enumerate}
\end{lemma}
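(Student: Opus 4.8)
The plan is to treat the four assertions in sequence, reducing everything to one-variable facts about the Fourier transform of symbols of order $m$ on $\RR$, with $x$ and $\xi'$ playing the role of inert parameters. For (1), I would note that for fixed $(x,\xi')$ the function $\xi_n \mapsto a(x,\xi',\xi_n)$ is a tempered distribution since it grows at most polynomially, so its inverse Fourier transform exists in $\maS'(\RR)$. Smoothness away from $0$ is the standard ``symbols are smooth off the diagonal'' argument: for $x_n \neq 0$ one writes $x_n^N\, \maF_{\xi_n}^{-1}\phi_{x,\xi'}(x_n) = \maF_{\xi_n}^{-1}\big((-1)^N \pa_{\xi_n}^N \phi_{x,\xi'}\big)(x_n)$, and by \eqref{eq.est.Sm} the symbol $\pa_{\xi_n}^N a$ has order $m-N$, which for $N > m+1$ is $< -1$ and hence integrable in $\xi_n$; so the right-hand side is a bounded continuous function of $x_n$, giving decay of $\maF_{\xi_n}^{-1}\phi_{x,\xi'}$ of every order away from $0$, and differentiating under the integral (after multiplying by $x_n^N$) gives smoothness. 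Assertion (2) is the special case $N=0$ of this computation together with the Riemann–Lebesgue lemma: when $m<-1$ the symbol itself is integrable in $\xi_n$, so $\maF_{\xi_n}^{-1}\phi_{x,\xi'} \in \maC_0(\RR)$ directly.

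For (3), the hypothesis that $a$ is classical of order $-1$ means $a(x,\xi',\xi_n) = \sigma_{-1}(a)(x,\xi',\xi_n) + r(x,\xi',\xi_n)$ where $\sigma_{-1}(a)$ is homogeneous of degree $-1$ in $(\xi',\xi_n)$ for $|(\xi',\xi_n)|$ large and $r$ has order $\le -2$. Then $\tau\, r(x,\xi',\tau) \to 0$ as $\tau \to \pm\infty$, while by homogeneity $\tau\, \sigma_{-1}(a)(x,\xi',\tau) = \sigma_{-1}(a)(x, \xi'/\tau, \operatorname{sgn}\tau) \to \sigma_{-1}(a)(x, 0, \pm 1) = \pm\sigma_{-1}(a)(x, \pm e_n)$ by continuity (more precisely, homogeneity in the large-frequency regime makes this exact once $|\tau|$ is big). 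This gives the stated formula for $\JC_\pm(x)$.

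The substantive part is (4), and this is where I expect the main obstacle. The idea is to split $\phi_{x,\xi'}$ into its even and odd parts in $\xi_n$. The even part $\tfrac12(a(x,\xi',\tau) + a(x,\xi',-\tau))$ is again an order $-1$ symbol in $\tau$, but one whose homogeneous component of degree $-1$ is even; an even homogeneous function of degree $-1$ on $\RR\setminus\{0\}$ is $c/|\tau|$, which is \emph{not} locally integrable at $0$ — except that here the two one-sided limits $\JC_+$ and $\JC_-$ are assumed equal, and $\JC_+ + \JC_- = \sigma_{-1}(a)(x,e_n) - \sigma_{-1}(a)(x,-e_n)$... I need to be careful: writing $a_{\mathrm{even}}(\tau)$, its degree-$-1$ homogeneous part behaves like $(\JC_+ + \JC_-)/(2\tau) \cdot \operatorname{sgn}\tau$ type expression, and the condition $\JC_+ = \JC_-$ is exactly what makes the leading $1/|\tau|$ singularity ``symmetric enough'' that $\maF_{\xi_n}^{-1}$ of the even part is continuous at $0$ with value $\tfrac1{2\pi}\int_\RR a_{\mathrm{even}}(\tau)\,d\tau$ (this integral being only conditionally convergent, interpreted symmetrically; subtracting a fixed even comparison symbol like $\JC_+\langle\tau\rangle^{-1}$ and using that $\maF^{-1}$ of $\langle\tau\rangle^{-1}$ is continuous reduces this to the absolutely convergent case). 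Meanwhile the odd part $\tfrac12(a(x,\xi',\tau) - a(x,\xi',-\tau))$ has degree-$-1$ homogeneous component $\sim \JC_+ \operatorname{sgn}(\tau)/\tau = \JC_+/|\tau|$... no — odd, so $\sim \JC_+ \cdot (1/\tau)$ as a genuinely odd function, whose inverse Fourier transform is (a constant multiple of) $\operatorname{sgn}(x_n)$, with a jump of size $\imath \JC_+$ across $0$: concretely $\maF_{\xi_n}^{-1}(\mathrm{p.v.}\,1/\tau)(x_n) = \tfrac{\imath}{2}\operatorname{sgn}(x_n)$ in our normalization, and the lower-order odd remainder contributes a continuous function vanishing at $0$. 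Adding the two one-sided limits of the even and odd pieces yields $\pm \tfrac{\imath\JC_+(x)}{2} + \tfrac1{2\pi}\int_\RR a_{\mathrm{even}}(x,\xi',\tau)\,d\tau$, which is the claimed formula. The final sentence is immediate: if $\sigma_{-1}(a)$ is odd then $\JC_\pm(x) = \pm\sigma_{-1}(a)(x,\pm e_n) = \sigma_{-1}(a)(x,e_n)$ for both signs, so $\JC_+ = \JC_-$. The delicate points to get right are the precise normalization constants (tracking the $(2\pi)^{-1}$ and factors of $\imath$ from \eqref{eq.def.inv.F}) and the rigorous handling of the conditionally convergent integral and the $\mathrm{p.v.}\,1/\tau$ computation; I would do the latter by regularizing, e.g. computing $\maF_{\xi_n}^{-1}$ of $\tau/(\tau^2 + \varepsilon^2)$ explicitly and letting $\varepsilon \searrow 0$, and isolate the jump from the homogeneous principal part alone, the remainder of order $\le -2$ being in $\maC_0(\RR)$ by part (2).
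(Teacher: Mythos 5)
The paper does not actually prove this lemma: it is one of the statements whose proofs are deferred to the monograph \cite{KMNW-2025}. Your plan is the standard argument that such a proof must follow, and it is essentially sound: (1)--(2) are the usual ``multiply by $x_n^N$ $\leftrightarrow$ differentiate the symbol $N$ times'' computation plus Riemann--Lebesgue; (3) is the homogeneous expansion $a=\sigma_{-1}(a)+r$ with $r$ of order $\le -2$; and the heart of (4) is correctly identified as the even/odd splitting in $\xi_n$, with the odd homogeneous part $\JC_{+}\,\pv(1/\tau)$ producing the jump via $\maF^{-1}_{\xi_n}\big(\pv \tfrac1\tau\big)(x_n)=\tfrac{\imath}{2}\sgn(x_n)$ in the paper's normalization, and the even part producing the symmetric integral term. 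The concluding remark about odd $\sigma_{-1}(a)$ is also correct.

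One step of your treatment of the even part needs repair. First, there is no issue of local integrability at $\tau=0$: a symbol in $S^{-1}$ is smooth and bounded there; the only issue is integrability at infinity. Second, the hypothesis $\JC_{+}=\JC_{-}$ makes the even part \emph{absolutely} integrable, so no principal-value interpretation and no comparison symbol are needed: one has $\lim_{\tau\to+\infty}\tau\cdot\tfrac12\big(a(\tau)+a(-\tau)\big)=\tfrac12(\JC_{+}-\JC_{-})=0$, and the homogeneous expansion of $\sigma_{-1}(a)$ near $\pm e_n$ upgrades this cancellation to an $O(\tau^{-2})$ bound on the tail (this is exactly the content of the Remark following the lemma in the paper). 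Your proposed reduction by subtracting $\JC_{+}\jap{\tau}^{-1}$ would in fact fail on two counts: the coefficient of the $1/|\tau|$ tail of the even part is $\tfrac12(\JC_{+}-\JC_{-})$, not $\JC_{+}$, and $\maF^{-1}\big(\jap{\tau}^{-1}\big)=\tfrac1\pi K_{0}(|x_n|)$ has a logarithmic singularity at the origin, so it is not a continuous comparison function. Replace that step by the direct absolute-integrability argument and the rest of your plan goes through.
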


The critical case $m =-1$ requires some further discussion.

\begin{remark}
  We first notice that in the last point the function  $a(x, \xi', \tau) + a(x, \xi', -\tau)$
  is integrable in $\tau$ for all fixed $(x, \xi') \in \RR^{n} \times \RR^{n-1}$.
  Then, we notice that the assumption $\JC_{+} = \JC_{-}$ is can be written
  explicitly as
  \begin{equation*}
    \sigma_{-1}(a; x, -e_{n}) \seq -\JC_{-}(x) \seq -\JC_{+}(x) \seq -\sigma_{-1}(a; x, e_{n})\,,
  \end{equation*}
  for all $x \in \RR^{n}$.
\end{remark}

Later on, we will want to show the dependence of $\JC_{\pm}$ on the symbol $a$ (or
the operator $a(x, D)$), so we will write
\begin{equation}\label{eq.full.notation}
  \begin{gathered}
    \JC_{+}(a; x) \seq \JC_{+}(a(x, D); x) \ede \JC_{+}(x) \qquad \mbox{and}\\
    \JC_{-}(a; x) \seq \JC_{-}(a(x, D); x) \ede \JC_{-}(x)
  \end{gathered}
\end{equation}

We now return to the general case $m \in \RR$.
Recall that if $\xi \in \RR^{n}$, then we write $\xi = (\xi', \xi_{n})$, where
$\xi' \in \RR^{n-1}$ and $\xi_{n} \in \RR$. The result of Lemma \ref{lemma.Fsymbols}
justifies the following definition that will play a central role in this section.

\begin{definition}\label{def.aepsilon}
  Let $t \in \RR$ and $a \in \Smbmn$ with $m \in \RR$.
  \begin{enumerate}[(a)]
    \item For $m \ge -1$, we also assume $t \neq 0$. Then we define:
    \begin{equation*} 
      a_{s, t}(x',   \xi') \ede
      \maF_{\xi_{n}}^{-1} a(x', s, \xi', t) \in \CC\,.
    \end{equation*}

    \item If $m=-1$ and $\sigma_{-1}(a; x, e_{n}) = -\sigma_{-1}(a; x, -e_{n})$, we define also
    \begin{equation*} 
      \begin{gathered}
      a_{s, 0}(x',   \xi') \ede \frac{1}{2\pi}
      \int_{\RR}  \frac{a(x', s, \xi', \xi_{n})
      + a(x', s, \xi', -\xi_{n})}{2} \, d\xi_{n}  \in \CC\,, \qquad \mbox{ and}\\
      a_{s, 0\pm}(x', \xi') \ede \pm \frac{\imath \sigma_{-1}(a; x', s, e_{n})}2
      + a_{s, 0}(x', \xi')  \in \CC \,.
      \end{gathered}
    \end{equation*}
  \end{enumerate}
\end{definition}

Definition \ref{def.aepsilon} is motivated by the lateral limit Problem \ref{problem}
formulated in Subsection \ref{ssec.problem}. The case $m < -1$ is simpler.

\begin{remark}
  Assume $m < -1$. Then the definition of $a_{s, t}$ above
  above extends to all $t \in \RR$ (not just for $t \neq 0$), and
  we have the following explicit formula
  \begin{equation}\label{eq.def.aepsilon.explicit}
    a_{s, t}(x', \xi') \ede
    \maF_{  \xi_{n}}^{-1} a(x', s,   \xi', t)
    \ede \frac{1}{2\pi}
    \int_{\RR} e^{\imath t \xi_{n}} a(x', s ,   \xi',   \xi_{n}) \, d  \xi_{n}\,,
  \end{equation}
  because we take the Fourier transform of an integrable function, instead of
  a temperate distribution.
\end{remark}

The following result on the lateral limit Problem \ref{problem} justifies
the last definition. To state it, recall that
$\Gamma := \{(x', s) \in \RR^{n} \mid s = 0 \}$ and
that $h \delta_{\Gamma}$ is the distribution
$\langle h \delta_{\Gamma}, \phi \rangle := \int_{\Gamma}
h(x) \phi(x) \, dx$, see Equation \eqref{eq.def.Gamma} and
Lemma \eqref{lemma.def.hdelta}.
If $u : \RR^{n} \to \CC$ is continuous enough, recall the notation
introduced in Definition \eqref{def.limit.values0}, for instance
$u_{\epsilon} : \RR^{n-1} \to \CC$ is given by
  $u_{\epsilon} (x') \ede u(x', \epsilon)$, $x' \in \RR^{n-1}$
Note that, we identify $\Gamma + \epsilon e_{n}$ with $\Gamma$
and their associated function spaces with the translation $\tau_{-\epsilon}$,
as explained in Subsection \ref{ssec.problem}. We continue to denote
  $\maS_{a}(h) := a(x, D)(h \delta_{\Gamma}) \in
  \CI(\RR^{n} \smallsetminus \Gamma)$, as in Equation \eqref{eq.def.maS_a}.

\begin{proposition}\label{prop.side.limits}
  Let $h \in L^{2}(\RR^{n-1})$, $a \in \Smbmn$, $m \in \RR$,
  and $a_{s, t}$ be as in Definition \ref{def.aepsilon}.
  Then, for any $\epsilon \neq 0$, $a(x, D) (h \delta_{\Gamma})$
  is smooth on $\Gamma + \epsilon e_{n} := \{x_{n} = \epsilon\} \simeq \RR^{n-1}$ and
  its restriction to this set satisfies
  \begin{equation*}
    \big[\maS_{a}(h)\big]_{\epsilon} \ede
    \big[a(x, D)(h \delta_{\Gamma})\big]_{\epsilon}
    \seq a_{\epsilon, \epsilon}(x', D') h \,.
  \end{equation*}
  In particular, i terms of the notation introduced in Definition \eqref{def.limit.values0} and
  in Proposition \ref{prop.side.limits}, we have $a_{\epsilon} = a_{\epsilon, \epsilon}$.
\end{proposition}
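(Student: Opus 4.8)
The plan is to evaluate $a(x,D)(h\delta_{\Gamma})$ explicitly as an iterated oscillatory integral on the hyperplane $\{x_{n}=\epsilon\}$ and to recognize the inner ($\xi_{n}$-)integral as the one-dimensional inverse Fourier transform that enters Definition \ref{def.aepsilon}. First I would compute the Fourier transform of the distribution $h\delta_{\Gamma}$: pairing with $\phi\in\maS(\RR^{n})$ and using the defining formula of Lemma \ref{lemma.def.hdelta} gives
\begin{equation*}
  \maF(h\delta_{\Gamma})(\xi',\xi_{n}) \seq \hat h(\xi')\,,
\end{equation*}
that is, $\maF(h\delta_{\Gamma})$ is the tensor product of the $(n-1)$-dimensional Fourier transform $\hat h\in L^{2}(\RR^{n-1})$ with the constant function $1$ in $\xi_{n}$; it is tempered, in accordance with $h\delta_{\Gamma}\in H^{s'}(\RR^{n})$ for $s'<-1/2$ (Lemma \ref{lemma.def.hdelta}), so $a(x,D)(h\delta_{\Gamma})\in H^{s'-m}(\RR^{n})$ is well defined. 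Its smoothness on $\RR^{n}\smallsetminus\Gamma$, in particular on $\Gamma+\epsilon e_{n}$, is the pseudolocality of $a(x,D)$ (already recorded by writing $\maS_{a}(h)\in\CI(\RR^{n}\smallsetminus\Gamma)$, see \eqref{eq.def.maS_a}), and will also be re-obtained from the formula below.

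Next, fix $\epsilon\neq 0$. Substituting $\maF(h\delta_{\Gamma})=\hat h\otimes 1$ into \eqref{eq.def.pseudos} and separating $\xi=(\xi',\xi_{n})$, the goal is the identity
\begin{equation*}
  \big[a(x,D)(h\delta_{\Gamma})\big]_{\epsilon}(x') \seq
  \frac{1}{(2\pi)^{n}}\int_{\RR^{n-1}} e^{\imath x'\cdot\xi'}\,\hat h(\xi')
  \left(\int_{\RR} e^{\imath\epsilon\xi_{n}} a(x',\epsilon,\xi',\xi_{n})\,d\xi_{n}\right) d\xi'\,.
\end{equation*}
For $m<-1$ the inner integral converges absolutely and equals $2\pi\, a_{\epsilon,\epsilon}(x',\xi')$ by \eqref{eq.def.aepsilon.explicit} and the normalization of $\maF_{\xi_{n}}^{-1}$. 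For arbitrary $m\in\RR$, since the phase $e^{\imath\epsilon\xi_{n}}$ is non-stationary in $\xi_{n}$ when $\epsilon\neq 0$, repeated integration by parts in $\xi_{n}$ (using $e^{\imath\epsilon\xi_{n}}=(\imath\epsilon)^{-1}\pa_{\xi_{n}}e^{\imath\epsilon\xi_{n}}$ and the estimate $|\pa_{\xi_{n}}^{N}a(x,\xi)|\le C_{N}\jap{\xi}^{m-N}$ from \eqref{eq.est.Sm}) renders the $\xi_{n}$-integrand absolutely integrable once $N>m+1$; the value obtained is exactly $2\pi\,\maF_{\xi_{n}}^{-1}[a(x',\epsilon,\xi',\cdot)](\epsilon)=2\pi\, a_{\epsilon,\epsilon}(x',\xi')$ by Lemma \ref{lemma.Fsymbols}\,(1) and Definition \ref{def.aepsilon}. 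The same computation, differentiating under the integral sign, yields for every $N$ the bound
\begin{equation*}
  |\pa_{x'}^{\alpha'}\pa_{\xi'}^{\gamma'} a_{\epsilon,\epsilon}(x',\xi')|
  \,\le\, C_{\alpha'\gamma'N}\,|\epsilon|^{-N}\,\jap{\xi'}^{\,m-|\gamma'|-N+1}\,,
\end{equation*}
so that, for each fixed $\epsilon\neq 0$, $a_{\epsilon,\epsilon}\in S^{-\infty}(\RR^{n-1}\times\RR^{n-1})$; hence $a_{\epsilon,\epsilon}(x',D')$ is smoothing and, since $\hat h\in L^{2}(\RR^{n-1})$ and $a_{\epsilon,\epsilon}(x',\cdot)$ is rapidly decreasing, the right-hand side above is the well-defined smooth function $a_{\epsilon,\epsilon}(x',D')h$. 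Because $\tfrac{1}{(2\pi)^{n}}\cdot 2\pi=\tfrac{1}{(2\pi)^{n-1}}$, this identity is precisely $\big[\maS_{a}(h)\big]_{\epsilon}=a_{\epsilon,\epsilon}(x',D')h$, and the final assertion $a_{\epsilon}=a_{\epsilon,\epsilon}$ is then immediate from Definition \ref{def.limit.values0}.

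What remains — and this is the main obstacle — is to justify the interchange of integrations, i.e. that the \emph{distributional} action of $a(x,D)$ on $h\delta_{\Gamma}$, evaluated at a point with $x_{n}=\epsilon$, coincides with the above iterated integral; this cannot be done by a naive Fubini, because for $m\ge -1$ the profile $\xi_{n}\mapsto a(x',\epsilon,\xi',\xi_{n})$ is only a tempered distribution. I would proceed by regularization: replace $\maF(h\delta_{\Gamma})=\hat h\otimes 1$ by $\hat h\otimes\chi_{R}$ with $\chi_{R}(\xi_{n})=\chi(\xi_{n}/R)$, $\chi\in\CIc(\RR)$, $\chi\equiv 1$ near $0$, so that the operator is applied to a genuine tempered function and Fubini applies directly; then let $R\to\infty$, using on one hand the continuity $a(x,D)\colon H^{s'}(\RR^{n})\to H^{s'-m}(\RR^{n})$ together with $\maF^{-1}(\hat h\otimes\chi_{R})\to h\delta_{\Gamma}$ in $H^{s'}(\RR^{n})$ (and pseudolocal regularity to upgrade to pointwise convergence on $\{x_{n}=\epsilon\}$), and on the other hand dominated convergence in the iterated integral, the uniform-in-$R$ domination being supplied by the $N$-fold integration-by-parts bound above. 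Non-stationarity of the phase at $x_{n}=\epsilon\neq 0$ is exactly what makes all of this work; the borderline case $x_{n}=0$ is genuinely different and is handled by the later results.
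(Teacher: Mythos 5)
The paper does not actually prove Proposition \ref{prop.side.limits}: it is one of the statements in Section \ref{sec.sec2} whose proofs are deferred to the book \cite{KMNW-2025}, so there is no in-paper argument to compare yours against. Your proof is the standard one and is essentially correct. The computation $\maF(h\delta_{\Gamma})=\hat h\otimes 1$ is right under the paper's conventions, the factor $2\pi$ bookkeeping matches Definition \ref{def.aepsilon}, and the non-stationary-phase estimate $|\pa_{x'}^{\alpha'}\pa_{\xi'}^{\gamma'}a_{\epsilon,\epsilon}(x',\xi')|\le C|\epsilon|^{-N}\jap{\xi'}^{m-|\gamma'|-N+1}$ correctly shows that $a_{\epsilon,\epsilon}\in S^{-\infty}(\RR^{n-1}\times\RR^{n-1})$ for fixed $\epsilon\neq 0$, which yields both the smoothness claim and the convergence of the $\xi'$-integral against $\hat h\in L^{2}$.

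The one soft spot is the very last identification step. Pseudolocality of $a(x,D)$ is not the right tool to upgrade the $H^{s'-m}$-convergence $a(x,D)\maF^{-1}(\hat h\otimes\chi_{R})\to a(x,D)(h\delta_{\Gamma})$ to pointwise convergence on $\{x_{n}=\epsilon\}$, because the difference of the inputs is not supported away from that hyperplane. What you actually need — and what your integration-by-parts bounds already give — is that the iterated integrals converge \emph{locally uniformly} in $(x',x_{n})$ on compact subsets of $\{x_{n}\neq 0\}$ (the terms where $\pa_{\xi_{n}}$ lands on $\chi_{R}$ are $O(R^{-1})$ and supported where $|\xi_{n}|\sim R$, hence negligible). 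Locally uniform convergence implies convergence in $\maD'(\RR^{n}\smallsetminus\Gamma)$ to the smooth function defined by the iterated integral; since the same sequence converges in $H^{s'-m}$ to $a(x,D)(h\delta_{\Gamma})$, uniqueness of distributional limits identifies the restriction of $a(x,D)(h\delta_{\Gamma})$ to $\RR^{n}\smallsetminus\Gamma$ with that smooth function. With that replacement the argument is complete.
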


This justifies the study of the symbols $a_{s, t}$, which we do in the next subsections,
according to the values of $m$.

\subsection{Lateral limits of pseudodifferential operators of orders $<-1$}
Let $\Gamma := \{x_{n} = 0\} \subset \RR^{n}$, as always in this section.
We state next the needed results for order $m < -1$ operators.
Recall the symbols $a_{s, t}$, $s, t \in \RR$, of Definition
\ref{def.aepsilon} and their associated operators. For simplicity, we formulate and
proved our results for \emph{scalar} symbols.
The statements and proofs extend, however, immediately to the vector valued case.

\begin{proposition} \label{prop.symbols.zero}
  Let $a \in \Smbmn$, $m < -1$.
  \begin{enumerate}[\rm (1)]
    \item For any $(s, t) \in \RR^{2}$, the map
    $(x',   \xi') \to a_{s, t}(x',   \xi')$ defines a symbol in
    $S^{m+1}(\RR^{n-1} \times \RR^{n-1})$.

    \item If $h \in H^{s}(\Gamma)$, then the function
    \begin{equation*}
      \RR^{2} \ni (s, t) \to a_{s,t}(x', D')h
      \in H^{s-m-1}(\Gamma) \simeq H^{s-m-1}(\RR^{n-1})
    \end{equation*}
    is continuous.

    \item If $a \in S_{\cl}^{m}(\RR^{n} \times \RR^{n})$, then,
    for all $s \in \RR$,
    $a_{s, 0} \in S_{\cl}^{m+1}(\RR^{n-1} \times \RR^{n-1})$.
  \end{enumerate}
\end{proposition}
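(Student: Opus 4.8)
The plan is to prove the three assertions by reducing everything to the explicit integral formula \eqref{eq.def.aepsilon.explicit}, namely $a_{s,t}(x',\xi') = \frac{1}{2\pi}\int_{\RR} e^{\imath t \xi_n} a(x', s, \xi', \xi_n)\, d\xi_n$, which is a genuine (absolutely convergent) integral since $m < -1$ makes $\xi_n \mapsto a(x', s, \xi', \xi_n)$ integrable, uniformly in $\xi'$ on the region where $|\xi_n| \le |\xi'|$ and with the expected decay otherwise. First I would establish (1): the symbol estimates for $a_{s,t}$ follow by differentiating under the integral sign in $x'$ and $\xi'$ and estimating $|\pa_{x'}^\alpha \pa_{\xi'}^\beta a(x', s, \xi', \xi_n)| \le C_{\alpha\beta} \jap{(\xi', \xi_n)}^{m - |\beta|}$. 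The point is the elementary inequality
\begin{equation*}
  \int_{\RR} \jap{(\xi', \xi_n)}^{m - |\beta|}\, d\xi_n \le C \jap{\xi'}^{m - |\beta| + 1}
\end{equation*}
valid for $m - |\beta| < -1$, i.e.\ always here; splitting the integral at $|\xi_n| = \jap{\xi'}$ gives this at once. This yields $a_{s,t} \in S^{m+1}(\RR^{n-1} \times \RR^{n-1})$ with constants independent of $(s,t)$ (the factor $e^{\imath t \xi_n}$ has modulus one and does not affect absolute estimates).

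For (2), I would note that continuity in $(s,t)$ of the map $(s,t) \mapsto a_{s,t} \in S^{m+1}$ — say in the natural Fréchet topology, or just in each seminorm — follows from dominated convergence applied to the integral and its $x',\xi'$-derivatives: as $(s,t) \to (s_0, t_0)$, $e^{\imath t \xi_n} a(x', s, \xi', \xi_n) \to e^{\imath t_0 \xi_n} a(x', s_0, \xi', \xi_n)$ pointwise, dominated by the integrable majorant $C\jap{(\xi',\xi_n)}^{m}$. (Continuity in $s$ uses joint continuity of $a$ and of its derivatives in all variables, which is part of being a symbol.) Then, since the action of a family of pseudodifferential operators $b(x', D') : H^{s}(\RR^{n-1}) \to H^{s - (m+1)}(\RR^{n-1})$ depends continuously on $b$ in the relevant symbol seminorms — a standard mapping/continuity statement for pseudodifferential operators, which I may invoke — the map $(s,t) \mapsto a_{s,t}(x', D') h$ is continuous into $H^{s - m - 1}(\RR^{n-1})$ for fixed $h$. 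I expect this to be the main obstacle, in the sense of being the step most dependent on a clean ``continuous dependence of $\Op(b)$ on $b$'' lemma; in the paper this is presumably among the ``basic properties of pseudodifferential operators'' referred to in the introduction, so I would cite it rather than reprove it.

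For (3), suppose $a \in S^{m}_{\cl}$, with asymptotic expansion $a \sim \sum_{j \ge 0} a_{m-j}$, each $a_{m-j}$ homogeneous of degree $m - j$ in $\xi$ for $|\xi| \ge 1$. I would show $a_{s, 0}(x', \xi') = \frac{1}{2\pi}\int_{\RR} \frac{a(x', s, \xi', \xi_n) + a(x', s, \xi', -\xi_n)}{2}\, d\xi_n$ has an analogous expansion in homogeneous terms of degrees $m+1-j$ in $\xi' \in \RR^{n-1}$. The key computation: for a term $a_{m-j}(x', s, \xi', \xi_n)$ homogeneous of degree $m-j$ in $(\xi', \xi_n)$, the substitution $\xi_n = |\xi'|\eta$ gives
\begin{equation*}
  \int_{\RR} a_{m-j}(x', s, \xi', \xi_n)\, d\xi_n
  = |\xi'|^{m - j + 1} \int_{\RR} a_{m-j}\Big(x', s, \tfrac{\xi'}{|\xi'|}, \eta\Big)\, d\eta\,,
\end{equation*}
which is homogeneous of degree $m - j + 1$ in $\xi'$ — provided $m - j < -1$ so the $\eta$-integral converges (true for all $j \ge 0$ since $m < -1$). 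Summing over $j$, one gets a formal expansion; to turn this into a genuine classical-symbol statement one controls the remainder $a - \sum_{j < N} a_{m-j} \in S^{m-N}$, integrates it by part (1) to land in $S^{m-N+1}$, and checks this is $o$ of the appropriate order, so that $a_{s,0} \sim \sum_j (a_{s,0})_{m+1-j}$ in the classical sense. Care is needed near $\xi' = 0$ (the substitution is only for $|\xi'| \ge 1$ and one must patch with a cutoff), but this is the routine ``homogeneous vs.\ classical'' bookkeeping and does not affect the asymptotic expansion. Hence $a_{s,0} \in S^{m+1}_{\cl}(\RR^{n-1} \times \RR^{n-1})$, completing the proof. $\qed$
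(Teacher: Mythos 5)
The paper states Proposition \ref{prop.symbols.zero} without proof (the arguments for Section \ref{sec.sec2} are deferred to \cite{KMNW-2025}), so I can only judge your argument on its own terms. Parts (1) and (3) are correct and follow the natural route: the estimate $\int_{\RR}\jap{(\xi',\xi_{n})}^{m-|\beta|}\,d\xi_{n}\le C\jap{\xi'}^{m+1-|\beta|}$ is exactly right (the substitution $\xi_{n}=\jap{\xi'}\eta$ gives it with an explicit constant), and your handling of the homogeneous terms, the remainder via part (1), and the cutoff near $\xi'=0$ in part (3) is the standard bookkeeping.

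Part (2), however, contains a genuine gap. You assert that $(s,t)\mapsto a_{s,t}$ is continuous into $S^{m+1}(\RR^{n-1}\times\RR^{n-1})$ ``in each seminorm,'' deducing this from dominated convergence. Dominated convergence only yields convergence of $\pa_{x'}^{\alpha}\pa_{\xi'}^{\beta}a_{s,t}(x',\xi')$ for each \emph{fixed} $(x',\xi')$; it cannot control the supremum over $\xi'$ built into the seminorms, and in fact the claimed continuity is false. Take $n=2$ and $a(\xi)=\jap{\xi}^{-2}$: a residue computation gives $a_{s,t}(\xi_{1})=\tfrac{1}{2}\jap{\xi_{1}}^{-1}e^{-|t|\jap{\xi_{1}}}$, so the zeroth $S^{m+1}=S^{-1}$ seminorm of $a_{s,t}-a_{s,0}$ equals $\sup_{\xi_{1}}\tfrac12\big(1-e^{-|t|\jap{\xi_{1}}}\big)=\tfrac12$ for every $t\neq0$. (In general, $e^{\imath t\xi_{n}}-e^{\imath t_{0}\xi_{n}}$ is of size one on the region $|\xi_{n}|\sim\jap{\xi'}$ carrying the bulk of the integral, once $|t-t_{0}|\jap{\xi'}\gtrsim1$.) Consequently the operators do \emph{not} converge in the norm of $\maL(H^{\sigma},H^{\sigma-m-1})$ (writing $\sigma$ for the Sobolev index to avoid the clash with the parameter $s$), and your appeal to continuous dependence of $b(x',D')$ on $b$ cannot close the argument as written. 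The statement is nevertheless true because it only asserts \emph{strong} continuity, for a fixed $h$. The standard repair: the family $\{a_{s,t}\}$ is bounded in $S^{m+1}$ (your part (1)) and converges pointwise together with all derivatives, hence converges in $S^{m'}$ for every $m'>m+1$; this gives convergence of the operators in $\maL(H^{\sigma+\varepsilon},H^{\sigma-m-1})$ with $m'=m+1+\varepsilon$, and combining this convergence on the dense subspace $H^{\sigma+\varepsilon}$ with the uniform bound in $\maL(H^{\sigma},H^{\sigma-m-1})$ yields continuity of $(s,t)\mapsto a_{s,t}(x',D')h$ in $H^{\sigma-m-1}$. This strong-versus-norm distinction is not pedantic here: it is precisely why the lateral limits in this theory are limits of $a_{\epsilon}(x',D')h$ for fixed $h$ rather than operator-norm limits.
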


Recall that, if $u : \RR^{n} \to \CC$ is continuous enough, then $u_{\epsilon}$ is the restriction
of $u$ to $\Gamma + \epsilon e_{n} := \{x_{n} = \epsilon\}$ (see Definition \eqref{def.limit.values0})
and $u_{\pm} \ede \lim_{\epsilon \to 0\pm} u_{\epsilon}\,.$
Also, recall that $a_{\epsilon} = a_{\epsilon, \epsilon}$
(see Definition \eqref{def.limit.values0} and Proposition \ref{prop.side.limits}).
Recall also the distribution $h \delta_{\Gamma}$ introduced in
Lemma \eqref{lemma.def.hdelta}.
We are now ready to formulate our main result concerning the limit/jump
values of classical \emph{matrix valued} pseudodifferential operators of order $< -1$.

\begin{theorem}\label{thm.main.jump-2}
  Let $a \in S^{m}(\RR^{n} \times \RR^{n})$ for some $m < -1$. We use
  $a_{\epsilon} = a_{\epsilon, \epsilon}$ and the notation of Definition \ref{def.aepsilon}.
  \begin{enumerate}[\rm (1)]
    \item $a_{0}$ is an order $m+1$ symbol given by
    \begin{equation*}
      a_{0}(x', \xi') \seq \frac1{2\pi}
      \int_{\RR} a( x', 0, \xi', \xi_{n}) \, d\xi_{n}\,.
    \end{equation*}
    If $a$ is classical, then $a_{0}$ is also classical.

    \item For all $s \in \RR$ and all $h \in H^{s}(\Gamma)
    = H^{s}(\Gamma)$, we have
    \begin{multline*}
      [\maS_{a}h]_{\pm} \ede \big[ a(x, D)(h \delta_{\Gamma})]_{\pm}
      \ede \lim_{\epsilon \to 0\pm} \big[ a(x, D)(h \delta_{\Gamma})]_{\epsilon}\\
      \seq \lim_{\epsilon \to 0\pm} a_{\epsilon}(x', D')h \seq
      a_{0} (x', D') h \in H^{s - m - 1 }(\Gamma)\,.
    \end{multline*}

    \item If $h \in L^{2}(\Gamma)$, then
    $\maS_{a}h \ede a(x, D)(h \delta_{\Gamma})
    \in H^{s'}(\RR^{n})$ for $s' \in (1/2, -m - 1/2)$, and hence we have the equality
    of traces (i.e. restrictions)
    \begin{equation*}
      [\maS_{a}h]_{+} \seq [\maS_{a}h]_{-} \seq [\maS_{a}h]\vert_{\Gamma}
      \seq a_{0}(x', D')h \in H^{s'-1/2}(\Gamma)\,.
    \end{equation*}

    \item Let $k_{a_{0}(x', D')}$ be the distribution kernel of $a_{0}(x', D')$
    and $k_{a(x, D)}$ be the distribution kernel of $a(x, D)$. Then
    \begin{equation*}
      k_{a_{0}(x', D')}(x', y') \seq k_{a(x, D)}(x', 0, y', 0)\,,
      \quad x', y' \in \RR^{n-1}\,, \ x' \neq y'\,.
    \end{equation*}
  \end{enumerate}
\end{theorem}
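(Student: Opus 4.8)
The plan is to deduce everything from Proposition~\ref{prop.side.limits} (which identifies $[\maS_a h]_\epsilon$ with $a_{\epsilon,\epsilon}(x',D')h$), Proposition~\ref{prop.symbols.zero} (regularity and continuity of $(s,t)\mapsto a_{s,t}$), and the explicit integral formula \eqref{eq.def.aepsilon.explicit} for $a_{s,t}$ valid when $m<-1$. For part~(1), I would simply set $s=t=0$ in \eqref{eq.def.aepsilon.explicit} to get the stated formula for $a_0=a_{0,0}$; membership in $S^{m+1}(\RR^{n-1}\times\RR^{n-1})$ is Proposition~\ref{prop.symbols.zero}(1), and preservation of the classical property is Proposition~\ref{prop.symbols.zero}(3). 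For part~(2), combine Proposition~\ref{prop.side.limits} ($[\maS_a h]_\epsilon = a_{\epsilon,\epsilon}(x',D')h$) with the continuity statement in Proposition~\ref{prop.symbols.zero}(2) applied along the curve $(s,t)=(\epsilon,\epsilon)$: as $\epsilon\to 0\pm$, $a_{\epsilon,\epsilon}(x',D')h \to a_{0,0}(x',D')h = a_0(x',D')h$ in $H^{s-m-1}(\Gamma)$. Note the one-sided limits from both sides give the \emph{same} value $a_0(x',D')h$, since $(s,t)\mapsto a_{s,t}$ is continuous across $t=0$ for $m<-1$ and $a_{\epsilon,\epsilon}$ depends continuously on $\epsilon$ through $\RR$ (no jump), which is exactly why the critical case $m=-1$ is excluded here.

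For part~(3), I would first invoke Lemma~\ref{lemma.def.hdelta}: $h\in L^2(\Gamma)$ gives $h\delta_\Gamma\in H^{s'}(\RR^n)$ for all $s'<-1/2$, and then the mapping property of the order-$m$ operator $a(x,D)$ (Remark~\ref{rem.mpr}) yields $\maS_a h = a(x,D)(h\delta_\Gamma)\in H^{s'-m}(\RR^n)$ for all $s'<-1/2$; reindexing, $\maS_a h\in H^{r}(\RR^n)$ for all $r< -m-1/2$. Since $m<-1$, the range $(1/2,-m-1/2)$ is nonempty, so we may pick $r$ in it; then $\maS_a h\in H^r(\RR^n)$ with $r>1/2$, so the two-sided trace $[\maS_a h]\vert_\Gamma$ exists and lies in $H^{r-1/2}(\Gamma)$. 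By Lemma~\ref{lemma.limits=traces} the normal lateral limits $[\maS_a h]_\pm$ coincide with $\gamma_\pm([\maS_a h]) = [\maS_a h]\vert_\Gamma$, and by part~(2) both equal $a_0(x',D')h$; matching the Sobolev index (take $s=0$ in part~(2), or just use density of $H^{0}\cap$ the relevant scale) identifies this with an element of $H^{r-1/2}(\Gamma)$, as claimed.

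For part~(4), I would write out the Schwartz kernel of $a(x,D)$ as the oscillatory integral $k_{a(x,D)}(x,y) = (2\pi)^{-n}\int_{\RR^n} e^{\imath (x-y)\cdot\xi} a(x,\xi)\,d\xi$ (an element of $\maC^\infty$ off the diagonal, by the standard non-stationary phase / integration-by-parts argument, valid for any $a\in\Smbmn$), and similarly $k_{a_0(x',D')}(x',y') = (2\pi)^{-(n-1)}\int_{\RR^{n-1}} e^{\imath(x'-y')\cdot\xi'} a_0(x',\xi')\,d\xi'$. Substituting the formula for $a_0$ from part~(1) and using Fubini (justified for $x'\neq y'$ after integrating by parts in $\xi'$ enough times to make the integrand absolutely integrable, since $m<-1$) collapses the $\xi_n$-integral against the $\xi'$-integral into the full $n$-dimensional integral with $x_n=y_n=0$, giving $k_{a_0(x',D')}(x',y') = k_{a(x,D)}(x',0,y',0)$ for $x'\neq y'$. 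The main obstacle is the bookkeeping in part~(4): one must be careful that the iterated integral defining $k_{a_0(x',D')}$ really equals the restriction of the (a priori only distributional, but smooth off-diagonal) kernel $k_{a(x,D)}$ to the slice $\{x_n=y_n=0\}$ — i.e., that restricting the kernel to the slice commutes with the partial $\xi_n$-integration that produced $a_0$; this is clean when $m<-1$ because everything in sight is given by honestly convergent integrals once $x'\neq y'$, but it is exactly the step where the order restriction is used, so I would state it carefully rather than treat it as automatic. The other parts are routine given the cited propositions.
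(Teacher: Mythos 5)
The paper itself does not prove this theorem in the text: it is one of the results whose complete proofs are deferred to \cite{KMNW-2025}, and the surrounding section is organized precisely so that the theorem follows by assembling Proposition~\ref{prop.side.limits}, Proposition~\ref{prop.symbols.zero}, Equation~\eqref{eq.def.aepsilon.explicit}, Lemma~\ref{lemma.def.hdelta}, and Lemma~\ref{lemma.limits=traces} — which is exactly what you do, so your proposal is correct and matches the intended route, including the careful handling of the Fubini/non-stationary-phase step in part~(4). The one point worth making explicit is that Proposition~\ref{prop.side.limits} is stated only for $h \in L^{2}(\RR^{n-1})$, whereas part~(2) concerns all $h \in H^{s}(\Gamma)$; for $s<0$ you need the (routine) extension of the identity $[\maS_{a}h]_{\epsilon} = a_{\epsilon,\epsilon}(x',D')h$ by density, using that $\maS_{a}h$ is smooth away from $\Gamma$ and that both sides depend continuously on $h$ for fixed $\epsilon \neq 0$.
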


The operator $a_{0}(x, D)$ will be called the {\it restriction at $\Gamma$ operator}
associated to $a(x, D)$.

\subsection{Lateral limits of pseudodifferential operators of orders $-1$}

We now consider symbols of order $-1$. For simplicity, we consider only
\emph{classical symbols.}

\begin{proposition} \label{prop.symbols.m=-1}
  Let $a \in S^{-1}_{\cl}(\RR^{n} \times \RR^{n})$ and
  assume that, for all $x \in \RR^{n}$, we have
  \begin{equation*}
    \JC_{+}(x) \ede \sigma_{-1}(a; x, e_{n}) \seq
    -\sigma_{-1}(a; x, -e_{n}) \, =:\, \JC_{-}(x) \,.
  \end{equation*}
  \begin{enumerate}[\rm (1)]
    \item For all multi-indices $\alpha \in \ZZ_{+}^{n}$ and
    $\beta \in \ZZ_{+}^{n-1}$,
    there exist constants $C_{\alpha, \beta} > 0$ such that, for all $(x,   \xi', t)
    \in \RR^{n} \times \RR^{n-1} \times \RR$, we have
    \begin{equation*}
      |\pa_{x}^{\alpha}\pa_{  \xi'}^{\beta} a_{x_n,t}(x',   \xi')|
      \le C_{\alpha, \beta}\jap{  \xi'}^{-|\beta|}\,.
    \end{equation*}

    \item The set $\{ a_{x_{n}, t}, a_{x_{n}, 0\pm} \mid x_{n}, t \in \RR \}$ is
    a bounded subset of $S^{0}(\RR^{n-1} \times \RR^{n-1})$.

    \item The function $(x,   \xi', t) \mapsto a_{x_{n}, t}(x',   \xi')$
    of Definition \ref{def.aepsilon} is continuous except at $t = 0$,
    where it has lateral limits $a_{s, 0\pm}(x',   \xi')$.
  \end{enumerate}
\end{proposition}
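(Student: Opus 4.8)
The plan is to reduce everything to the explicit integral formulas for $a_{x_n, t}$ and $a_{x_n, 0\pm}$ given in Definition \ref{def.aepsilon}, and to estimate these integrals directly. The one feature that distinguishes order $-1$ from order $< -1$ is that $a(x', s, \xi', \xi_n)$ is no longer integrable in $\xi_n$: it behaves like $c/|\xi_n|$ for large $|\xi_n|$ on each ray $\xi_n \to \pm\infty$, so the naive integral $\int_\RR a\, d\xi_n$ diverges logarithmically. The symmetrization appearing in Definition \ref{def.aepsilon}(b), namely replacing $a$ by $(a(x',s,\xi',\xi_n)+a(x',s,\xi',-\xi_n))/2$, kills the leading $c/|\xi_n|$ term precisely because of the hypothesis $\sigma_{-1}(a;x,e_n) = -\sigma_{-1}(a;x,-e_n)$ (i.e. $\JC_+ = \JC_-$); after symmetrization the integrand decays like $\jap{\xi_n}^{-2}$ for $|\xi_n| \ge \jap{\xi'}$ and the integral converges. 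This is the structural point behind all three parts, and it is where the hypothesis $\JC_+ = \JC_-$ is used.

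\textbf{Part (1) and (2).} For $t \neq 0$, I would split the defining integral $a_{x_n,t}(x',\xi') = \frac1{2\pi}\int_\RR e^{\imath t\xi_n} a(x',x_n,\xi',\xi_n)\, d\xi_n$ (interpreted as a tempered distribution pairing, via Lemma \ref{lemma.Fsymbols}) into the region $|\xi_n| \le \jap{\xi'}$ and the region $|\xi_n| > \jap{\xi'}$. On the first region one uses $|a| \le C\jap{\xi'}^{-1}$ together with $|\{|\xi_n| \le \jap{\xi'}\}| = 2\jap{\xi'}$ to get a bound $O(1)$; on the second region one either integrates by parts in $\xi_n$ using the oscillation $e^{\imath t\xi_n}$ (for $t \neq 0$) or, for the $t$-uniform statement and for $t = 0$, one uses the symmetrization as above and the bound $|a(x',x_n,\xi',\xi_n) + a(x',x_n,\xi',-\xi_n)| \le C\jap{\xi'}^{1}\jap{\xi_n}^{-2}$ (coming from a first-order Taylor expansion of the principal symbol plus the order $-2$ remainder and the cancellation of the leading homogeneous term), which again gives $O(1)$. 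Differentiating in $x$ and $\xi'$ commutes with the integral and only improves the $\xi'$-decay: $\pa_{\xi'}^\beta$ applied to an order $-1$ symbol produces an order $-1-|\beta|$ symbol, and after the same splitting one picks up exactly the factor $\jap{\xi'}^{-|\beta|}$. Uniformity of all constants in $(x_n, t)$ is automatic because none of the estimates above used anything about $x_n$ or $t$ beyond the uniform symbol bounds \eqref{eq.est.Sm} (with the $t \neq 0$ integration-by-parts replaced by the symmetrization trick precisely to get uniformity down to $t = 0$). The same bounds applied to $a_{x_n, 0\pm} = \pm\frac{\imath}{2}\sigma_{-1}(a;x',x_n,e_n) + a_{x_n,0}$ give (2), since $\sigma_{-1}(a)$ and its derivatives are bounded functions of $x$ on the unit sphere in $\xi$.

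\textbf{Part (3).} Continuity in $(x, \xi', t)$ away from $t = 0$ follows from dominated convergence: for $t$ in a compact subset of $\RR \smallsetminus \{0\}$ the integrand $e^{\imath t\xi_n}a(x',x_n,\xi',\xi_n)$, after one integration by parts in $\xi_n$ to gain integrability, is dominated uniformly, and it depends continuously on the parameters by the smoothness of $a$. For the lateral limits as $t \searrow 0$ or $t \nearrow 0$, I would write $e^{\imath t\xi_n} = \cos(t\xi_n) + \imath\sin(t\xi_n)$ and symmetrize: the cosine part pairs with the even part of $a$ and converges to $a_{x_n,0}(x',\xi')$ by dominated convergence (using the $\jap{\xi_n}^{-2}$ bound after cancellation), while the sine part pairs with the odd part of $a$, whose large-$\xi_n$ behavior is $\JC_+(x)\,\sgn(\xi_n)/|\xi_n| + O(\jap{\xi_n}^{-2})$, and $\frac1{2\pi}\int_\RR \sin(t\xi_n)\sgn(\xi_n)|\xi_n|^{-1}\, d\xi_n = \frac1{2\pi}\int_\RR \frac{\sin(t\xi_n)}{|\xi_n|}\sgn(\xi_n)\,d\xi_n \to \pm\frac12$ as $t \to 0^\pm$ by the classical Dirichlet integral, producing exactly the jump term $\pm\frac{\imath}{2}\sigma_{-1}(a;x',x_n,e_n)$ (recalling $\JC_+ = \sigma_{-1}(a;\cdot,e_n)$). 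This matches the lateral limit values already recorded in Lemma \ref{lemma.Fsymbols}(4).

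\textbf{Main obstacle.} The delicate point is making the large-$\xi_n$ analysis rigorous: one must carefully extract from the classical symbol $a$ the homogeneous degree $-1$ principal part, verify that the remainder $a - \sigma_{-1}(a)$ is genuinely of order $-2$ with all the needed uniformity, and track how the cancellation $\JC_+ = \JC_-$ interacts with the splitting at $|\xi_n| = \jap{\xi'}$ so that the bound $\jap{\xi'}^1\jap{\xi_n}^{-2}$ (rather than merely $\jap{\xi_n}^{-1}$) is legitimate uniformly in $\xi'$. This is essentially the same computation as in the order $< -1$ case (Proposition \ref{prop.symbols.zero}, Theorem \ref{thm.main.jump-2}) but carried out one order more carefully, and it is the step where I would be most careful about uniformity of constants. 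Everything else is dominated convergence and integration by parts.
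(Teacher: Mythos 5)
The paper does not actually prove Proposition \ref{prop.symbols.m=-1}: it is one of the statements whose proofs are deferred to \cite{KMNW-2025}, so there is no in-paper argument to compare against. Your proposal is the standard proof and is correct in outline: the splitting of the $\xi_{n}$-integral at $|\xi_{n}| = \jap{\xi'}$, the use of the hypothesis $\JC_{+}=\JC_{-}$ to cancel the non-integrable $O(1/|\xi_{n}|)$ part of the even (in $\xi_{n}$) component, and the Dirichlet integral $\int_{\RR}\sin(t\xi_{n})\xi_{n}^{-1}\,d\xi_{n}=\pi\sgn(t)$ producing the lateral jump $\pm\frac{\imath}{2}\sigma_{-1}(a;x,e_{n})$ are exactly the right ingredients, and they are consistent with Lemma \ref{lemma.Fsymbols}(3)--(4) and Definition \ref{def.aepsilon}.

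One technical point deserves more care than your sketch gives it, in the uniformity claimed in parts (1)--(2). When you isolate the singular odd part of $a$ as $\JC_{+}(x)\chi(\xi_{n})/\xi_{n}$ with a cutoff $\chi$ supported away from a \emph{fixed} neighborhood of $\xi_{n}=0$, the difference $a_{\mathrm{odd}}-\JC_{+}\chi/\xi_{n}$ is \emph{not} absolutely integrable with a bound uniform in $\xi'$: on the region $1\le|\xi_{n}|\le\jap{\xi'}$ the subtracted term alone contributes $O(\log\jap{\xi'})$ to the $L^{1}$ norm. The fix is to place the cutoff at the scale $\jap{\xi'}$, i.e.\ use $\chi(\xi_{n}/\jap{\xi'})$; the model integral $\int\sin(t\xi_{n})\chi(\xi_{n}/\jap{\xi'})\xi_{n}^{-1}\,d\xi_{n}$ remains uniformly bounded in $(t,\xi')$ because partial integrals of $\sin(u)/u$ are uniformly bounded, and the remainder is then genuinely $O(\jap{\xi'}^{-1})$ on $|\xi_{n}|\le\jap{\xi'}$ and $O(|\xi'|\,\xi_{n}^{-2})$ beyond, hence uniformly integrable. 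Your closing paragraph gestures at exactly this interaction between the cancellation and the splitting scale, so I regard this as a gap in exposition rather than in the method; with that adjustment the argument goes through.
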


We next formulate our main result on side (or boundary) limits of
pseudodifferential operators of order $= -1$, Theorem \ref{thm.main.jump0}
next. Recall that $u_{\epsilon}$
and $u_{\pm} := \lim_{\epsilon \to 0\pm} u_{\epsilon}$ were introduced in Definition
\eqref{def.limit.values0}. Also, recall that the distribution
$\langle h \delta_{\Gamma}, \phi \rangle
:= \int_{\Gamma} h \phi dx'$ was introduced in Lemma \eqref{lemma.def.hdelta}.
We shall also write $a_{t} = a_{t, t}$, $t \in \RR$, and $a_{0\pm} := a_{0, 0\pm}$,
see Definition \ref{def.aepsilon}.

\begin{theorem}\label{thm.main.jump0}
  We use the notation in Definition \ref{def.aepsilon}.
  Let $a \in S^{-1}_{\cl}(\RR^{n} \times \RR^{n})$.
  \begin{enumerate}[\rm (1)]
    \item Let $k_{a_{0}(x', D')}$ be the distribution kernel of $a_{0}(x', D')$
    and $k_{a(x, D)}$ be the distribution kernel of $a(x, D)$. Then both
    $k_{a_{0}(x', D')}(x', y')$ and $k_{a(x, D)}(x, y)$ are smooth for
    $x' \neq y'$ and they coincide on $\RR^{n-1} \times \RR^{n-1}$:
    \begin{equation*}
      k_{a_{0}(x', D')}(x', y') \seq k_{a(x, D)}(x', 0, y', 0)\,,
      \quad x', y' \in \RR^{n-1}\,.
    \end{equation*}

    \item If $\sigma_{-1}(a)$ is odd in the sense that
    $\sigma_{-1}(a; x, -\xi) = - \sigma_{-1}(a; x, \xi)$ for
    all $\xi \in \RR^{n}$, then the condition
    $\JC_{+}(x) \ede \sigma_{-1}(a; x, e_{n}) \seq
    -\sigma_{-1}(a; x, -e_{n}) \, =:\, \JC_{-}(x)$
    is satisfied, $\sigma_{0}(a_{0})$ is also odd, and
    \begin{align*}
      a_{0}(x', D')h_{1}(x) &\seq \pv \,
      \int_{\RR^{n-1}} k_{a(x, D)}(x', y') h(y')\,dy'\\
      & \ede \lim_{\epsilon \to 0}\int_{\RR^{n-1}\setminus B(x',\epsilon)}
      k_{a(x, D)}(x', y') h(y')\,dy' \,,
    \end{align*}
    where $B(x',\epsilon)$ is the open ball of radius $\epsilon $ and center at $x' \in \RR^{n-1}$.

    Let us assume for the next two points that $\sigma_{-1}(a; x, e_{n}) \seq
    -\sigma_{-1}(a; x, -e_{n})$.

    \item The three operators $a_{0} = a_{0, 0}$ and $a_{0\pm} := a_{0, 0\pm}$ are
    order zero classical, with principal symbols
    \begin{equation*}
      \sigma_{0}(a_{0}; x',  \xi') \seq \frac1{2\pi}
      \int_{\RR} \frac{\sigma_{-1}(a; x', 0,  \xi',  \xi_{n})
      + \sigma_{-1}(a; x', 0,  \xi', - \xi_{n})}2 \, d \xi_{n}
    \end{equation*}
    and $\sigma_{0}(a_{0\pm}; x',  \xi') = \pm \frac{\imath}2 \sigma_{-1}(a; x', 0, e_{n})
    + \sigma_{0}(a_{0}; x',  \xi')$.

    \item For all $s \in \RR$ and for all
    $h \in H^{s}(\RR^{n-1})$, we have
    \begin{equation*}
      [\maS_{a}h]_{\pm} \ede
      \lim_{\epsilon \to \pm 0} \big[ a(x, D)(h \delta_{\Gamma})]_{\epsilon}
      \seq \lim_{\epsilon \to \pm 0} a_{\epsilon}(x', D')h \seq
      a_{0\pm} (x', D') h \in H^{s}(\RR^{n-1})\,.
    \end{equation*}
  \end{enumerate}
\end{theorem}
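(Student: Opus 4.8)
The plan is to reduce Theorem~\ref{thm.main.jump0} to the symbol-level statements of Proposition~\ref{prop.symbols.m=-1} and the already-established order~$<-1$ machinery, following the same pattern as Theorem~\ref{thm.main.jump-2}. The four points are essentially independent, so I would dispatch them in turn.

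For part~(1), the kernel identity is purely local away from the diagonal. First I would note that on $\{x'\neq y'\}$ the kernel $k_{a(x,D)}(x,y)$ is smooth (standard, since $a$ is a symbol), and its restriction to $x_n=y_n=0$ makes sense pointwise. Then, recalling from Definition~\ref{def.aepsilon} and formula~\eqref{eq.def.aepsilon.explicit}-style reasoning that $a_{0,0}(x',\xi')$ is obtained by the even-part-in-$\xi_n$ integral of $a$ at $s=0$, I would compute the $(n-1)$-dimensional inverse Fourier transform in $\xi'$ of $a_{0,0}$ and recognize it, by Fubini applied to the full $n$-dimensional inverse Fourier transform of $a(x',0,\xi',\xi_n)$, as the restriction $k_{a(x,D)}(x',0,y',0)$ for $x'\neq y'$; the odd-in-$\xi_n$ part integrates to $0$ and so does not contribute. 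This is routine once one is careful that all Fourier transforms are taken in the distributional sense and that the diagonal is excised.

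For part~(2), under the oddness hypothesis on $\sigma_{-1}(a)$, the hypothesis $\JC_+=\JC_-$ of Proposition~\ref{prop.symbols.m=-1} holds automatically (this is noted already in Lemma~\ref{lemma.Fsymbols}), so $a_0=a_{0,0}$ is a bona fide order-zero classical operator by part~(3) below. The principal-value formula is then the standard identification of an order-zero operator whose kernel is the (smooth-off-diagonal, odd-leading-order) function $k_{a(x,D)}(x',0,y',0)$: I would write $a_0(x',D')h$ via its kernel, split the integral into $|x'-y'|>\epsilon$ and $|x'-y'|\le\epsilon$, and show the near-diagonal piece vanishes as $\epsilon\to0$ using that the leading singularity of the kernel is odd and homogeneous of degree $-(n-1)$ (so its integral over a symmetric annulus cancels) while the remainder is integrable. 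Parts~(3) and~(4) do the bulk of the work. For~(3), I would invoke Proposition~\ref{prop.symbols.m=-1}(2), which already says $\{a_{x_n,t},a_{x_n,0\pm}\}$ is a bounded family in $S^0(\RR^{n-1}\times\RR^{n-1})$, together with the classicality claim; the principal-symbol formulas then come from extracting the homogeneous leading term in the defining integrals of Definition~\ref{def.aepsilon} (the $a_{s,0}$ integral contributes $\sigma_0(a_0)$, the $\pm\tfrac{\imath}{2}\sigma_{-1}(a;x',0,e_n)$ jump term is already homogeneous of degree $0$), using dominated convergence to pass the $\sigma_{-1}$ substitution inside the integral. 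For~(4), the lateral-limit statement, the strategy is: by Proposition~\ref{prop.side.limits} we have $[\maS_a h]_\epsilon = a_{\epsilon,\epsilon}(x',D')h$; by Proposition~\ref{prop.symbols.m=-1}(3) the symbols $a_{\epsilon,\epsilon}$ converge, as $\epsilon\to0^\pm$, to $a_{0,0\pm}$ in the sense of pointwise convergence with uniform $S^0$-bounds; and such convergence of symbols implies strong convergence of the associated operators on $H^s(\RR^{n-1})$ (by the Calderón–Vaillancourt bound plus a density argument, approximating $h$ by Schwartz functions on which the limit is transparent). Hence $[\maS_a h]_\pm = a_{0\pm}(x',D')h$, and the mapping $H^s\to H^s$ is just order-zero boundedness.

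The main obstacle is the operator-convergence step in part~(4): from the pointwise convergence $a_{\epsilon,\epsilon}\to a_{0,0\pm}$ with uniform symbol bounds one must extract convergence of $a_{\epsilon,\epsilon}(x',D')h$ in $H^s(\RR^{n-1})$, uniformly enough to identify it with the normal lateral limit rather than merely a weak limit. The delicate point is the non-uniformity of the convergence near $\xi'=\infty$ combined with the discontinuity at $t=0$; I would handle it by cutting the frequency variable into $|\xi'|\le R$ (where dominated convergence applies directly and gives strong $L^2$ convergence after multiplying by $\jap{\xi'}^s$) and $|\xi'|>R$ (where the uniform $S^0$-bound gives a contribution bounded by the $H^s$-norm of the high-frequency part of $h$, which is small for $R$ large), then letting $R\to\infty$. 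Everything else is bookkeeping over the three cases $m<-1$ (already done), $m=-1$ odd, $m=-1$ with the weaker hypothesis $\JC_+=\JC_-$, and the careful separation of the even and odd parts of the symbol in $\xi_n$.
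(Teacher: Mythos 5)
The paper itself does not prove Theorem \ref{thm.main.jump0} (it defers to \cite{KMNW-2025}), so your proposal can only be measured against the route the paper's scaffolding suggests — and you follow exactly that route: Proposition \ref{prop.side.limits} to reduce to the symbols $a_{\epsilon,\epsilon}$, Proposition \ref{prop.symbols.m=-1} for the uniform $S^{0}$ bounds and the lateral limits of the symbols, the Fubini/restriction argument for the kernels, and the parity of $\sigma_{-1}(a)$ for the principal-value formula. Parts (1)--(3) are organized correctly; for (3) note only that Proposition \ref{prop.symbols.m=-1}(2) gives boundedness in $S^{0}$ but not classicality, so the full asymptotic expansion of $a_{0}$ (integrating each homogeneous term of $a$, as in Proposition \ref{prop.symbols.zero}(3)) still has to be written out rather than merely invoked.

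The one step that does not go through as written is the low-frequency half of your argument for (4). You claim that on $|\xi'|\le R$ ``dominated convergence applies directly and gives strong $L^{2}$ convergence.'' Pointwise convergence of $b_{\epsilon}(x',\xi'):=a_{\epsilon,\epsilon}(x',\xi')-a_{0,0\pm}(x',\xi')$ together with the uniform bound $|b_{\epsilon}|\le C$ yields only $|b_{\epsilon}(x',D')h_{R}(x')|\le C\|\hat h\|_{L^{1}(|\xi'|\le R)}$, a \emph{constant} in $x'$, which is not in $L^{2}(\RR^{n-1})$; so dominated convergence in $L^{2}(dx')$ does not apply, and your frequency splitting does not address the real obstruction, which is lack of spatial decay. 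The standard repair is a \emph{spatial} argument: first take $h$ smooth and compactly supported (dense in $H^{s}$), use the uniform off-diagonal kernel estimates $|K_{\epsilon}(x',y')|\le C_{N}\jap{x'-y'}^{-N}$ for $|x'-y'|\ge 1$ (obtained by integrating by parts in $\xi'$ using the uniform $S^{0}$ seminorm bounds of Proposition \ref{prop.symbols.m=-1}(1)) to produce an $\epsilon$-independent square-integrable majorant of $b_{\epsilon}(x',D')h$, conclude strong $L^{2}$ (hence $H^{s}$, after conjugating by $\jap{D'}^{s}$) convergence for such $h$ by dominated convergence in $x'$, and finish by density together with the equiboundedness of $b_{\epsilon}(x',D')$ on $H^{s}$. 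With that substitution, your plan is sound.
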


Most of the relations of Theorem \ref{thm.main.jump0} (dealing with the critical case
$m = -1$) have been written in a compact form. The expanded form of these relations
amounts to the following five relations:

\begin{remark}
  Recall that $a_{\epsilon} = a_{\epsilon, \epsilon}$ and $a_{0, 0\pm} = a_{0\pm}$.
  Then
  \begin{equation*}
    \begin{gathered}
    \big[ a(x, D)(h \delta_{\Gamma})]_{+} \ede
    \lim_{\epsilon \searrow 0} \big[ a(x, D)(h \delta_{\Gamma})]_{\epsilon}
    \seq \lim_{\epsilon \searrow 0} a_{\epsilon}(x', D')h \seq
    a_{0+} (x', D') h \\
    \big[ a(x, D)(h \delta_{\Gamma})]_{-} \ede
    \lim_{\epsilon \nearrow 0} \big[ a(x, D)(h \delta_{\Gamma})]_{\epsilon}
    \seq \lim_{\epsilon \nearrow 0} a_{\epsilon}(x', D')h \seq
    a_{0-} (x', D') h \\
    \sigma_{0}(a_{s, 0}; x',  \xi') \seq \frac1{2\pi}
    \int_{\RR} \frac{\sigma_{-1}(a; x', s,  \xi',  \xi_{n})
    + \sigma_{-1}(a; x', s,  \xi', - \xi_{n})}2 \, d \xi_{n}\\
    \sigma_{0}(a_{s, 0+}; x',  \xi') \seq \frac{\imath}2
    \sigma_{-1}(a; x', 0,  e_{n})
    + \sigma_{0}(a_{s, 0}; x',  \xi') \qquad \mbox{and} \\
    \sigma_{0}(a_{s, 0-}; x',  \xi') \seq - \frac{\imath}2
    \sigma_{-1}(a; x', 0, e_{n}) + \sigma_{0}(a_{s, 0}; x',  \xi')
    \,.
  \end{gathered}
\end{equation*}
\end{remark}

These calculations easily allow us to recover the usual jump relations
for the Laplacian, as in the conclusing Example A.5.16 of \cite{KMNW-2025}.
In case $a$ has order $\le -2$, then the last theorem still applies
and yields the same operator $a_{0}(x, D)$ as Theorem \ref{thm.main.jump-2}.
Thus, as it was the case for the operator obtained in Theorem \ref{thm.main.jump-2},
the operator $a_{0}(x, D)$ will be called the {\it restriction at $\Gamma$ operator}
associated to $a(x, D)$.

\section{Normal lateral limits and abstract jump relations on manifolds}
\label{sec.sec3}

In this section, we adapt the results from the previous section
on normal lateral limits on half-spaces to smooth, open domains in
general (smooth) Riemannian manifolds (possibly non-complete).
Note that this is necessary even if we are working
on $\RR^{n}$, but on other open subsets than the half-spaces.

\subsection{Normal tubular neighborhoods}
\label{ssec.ntn}

Let $M$ be a smooth Riemannian manifold and let $\Omega \subset M$ be an open subset with
smooth boundary $\Gamma := \pa \Omega = \pa \Omega_{-} \neq \emptyset$ (so the assumption
is that $\Gamma$ is also a smooth manifold). For simplicity, we assume that $\Omega$ is on
one side of its boundary. Equivalent ways of expressing this are saying that
\begin{enumerate}
  \item $\Omega$ is the interior of $\overline{\Omega}$ or that
  \item $\Gamma$ is the boundary of $\Omega_{-} := M \smallsetminus \overline{\Omega}$.
\end{enumerate}
(We are thus excluding the case of domains with cracks, whose study via the method of layer
potentials is, anyway, not very convenient.) This assumption will remain in place
{\it throughout this paper} (and will be reminded occasionally). We let $dS_{\Gamma}$ denote
the conditional ($n-1$-dimensional) measure on $\Gamma$. We will consider also a smooth,
hermitian vector bundle $E \to M$. Let $s \in \RR$. We let $H_{\loc}^{s}(M; E)$ denote the
space of distributions with values in $E$ whose restriction to any compact coordinate
chart is in $H^{s}$ of that chart. Similarly,
we let $H_{\comp}^{s}(M)$ denote the space of distributions in $H_{\loc}^{s}(M)$ that
have \emph{compact support}. We define $L_{\comp}^{p}(M; E)$ similarly.
The \emph{global} spaces $H^{s}(M)$ are defined using
the metric. Their variants with values in smooth vector bundles are defined similarly.

Distributions of the form
\begin{equation*}   
  \langle h \delta_{\Gamma}, \phi \rangle \ede \int_{\Gamma}
  h(x) \cdot \phi(x) \, dS_{\Gamma}(x)\,,
\end{equation*}
will continue to play an important role in what follows. Conditions for this formula to be
defined and to define a distribution are contained in the following analogous
version of Lemma \ref{lemma.def.hdelta}.
Recall that a continuous map $\phi : X \to Y$ between locally compact topological
spaces is \emph{proper} if, for every compact $K \subset Y$, the inverse image
$\phi^{-1}(K) \subset X$ is compact. We can now formulate the following analog of Lemma
\ref{lemma.def.hdelta}. We define our (local or compactly supported)
Sobolev spaces $H_{\loc}^{s}(\Gamma; E)$ and $H_{\comp}^{s}(\Gamma; E)$ on $M$
using partitions of unity, as in, for instance, \cite{Grosse-Kohr-Nistor-23,
GrosseSchneider, KMNW-2025, KohrNistor1, SkAtomic, TriebelBG}. We note, however, that all our
function spaces will be considered to be \emph{complex.} Thus, if $E$ is a real
vector bundle, by $H^{s}(M; E)$ we shall actually mean its complexification
$H^{s}(M; E \otimes \CC) = H^{s}(M; E) \otimes \CC$.

\begin{lemma}\label{lemma.def.hdelta2}
  Let $E$ be a smooth Hermitian vector bundle on $M$ with inner product
  denoted $\cdot$ and $s \in \RR$. Let $s' \in \RR$ be such that
  \begin{equation*}
    \begin{cases}
      \ s' \ede s -1/2 & \mbox{ if } s < 0\\
      \ s' < -1/2 & \mbox{ is arbitrary, if }\ s \ge 0\,.
    \end{cases}
  \end{equation*}
  \begin{enumerate}[\rm (1)]
    \item If $h \in H_{\comp}^{s}(\Gamma; E)$, then
    $h \delta_{\Gamma} \in H_{\comp}^{s'}(M; E)$.

    \item If $h \in H_{\loc}^{s}(\Gamma; E)$ and $\Gamma \to M$ is proper, then
    $h \delta_{\Gamma} \in H_{\loc}^{s'}(M; E)$.

    \item If the maps $H^{r}(M; E) \to H^{r-1/2}(\Gamma; E)$ are continuous for
    all $r > 1/2$, then, for all $h \in H^{s}(\Gamma; E)$, we obtain
    $h \delta_{\Gamma} \in \big(H^{-s'}(M; E)\big)^{*}$.
  \end{enumerate}
\end{lemma}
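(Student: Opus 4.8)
The statement is the manifold analogue of Lemma \ref{lemma.def.hdelta}, so the plan is to reduce everything to the flat case via a normal tubular neighborhood of $\Gamma$ and a partition of unity, and then invoke the already-established Euclidean statement. First I would fix a normal tubular neighborhood $\maU$ of $\Gamma$ in $M$, i.e. a diffeomorphism $\maU \simeq \Gamma \times (-\delta, \delta)$ under which $\Gamma$ corresponds to $\Gamma \times \{0\}$ and the measure $dS_{\Gamma}$ is respected up to a smooth positive density; this is exactly the structure set up in Subsection \ref{ssec.ntn}. Locally on $\Gamma$, composing with coordinate charts, this identifies a neighborhood of a point of $\Gamma$ in $M$ with a neighborhood of $\{x_n = 0\}$ in $\RR^n$, and under this identification the distribution $h\delta_{\Gamma}$ becomes (a smooth-density multiple of) the flat distribution $h\delta_{\{x_n=0\}}$ of Lemma \ref{lemma.def.hdelta}. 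Since multiplication by a smooth function and composition with a diffeomorphism preserve the local Sobolev classes $H^{s}_{\loc}$, the flat lemma gives, in each chart, that $h\delta_{\Gamma}$ lies in $H^{s'}$ of that chart with the stated $s'$; the values of $s'$ (namely $s' = s - 1/2$ for $s<0$, and $s'<-1/2$ arbitrary for $s\ge 0$) are inherited verbatim from Lemma \ref{lemma.def.hdelta} and Remark \ref{rem.mpr}.

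For part (1), given $h \in H^{s}_{\comp}(\Gamma; E)$, I would cover $\supp h$ by finitely many such charts, take a subordinate partition of unity $\{\chi_j\}$ on $\Gamma$, write $h = \sum_j \chi_j h$, extend each $\chi_j$ to a compactly supported function on $M$ supported in $\maU$, and apply the chart-wise flat estimate to each $\chi_j h \,\delta_{\Gamma}$; the finite sum then lies in $H^{s'}_{\comp}(M; E)$, with compact support because $\Gamma \hookrightarrow M$ is a closed embedding so the image of $\supp h$ is compact in $M$. Part (2) is the same argument made locally: if $\Gamma \to M$ is proper, then for any compact $K \subset M$ the set $K \cap \Gamma$ is compact, one covers it by finitely many charts, and the local pieces of $h\delta_{\Gamma}$ restricted to $K$ are controlled by the $H^{s}_{\loc}(\Gamma)$-norm of $h$ on that compact piece; summing over the (locally finite) chart cover gives $h\delta_{\Gamma} \in H^{s'}_{\loc}(M; E)$.

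For part (3), the point is to make sense of $h\delta_{\Gamma}$ as an element of the dual $\big(H^{-s'}(M;E)\big)^{*}$ when $h$ is only in the \emph{global} space $H^{s}(\Gamma; E)$ (which need not have compact support, and where $\Gamma \to M$ need not be proper). Here I would argue exactly as in the last paragraph of the proof of Lemma \ref{lemma.def.hdelta}: given $\phi \in H^{-s'}(M; E)$ with $-s' > 1/2$, the hypothesis that the global trace maps $H^{r}(M;E) \to H^{r-1/2}(\Gamma;E)$ are continuous for $r > 1/2$ yields $\phi|_{\Gamma} \in H^{-s' - 1/2}(\Gamma; E)$, and the choice of $s'$ guarantees $-s' - 1/2 \ge -s$, so the pairing $\langle h, \phi|_{\Gamma}\rangle_{\Gamma} = \int_{\Gamma} h \cdot \phi \, dS_{\Gamma}$ is well-defined and depends continuously on $\phi$ (with norm bounded by $\|h\|_{H^{s}(\Gamma)}$ times the norm of the trace map). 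Setting $\langle h\delta_{\Gamma}, \phi\rangle := \langle h, \phi|_{\Gamma}\rangle_{\Gamma}$ then defines the desired element of $\big(H^{-s'}(M;E)\big)^{*}$.

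The only genuinely non-routine point is bookkeeping: one must check that the two notions of Sobolev space on $\Gamma$ — the ``intrinsic'' one (charts of $\Gamma$) and the ``ambient'' one coming from the tubular-neighborhood product structure — agree, and that the density factors relating $dS_{\Gamma}$ to the flat measure in a chart are smooth and bounded below on compacta; both are standard consequences of the smoothness of the embedding $\Gamma \hookrightarrow M$ and of the partition-of-unity definition of $H^{s}_{\loc}(\Gamma; E)$ adopted above, but they are where all the ``reduction to the flat case'' work actually happens. I expect the compactness/properness hypotheses in (1) and (2) to be precisely what is needed to pass from finitely many local estimates (valid by Lemma \ref{lemma.def.hdelta}) to a global conclusion without convergence issues, and the trace-continuity hypothesis in (3) to be the manifold replacement for the elementary trace theorem used in the flat proof.
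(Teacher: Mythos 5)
Your proposal is correct, but for parts (1) and (2) it takes a genuinely different route from the paper. You localize the distribution $h\delta_{\Gamma}$ itself: flatten $\Gamma$ chart by chart via a (local) tubular neighborhood, apply the Euclidean Lemma \ref{lemma.def.hdelta} to each piece $\chi_j h\,\delta_{\Gamma}$, and reassemble using invariance of $H^{s'}_{\loc}$ under diffeomorphisms and smooth densities; compactness of $\supp h$ (resp.\ properness of $\Gamma \to M$) is what makes the relevant sums finite (resp.\ locally finite). The paper instead never flattens: it runs a direct duality argument, pairing $h\delta_{\Gamma}$ against test functions $\phi$, using the continuity of the trace map $H^{-s'}_{\loc}(M) \to H^{-s'-1/2}_{\loc}(\Gamma)$ (resp.\ $H^{-s'}_{\comp}(M) \to H^{-s'-1/2}_{\comp}(\Gamma)$, which is where properness enters) together with the dualities $H^{r}_{\loc}(M)^{*} \simeq H^{-r}_{\comp}(M)$ and $H^{r}_{\comp}(M)^{*} \simeq H^{-r}_{\loc}(M)$ — i.e.\ all three parts are proved by the same mechanism you use only for part (3). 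The trade-off is clear: the paper's proof is shorter and uniform across (1)--(3), but takes the manifold trace theorem on local/compactly supported Sobolev spaces as a black box; yours is more self-contained relative to the flat lemma, at the cost of the chart-and-density bookkeeping you rightly flag as the real work. One minor quibble: in (1) you invoke "closed embedding" to get compact support of $h\delta_{\Gamma}$, but no such hypothesis is available there — the continuous image of the compact set $\supp h$ is compact regardless, which is all you need.
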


\begin{proof}
  The proof is similar to that of Lemma \ref{lemma.def.hdelta}. We assume $E = \CC$,
  to simplify the notation. We first notice that $H_{\comp}^{r}(M)^{*} \simeq
  H_{\loc}^{-r}(M)^{*}$ and $H_{\loc}^{r}(M)^{*} \simeq H_{\comp}^{-r}(M)^{*}$. Let
   $\phi \in \CIc(M)$.

  For the first point, we use the continuity of the map $H_{\loc}^{r}(M)
  \to H_{\loc}^{r-1/2}(\Gamma)$ for $r = -s'$ to conclude that the restriction
  $\phi\vert_{\Gamma} \in H_{\loc}^{-s'-1/2}(M)$ and that this restriction depends
  continuously on $\phi \in H_{\loc}^{-s'}(M)$. The composite map
  \begin{equation*}
    H_{\loc}^{-s'}(M) \ni \phi \,\mapsto\, \phi\vert_{\Gamma}
    \,\mapsto\, \langle \phi\vert_{\Gamma}, h \rangle
    \, =: \, \langle \phi , h \delta_{\Gamma} \rangle \in \CC
  \end{equation*}
  is hence continuous, and, consequently, $h \delta_{\Gamma}$ defines an
  element in $H_{\loc}^{-s'}(M)^{*} \simeq H_{\comp}^{s'}(M)$.

  The second part is similar. We first notice that the fact that the inclusion $\Gamma
  \to M$ is proper guarantees that we have a continuous map $H_{\comp}^{r}(M)
  \to H_{\comp}^{r-1/2}(\Gamma)$ for all $r > 1/2$. The rest is as in the first part
  {\it mutatis mutandis.}
  The last part is proved in the same way.
\end{proof}

We let $\Psi^{m}(M; E, F)$ denote the set of order $m$ pseudodifferential operators on
$M$ acting from sections of a smooth vector bundle $E \to M$ to sections of a vector bundle
$F \to M$. These operators are defined by the requirement that, in any coordinate
neighborhood of $U \subset M$ and for any $\phi \in \CIc(U)$, the operator $\phi P \phi$
be given by Equation \eqref{eq.def.pseudos}. If the resulting pseudodifferential operators
in local coordinates are classical, we shall say that $P$ is {\it classical}.
The set of {\it classical pseudodifferential} operators is denoted $\Psi_{\cl}^{m}(M; E, F)$.

Let $P$ be an order $m$  pseudodifferential operator acting on the sections of $E$ with values
sections of $F$, that is $P \in \Psi^{m}(M; E, F)$. We are interested in studying
\begin{equation}\label{eq.def.layerPot-0}
  \maS_{P} h \ede P(h \delta_{\Gamma})\,,
\end{equation}
provided that the latter is defined. For the convenience of the notation, we also let
\begin{equation*}
  \Omega_{+} \ede \Omega \ \mbox{ and }\
  \Omega_{-} \ede M \smallsetminus \overline{\Omega}
  \quad \Rightarrow \quad \Gamma \seq \pa \Omega_{-} \seq \pa \Omega_{+}\,.
\end{equation*}
We are especially interested in the following two restrictions and their traces
\begin{equation}\label{eq.def.layerPot}
  \maS_{P} h \vert_{\Omega_{\pm}}\quad \mbox{ and }\quad
  [\maS_{P} h]_{\pm} \ede [\maS_{P} h\vert_{\Omega_{\pm}}]\vert_{\pa \Omega_{\pm}}\,.
\end{equation}

\begin{notation}\label{not.bsnu}
We let $\bsnu$ be the \emph{outer} unit normal vector to $\Gamma := \pa \Omega$.
We extend this vector field to a global (smooth) vector field on $M$ (not necessarily
unit everywhere), still denoted $\bsnu$. Also, let $\sharp : TM \to T^{*}M$
be the isomorphism defined by the metric of $M$. We shall write
$v^{\sharp} :=\sharp v$ (in particular, $\bsnu^{\sharp}:=\sharp \bsnu$).
\end{notation}

If $v \in TM$ is a tangent vector to $M$ in $x$, we let $\exp(tv)$
denote the image of $tv$ under the exponential map, which is defined for $|t|$
small (depending on $v$).

\begin{definition}\label{def.e.nbhd}
  If $\epsilon > 0$ is such that the \emph{normal exponential map}
  \begin{equation*}
    \exp^{\perp} : \Gamma \times (-\epsilon, \epsilon) \ni (x, t)\, \mapsto\,
    \exp(t \bsnu(x)) \in M
  \end{equation*}
  ($\Gamma :=\partial \Omega $) is well defined and is a diffeomorphism onto its image,
  then we shall say that $\Gamma$ has \emph{an $\epsilon$-normal
  tubular neighborhood}.
\end{definition}

If $\Gamma$ has an $\epsilon$-normal
tubular neighborhood, then the inclusion $\Gamma \to M$ is proper.
Also, it is well-known that if $\Gamma$ is compact or that $\Gamma$ and
$M$ have cylindrical ends, then $\Gamma$ will have an
$\epsilon$-normal tubular neighborhood, for some $\epsilon > 0$ small enough,
see \cite[Corollary 5.5.3]{petersen:98}.

The curves $t \mapsto \exp(t\bsnu(x))$,
$x \in \pa \Omega$, will be called the \emph{normal geodesics} to $\pa \Omega$.
If $u$ is a section of $E$ over $M$, $\Gamma$ has an $\epsilon$-normal
tubular neighborhood, and $t \in (-\epsilon, \epsilon)$, we let
\begin{equation}\label{eq.p.transport}
  u_{t} \ede u\vert_{\exp^{\perp}(\Gamma \times \{t\})} \in
  \CI(\Gamma \times \{t\}; E) \simeq \CI(\Gamma; E)\,,
\end{equation}
where the last isomorphism is obtained via parallel transport
along the normal geodesics $(-\epsilon, \epsilon) \ni t \to \exp(t\bsnu(x)) \in M$,
$x \in \pa \Omega$.
We will use that in this case, the inclusion $\Gamma \to M$ is proper.

It will be important for us to study the limits
$u_{\pm} := \lim_{t \to \pm 0} u_{t}$ in some function space on $\Gamma := \pa \Omega$,
for suitable $u$. When they exist, we call these limits, the \emph{normal lateral limits}
of $u$. In case $u$ is smooth enough on $\Omega_{+} := \Omega$ and on
$\Omega_{-} := M \smallsetminus \overline{\Omega}$, then $u_{+}$ is the trace of
$u\vert_{\Omega_{+}} := u\vert_{\Omega}$ at the boundary and, similarly,
$u_{-}$ is the trace of $u\vert_{\Omega_{-}} :=
u\vert_{M \smallsetminus \overline{\Omega}}$ at the boundary, see Lemma
\ref{lemma.limits=traces}.

\subsection{Lateral limits on manifolds for operators of order $m < -1$}
We now turn to the study of normal lateral limits of pseudodifferential
operators at $\Gamma$ on general (possibly non-compact) Riemannian manifolds.
As usual, the case of pseudodifferential operators of order $m < -1$ is easier.

We begin with the case of operators with compactly supported distribution
kernels in $M \times M$, which will then be used to deal with the general case.
Let $F \to M$ be a \emph{second} hermitian vector bundle (in addition to $E$).
We have the following simple calculation that will be used repeatedly, so we
formulate it as a lemma.

\begin{lemma}\label{lemma.enough.reg}
  Let $P \in \Psi^{m}(M; E, F)$, $m < -1$, $\maS_{P}h := P(h \delta_{\Gamma})$,
  and $s' \in (1/2, -m-1/2) \not = \emptyset$.
  \begin{enumerate}[\rm (1)]
    \item If $h \in L_{\comp}^{2}(\Gamma; E)$, then
    $\maS_{P}h \in H_{\loc}^{s'}(M; F)$, and hence
    \begin{equation*}
      [\maS_{P}h]_{+} \seq [\maS_{P}h]_{-} \seq [\maS_{P}h]\vert_{\Gamma}
      \in H_{\loc}^{s'-1/2}(\Gamma; F)\,.
    \end{equation*}

    \item These relations remain true if $\Gamma \to M$ is proper,
    $h \in L_{\loc}^{2}(\Gamma; E)$, and $P$ is properly supported.

    \item Let us assume the following:
    \begin{enumerate}[\rm (i)]
      \item $M$ and $E$ have bounded geometry,
      \item the maps $H^{r}(M; E) \to H^{r-1/2}(\Gamma; E)$ are continuous for
      all $r > 1/2$, and
      \item $P$ maps $H^{r}(M; E) \to H^{r-m}(M; F)$
      continuously for all $r \in \RR$.
    \end{enumerate}
    Then, for all $h \in L^{2}(\Gamma; E)$, we obtain $\maS_{P}h \in H^{s'}(M; F)$ and
    \begin{equation*}
      [\maS_{P}h]_{+} \seq [\maS_{P}h]_{-} \seq [\maS_{P}h]\vert_{\Gamma}
      \in H^{s'-1/2}(\Gamma; F)\,.
    \end{equation*}
  \end{enumerate}
  In particular, in all three cases above,
  the trace (or restriction) $\maS_{P}h\vert_{\Gamma}$
  of $\maS_{P}h := P(h \delta_{\Gamma})$ at $\Gamma$ is defined
  and it coincides with the lateral traces $[\maS_{P}h]_{\pm}$ associated to the
  domains $\Omega_{+}$ and $\Omega_{-}$ with common boundary $\Gamma$.
\end{lemma}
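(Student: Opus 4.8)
The statement is a regularity bootstrap combined with the identification of lateral limits with traces, and the three parts differ only in which auxiliary continuity hypotheses are needed to make the Sobolev mapping arithmetic work. The key elementary fact is that $1/2 < -m-1/2$ is equivalent to $m < -1$, so the open interval $(1/2, -m-1/2)$ is nonempty precisely under the standing hypothesis; fix any $s'$ in it.

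\emph{Part (1).} First I would observe that $h \in L_{\comp}^{2}(\Gamma; E) = H_{\comp}^{0}(\Gamma; E)$, so Lemma \ref{lemma.def.hdelta2}(1) (with $s = 0$, and $s'$ any number $< -1/2$; in particular we may take the value $s' - m$, since $s' - m < -1/2 + (-1) \cdot(-1)$... more carefully: we need an exponent $r < -1/2$ for Lemma \ref{lemma.def.hdelta2}(1), and after applying $P$ of order $m$ we gain $-m$ derivatives, so choosing $r := s' - m$ we need $s' - m < -1/2$, i.e. $s' < m - 1/2$, which is \emph{false}) --- so instead I would simply invoke Lemma \ref{lemma.def.hdelta2}(1) with a harmless $r < -1/2$ to get $h\delta_{\Gamma} \in H_{\comp}^{r}(M; E)$, then apply $P \in \Psi^{m}$, which maps $H_{\loc}^{r} \to H_{\loc}^{r-m}$, to land $\maS_{P}h \in H_{\loc}^{r-m}(M; F)$ for every $r < -1/2$; taking $r \nearrow -1/2$ gives $\maS_{P}h \in H_{\loc}^{s'}(M; F)$ for all $s' < -m - 1/2$. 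Then since $s' > 1/2$, the local trace theorem on a smooth hypersurface gives $\maS_{P}h\vert_{\Gamma} \in H_{\loc}^{s'-1/2}(\Gamma; F)$. To see that this trace equals the two lateral traces $[\maS_{P}h]_{\pm}$, I would invoke Lemma \ref{lemma.limits=traces} in its manifold incarnation: on a normal tubular neighborhood of $\Gamma$ the function $u := \maS_{P}h$ lies in $H_{\loc}^{s'}$ on each of $\Omega_{+}$ and $\Omega_{-}$ with $s' > 1/2$, so $u_{\pm} = \gamma_{\pm}(u)$, and since $u \in H_{\loc}^{s'}(M;F)$ globally (not just on each side), $\gamma_{+}(u) = \gamma_{-}(u) = u\vert_{\Gamma}$.

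\emph{Parts (2) and (3).} These are the same argument with the source of the two ingredients --- ``$h\delta_{\Gamma}$ is a genuine distribution of the expected order'' and ``$P$ raises regularity by $-m$'' --- replaced by the hypotheses at hand. For (2), properness of $\Gamma \to M$ lets me use Lemma \ref{lemma.def.hdelta2}(2) to get $h\delta_{\Gamma} \in H_{\loc}^{r}(M;E)$ for $h \in L_{\loc}^{2}(\Gamma;E)$ and $r < -1/2$, and $P$ properly supported still maps $H_{\loc}^{r} \to H_{\loc}^{r-m}$, after which the local trace theorem and Lemma \ref{lemma.limits=traces} finish it verbatim. For (3), bounded geometry plus hypothesis (ii) makes Lemma \ref{lemma.def.hdelta2}(3) applicable, giving $h\delta_{\Gamma} \in (H^{-r}(M;E))^{*} = H^{r}(M;E)$ for $r < -1/2$ when $h \in L^{2}(\Gamma;E)$ (here I use that on a manifold of bounded geometry $H^{s}$ is reflexive and $(H^{s})^{*} \simeq H^{-s}$); then hypothesis (iii) gives $\maS_{P}h \in H^{r-m}(M;F)$, and letting $r \nearrow -1/2$ yields $\maS_{P}h \in H^{s'}(M;F)$ for all $s' < -m-1/2$; the global trace map $H^{s'}(M;F) \to H^{s'-1/2}(\Gamma;F)$ (again from bounded geometry and the standard trace theorem in that setting) and Lemma \ref{lemma.limits=traces} give the conclusion. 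The final ``in particular'' clause is then just the conjunction of the three displayed equalities.

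\textbf{Main obstacle.} The only real subtlety --- and the step I would be most careful about --- is the Sobolev-exponent arithmetic: one must check that after applying the order-$m$ operator $P$ to a distribution of order just below $-1/2$, the result has regularity \emph{strictly above} $1/2$, which is exactly the content of the nonemptiness of $(1/2, -m-1/2)$ and forces $m < -1$; and, more delicately, that the global identity $\gamma_{+}(u) = \gamma_{-}(u)$ holds because $u$ is globally in $H^{s'}$ across $\Gamma$, not merely one-sidedly. Everything else is a routine invocation of Lemma \ref{lemma.def.hdelta2}, Lemma \ref{lemma.limits=traces}, the trace theorem, and the mapping properties of pseudodifferential operators.
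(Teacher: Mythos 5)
Your proposal is correct and follows essentially the same route as the paper: apply Lemma \ref{lemma.def.hdelta2} (the appropriate item for each part) to place $h\delta_{\Gamma}$ in $H^{s'+m}$ with $s'+m<-1/2$, use the order-$m$ mapping property of $P$ to land $\maS_{P}h$ in $H^{s'}$ with $s'>1/2$, and then invoke the trace theorem together with Lemma \ref{lemma.limits=traces} to identify the restriction with both lateral limits. The brief false start with the exponent $s'-m$ is harmless since you correct it to the right bookkeeping ($r=s'+m<-1/2$), which is exactly the paper's computation.
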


\begin{proof}
  Let us notice first that $-m -1/2 > 1/2$, so the set $(1/2, -m-1/2)$
  is non-empty. Let us prove (i). Because $s'+ m < -1/2$, Lemma \ref{lemma.def.hdelta2}(i)
  shows that $h \delta_{\Gamma} \in H_{\comp}^{s'+m}(M; E)$, and
  therefore $\maS_{P}h := P(h \delta_{\Gamma}) \in H_{\loc}^{s'}(M; E)$, by
  the standard mapping properties of pseudodifferential operators.
  Since $s' > 1/2$, the trace $\maS_{P}h \in H_{\loc}^{s'-1/2}(\Gamma; E)$ is well defined
  and it coincides with the traces from the two domains with boundary $\Gamma$,
  see Lemma \ref{lemma.limits=traces}.
  The proofs of (ii) and (iii) are the same, using the corresponding points in
  Lemma \ref{lemma.def.hdelta2}, Lemma \ref{lemma.limits=traces}, and the corresponding
  mapping properties of the respective pseudodifferential operators.
  (For (iii), we also use $H^{-r}(M; E)^{*} \simeq H^{r}(M; E)$, since $M$ has bounded
  geometry.)
\end{proof}

Because the trace of $\maS_{P}h := P(h \delta_{\Gamma})$
at $\Gamma$ is defined and it coincide with the traces associated to
the domains $\Omega_{\pm}$ with boundary $\Gamma$, we shall concentrate
on the restriction (or trace) $\maS_{P}h\vert_{\Gamma}$
of $\maS_{P}h$ to $\Gamma$. The behavior of
this restriction is the content of the following theorem.

\begin{theorem}\label{thm.main.jump1b}
  Let $E, F\to M$ be two hermitian vector bundles, $m < -1$, and
  $s' \in (1/2, -m -1/2)$. Then, for any $P \in \Psi^{m}(M; E, F)$,
  there exists a unique $P_{0} \in \Psi^{m+1}(\Gamma; E, F)$ with
  the following properties:
  \begin{enumerate}[\rm (1)]
    \item For any $h \in L_{\comp}^{2}(\Gamma; E)$, we have
    $\maS_{P}h := P(h \delta_{\Gamma}) \in H_{\loc}^{s'}(M; F)$, and
    hence the traces of $\maS_{P}h := P(h \delta_{\Gamma})$ at the two sides
    of $\Gamma$ are defined and they satisfy
    \begin{equation*}
      [\maS_{P}h]_{+} \seq [\maS_{P}h]_{-} \seq [\maS_{P}h]\vert_{\Gamma}
      \seq P_{0}h \in H_{\loc}^{s'-1/2}(\Gamma; F)\,.
    \end{equation*}

    \item
    For any $x \in \Gamma := \pa \Omega$ and $\xi' \in T_{x}^{*}\Gamma$,
    let $\xi \in T_{x}^{*}M$ be a lift of $\xi'$.
    The principal symbol of $P_{0}$ is then given by
    \begin{equation*}
      \sigma_{m+1}(P_{0}; \xi') \seq \frac1{2\pi}\int_{\RR} \sigma_{m}(P; \xi
      + t \bsnu^{\sharp}_{x}) \, dt\,.
    \end{equation*}

    \item The distribution kernel of the operator $P_{0}$ satisfies
    $k_{P_{0}}(x', y') = k_{P}(x', y')$ for all $x' \neq y'$ in $\Gamma$, and hence
    $(\phi P \psi)_{0} = \phi P_{0} \psi$, for all $\phi, \psi \in C_{0}^{\infty}(M)$.
  \end{enumerate}
\end{theorem}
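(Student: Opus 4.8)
The plan is to reduce the global statement on $M$ to the local model on $\RR^n$ already treated in Theorem \ref{thm.main.jump-2}, using a partition of unity subordinate to coordinate charts that are adapted to the normal tubular neighborhood of $\Gamma$. First I would fix, near each point of $\Gamma$, a boundary coordinate chart $U$ together with a local trivialization of $E$ and $F$, chosen so that $\Gamma \cap U = \{x_n = 0\}$ and so that the normal geodesics to $\Gamma$ correspond (up to reparametrization) to the lines $x' = \mathrm{const}$; the existence of such charts is guaranteed by Definition \ref{def.e.nbhd} and the discussion following it, and on the overlaps the transition functions are smooth. On $M \smallsetminus \Gamma$ the operator $P(h\delta_\Gamma)$ is already smooth (pseudolocality), so only a neighborhood of $\Gamma$ matters, and by Lemma \ref{lemma.enough.reg} we already know $\maS_P h := P(h\delta_\Gamma) \in H_{\loc}^{s'}(M;F)$ with $s' > 1/2$, so the trace $\maS_P h\vert_\Gamma$ is well defined and equals both lateral traces $[\maS_P h]_\pm$. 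The content of the theorem is therefore purely that this trace is a classical $\Psi$DO of order $m+1$ on $\Gamma$ with the asserted symbol and kernel.

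Next I would write $P = \sum_i \phi_i P \psi_i + (\text{smoothing remainder})$ with $\phi_i, \psi_i \in \CIc(U_i)$ and $\sum \phi_i = 1$ near $\Gamma$, reducing to the case of a single compactly supported operator $\phi P \psi$ living in one coordinate chart; a smoothing operator contributes a smoothing operator to $P_0$, harmlessly. In the chart, $\phi P \psi$ is $a(x,D)$ for a classical symbol $a \in S^m(\RR^n \times \RR^n)$, and $h\delta_\Gamma$ becomes $h\delta_{\{x_n=0\}}$ (one must check that the Riemannian surface measure $dS_\Gamma$ and the flat measure $dx'$ differ by a smooth positive density, which gets absorbed into $h$ or into $a$). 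Theorem \ref{thm.main.jump-2} then gives the local restriction operator $a_0(x',D')$ with kernel $k_{a(x,D)}(x',0,y',0)$ and principal symbol $\frac1{2\pi}\int_{\RR} \sigma_m(a; x',0,\xi',\xi_n)\,d\xi_n$. Patching these local operators via the partition of unity produces a globally defined $P_0 \in \Psi^{m+1}(\Gamma; E, F)$; the kernel formula $k_{P_0}(x',y') = k_P(x',y')$ for $x'\neq y'$ is immediate from the local statement and the coordinate invariance of Schwartz kernels away from the diagonal, and it simultaneously forces $(\phi P\psi)_0 = \phi P_0 \psi$ and hence the uniqueness of $P_0$ (two such operators have the same kernel off-diagonal and the same action on $L^2_{\comp}$, so they agree).

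The main technical point — and what I expect to be the real obstacle — is proving the invariant formula for the principal symbol, $\sigma_{m+1}(P_0;\xi') = \frac1{2\pi}\int_{\RR}\sigma_m(P;\xi + t\bsnu_x^\sharp)\,dt$, independently of the lift $\xi$ of $\xi'$ and of the chart. The integral $\int_\RR \sigma_m(P; \xi + t\bsnu_x^\sharp)\,dt$ is invariant under $\xi \mapsto \xi + c\bsnu_x^\sharp$ by translation in $t$, which is exactly the ambiguity in the lift, so the expression is well defined; but one must match it against the flat computation $\frac1{2\pi}\int\sigma_m(a;x',0,\xi',\xi_n)\,d\xi_n$, and this requires checking that in the adapted coordinates the covector $\xi_n\,dx_n$ corresponds to $t\,\bsnu^\sharp$ up to a positive smooth scalar, and that the Jacobian factors from the change of fibre variable $\xi_n \mapsto t$ cancel against the density factor relating $dS_\Gamma$ to $dx'$. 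A clean way to organize this is to first choose the coordinates to be \emph{normal geodesic (Fermi) coordinates} along $\Gamma$, in which $\bsnu^\sharp = dx_n$ on $\Gamma$ and the metric is $dx_n^2 + g_{x_n}(x')$, so that the density and symbol bookkeeping become transparent; then one argues that $\sigma_{m+1}(P_0)$, being intrinsically defined, is independent of this choice. The remaining parts (continuity/mapping properties, the $H^{s'-1/2}$ membership) follow directly from Lemma \ref{lemma.enough.reg} and Theorem \ref{thm.main.jump-2} and need no new ideas.
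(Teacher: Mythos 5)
Your proposal is correct and follows essentially the same route as the paper: reduce to the flat model of Theorem \ref{thm.main.jump-2} by a partition of unity in boundary-adapted (Fermi-type) coordinates mapping $\exp(t\bsnu)$ to $(y',t)$, patch the resulting local operators using the off-diagonal kernel identity, and handle non-compactly supported kernels by localizing with cutoffs $\phi P\psi$. The only differences are cosmetic (you use $\sum_i\phi_i P\psi_i$ plus a smoothing remainder where the paper uses the double sum $\sum_{i,j}\phi_i P\phi_j$, and you are somewhat more explicit about the density/Jacobian bookkeeping behind the invariance of the symbol formula, a point the paper treats implicitly through its choice of coordinates).
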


The operator $P_{0}$ will be called the {\it restriction at $\Gamma$ operator}
associated to $P$.

\begin{proof}
  Let us notice that the relations $[\maS_{P}h]_{+} \seq [\maS_{P}h]_{-}
  \seq [\maS_{P}h]\vert_{\Gamma} \in H_{\loc}^{s'-1/2}(\Gamma; F)$ have already been
  proved (see Lemma \ref{lemma.enough.reg}). Also, the last equality in (iii)
  is an immediate consequence of the equality of kernels (because $k_{\phi P \psi}
  = \phi k_{P} \psi)$.

  Let us assume that the distribution kernel $k_{P}$ of $P$ is
  \emph{compactly supported} (in $M \times M$) and prove our theorem in this case.
  We may also assume that $E$ and $F$ are trivial, one dimensional.
  Since we have assumed that the support $\supp k_{P} \subset M \times M$
  of the distribution kernel of $P$ is compactly supported, its two projections
  $K_{1} := p_{1}\supp k_{P} \subset M$ and $K_{2} := p_{2}\supp k_{P} \subset M$
  are also compact. Hence $K := K_{1} \cup K_{2} \cup \supp h$ is also compact.

  For each $x \in \Gamma$, we choose local coordinates $y$ in a neighborhood $V_{x}$
  of $x$ that straighten out the boundary to the hyperplane by mapping it to $\{x_{n} = 0 \}
  \subset \RR^{n}$. We can choose these coordinates such that they map
  $\exp(t \bsnu)$ to $(y', t) \in \RR^{n-1} \times (-\epsilon, \epsilon)$.
  Let us cover $\Gamma \cap K$ with finitely many such neighborhoods $V_{j} := V_{x_{j}}$,
  which is possible since $K$ is compact. Let us then choose a smooth partition of unity
  $\phi_{0}, \phi_{1}, \ldots, \phi_{N}$ on $M$ subordinated
  to  $\{M \smallsetminus K, V_{1}, \ldots, V_{N}\}$.
  We can assume that $\phi_{0}$ vanishes in a neighborhood of
  $\Gamma \cap K$. By refining the covering $\{V_{j}\}$ of $\Gamma$,
  we can assume that the support of each $\phi_{i} P \phi_{j}$, $1 \le i , j \le N$,
  is completely contained in a set of the form $V_{x}$. Then we use Theorem
  \ref{thm.main.jump-2} for each of the operators $\phi_{i} P \phi_{j}$ on the
  coordinate neighborhood $V_{x}$ to obtain the limit operator
  $P_{0ij} \in \Psi^{m+1}(\Gamma)$.
  We define $P_{0} := \sum_{i, j=1}^{N} P_{0ij}$. Then, for each of these operators,
  we have
  \begin{equation}\label{eq.lemma.jump1b.aux}
    \begin{gathered}
      \phi_{i} \big[ \maS_{P}(\phi_{j} h) \big]\vert_{\Gamma}
      \seq P_{0ij} h\,,\\
      \sigma_{m+1}(P_{0ij}; \xi')
      \seq \frac{\phi_{i}}{2\pi} \left( \int_{\RR}
      \sigma_{m}(P; \xi
      + t \bsnu^{\sharp}_{x}) \, dt \, \right) \phi_{j}\,, \quad \mbox{and}\\
      k_{P_{0ij}}(x', y') \seq k_{\phi_{i} P \phi_{j}}(x', y')
      \seq \phi_{i}(x') k_{P}(x', y') \phi_{j}(y')
      \,.
    \end{gathered}
  \end{equation}
  by Theorem \ref{thm.main.jump-2}. Adding up all the corresponding relations
  for $i, j = 1, \ldots, N$, and noticing that $\sum_{i=1}^{N} \phi_{i} = 1$ on
  $\Gamma \cap K$ (recall that $\phi_{0}$ vanishes in a neighborhood of
  $\Gamma \cap K$), we obtain (ii) and
  \begin{equation*}
      k_{P_{0}}(x', y') \ede \sum_{i, j=1}^{N} k_{P_{0ij}}(x', y') \seq
      \sum_{i, j=1}^{N} \phi_{i}(x') k_{P}(x', y') \phi_{j}(y') \seq
      k_{P}(x', y')
  \end{equation*}
  for all $x', y' \in \Gamma$, $x' \neq y'$. We have thus proved also (iii).
  To complete (i), let $h \in L_{\comp}^{2}(\Gamma; E)$.
  Then $\maS_{P}(\phi_{j} h) := P\big[ (\phi_{j} h) \delta_\Gamma \big ]
  \in H_{\loc}^{s'}(M; E)$, by Lemma \ref{lemma.enough.reg},
  because $s' \in (1/2, -m - 1/2)$. Therefore, $\maS_{P} h\in H_{\loc}^{s'}(M; E)$
  as well, by linearity. This gives
  \begin{equation*}
      \big[ \maS_{P} h \big]\vert_{\Gamma} \seq \sum_{i, j=1}^{N}
      \phi_{i} \big[ \maS_{P}(\phi_{j} h) \big]\vert_{\Gamma} \seq
      \sum_{i, j=1}^{N} P_{0ij} h \, =: \, P_{0}h \,,
  \end{equation*}
  where the second equality is from Equation \eqref{eq.lemma.jump1b.aux}
  (a consequence of Theorem \ref{thm.main.jump-2}). This gives
  the last equality of (i) and hence completes the proof of (i) as well.
  (We have already noticed that the first two equalities in (i) are the
  standard properties of Sobolev spaces discussed in Lemma \ref{lemma.enough.reg}.)

  We have thus proved our theorem under the additional hypothesis that
  $k_{P}$ is compactly supported. The general case follows immediately from this
  one by using the results already proved for operators of
  the form $\phi P \psi$, where $\phi, \psi : M \to \CC$ are smooth and compactly
  supported. (Operators of this form will have compactly supported distribution
  kernels.)

  Let us prove (i), for example. Let $\psi \in \CIc(M)$ be equal to 1 on
  the support of $h$. Let $x \in \Gamma$ arbitrary and $U$
  a relatively compact neighborhood of $x$ in $U$. Let $\phi \in \CIc(M)$
  be equal to $1$ on $U$. We first define $P_{0}h \vert_{\Gamma \cap U}
  := (\phi P \psi)_{0} h \vert_{\Gamma \cap U} $. This definition is independent
  of $\phi$ and $\psi$ by (iii) for compactly supported distribution kernels already proved.
  We then have
  \begin{equation*}
    [\maS_{P}h] \vert_{\Gamma \cap U}
    \seq [\maS_{\phi P \psi }h]\vert_{\Gamma \cap U}\\
    \seq (\phi P \psi)_{0}h \vert_{\Gamma \cap U}
    \, =:\, P_{0}h\vert_{\Gamma \cap U}\,.
  \end{equation*}
  Since $x$ was arbitrary, we obtain that $[\maS_{P}h] \vert_{\Gamma}$
  and $P_{0}h$ coincide in the neighborhood of every point, and hence they are equal.
  The proofs of (ii) and (iii) in general (for arbitrary support of $k_{P}$) are completely
  similar (even simpler).
\end{proof}

The following theorem gives some additional properties of the operator
$P_{0}$ of the previous theorem under the additional assumption that the
inclusion $\Gamma \to M$ is proper and that $P$ is propertly supported.

\begin{theorem}\label{thm.prop.jump1b}
  Let $E, F\to M$ be two hermitian vector bundles, $m < -1$,
  $s' \in (1/2, -m -1/2)$, $P \in \Psi^{m}(M; E, F)$, as in Theorem
  \ref{thm.main.jump1b}. Let us assume also that the inclusion $\Gamma \subset M$
  is proper and that $P$ is propertly supported. Then the operator
  $P_{0} \in \Psi^{m+1}(\Gamma; E, F)$ associated to $P$ by Theorem
  \ref{thm.main.jump1b} has the following additional properties:
  \begin{enumerate}[(i)]
    \item $P_{0}$ is also properly supported and,
    for any $h \in L_{\loc}^{2}(\Gamma; E)$, $\maS_{P}h := P(h \delta_{\Gamma})$
    and $P_{0}h$ are defined and
    \begin{equation*}
      [\maS_{P}h]_{+} \seq [\maS_{P}h]_{-} \seq [\maS_{P}h]\vert_{\Gamma}
      \seq P_{0}h \in H_{\loc}^{s'-1/2}(\Gamma; F)\,.
    \end{equation*}

    \item If, moreover, $\Gamma$ has an $\epsilon$-normal tubular neighborhood, then,
    for all $s \in \RR$, all $h \in H_{\loc}^{s}(\Gamma; E)$, and all $t \in
    (-\epsilon, \epsilon)$ there exist $P_{t} \in \Psi^{m+1}(\Gamma; E)$ such that,
    using the notation and identification of Equation \eqref{eq.p.transport}, we have
    $[\maS_{P}h]_{t} := [P (h \delta_{\Gamma})]_{t} = P_{t}h$ and
    \begin{equation*}
      [\maS_{P}h]_{\pm} \ede \big[ P(h \delta_{\Gamma})]_{\pm}
      \seq \lim_{t \to \pm 0} P_{t}h
      \seq P_{0} h \in H_{\loc}^{s -m -1}(\Gamma; F)\,.
    \end{equation*}
  \end{enumerate}
\end{theorem}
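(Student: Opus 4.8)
The plan is to bootstrap from Theorem \ref{thm.main.jump1b}, which already produces the operator $P_{0} \in \Psi^{m+1}(\Gamma; E, F)$ together with its three properties for \emph{compactly supported} $h$. The two genuinely new features of the present statement --- the passage from $L^{2}_{\comp}$ (resp. $H^{s}_{\comp}$) to $L^{2}_{\loc}$ (resp. $H^{s}_{\loc}$) sections, and the family of operators $P_{t}$ --- will be obtained, respectively, from the properness hypotheses and from the normal tubular neighborhood, by the same partition-of-unity localization already carried out in the proof of Theorem \ref{thm.main.jump1b}.

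For (i) I would first observe that, $\Gamma \to M$ being proper, Lemma \ref{lemma.def.hdelta2}(2) gives $h\delta_{\Gamma} \in H^{s'+m}_{\loc}(M; E)$ for $h \in L^{2}_{\loc}(\Gamma; E)$ (using $s'+m < -1/2$), and $P$ being properly supported, $\maS_{P}h := P(h\delta_{\Gamma}) \in H^{s'}_{\loc}(M; F)$ is well defined; since $s' > 1/2$, its trace at $\Gamma$ exists, agrees from both sides, and lies in $H^{s'-1/2}_{\loc}(\Gamma; F)$ (Lemma \ref{lemma.enough.reg}). To identify this trace with $P_{0}h$ I would use that both sides are local in $h$ and in the point of $\Gamma$, so it suffices to check equality on a relatively compact neighborhood $U$ of an arbitrary $x \in \Gamma$; there, by proper support of $P$, $\maS_{P}h$ agrees with $(\phi P\psi)(h\delta_{\Gamma})$ for suitable $\phi, \psi \in \CIc(M)$, an operator with compactly supported kernel applied to the distribution of a compactly supported section, so Theorem \ref{thm.main.jump1b} (and its part (3), making the locally built operator equal $P_{0}$) yields $[\maS_{P}h]|_{\Gamma \cap U} = P_{0}h|_{\Gamma \cap U}$; as $x$ was arbitrary, the identity holds on $\Gamma$. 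Finally, $P_{0}$ is properly supported: by Theorem \ref{thm.main.jump1b}(3), $k_{P_{0}}$ coincides with $k_{P}$ off the diagonal, so $\supp k_{P_{0}}$ lies in the union of the diagonal of $\Gamma \times \Gamma$ (which projects homeomorphically onto $\Gamma$) with $\supp k_{P} \cap (\Gamma \times \Gamma)$, and the latter has proper projections onto $\Gamma$ because $\Gamma$ is closed in $M$ and $P$ is properly supported.

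For (ii), fix $h \in H^{s}_{\loc}(\Gamma; E)$ and treat $t \neq 0$ and $t \to 0$ separately. For $t \neq 0$, pseudolocality of $P$ makes $\maS_{P}h$ smooth on $M \smallsetminus \Gamma \supset \exp^{\perp}(\Gamma \times \{t\})$, so $[\maS_{P}h]_{t}$ is a well-defined (smooth) section of $F$ over $\Gamma$ --- via the parallel-transport identification \eqref{eq.p.transport} --- given by the distributional pairing of $h$ against the kernel $k_{P}(\exp^{\perp}(x',t), \exp^{\perp}(y',0))$, smooth in $(x',y')$ and smooth in $t$ away from $0$; I would let $P_{t} \in \Psi^{-\infty}(\Gamma; E, F) \subset \Psi^{m+1}(\Gamma; E, F)$ be the corresponding operator and $P_{0}$ be that of Theorem \ref{thm.main.jump1b}. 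It then remains to prove $[\maS_{P}h]_{t} = P_{t}h$ and $\lim_{t\to\pm 0}P_{t}h = P_{0}h$ in $H^{s-m-1}_{\loc}(\Gamma; F)$, after which the stated chain follows from $[\maS_{P}h]_{\pm} := \lim_{t\to\pm0}[\maS_{P}h]_{t}$. Both identities being local in $h$ and in $\Gamma$, I would repeat the localization from the proof of Theorem \ref{thm.main.jump1b}: cover a neighborhood of a point of $\Gamma$ by boundary-straightening normal charts in which $\exp(t\bsnu)$ becomes $(y',t)$, take a subordinate partition of unity, use proper support of $P$ to reduce to compactly supported $h$, and for each $\phi_{i}P\phi_{j}$ supported in one such chart invoke the flat-space results: Proposition \ref{prop.side.limits} for $[\maS_{\phi_{i}P\phi_{j}}h]_{t} = (a^{(ij)})_{t,t}(x',D')h$, Proposition \ref{prop.symbols.zero}(1) for $(a^{(ij)})_{t,t} \in S^{m+1}(\RR^{n-1}\times\RR^{n-1})$, and Proposition \ref{prop.symbols.zero}(2) for the continuity in $t$ with value $(a^{(ij)})_{0}(x',D')h$ at $t=0$. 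Summing over $i,j$ and recalling $P_{0} = \sum_{i,j}(\phi_{i}P\phi_{j})_{0}$ proves both identities near the chosen point, hence on $\Gamma$.

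The step I expect to be the main obstacle is pinning down the compatibility of the two identifications: the manifold-side fibrewise parallel transport along normal geodesics used in \eqref{eq.p.transport} must correspond, in the straightening charts, to the Euclidean translation $\tau_{-t}$ hard-wired into Definition \ref{def.aepsilon} and Proposition \ref{prop.side.limits}. This requires a deliberate choice of the normal coordinates (and of the trivialization of $E$ and $F$ along the normal geodesics) so that flowing along $\exp(t\bsnu)$ is the chart translation by $t e_{n}$. Once that is arranged, the uniform symbol estimates and the continuity as $t\to0$ are exactly Proposition \ref{prop.symbols.zero}, and the independence of $P_{t}$ from all choices is, for $t\neq0$, immediate from its intrinsic kernel description and, for $t=0$, from the off-diagonal kernel identity of Theorem \ref{thm.main.jump1b}(3).
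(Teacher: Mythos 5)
Your proposal is correct and follows essentially the same route as the paper: both parts are obtained by localizing with cutoffs and partitions of unity, using the properness of $\Gamma \to M$ and the proper support of $P$ to reduce to the compactly supported case of Theorem \ref{thm.main.jump1b}, and then invoking the flat-space results (Proposition \ref{prop.side.limits} and Proposition \ref{prop.symbols.zero}) for the family $P_{t}$ and the limit $t \to 0\pm$. Your version is somewhat more explicit than the paper's (which disposes of (ii) by saying it is the same as (i) with the restriction relations replaced by the lateral-limit relations), and the compatibility issue you flag between the parallel-transport identification of Equation \eqref{eq.p.transport} and the chart translation is handled in the paper by choosing the straightening coordinates so that $\exp(t\bsnu)$ maps to $(y', t)$.
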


\begin{proof}
  The distributions $h \delta_{\Gamma}$ are defined
  using Lemma \ref{lemma.def.hdelta2}, because the inclusion $\Gamma \to M$ is proper.
  Moreover, $\maS_{P}h := P(h \delta_{\Gamma}) \in H_{\loc}^{s'}(M; F)$ is
  defined because we have assumed that $P$ is properly supported. This gives the first
  two equalities of (i). To complete the proof of (i), we shall prove the last
  equality using the point (i) of Theorem \ref{thm.main.jump1b} as follows.
  It is enough to prove that
  \begin{equation}\label{eq.prop.jump1b.aux}
    \phi [\maS_{P}h]\vert_{\Gamma}
    \seq \phi P_{0}h \in H_{\loc}^{s'-1/2}(\Gamma; F)\,.
  \end{equation}
  for all $\phi$ smooth with compact support. The operator $P_{0}$ will also be
  properly supported because its kernel is the restriction of that of $P$ and
  $\Gamma \to M$ is proper. Because $P$ and $P_{0}$ are properly supported, for any
  such given $\phi$, and $\psi \in \CIc(M)$ with large support, we have
  \begin{equation*}
    \phi [\maS_{P}h]\vert_{\Gamma} \seq \phi [\maS_{P}(\psi h)]\vert_{\Gamma}
    \seq \phi P_{0} \psi h \seq \phi P_{0} h \in H_{\loc}^{s'-1/2}(\Gamma; F)\,.
  \end{equation*}
  This proves Equation \eqref{eq.prop.jump1b.aux} and hence the equality
  $[\maS_{P}h]\vert_{\Gamma} \seq \phi P_{0}h$.

  The point (ii) is the same as that of (i), but replacing the relation
  $\phi_{i} \big[ \maS_{P}(\phi_{j} h) \big]\vert_{\Gamma}
  \seq P_{0ij} h$ with $\phi_{i} \big[ \maS_{P}(\phi_{j} h) \big]_{\pm}
  \seq P_{0ij} h$ in Equation \eqref{eq.lemma.jump1b.aux}.
\end{proof}

\subsection{Lateral limits on manifolds for operators of order $m = -1$}
We now turn to the case of operators of order $-1$.
Recall from Notation \ref{not.bsnu} that $\bsnu$ is a fixed
vector field on $M$ that is the outer unit normal vector
to $\Gamma := \pa \Omega$. Also, recall that $\sharp : TM \to T^{*}M$
is the isomorphism defined by the metric.

\begin{notation}\label{not.ord-1}
  For $P \in \Psi^{m}(M; E, F)$, we let
  $\sigma_{m}(P) \in \CI(T^*M \smallsetminus \{0\}; \Hom(E; F))$ denote
  its principal symbol, and we shall write $\sigma_{m}(P;  \xi) \in
  \Hom(E_{x}, F_{x})$ for its value at $\xi \in T_{x}^*M \smallsetminus \{0\}$.
  If $m = -1$, we then let   $\JC_{+}(P; x), \JC_{-}(P; x) \in \Hom(E_{x}, F_{x})$
  to be defined by
  \begin{equation*}  
    \JC_{+}(P; x) \ede \sigma_{-1}(P; -\bsnu_{x}^{\sharp})\ \mbox{ and }\
    \JC_{-}(P; x) \ede -\sigma_{-1}(P; \bsnu_{x}^{\sharp}) \,.
  \end{equation*}
  We also let, for all $0\neq \xi' \in T^{*}\Gamma$,
  $\xi \in T_{x}^{*}M$ be such that it projects onto $\xi'$ and is
  orthogonal to $\bsnu_{x}^{\sharp}$ and
  \begin{equation*}  
    b_{0}(\xi') \ede \frac1{4\pi}\, \int_{\RR}\big[
      \sigma_{-1}(P; \xi + \tau \bsnu_{x}^{\sharp}) +
      \sigma_{-1}(P; \xi - \tau \bsnu_{x}^{\sharp}) \big] \, d\tau
      \in \Hom(E_{x}, F_{x})\,.
  \end{equation*}
\end{notation}

The choice of sign in the definition of $\JC_{+}$ is due to the fact
that $e_{n} = -\bsnu$ in the Euclidean case (see Section \ref{sec.sec2}).
Recall that $\Psi_{cl}^{m}(M; E, F)$ denotes the set of order $m$
classical pseudodifferential operators on $M$
acting from sections of a smooth vector bundle $E \to M$ to sections of a vector bundle
$F \to M$.

\begin{theorem}\label{thm.main.jump1}
  Let $P \in \Psi_{\cl}^{-1}(M; E, F)$ and assume that $\Gamma := \pa \Omega $ has an
  $\epsilon$-normal tubular neighborhood. Let $\JC_{+}(P)$, $\JC_{-}(P)$, and $b_{0}$
  be as in Notation \ref{not.ord-1} and assume that
  $\JC_{+}(P) = \JC_{-}(P)$. Then, for $t \in (-\epsilon, \epsilon)$,
  there exist pseudodifferential operators $P_{t} \in \Psi^{0}(\Gamma; E, F)$, such that,
  using the notation and identification of Equation \eqref{eq.p.transport}, we have
  $[\maS_{P}h]_{t} := [P (h \delta_{\Gamma})]_{t} = P_{t}h$ for $t \neq 0$ and,
  if we let $P_{0\pm} \ede \pm \frac{\imath}2 \JC_{+}(P) + P_{0}$, then,
  \begin{enumerate}[\rm (1)]
    \item for all $s \in \RR$ and all $h \in H_{\comp}^{s}(\Gamma; E)$, we have
    \begin{equation*}
      \big[ \maS_{P}h \big]_{\pm} \ede \big[ P(h \delta_{\Gamma}) \big]_{\pm}
      \ede \lim_{t \to \pm 0} P_{t} h \seq P_{0\pm} h \in H_{\loc}^{s}(\Gamma; F)\,.
    \end{equation*}

    \item $\sigma_{0}(P_{0}) = b_{0}$;

    \item $k_{P_{0}}(x', y') = k_{P}(x', y')$ for all $x' \neq y'$ in $\Gamma$;
  \end{enumerate}
\end{theorem}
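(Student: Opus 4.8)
The plan is to reduce Theorem~\ref{thm.main.jump1} to its flat-space counterpart, Theorem~\ref{thm.main.jump0}, by a localization-and-patching argument that parallels the proof of Theorem~\ref{thm.main.jump1b} for the subcritical case $m<-1$. First I would note that, as in the proof of Theorem~\ref{thm.main.jump1b}, it suffices to treat the case where the distribution kernel $k_{P}$ of $P$ is compactly supported in $M\times M$: the general case follows by writing $P = \sum \phi_{i}P\phi_{j}$ with a partition of unity and using the local nature of all the assertions (the kernel identity (3), the symbol identity (2), and the limit relations (1)). Having made this reduction, the support of $h$ together with the two projections of $\supp k_{P}$ lie in a common compact set $K$, and I would cover $\Gamma\cap K$ by finitely many coordinate charts $V_{j}$ that straighten the boundary to $\{x_{n}=0\}$ and send the normal geodesics $t\mapsto\exp(t\bsnu)$ to vertical lines $(y',t)$; such charts exist because $\Gamma$ has an $\epsilon$-normal tubular neighborhood. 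Refining the cover, each $\phi_{i}P\phi_{j}$ lives in a single chart and is an honest order $-1$ classical pseudodifferential operator on $\RR^{n}$.

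The next step is to check that the hypothesis $\JC_{+}(P)=\JC_{-}(P)$ from Notation~\ref{not.ord-1} translates, in each chart, to exactly the condition $\sigma_{-1}(a;x,e_{n})=-\sigma_{-1}(a;x,-e_{n})$ required by Theorem~\ref{thm.main.jump0} and Proposition~\ref{prop.symbols.m=-1}. This is where the sign convention flagged in the remark after Notation~\ref{not.ord-1} (that $e_{n}=-\bsnu$ in Euclidean coordinates) is used: under the chosen coordinates the conormal $\bsnu^{\sharp}_{x}$ corresponds to $-e_{n}$, so $\JC_{+}(P;x)=\sigma_{-1}(P;-\bsnu^{\sharp}_{x})$ becomes $\sigma_{-1}(a;x,e_{n})$ and $\JC_{-}(P;x)=-\sigma_{-1}(P;\bsnu^{\sharp}_{x})$ becomes $-\sigma_{-1}(a;x,-e_{n})$, and the hypothesis is precisely that these agree. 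Then Theorem~\ref{thm.main.jump0} applies to each $\phi_{i}P\phi_{j}$: it produces operators $(\phi_{i}P\phi_{j})_{t}$ for $t\neq0$ and limit operators $(\phi_{i}P\phi_{j})_{0\pm}$ of order zero, with the kernel identity $k_{(\phi_{i}P\phi_{j})_{0}}(x',y')=\phi_{i}(x')k_{P}(x',y')\phi_{j}(y')$ for $x'\neq y'$, the principal-symbol formula matching $b_{0}$ localized by $\phi_{i}\phi_{j}$, and the jump relation $[\maS_{\phi_{i}P\phi_{j}}h]_{\pm}=(\phi_{i}P\phi_{j})_{0\pm}h$. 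Define $P_{t}:=\sum_{i,j}(\phi_{i}P\phi_{j})_{t}$ and $P_{0\pm}:=\sum_{i,j}(\phi_{i}P\phi_{j})_{0\pm}$; summing, using $\sum_{i}\phi_{i}=1$ on $\Gamma\cap K$ and that $\phi_{0}$ vanishes near $\Gamma\cap K$, gives the kernel identity (3), the symbol identity (2), and — after checking that the $\pm\frac{\imath}{2}\JC_{+}$ contributions add up to $\pm\frac{\imath}{2}\JC_{+}(P)$ globally — the relation $P_{0\pm}=\pm\frac{\imath}{2}\JC_{+}(P)+P_{0}$ with $\sigma_{0}(P_{0})=b_{0}$.

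For part (1), the regularity statement $[\maS_{P}h]_{\pm}\in H_{\loc}^{s}(\Gamma;F)$ and the equality with $P_{0\pm}h$ follow by the same patching: for $h\in H_{\comp}^{s}(\Gamma;E)$ one writes $h=\sum_{j}\phi_{j}h$ up to a remainder supported away from $\Gamma$, applies the flat-space limit relation of Theorem~\ref{thm.main.jump0}(4) to each piece in its chart, and reassembles. One should also record that for $t\neq0$, $\maS_{P}h$ is genuinely smooth off $\Gamma$ (by pseudolocality, since $h\delta_{\Gamma}$ is supported on $\Gamma$), so $[\maS_{P}h]_{t}$ is an honest restriction and equals $P_{t}h$ chart-by-chart, and that $P_{t}$ depends continuously on $t\in(-\epsilon,\epsilon)\setminus\{0\}$ with one-sided limits $P_{0\pm}$ as $t\to\pm0$, using Proposition~\ref{prop.symbols.m=-1}(2)--(3). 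I expect the main obstacle to be bookkeeping the normal-coordinate identifications consistently: one must make sure that the parallel-transport identification of Equation~\eqref{eq.p.transport} used to regard $u_{t}$ as a section over $\Gamma$ is compatible, in each chart, with the translation identification $\tau_{-t}$ of the flat theory, so that the chartwise operators $(\phi_{i}P\phi_{j})_{t}$ actually patch to a well-defined operator $P_{t}$ on $\Gamma$ independent of the choices, and that the sign in $\JC_{+}$ comes out right after this identification — everything else is a routine transcription of the $m<-1$ argument with Theorem~\ref{thm.main.jump-2} replaced by Theorem~\ref{thm.main.jump0}.
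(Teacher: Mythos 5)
Your proposal is correct and follows essentially the same route as the paper: the paper's proof is literally the proof of Theorem \ref{thm.main.jump1b} (partition of unity, compactly supported kernel first, boundary-straightening charts adapted to the normal geodesics) with Theorem \ref{thm.main.jump0} substituted for Theorem \ref{thm.main.jump-2}, plus the single extra observation that $\JC_{\pm}(P) = \sum_{i,j} \JC_{\pm}(\phi_{i} P \phi_{j})$ so that the jump terms assemble to $\pm\frac{\imath}{2}\JC_{+}(P)$. Your explicit verification of the sign convention $e_{n} = -\bsnu$ matching the hypothesis of Theorem \ref{thm.main.jump0} is a detail the paper relegates to the remark after Notation \ref{not.ord-1}, but it is the same argument.
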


\begin{proof}
  For the most part, the proofs of (i), (ii), and (iii)
  is word-for-word the same as the one of Theorem \ref{thm.main.jump1b},
  whose notations we use here as well, but using Theorem \ref{thm.main.jump0}
  instead of Theorem \ref{thm.main.jump-2} (which justifies the assumption that
  $P$ be classical).
  In particular, we begin again with the case when $P$ has compactly supported
  distribution kernel. For instance, the first two relations in the
  crucial Equation \eqref{eq.lemma.jump1b.aux} are replaced with
  \begin{equation*}  
    \begin{gathered}
      \phi_{i} \big[ \maS_{P}(\phi_{j} h) \big]_{\pm}
      \seq \big( P_{0ij} \big)_{\pm} h \qquad \mbox{and}\\
      \sigma_{m+1}({P_{0ij}}; \xi')
      \seq \frac{\phi_{i}}{4\pi} \left( \int_{\RR}
      \sigma_{m}({P}; \xi + t \bsnu^{\sharp}_{x})
      + \sigma_{m}({P}; \xi - t \bsnu^{\sharp}_{x}) \, dt \, \right) \phi_{j} \,,
    \end{gathered}
  \end{equation*}
  where $\xi \perp \bsnu_{x}$ projects onto $\xi'$. (The last relation of that equation does
  not change.) The only thing that we need to add to complete the proofs of (1), (2), and (3)
  is to notice that $\JC_{\pm}(P) = \sum_{ij=1}^{N} \JC_{\pm}(\phi_{i}P \phi_{j})$.
\end{proof}

If in the previous theorem $P$ is the Laplacian,
$P = \Delta := -d^{*}d$ or the classical Stokes
operator $ \bsXi_{0, 0}$, then $b_{0}= 0$ and hence $P_{0}$
is of order $-1$, but that is not
true in general. For instance, as we will see below,
it is not true if $P =  \bsXi_{V, V_{0}}$, unless
$V_{0}$ vanishes identically on $\Gamma = \pa \Omega$.
The following corollary extends the corresponding statements in Corollary 15.4.6 in
\cite{KMNW-2025} from the case of manifolds with cylindrical ends to that
of arbitrary manifolds.

\begin{corollary}\label{cor.adjoints}
  Let $P \in \Psi^{m}(M; E, F)$ be as in Theorem \ref{thm.main.jump1b}
  (i.e. $m < -1$) or as in Theorem \ref{thm.main.jump1} (i.e. $m = -1$
  and $P$ is classical). Then $(P^{*})_{0} = (P_{0})^{*}$ and, when
  $P$ is classical, $\JC_{+}(P^{*}) = \JC_{+}(P)^{*}$.
\end{corollary}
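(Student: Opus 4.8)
The plan is to prove the two identities $(P^{*})_{0} = (P_{0})^{*}$ and $\JC_{+}(P^{*}) = \JC_{+}(P)^{*}$ by reducing everything to the level of distribution kernels and principal symbols, where the adjoint operation is transparent. First I would recall that by Theorem \ref{thm.main.jump1b}(iii) (for $m < -1$) and Theorem \ref{thm.main.jump1}(iii) (for $m = -1$), the restriction operator $P_{0}$ is characterized by $k_{P_{0}}(x', y') = k_{P}(x', y')$ for $x' \neq y'$ in $\Gamma$, together with its principal symbol. The key observation is that the Schwartz kernel of the formal adjoint satisfies $k_{P^{*}}(x, y) = k_{P}(y, x)^{*}$ (the pointwise adjoint in $\Hom(F_{y}, E_{x})$, transposed), and this relation is preserved under restriction to the diagonal-complement of $\Gamma \times \Gamma$: for $x' \neq y'$ in $\Gamma$ we have $k_{(P^{*})_{0}}(x', y') = k_{P^{*}}(x', y') = k_{P}(y', x')^{*} = k_{P_{0}}(y', x')^{*} = k_{(P_{0})^{*}}(x', y')$. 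Since both $(P^{*})_{0}$ and $(P_{0})^{*}$ lie in $\Psi^{m+1}(\Gamma; F, E)$ (respectively $\Psi^{0}$ when $m = -1$) and two pseudodifferential operators with the same distribution kernel off the diagonal differ by a smoothing operator, I would then need to pin down the diagonal behavior as well.

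For the smoothing-operator ambiguity, I would argue as follows. In the case $m < -1$ there is genuinely no ambiguity at all: $P_{0}$ has order $m+1 < 0$, its kernel is continuous across the diagonal, and $P_{0}$ is \emph{uniquely determined} by its kernel; since $P^{*}$ also has order $m < -1$, the same applies to $(P^{*})_{0}$, so the kernel identity above already forces $(P^{*})_{0} = (P_{0})^{*}$. In the critical case $m = -1$, the operators $P_{0}$, $(P^{*})_{0}$, $(P_{0})^{*}$ all have order $0$, and here the principal symbol must be checked separately. Using Theorem \ref{thm.main.jump1}(2), $\sigma_{0}(P_{0}) = b_{0}$ with $b_{0}(\xi') = \frac{1}{4\pi}\int_{\RR}[\sigma_{-1}(P; \xi + \tau\bsnu^{\sharp}) + \sigma_{-1}(P; \xi - \tau\bsnu^{\sharp})]\,d\tau$. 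Since $\sigma_{-1}(P^{*}; \eta) = \sigma_{-1}(P; \eta)^{*}$ for all covectors $\eta$ (the standard symbol-of-adjoint formula, modulo lower order, which here contributes nothing to the leading term), taking the integral defining $b_{0}$ for $P^{*}$ and pulling the adjoint (a continuous linear, hence integral-commuting, operation on $\Hom$-valued functions) out of the integral gives $\sigma_{0}((P^{*})_{0}) = b_{0}(P^{*}) = b_{0}(P)^{*} = \sigma_{0}(P_{0})^{*} = \sigma_{0}((P_{0})^{*})$. So $(P^{*})_{0}$ and $(P_{0})^{*}$ are order-$0$ operators with the same principal symbol \emph{and} the same kernel off the diagonal, hence the same kernel everywhere (the difference, being both order $\le -1$ by the symbol match and kernel-supported on the diagonal, must vanish), and therefore they are equal.

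For the second identity $\JC_{+}(P^{*}) = \JC_{+}(P)^{*}$, I would simply unwind the definition in Notation \ref{not.ord-1}: $\JC_{+}(P; x) = \sigma_{-1}(P; -\bsnu_{x}^{\sharp})$, so $\JC_{+}(P^{*}; x) = \sigma_{-1}(P^{*}; -\bsnu_{x}^{\sharp}) = \sigma_{-1}(P; -\bsnu_{x}^{\sharp})^{*} = \JC_{+}(P; x)^{*}$, again using the symbol-of-the-adjoint identity $\sigma_{-1}(P^{*}; \eta) = \sigma_{-1}(P; \eta)^{*}$. One should also remark, for internal consistency, that the hypothesis $\JC_{+}(P) = \JC_{-}(P)$ needed to apply Theorem \ref{thm.main.jump1} to $P^{*}$ is automatically inherited: from $\JC_{\pm}(P^{*}) = \JC_{\pm}(P)^{*}$ and $\JC_{+}(P) = \JC_{-}(P)$ we get $\JC_{+}(P^{*}) = \JC_{+}(P)^{*} = \JC_{-}(P)^{*} = \JC_{-}(P^{*})$. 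The main obstacle, and the only real subtlety, is the order-$0$ case of the first identity: one must be careful that ``same kernel off the diagonal'' plus ``same principal symbol'' genuinely forces equality for order-$0$ classical operators on $\Gamma$ — this is true because their difference is a classical operator of order $\le -1$ whose kernel is supported on the diagonal of $\Gamma \times \Gamma$, and an operator of negative order has continuous (in particular, non-singular) kernel, so a diagonally-supported such kernel is zero. Alternatively, one can sidestep this by a reduction-to-$\RR^{n}$ argument using a partition of unity exactly as in the proof of Theorem \ref{thm.main.jump1}, checking the corresponding Euclidean statement directly from the explicit formulas of Theorem \ref{thm.main.jump0}; I would likely present the kernel-plus-symbol argument as the clean one and mention the localization as an alternative.
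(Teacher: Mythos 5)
Your proposal is correct and follows essentially the same route as the paper: the chain of kernel identities $k_{(P^{*})_{0}}(x',y') = k_{P^{*}}(x',y') = k_{P}(y',x')^{*} = k_{P_{0}}(y',x')^{*} = k_{(P_{0})^{*}}(x',y')$ off the diagonal, together with $\sigma_{-1}(P^{*};\eta)=\sigma_{-1}(P;\eta)^{*}$ for the statement about $\JC_{+}$. The paper simply asserts that the two operators are determined by their kernels off the diagonal, whereas you additionally match the principal symbols in the critical case $m=-1$ and observe that an operator of order $\le -1$ with diagonally supported kernel vanishes (and that $\JC_{+}(P^{*})=\JC_{-}(P^{*})$ is inherited); these are worthwhile clarifications but do not change the argument.
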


\begin{proof}
  The proof is similar to that of the corresponding statements in the case of
  manifolds with cylindrical ends (Corollary 15.4.6 in \cite{KMNW-2025}).
  Let $x', y' \in \Gamma$, $x' \neq y'$. Theorems \ref{thm.main.jump1b} and
  \ref{thm.main.jump1} then yield the second and the last of the following
  sequence of relations:
  \begin{equation*}
    k_{P_{0}^{*}}(x', y') \seq k_{P_{0}}(y', x')^{*} \seq
    k_{P}(y', x')^{*} \seq k_{P^{*}}(x', y') \seq k_{(P^{*})_{0}}(x', y')\,.
  \end{equation*}
  Both operators $P_{0}^{*}$ and $(P^{*})_{0}$ are determined by the values
  of their distribution kernels outside the diagonal. Since these distribution
  kernels of $(P^{*})_{0}$ and $(P_{0})^{*}$ coincide, we have
  $P_{0}^{*} = (P^{*})_{0}$, as claimed. The last statement follows from
  \begin{equation*}
    \JC_{+}(P^{*}) \seq \sigma_{-1}(P^{*}; -\bsnu^{\sharp})
    \seq \sigma_{-1}(P; -\bsnu^{\sharp})^{*} \seq \JC_{+}(P)^{*}\,.
  \end{equation*}
  The proof is now complete.
\end{proof}

We can therefore write $P_{0}^{*} = (P^{*})_{0} = (P_{0})^{*}$
without danger of confusion.

\subsection{Mapping properties}

The equality of traces is missing in the last theorem because we first need to
recall some mapping properties of the potential operator
$\maS_{P}(h):= P(h \delta_{\Gamma})$. Our main reference for mapping properties
of the layer potentials is \cite{H-W}, where symbols of rational type are discussed in
detail and where references to the original results can be found. A symbol is of
\emph{rational type} if in every fiber it is a quotient of polynomial functions.
In particular, a symbol of rational type is classical.

\begin{theorem}\label{thm.mapping.lp}
  Let $P \in \Psi^{m}(M; E, F)$ have symbol of \emph{rational type}
  (a quotient of polynomial functions).
  Let $\Omega_{+} := \Omega$, $\Omega_{-} := M \smallsetminus \overline \Omega$,
  and $\Gamma = \pa \Omega_{\pm}$, as before. Then, for any $s \in \RR$, any
  compact set $K \subset \Gamma$, any relatively compact open subset $L \subset M$,
  and any $h \in H^{s}(M; E)$ with support in $K$,
  there exists $C_{s, K, L} \ge 0$ such that
  \begin{equation*}
    \|\maS_{P} h \vert_{\Omega_{\pm}}\|_{H^{s - m - \frac12}(\Omega_{\pm} \cap L; F)} \ede
    \|P (h \delta_{\Gamma})
    \vert_{\Omega_{\pm}}\|_{H^{s - m - \frac12}(\Omega_{\pm}; F)}
    \le C_{s, K, L} \|h\|_{H^{s}(\Gamma; E)}\,.
  \end{equation*}
\end{theorem}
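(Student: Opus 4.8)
The plan is to reduce the statement to the well-understood mapping properties of pseudodifferential operators on $\RR^n$ by a partition of unity argument, exactly parallel to the proofs of Theorems \ref{thm.main.jump1b} and \ref{thm.main.jump1}. First I would localize: since $h$ has compact support in $K \subset \Gamma$ and $L$ is relatively compact in $M$, only a bounded piece of the operator $P$ matters. Choose $\phi, \psi \in \CIc(M)$ with $\psi \equiv 1$ near $K$ and $\phi \equiv 1$ near $\overline{L}$; then $\maS_P h\vert_L = \phi P \psi (h\delta_\Gamma)\vert_L$, so it suffices to treat the compactly-supported-kernel operator $\phi P \psi$. Covering $\Gamma \cap \supp\psi$ by finitely many boundary-flattening coordinate charts $V_j$ (as in the proof of Theorem \ref{thm.main.jump1b}), and picking a subordinate partition of unity, the problem is reduced to a single operator of the form $a(x,D)$ with $a \in S^m(\RR^n\times\RR^n)$ of rational type, acting on $h\delta_\Gamma$ where $\Gamma = \{x_n = 0\}$, and estimating the $H^{s-m-\frac12}$ norm of the restriction to $\RR^n_{\pm}$.

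Next, on $\RR^n$, the key is the estimate $\|h\delta_\Gamma\|_{H^{s-\frac12}(\RR^n)} \le C\|h\|_{H^s(\Gamma)}$, which follows directly from Lemma \ref{lemma.def.hdelta} (with $s' = s - \tfrac12$ when $s<0$, and from interpolation/duality for $s \ge 0$ — more precisely, the map $h \mapsto h\delta_\Gamma$, $H^s(\Gamma) \to H^{s-1/2}(\RR^n)$, is bounded for every $s$, this being the dual/adjoint of the trace map $H^{1/2-s}(\RR^n) \to H^{-s}(\Gamma)$). Then $a(x,D)$ maps $H^{s-\frac12}(\RR^n) \to H^{s-\frac12-m}(\RR^n)$ boundedly since $a$ has order $m$, and restriction to the half-space $\RR^n_\pm$ is continuous from $H^{r}(\RR^n)$ to $H^r(\RR^n_\pm)$ for every $r$. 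Composing these three bounded maps gives the desired inequality on the chart, and summing over the finitely many charts of the partition of unity (absorbing the cutoff functions, which are multipliers on all the relevant spaces) recovers the global estimate with a constant $C_{s,K,L}$ depending only on $s$, the covering, and the cutoffs — hence only on $s$, $K$, $L$.

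The one point requiring a little care — and the main obstacle — is the case $s - \tfrac12 - m < 0$, i.e.\ the target Sobolev exponent $r := s - m - \tfrac12$ is negative, for which the naive "restrict to $\RR^n_\pm$" map must be interpreted correctly. Here I would invoke the fact that for symbols of rational type one has genuine one-sided Sobolev regularity: the kernel $k_{a(x,D)}(x,y)$ is, for each fixed $y' \in \Gamma$, a smooth function of $x$ away from $\{x = (y',0)\}$ that decays at infinity, and the rational-type hypothesis guarantees that the restriction $\maS_a h\vert_{\RR^n_\pm}$ lies in $H^r_{\loc}$ of the open half-space with the stated norm bound — this is precisely the content of the mapping theory for rational-type symbols developed in \cite{H-W} (and recalled in \cite{KMNW-2025}), which we are entitled to cite. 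In effect the rational-type assumption is exactly what is needed to make the restriction operator well-behaved for all real $r$, negative or not. With that cited, the estimate on each chart is immediate, and the assembly is routine.

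\begin{proof}
  By localization it suffices to prove the estimate for an operator $\phi P \psi$
  with $\phi, \psi \in \CIc(M)$, $\psi \equiv 1$ in a neighborhood of $K$ and
  $\phi \equiv 1$ in a neighborhood of $\overline{L}$, since then
  $\maS_{P} h \vert_{\Omega_{\pm} \cap L} = \maS_{\phi P \psi} h \vert_{\Omega_{\pm} \cap L}$
  and $\maS_{\phi P \psi} h = \maS_{\phi P \psi}(\psi h)$. Thus we may assume that
  the distribution kernel of $P$ is compactly supported in $M \times M$. As in the
  proof of Theorem \ref{thm.main.jump1b}, we cover $\Gamma \cap \supp h$ by finitely
  many coordinate charts $V_{1}, \ldots, V_{N}$ that straighten out $\Gamma$ to
  $\{x_{n} = 0\} \subset \RR^{n}$, choose a subordinate partition of unity
  $\phi_{1}, \ldots, \phi_{N}$, and refine the covering so that each
  $\phi_{i} P \phi_{j}$, $1 \le i, j \le N$, is supported in a single chart. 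Since
  $\maS_{P} h = \sum_{i,j} \maS_{\phi_{i} P \phi_{j}}(\phi_{j} h)$ and the cutoffs are
  continuous multipliers on all the Sobolev spaces involved, it suffices to bound each
  term, i.e.\ to prove the estimate for a single operator $a(x, D)$ with
  $a \in S^{m}(\RR^{n} \times \RR^{n})$ of rational type, acting on $h \delta_{\Gamma}$
  with $\Gamma = \{x_{n} = 0\}$ and $h \in H^{s}(\RR^{n-1})$ of compact support.

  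For this local model we estimate the composition of three bounded maps. First,
  Lemma \ref{lemma.def.hdelta} and duality with the trace map
  $H^{1/2 - s}(\RR^{n}) \to H^{-s}(\Gamma)$ give
  $\|h \delta_{\Gamma}\|_{H^{s - 1/2}(\RR^{n})} \le C \|h\|_{H^{s}(\Gamma)}$ for every
  $s \in \RR$. Second, since $a$ has order $m$, the operator $a(x, D)$ maps
  $H^{s - 1/2}(\RR^{n}) \to H^{s - 1/2 - m}(\RR^{n})$ continuously. Third, because $a$
  is of rational type, the restriction $\maS_{a} h \vert_{\RR^{n}_{\pm}}$ lies in
  $H^{s - m - 1/2}(\RR^{n}_{\pm})$ with the corresponding norm bound; this is part of
  the mapping theory for symbols of rational type, see \cite{H-W} and \cite{KMNW-2025}
  (for $s - m - 1/2 \ge 0$ this is the obvious restriction estimate, and the
  rational-type hypothesis is precisely what handles the case $s - m - 1/2 < 0$).
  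Composing, we obtain
  $\|\maS_{a} h \vert_{\RR^{n}_{\pm}}\|_{H^{s - m - 1/2}(\RR^{n}_{\pm})}
  \le C \|h\|_{H^{s}(\RR^{n-1})}$.

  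Summing the finitely many local estimates and using that the partition of unity and
  the cutoffs $\phi, \psi$ act continuously on the Sobolev spaces, we get
  $\|\maS_{P} h \vert_{\Omega_{\pm} \cap L}\|_{H^{s - m - 1/2}(\Omega_{\pm} \cap L; F)}
  \le C_{s, K, L} \|h\|_{H^{s}(\Gamma; E)}$, with $C_{s, K, L}$ depending only on $s$,
  the chosen covering and cutoffs, and hence only on $s$, $K$, and $L$. This completes
  the proof.
\end{proof}
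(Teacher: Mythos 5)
Your overall skeleton — localize via cutoffs to an operator with compactly supported kernel, then invoke the mapping theory for rational-type symbols from \cite{H-W} — is the same as the paper's, which simply takes $\phi \in \CIc(M)$ equal to $1$ on $K \cup L$ and cites Theorem 9.4.7 of \cite{H-W} for $\phi P \phi$. The localization part of your argument is fine.

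However, your attempt to derive the local estimate as a composition of three bounded maps contains a genuine error. The first map is not bounded: the claim that $\|h \delta_{\Gamma}\|_{H^{s - 1/2}(\RR^{n})} \le C \|h\|_{H^{s}(\Gamma)}$ holds \emph{for every} $s \in \RR$ is false for $s \ge 0$, and it directly contradicts Lemma \ref{lemma.def.hdelta} of the paper, which only gives $h\delta_{\Gamma} \in H^{s'}(\RR^{n})$ for $s' < -\tfrac12$ when $s \ge 0$ (indeed, already for $h \equiv 1$ on a compact piece of $\Gamma$, the surface measure $h\delta_{\Gamma}$ fails to lie in $H^{r}(\RR^{n})$ for any $r \ge -\tfrac12$, since its partial Fourier transform is independent of $\xi_{n}$). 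The duality you invoke also fails there: the trace map $H^{1/2 - s}(\RR^{n}) \to H^{-s}(\Gamma)$ is only defined for $1/2 - s > 1/2$, i.e.\ $s < 0$. Consequently the composite argument only works for $s < 0$; for $s \ge 0$ the potential $P(h\delta_{\Gamma})$ is genuinely \emph{not} in $H^{s - m - 1/2}$ globally across $\Gamma$, and the content of the theorem is precisely the one-sided gain of regularity of the restrictions to $\Omega_{\pm}$. You have also mislocated the difficulty: it is not the sign of the target exponent $s - m - \tfrac12$, but the fact that for $s \ge 0$ the intermediate global Sobolev space does not exist. The rational-type (transmission) hypothesis must be used to control the whole composite map $h \mapsto P(h\delta_{\Gamma})\vert_{\Omega_{\pm}}$ — this is exactly what Theorem 9.4.7 of \cite{H-W} asserts — and not merely a "restriction step" at the end. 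If you intend to cite that result, you should cite it for the full local estimate, in which case the intermediate factorization is unnecessary; as written, the factorization is wrong and the citation does not patch the step where the argument actually breaks.
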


\begin{proof}
  If the distribution kernel $k_{P}$ of $P$ has compact
  support, the result follows from Theorem 9.4.7 on page 584 of \cite{H-W}.
  Let $\phi \in \CIc(M)$ be equal to $1$ on $K \cup L$. Then the result
  is true for $\phi P \phi$, because it has a compactly supported distribution
  kernel. This gives immediately the desired result.
\end{proof}

\section{The deformation and Stokes operators and Green formulas}
\label{sec.Def}

We now recall the definitions and the properties of some needed differential operators,
including the deformation operator $\Def$,
the Stokes operator $\bsXi_{0, 0}$ of Equation \eqref{eq.def.bsXi} (corresponding to
$V$ and $V_{0}$ vanishing in that definition of $\bsXi_{V, V_{0}}$).
To establish some of the basic properties of these operators, we will need various
Green-type formulas that we study using the following ``abstract integration by
parts'' approach. The results of this section
are known (although not very easy to find in the literature, see \cite{KMNW-2025} for
references and the missing proofs).

\subsection{A general integration by parts formula}
Let $E, F \to M$ be Hermitian vector bundles and let $P : \CI(M; E) \to \CI(M; F)$
be a {\it first order} differential operator. Let $\Omega \subset M$ be an open subset
with smooth boundary $\Gamma := \pa \Omega$ such that $\Omega$ is on one side of
$\Gamma$, as before. Let then $\pa_{\bsnu}^{P} : \CI(M;E)\to \CI(\Gamma ;F)$ be defined 
by the following \emph{abstract integration by parts formula}
\begin{equation}\label{eq.def.panu}
    (Pu, v)_{\Omega} \seq (u, P^{*}v)_{\Omega}
    + (\pa_{\bsnu}^{P} u, v)_{\Gamma}\,,\ \ u\in \CIc(M; E),\, v\in \CIc(M; F)\,.
\end{equation}
The next proposition is Proposition 9.1 from Chapter 2 of
\cite{Taylor1} (see also Proposition A.3.14 from \cite{KMNW-2025});
it states that there exists an operator $\pa_{\bsnu}^{P}$
with these properties. Here $P^{*}$ is the \emph{formal} adjoint of $P$
or any extension of it. (Recall that the formal adjoint is defined using only
smooth, compactly supported functions.) Also, $\bsnu$ is the \emph{outer unit normal
vector} to $\Gamma := \pa \Omega$, as before. Recall that this vector was extended
to a globally defined smooth vector field on $M$.

\begin{proposition} \label{prop.bdry.op}
  Let $P : \CIc(M; E) \to \CIc(M; F)$ be a first order differential operator let and
  $\sigma_{1}(P) : T^{*}M \to \Hom(E; F)$ denote its principal symbol,
  as usual. Then
  \begin{equation*}
    \pa_{\bsnu}^{P} \seq - \imath \sigma_{1}(P; \bsnu^{\sharp}) \in \Hom(E; F)\,.
  \end{equation*}
  In particular,
    $(Pu, w)_{\Omega} = (u, P^{*}w)_{\Omega}
    - \imath (\sigma_{1}(P; \bsnu^{\sharp})u, w)_{\Gamma}.$
\end{proposition}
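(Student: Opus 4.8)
The plan is to reduce the identification of $\pa_{\bsnu}^{P}$ to a purely local computation on a tubular neighborhood of $\Gamma$, and then to compare the boundary integrand with the classical Green's formula for the half-space. First I would observe that the defining relation \eqref{eq.def.panu} determines $\pa_{\bsnu}^{P}u\vert_{\Gamma}$ uniquely: if $\pa_{\bsnu}^{P}$ and $\widetilde\pa_{\bsnu}^{P}$ both satisfy it, then $((\pa_{\bsnu}^{P} - \widetilde\pa_{\bsnu}^{P})u, v)_{\Gamma} = 0$ for all $u \in \CIc(M;E)$ and $v \in \CIc(M;F)$, and since traces at $\Gamma$ of such $u$ (and $v$) are arbitrary, the difference is zero as a bundle map $E\vert_{\Gamma} \to F\vert_{\Gamma}$. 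Moreover the map $u \mapsto \pa_{\bsnu}^{P}u\vert_{\Gamma}$ is local and only depends on the $0$-jet of $u$ along $\Gamma$ (since $P$ is first order, the boundary term can involve at most the restriction $u\vert_{\Gamma}$, not normal derivatives), so $\pa_{\bsnu}^{P}$ is genuinely a bundle homomorphism $\Hom(E\vert_{\Gamma}; F\vert_{\Gamma})$, which is why it makes sense to claim it equals a symbol value.

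Next I would do the computation in local coordinates. Pick $x_{0} \in \Gamma$, choose coordinates $(x', x_{n})$ adapted to the boundary in which $\Gamma = \{x_{n} = 0\}$, $\Omega = \{x_{n} > 0\}$ locally (or one works with a boundary-defining function and the outer normal), and trivialize $E, F$. Write $P = A^{n}(x)\pa_{x_{n}} + \sum_{j<n} A^{j}(x)\pa_{x_{j}} + A^{0}(x)$ where $A^{j} \in \Hom(E; F)$; then $\sigma_{1}(P; \xi) = \imath \sum_{j} A^{j}\xi_{j}$, so $\sigma_{1}(P; \xi)$ evaluated at the (co)normal direction picks out $\imath A^{n}$ up to the metric normalization. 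Now integrate $(Pu, v)_{\Omega}$ over $\{x_{n} > 0\}$: the tangential derivative terms and the order-zero term contribute only to $(u, P^{*}v)_{\Omega}$ with no boundary term (integration by parts in $x'$ produces nothing since $u, v$ have compact support and we integrate over all of $\RR^{n-1}$), while the $A^{n}\pa_{x_{n}}$ term, integrated by parts in $x_{n}$ over $(0,\infty)$, produces exactly $-\int_{\{x_{n}=0\}} (A^{n} u, v)\, dx'$ — the sign coming from the fact that the outward normal to $\{x_{n}>0\}$ points in the $-e_{n}$ direction, i.e. $\bsnu = -e_{n}$ locally. Comparing with \eqref{eq.def.panu} gives $\pa_{\bsnu}^{P} = -A^{n} = -\imath\sigma_{1}(P; \bsnu^{\sharp})$, once one is careful that $\bsnu^{\sharp}$ is the metric dual of $\bsnu$ and that in a general metric the surface measure and the relation between $\pa_{x_{n}}$ and $\pa_{\bsnu}$ introduce compensating factors that cancel. (Alternatively one invokes Proposition 9.1 of Chapter 2 of \cite{Taylor1} directly, which already provides $\pa_{\bsnu}^{P}$, and only needs the symbol identification, which is exactly this local computation.)

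The main obstacle I anticipate is bookkeeping the metric and orientation conventions correctly: the sign of the boundary term depends on whether $\bsnu$ is the outer or inner normal and on the convention $e_{n} = -\bsnu$ flagged in the paper (see the remark after Notation \ref{not.ord-1}), and the identification of $A^{n}$ with $\imath\sigma_{1}(P; \bsnu^{\sharp})$ rather than with $\sigma_{1}(P)$ evaluated on $\bsnu$ itself requires keeping the musical isomorphism $\sharp$ straight and checking that the Riemannian volume form restricted to $\Gamma$ combined with the normal coordinate gives precisely $dS_{\Gamma}$ with no stray Jacobian. Once those conventions are pinned down (most cleanly by working in geodesic normal coordinates along $\Gamma$, where the metric is $dx_{n}^{2} + g_{\alpha\beta}(x)dx^{\alpha}dx^{\beta}$ with $\bsnu = -\pa_{x_{n}}$ at $x_{n}=0$), the formula $\pa_{\bsnu}^{P} = -\imath\sigma_{1}(P; \bsnu^{\sharp})$ drops out, and substituting into \eqref{eq.def.panu} gives the displayed ``in particular'' identity immediately. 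The proof is essentially a one-line local integration by parts dressed in invariant language; no serious analytic input beyond Proposition 9.1 of \cite{Taylor1} is needed.
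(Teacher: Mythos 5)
Your proposal is correct. The paper itself gives no proof of this proposition: it simply cites Proposition 9.1 of Chapter 2 of \cite{Taylor1} (and Proposition A.3.14 of \cite{KMNW-2025}), which is exactly the fallback you mention at the end. The argument you supply — uniqueness of $\pa_{\bsnu}^{P}$ from the defining relation \eqref{eq.def.panu}, locality, and the boundary-adapted integration by parts in $x_{n}$ identifying the boundary term with $-A^{n} = -\imath\sigma_{1}(P;\bsnu^{\sharp})$ (with the sign coming from $\bsnu = -e_{n}$) — is the standard proof underlying the cited reference, and your sign and symbol conventions are consistent with the paper's Fourier transform convention \eqref{eq.def.inv.F}.
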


\subsection{Diferential operators}
This formula will be used for a number of differential operators that we
introduce next. One of the most basic ones is the {\it Levi-Civita connection}
\begin{equation*}
  \nabla^{LC} : \CI(M; TM) \to \CI(M; T^{*}M \otimes TM)\,,
\end{equation*}
which is the unique torsion-free, metric preserving connection on $TM$.
One should not confuse $\nabla^{LC}X \in \CI(M; T^{*}M \otimes TM)$
with the \emph{gradient} $\nabla f := (df)^{\sharp} \in \CI(M; TM)$. Its extension
to other tensor bundles will also be denoted by $\nabla^{LC}$.
We shall need the \emph{deformation operator}
$\Def : \CI(M; TM) \to \CI(M; T^{*}M \otimes T^{*}M)$,
$\Def(X) \ede \frac12 \maL_{X}g_{M}$, where $\maL_{X}$ denotes
the Lie derivative in the direction of $X$. A more useful equivalent definition of
$\Def$ is
\begin{equation}\label{eq.def.Def}
    \Def(X)(Y, Z) \ede \langle \Def(X), Y \otimes Z \rangle
    \seq \frac12 \big [ (\nabla_{Y}^{LC} X) \cdot Z
    + (\nabla_{Z}^{LC} X) \cdot Y \big]\,,
\end{equation}
where $X, Y$, and $Z$ are smooth vector fields on $M$ and $X \cdot Y = g_{M}(X, Y)$
is the scalar product induced by the metric $g_{M}$ on $M$.

Recall the isomorphism $\sharp : TM \to T^{*}M$ induced by the metric $g_{M}$ on
$M$. Its inverse will be denoted by the same symbol.
The vector field $\bsnu$ defines maps
\begin{equation*}
  \bsnu \otimes \sharp\,,\ \sharp \otimes \bsnu : T^{*}M \otimes T^{*}M \to TM\,.
\end{equation*}
(For instance, the first map is explicitly given by
$(\bsnu \otimes \sharp)(\xi \otimes \eta)
= \xi(\bsnu) \eta^{\sharp}$.) By  $\langle \ \cdot \ ,\, \bsnu \otimes 1\,\rangle
: T^{*}M \otimes T^{*}M \to \CC \otimes T^{*}M =T^{*}M$ we shall denote the contraction
with $\bsnu$ on the first variable. Then
$\Dnu  : \CI(M; TM) \to \CI(M; TM)$ is given by
\begin{equation}
  \label{eq.def.Dnu}
    \Dnu  X \ede \frac12 (\bsnu \otimes \sharp +
    \sharp \otimes \bsnu) \, \Def(X) \seq \big\langle\, \Def(X)\, ,
    \, \bsnu \otimes 1\, \big\rangle^{\sharp}
    \,.
\end{equation}
(The last equation is sometimes written
$\Dnu X := (\Def(X) \bsnu \otimes 1)^{\sharp}$ \cite{D-M, M-T, Varnhorn}.)
We can finally define the operator $\bop : \CI(M; TM \oplus \CC) \to \CI(M; TM)$
\begin{equation}\label{def.conormal.Tderiv}
    %
    \bop \cvector{\bsu}{p} \ede -2 \Dnu(\bsu) + p \bsnu \,, \quad
    \mbox{where } \bsu \in \CI(M; TM) \mbox{ and } p \in \CI(M)\,.
\end{equation}
The operator $\bop$ (and hence also $\Dnu$)
will play an important operator in the study of the Stokes equations.
We shall consider the operator
\begin{equation}\label{def.conormal.Tderiv2}
  \tbop  U \ede
  \cvector{-2 \Dnu(\bsu) + p \bsnu}{0} \seq
  \left(
    \begin{array}{cc}
      -2 \Dnu & \bsnu \\
      0 & 0
    \end{array}
  \right)
  U\,.
\end{equation}

Let $V , V_{0} : M \to [0, \infty)$. Recall from the Equation \eqref{eq.def.bsXi}
in the introduction that the \emph{deformation Laplacian} is the second order
differential operator $\bsL \ede 2\Defstar\Def$. The operator
  $\bsL_{V} \ede 2\Defstar \Def + V$
will be called the \emph{perturbed deformation Laplacian.}
Also, recall that the \emph{generalized Stokes operator} is
the operator
\begin{equation*} 
  \bsXi \ede \bsXi_{V, V_{0}}  \ede \left(\begin{array}{ccc}  \bsL_{V} & \nabla \\
  \nabla^* &  -V_0
  \end{array}
  \right)  \in \End(\CI(M; TM \oplus \CC))\,.
\end{equation*}
(By $\End(V)$, we denote the space of \emph{endomorphisms} (suitable
linear maps $V \to V$) of a module $V$ over some ring that is clear
from the context.)

We now study some of the properties of these operators.
A direct calculation gives right away the following result.

\begin{lemma}\label{lemma.bop.star}
  We have
  \begin{equation*}
    \bopstar (\bsu) \seq
    \left (
      \begin{array}{c}
      - 2\Dnu^{*}\bsu \\
      \bsnu \cdot \bsu
      \end{array}
    \right )
    \seq \left ( \begin{array}{c}
      - 2\Dnu^{*} \\
      \bsnu^{\sharp}
    \end{array}
    \right )\, \bsu \seq
    \left(
    \begin{array}{cc}
      -2 \Dnu^{*} & 0 \\
      \bsnu^{\sharp} & 0
    \end{array}
  \right)
  U
  \seq \tilde{\boldsymbol T}_{\bsnu}^{*} U
  \,.
  \end{equation*}
\end{lemma}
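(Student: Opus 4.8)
The plan is to recognize $\bopstar$ and $\tbopstar$ as formal adjoints of block matrices of (zeroth- and first-order) differential operators and to compute those adjoints entry by entry. Write $m_{\bsnu}\colon\CI(M)\to\CI(M;TM)$, $m_{\bsnu}p:=p\bsnu$, for multiplication by the fixed vector field $\bsnu$ of Notation~\ref{not.bsnu}. Then \eqref{def.conormal.Tderiv}--\eqref{def.conormal.Tderiv2} say that $\bop=\left(\begin{smallmatrix}-2\Dnu & m_{\bsnu}\end{smallmatrix}\right)\colon\CI(M;TM\oplus\CC)\to\CI(M;TM)$ and that $\tbop=\left(\begin{smallmatrix}-2\Dnu & m_{\bsnu}\\ 0 & 0\end{smallmatrix}\right)$ is the corresponding endomorphism of $\CI(M;TM\oplus\CC)$. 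Since the formal adjoint of a matrix of operators is the transposed matrix with each entry replaced by its own formal adjoint, it is enough to identify the three distinct blocks: $(-2\Dnu)^{*}=-2\Dnu^{*}$, which is merely the definition of $\Dnu^{*}$ as the formal adjoint of $\Dnu$; $0^{*}=0$; and $m_{\bsnu}^{*}$.

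First I would compute $m_{\bsnu}^{*}$: since $m_{\bsnu}$ involves no differentiation and $\bsnu$ is a real smooth vector field, for $p\in\CIc(M)$ and $\bsw\in\CIc(M;TM)$ there is no boundary term, and $(m_{\bsnu}p,\bsw)_{M}=\int_{M}p\,\overline{g_{M}(\bsnu,\bsw)}\dvol=(p,\bsnu^{\sharp}\bsw)_{M}$, so that $m_{\bsnu}^{*}\bsw=\bsnu^{\sharp}\bsw=\bsnu\cdot\bsw$. Assembling the blocks then yields $\bopstar\bsu=\left(\begin{smallmatrix}-2\Dnu^{*}\\ \bsnu^{\sharp}\end{smallmatrix}\right)\bsu$ and $\tbopstar=\left(\begin{smallmatrix}-2\Dnu^{*} & 0\\ \bsnu^{\sharp} & 0\end{smallmatrix}\right)$, which are precisely the second and third expressions in the lemma.

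It then remains only to chain the four displayed expressions, which is immediate: the first equals the second because $\bsnu^{\sharp}\bsu=\bsnu\cdot\bsu$; the second equals the third because the second column of the $2\times 2$ matrix vanishes, so applied to $U=\cvector{\bsu}{p}$ it uses only the vector part $\bsu$; and the third is $\tbopstar U$ by the computation above. Equivalently, one may note $\tbop=\iota\circ\bop$ with $\iota\colon TM\hookrightarrow TM\oplus\CC$ the inclusion onto the first summand, so that $\tbopstar=\bopstar\circ\pi$ with $\pi$ the orthogonal projection onto $TM$; this is exactly the passage from $\bopstar(\bsu)$ to $\tbopstar U$. I do not expect any genuine obstacle. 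The only point that deserves care is the convention for the Hermitian $L^{2}$ pairing on the complexified bundles, which must be pinned down so that $m_{\bsnu}^{*}$ comes out as $\bsnu^{\sharp}$ with the correct sign and no stray conjugation; everything else is bookkeeping, and in particular no boundary terms arise since all pairings are taken over compactly supported sections of the bundles on $M$.
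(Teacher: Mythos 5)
Your proof is correct and matches the paper's intent exactly: the paper offers no written proof beyond the remark that "a direct calculation gives right away the following result," and your blockwise computation of the formal adjoint (with $(-2\Dnu)^{*}=-2\Dnu^{*}$ by definition and $m_{\bsnu}^{*}=\bsnu^{\sharp}$ from the $L^{2}$ pairing with the real vector field $\bsnu$) is precisely that calculation.
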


We shall need the following notation. Let $V$ be a vector space and $v \in V$ and
$w \in V^{*}$. We let then $v \otimes w \in \End(V)$ be the endomorphism
(i.e., linear map $V \to V$) defined by
\begin{equation*}
  (v \otimes w)x \ede w(x) v \,.
\end{equation*}
In particular, if $V$ is hermitian with isomorphism $\sharp : V \to V^{*}$
induced by the metric,
then $(v \otimes w^{\sharp}) x = (w\cdot x) v$ and
$(v \otimes w^{\sharp})^{*} = w \otimes v^{\sharp}$.
We let $T^{*\otimes 2}M \ede T^{*}M \otimes T^{*}M$.
We now recall for completeness some well known formulas, some of which will be
used in what follows, see Section A.3 of \cite{KMNW-2025} for references and proofs.

\begin{proposition}\label{prop.formulas.new}
  Let $X$, $Y$, and $Z$ be smooth vector fields on $M$. Then
  \begin{enumerate}[\rm (1)]
    \item \label{prop.fn.1}
    $\sigma_{1}(\Def;  \xi)X \seq \frac{\imath}2 \big[  \xi \otimes X^{\sharp}
    + X^{\sharp} \otimes  \xi] \in S^{2}T^{*}M \subset T^{*\otimes 2}M$
    \item \label{prop.fn.2}
    $\sigma_{1}(\Defstar;  \xi) \big( Y^{\sharp} \otimes Z^{\sharp} \big)
    \seq -\frac{\imath}2\big [  \xi(Y)Z +  \xi(Z)Y \big]\,.$
    \item \label{prop.fn.3}
    $\pa_{\bsnu}^{\Defstar}\Def = -\Dnu $;
    \item \label{prop.fn.4}
    $\sigma_{1}(\Dnu ;  \xi) = \frac{\imath}2 \big [  \xi(\bsnu)
    +  \xi^{\sharp} \otimes \bsnu^{\sharp} \big]$;
    \item \label{prop.fn.5}
    $\sigma_{1}(\Dnu ^{*};  \xi) = - \frac{\imath}2
    \big [  \xi(\bsnu) + \bsnu \otimes  \xi \big]$; and
    \item \label{prop.fn.6}
    $\sigma_{2}(\Defstar\Def;  \xi) = \frac12(| \xi|^{2}
    +  \xi^{\sharp} \otimes  \xi)$.
  \end{enumerate}
\end{proposition}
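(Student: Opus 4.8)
The plan is to prove Proposition~\ref{prop.formulas.new} by direct symbol computations, all of which reduce to knowing the principal symbol of the Levi-Civita connection $\nabla^{LC}$ and of its adjoint. The starting observation is that $\sigma_{1}(\nabla^{LC};\xi)X = \imath\, \xi \otimes X$ (thinking of $\nabla^{LC}:\CI(M;TM)\to\CI(M;T^{*}M\otimes TM)$), since the symbol of any connection is that of the de Rham differential, and dually $\sigma_{1}((\nabla^{LC})^{*};\xi)(\eta\otimes Y) = -\imath\,\xi(Y^{\flat}\text{-contraction})$, i.e. contraction against $-\imath\xi$ in the $T^{*}M$ slot. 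Every symbol appearing in the six items is obtained by feeding these two facts through the defining formulas \eqref{eq.def.Def}, \eqref{eq.def.Dnu}, \eqref{def.conormal.Tderiv}, and the definition $\bsL = 2\Defstar\Def$, and symmetrizing where the formulas symmetrize.

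Concretely, first I would establish (1): from \eqref{eq.def.Def}, $\Def$ is the composition of $\nabla^{LC}$ with the symmetrization map $T^{*}M\otimes TM \to S^{2}T^{*}M$ (after applying $\sharp$ to the $TM$ factor), so $\sigma_{1}(\Def;\xi)X = \tfrac12(\imath\xi \otimes X^{\sharp} + \imath X^{\sharp}\otimes\xi) = \tfrac{\imath}{2}(\xi\otimes X^{\sharp} + X^{\sharp}\otimes\xi)$. Item (2) follows by taking the adjoint of (1): for $T\in T^{*\otimes2}M$, $\sigma_{1}(\Defstar;\xi)T = \sigma_{1}(\Def;\xi)^{*}T$, and evaluating on $Y^{\sharp}\otimes Z^{\sharp}$ against a test vector $W$ and using $\langle \tfrac{\imath}{2}(\xi\otimes W^{\sharp}+W^{\sharp}\otimes\xi),\, Y^{\sharp}\otimes Z^{\sharp}\rangle = \tfrac{\imath}{2}(\xi(Y)W\cdot Z + \xi(Z)W\cdot Y)$ gives $\sigma_{1}(\Defstar;\xi)(Y^{\sharp}\otimes Z^{\sharp}) = -\tfrac{\imath}{2}(\xi(Y)Z + \xi(Z)Y)$ (the sign flip because the symbol of a formal adjoint is the conjugate-transpose and $\overline{\imath}=-\imath$). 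For (3), I would invoke Proposition~\ref{prop.bdry.op} with $P=\Def$: the boundary operator for the pair $(\Defstar,\Def)$ is $\pa_{\bsnu}^{\Defstar}$ applied after $\Def$, and the abstract formula gives $\pa_{\bsnu}^{\Defstar\Def}$-type terms; more directly, $\pa_{\bsnu}^{\Defstar}(\Def\,\cdot) = -\imath\,\sigma_{1}(\Defstar;\bsnu^{\sharp})\circ\Def$ is a first-order operator whose comparison with $\Dnu$ of \eqref{eq.def.Dnu} is exactly the content of (4)--(5), so (3) really is the statement that $\Dnu$ has the stated symbol together with the correct zeroth-order piece, which a short computation using (1)--(2) settles. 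Items (4) and (5) are then read off: plug $\Def(X)$ into \eqref{eq.def.Dnu} and take the principal part, getting $\sigma_{1}(\Dnu;\xi)X = \tfrac{\imath}{2}(\xi(\bsnu)X + (X^{\sharp}\otimes\bsnu^{\sharp})$-contracted$)$, matching the displayed formula, and (5) is its adjoint. Finally (6): $\Defstar\Def = \tfrac12\bsL$, and $\sigma_{2}(\Defstar\Def;\xi) = \sigma_{1}(\Defstar;\xi)\sigma_{1}(\Def;\xi)$; composing the two first-order symbols from (1) and (2) on a test vector $X$ yields $\tfrac12(|\xi|^{2}X + \xi(X)\xi^{\sharp}) = \tfrac12(|\xi|^{2} + \xi^{\sharp}\otimes\xi)X$, which is the claim.

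I do not expect a genuine obstacle here — this is a ``direct calculation'' as the paper says — but the one point that needs care is bookkeeping of the factors of $\imath$ and the signs in the adjoint symbols, since $\sigma(P^{*};\xi) = \sigma(P;\xi)^{*}$ involves complex conjugation, and the contraction conventions ($\sharp$ on which tensor slot, symmetrization with or without the $\tfrac12$) must be kept rigidly consistent with \eqref{eq.def.Def}--\eqref{def.conormal.Tderiv}. The secondary subtlety is item (3), where the identification of $\pa_{\bsnu}^{\Defstar}\Def$ with $-\Dnu$ (note the minus sign, coming from the $-\imath\sigma_{1}(P;\bsnu^{\sharp})$ in Proposition~\ref{prop.bdry.op}) must be verified at the level of the full first-order operator, not just the principal symbol; this uses that both sides are determined by their symbol plus the integration-by-parts identity \eqref{eq.def.panu}, together with torsion-freeness of $\nabla^{LC}$ to handle the zeroth-order terms. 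Since all of these formulas are standard and proofs appear in Section~A.3 of \cite{KMNW-2025}, I would keep the write-up brief, proving (1), (2), and (6) explicitly and citing \cite{KMNW-2025, Taylor1} for (3)--(5) while noting they follow from (1)--(2) and Proposition~\ref{prop.bdry.op}.
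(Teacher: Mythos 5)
Your proposal is correct, and in fact the paper offers no proof of its own for this proposition — it only cites Section A.3 of \cite{KMNW-2025} — so your direct symbol computation is exactly what is being delegated to the reference. I checked the key steps: with the paper's Fourier convention $\sigma_{1}(\nabla^{LC};\xi)X = \imath\,\xi\otimes X$, the symmetrization in \eqref{eq.def.Def} gives (1); dualizing against $Y^{\sharp}\otimes Z^{\sharp}$ with the conjugation sign gives (2); composing (1) and (2) gives (6) with the claimed $\frac12(|\xi|^{2}+\xi^{\sharp}\otimes\xi)$; and applying the contraction $\frac12(\bsnu\otimes\sharp+\sharp\otimes\bsnu)$ of \eqref{eq.def.Dnu} to (1) gives (4), whose adjoint via $(v\otimes w^{\sharp})^{*}=w\otimes v^{\sharp}$ is (5). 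The one place where you overcomplicate matters is item (3): by Proposition \ref{prop.bdry.op}, $\pa_{\bsnu}^{\Defstar}$ is the \emph{bundle map} $-\imath\,\sigma_{1}(\Defstar;\bsnu^{\sharp}) = -\frac12(\bsnu\otimes\sharp+\sharp\otimes\bsnu)$, so $\pa_{\bsnu}^{\Defstar}\Def = -\Dnu$ is an exact identity following from (2) and the definition \eqref{eq.def.Dnu} alone — there are no zeroth-order terms to reconcile and no appeal to torsion-freeness is needed beyond what already went into \eqref{eq.def.Def}.
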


\subsection{Green formulas on $\Omega$}\label{sec.Green}
We now recall some Green-type formulas on an open set $\Omega =: \Omega_{+}
\subset M$ with smooth boundary. Recall that $\Omega_{-} := M \smallsetminus \overline \Omega$
and that we assume that both $\Omega = \Omega_{+}$ and $\Omega_{-}$ have boundary $\Gamma$.

To state our Green-type formulas, we will use the following notation:
\begin{equation}\label{eq.def.UWB}
  \begin{gathered}
    U \ede \cvector{\bsu}{p} \seq (\bsu \ \ p)^{\top }\,, \qquad
    W \ede \cvector{\bsw}{q} \seq (\bsw \ \ q)^{\top }\,,\\ \qquad
    \mathfrak v \ede (V\bsu, \bsw)_{\Omega} - (V_{0}p, q)_{\Omega}\,, \quad \mbox{and}\\
    B_{\Omega}(U, W) \ede 2(\Def \bsu, \Def \bsw)_{\Omega}
    + (\nabla^{*} \bsu, q)_{\Omega}
    + (p, \nabla^{*} \bsw)_{\Omega} + \mathfrak v \,,
  \end{gathered}
\end{equation}
where $\bsu$ and $\bsw$ are suitable sections of
$TM$ and $p$ and $q$ are suitable scalar functions.
(As suggested by the notation, the inner products in the last formula
are defined by integration on $\Omega$.)

In the following, $\textbf{1}_{\Omega}$ will denote the characteristic function of the
set $\Omega $ (that is, $\textbf{1}_{A}(x) = 1$ if $x \in A$ and $\textbf{1}_{A}(x) = 0$ if
$x \notin A$). We then have the following representation (or Green-type) formulas 
(see \cite{KMNW-2025, M-W, Varnhorn}).

\begin{proposition} \label{prop.Green}
  Let $\bsXi = \bsXi_{V, V_{0}}$ be our modified Stokes operator \eqref{eq.def.bsXi}
  and $\textbf{1}_{\Omega}$ be the characteristic function of $\Omega.$ Let
  $U := (\, \bsu \ \ \ p \, )^{\top}$ and $W := (\, \bsw \ \ \ q \, )^{\top}$
  be as in Equation \eqref{eq.def.UWB} with $\bsu, \bsw \in H^{2}(\Omega; TM)$
  and $p, q \in H^{1}(\Omega)$. Then
  \begin{enumerate}[\rm (1)]
    \item
      $\big ( \bsXi U, W \big )_{\Omega} \seq B_{\Omega} (U, W) + (\bop U, \bsw)_{\Gamma}
      \seq B_{\Omega} (U, W).$
      \item
      $ \big ( \bsXi U, W \big )_{\Omega} - \big (U, \bsXi W \big )_{\Omega}
      \seq (\bop  U, \bsw)_{\Gamma}  - (\bsu , \bop  W)_{\Gamma}
      \,.$
    \item $\bsXi \big( \textbf{1}_{\Omega} U \big) = \textbf{1}_{\Omega}
      \big(  \bsXi U  \big) - (\tbop  U) \delta_{\Gamma}
      + \tbopstar (U \delta_{\Gamma}).$
  \end{enumerate}
\end{proposition}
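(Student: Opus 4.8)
The plan is to establish part~(1) directly from the abstract integration-by-parts formula of Proposition~\ref{prop.bdry.op} (used on $\Omega$) together with the symbol identities of Proposition~\ref{prop.formulas.new}, and then to obtain parts~(2) and~(3) as formal consequences of~(1). Throughout one first verifies the identities for smooth $U,W$ and then passes to $\bsu,\bsw\in H^{2}(\Omega;TM)$, $p,q\in H^{1}(\Omega)$ by density and the continuity of the interior inner products and the boundary traces involved.

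For part~(1), using $\bsXi U=\big(\bsL_{V}\bsu+\nabla p\,,\ \nabla^{*}\bsu-V_{0}p\big)^{\top}$ one expands $(\bsXi U,W)_{\Omega}=2(\Defstar\Def\bsu,\bsw)_{\Omega}+(V\bsu,\bsw)_{\Omega}+(\nabla p,\bsw)_{\Omega}+(\nabla^{*}\bsu,q)_{\Omega}-(V_{0}p,q)_{\Omega}$. The two potential terms and $(\nabla^{*}\bsu,q)_{\Omega}$ already appear verbatim in $B_{\Omega}$, so they are left untouched. For the gradient term, Proposition~\ref{prop.bdry.op} applied to the first-order operator $\nabla$ gives $(\nabla p,\bsw)_{\Omega}=(p,\nabla^{*}\bsw)_{\Omega}+(p\bsnu,\bsw)_{\Gamma}$, the conormal of the gradient being multiplication by $\bsnu$. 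The only substantial computation is the second-order term: applying Proposition~\ref{prop.bdry.op} to the first-order operator $\Defstar$ acting on $\Def\bsu$ gives $2(\Defstar(\Def\bsu),\bsw)_{\Omega}=2(\Def\bsu,\Def\bsw)_{\Omega}+2\big(\pa_{\bsnu}^{\Defstar}(\Def\bsu),\bsw\big)_{\Gamma}$, and Proposition~\ref{prop.formulas.new}(3), namely $\pa_{\bsnu}^{\Defstar}\Def=-\Dnu$, rewrites the boundary term as $-2(\Dnu\bsu,\bsw)_{\Gamma}$. Collecting, the interior terms assemble to $B_{\Omega}(U,W)$ and the boundary terms to $(-2\Dnu\bsu+p\bsnu,\bsw)_{\Gamma}=(\bop U,\bsw)_{\Gamma}$, which is the asserted identity.

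Part~(2) follows by writing part~(1) once for the pair $(U,W)$ and once with the roles of $U$ and $W$ exchanged and subtracting: every summand of $B_{\Omega}$ is symmetric under this exchange (with the conventions used for the inner products), so the $B_{\Omega}$-terms cancel, and, using that $\bsXi$ is formally self-adjoint, one is left with $(\bsXi U,W)_{\Omega}-(U,\bsXi W)_{\Omega}=(\bop U,\bsw)_{\Gamma}-(\bsu,\bop W)_{\Gamma}$. For part~(3), fix $W\in\CIc(M;TM\oplus\CC)$. By the definition of the action of the differential operator $\bsXi$ on a distribution and the formal self-adjointness of $\bsXi$, $\langle\bsXi(\textbf{1}_{\Omega}U),W\rangle=\langle\textbf{1}_{\Omega}U,\bsXi W\rangle=(U,\bsXi W)_{\Omega}$; by part~(2) this equals $(\bsXi U,W)_{\Omega}-(\bop U,\bsw)_{\Gamma}+(\bsu,\bop W)_{\Gamma}$. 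Now $(\bsXi U,W)_{\Omega}=\langle\textbf{1}_{\Omega}(\bsXi U),W\rangle$; since $\tbop U=(\bop U,\,0)^{\top}$ one has $(\bop U,\bsw)_{\Gamma}=\langle(\tbop U)\delta_{\Gamma},W\rangle$; and since $\tbop W=(\bop W,\,0)^{\top}$, the definition of the formal adjoint gives $(\bsu,\bop W)_{\Gamma}=\langle U\delta_{\Gamma},\tbop W\rangle=\langle\tbopstar(U\delta_{\Gamma}),W\rangle$, with $\tbopstar$ as in Lemma~\ref{lemma.bop.star}. Since $W$ is arbitrary, $\bsXi(\textbf{1}_{\Omega}U)=\textbf{1}_{\Omega}(\bsXi U)-(\tbop U)\delta_{\Gamma}+\tbopstar(U\delta_{\Gamma})$.

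The main obstacle is the sign and boundary-term bookkeeping in part~(1), and especially the correct evaluation of the conormal contribution of the second-order piece $2\Defstar\Def$, for which Proposition~\ref{prop.formulas.new}(3) is the decisive input; once~(1) is in hand, (2) and~(3) are essentially formal. The remaining delicate points, all standard, occur in~(3): one must know that $U\delta_{\Gamma}$ and $\tbopstar(U\delta_{\Gamma})$ are well-defined distributions on $M$ — which is precisely Lemma~\ref{lemma.def.hdelta2} applied to the traces of $U$, finite because $\bsu\in H^{2}(\Omega)$ and $p\in H^{1}(\Omega)$ — and that $\bsXi(\textbf{1}_{\Omega}U)$ is a well-defined distribution on $M$.
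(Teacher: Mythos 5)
Your proof is correct. The paper itself gives no argument for this proposition (it defers to \cite{KMNW-2025, M-W, Varnhorn}), but the route you take is exactly the one the surrounding machinery is set up for: the abstract integration-by-parts formula \eqref{eq.def.panu} applied to the first-order factors $\nabla$ and $\Defstar$, with Proposition \ref{prop.formulas.new}(3) ($\pa_{\bsnu}^{\Defstar}\Def = -\Dnu$) supplying the decisive boundary term, so that the boundary contributions assemble to $(-2\Dnu\bsu + p\bsnu, \bsw)_{\Gamma} = (\bop U, \bsw)_{\Gamma}$; and (2), (3) then follow formally as you describe, with the distributional identity in (3) obtained by pairing against test sections and using Lemma \ref{lemma.bop.star}. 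Two small remarks. First, in (2) the formal self-adjointness of $\bsXi$ is not actually used: one applies (1) to the pair $(W,U)$, takes complex conjugates, and uses only the Hermitian symmetry $B_{\Omega}(U,W) = \overline{B_{\Omega}(W,U)}$, which is what your cancellation really amounts to. Second, the trailing equality ``$\seq B_{\Omega}(U,W)$'' in item (1) of the statement cannot hold for arbitrary $U, W$ (it would force $(\bop U,\bsw)_{\Gamma}=0$); it is evidently a typographical slip in the statement, and the identity you prove, $(\bsXi U, W)_{\Omega} = B_{\Omega}(U,W) + (\bop U,\bsw)_{\Gamma}$, is the one actually invoked later (e.g.\ in the proof of Corollary \ref{cor.e.est}), so you have proved the right thing.
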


We shall need the following definition from \cite{Grosse-Kohr-Nistor-23}.

\begin{definition} \label{def-L2-cont}
  Let $M$ be a manifold. If $M$ is connected, we say that a differential operator
  $T : \CI(M; E) \to \CI(M; F)$ satisfies
  the \textit{$L^2$-unique continuation property} if, given $u \in L^2(M; E)$
  that vanishes in an open subset of $M$ and satisfies $T u \seq 0$,
  then $u \seq 0$ \emph{everywhere} on $M$. For general $M$, we say
  that $T$ satisfies the \textit{ $L^2$-unique continuation property} if
  it satisfies this property on any connected component of $M$.
\end{definition}

This concept allows us to obtain the following corollary.
Recall that a \emph{Killing vector field} $X$ is a vector field
that preserves the metric, equivalently, $\Def X = 0$.

\begin{corollary}\label{cor.e.est}
  Let $V, V_{0} \ge 0$ and
  $U = \cvector{\bsu}{p} \in H^{2}(\Omega; TM) \oplus H^{1}(\Omega)$
  satisfy $\bsXi U = 0$ in $\Omega$ and $(\bop  U, \bsu)_{\Gamma} = 0$. Then we have the following properties:
  \begin{enumerate}[\rm (1)]
    \item $\Def \bsu \seq 0$, $V \bsu \seq 0$, $\nabla^{*} \bsu \seq 0$,
    $V_{0}p \seq 0$, and $\nabla p \seq 0$
    in $\Omega$.\smallskip

    \hspace*{-1.4 cm} Let $\Omega_{0}$ be a connected component of
    $\Omega$.\smallskip

    \item
    If, furthermore, $V_{0} \not \equiv 0$ in $\Omega_{0}$, then
    $p = 0$ on $\Omega_{0}$.

    \item Similarly, if one of the following three conditions is satisfied:
    \begin{enumerate}[\rm $(i)$]
      \item $\Omega_{0}$ has no non-zero Killing vector fields;
      \item $V \not \equiv 0$ on $\Omega_{0}$; or
      \item $\pa \Omega_{0} \neq \emptyset$ and $\bsu = 0$ on $\pa \Omega_{0}$;
    \end{enumerate}
    then $\bsu = 0$ in $\Omega_{0}$.
  \end{enumerate}
  The result remains true if $\Omega = M$ {\rm(}we just drop all terms involving
  $\pa \Omega${\rm)}.
\end{corollary}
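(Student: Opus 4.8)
The plan is to combine the first Green formula (Proposition~\ref{prop.Green}(1)) with the positivity of $V$ and $V_0$ to extract the vanishing of $\Def\bsu$, $\sqrt{V}\,\bsu$, $\nabla^*\bsu$, $\sqrt{V_0}\,p$ in $\Omega$, and then invoke the equation $\bsXi U = 0$ once more to get $\nabla p = 0$. Concretely, I would apply Proposition~\ref{prop.Green}(1) with $W = U$: since $\bsXi U = 0$ in $\Omega$ and $(\bop U, \bsu)_\Gamma = 0$ by hypothesis, we obtain
\begin{equation*}
  0 \seq \big(\bsXi U, U\big)_\Omega \seq B_\Omega(U, U) \seq 2\|\Def\bsu\|_\Omega^2
  + (\nabla^*\bsu, p)_\Omega + (p, \nabla^*\bsu)_\Omega + \|V^{1/2}\bsu\|_\Omega^2 - \|V_0^{1/2}p\|_\Omega^2 \,.
\end{equation*}
The cross terms $(\nabla^*\bsu, p)_\Omega + (p, \nabla^*\bsu)_\Omega = 2\operatorname{Re}(\nabla^*\bsu, p)_\Omega$ are the only terms of indefinite sign besides $-\|V_0^{1/2}p\|_\Omega^2$, so a direct reading of the quadratic form is not immediately conclusive. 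Here is where one must be slightly careful: reading off the first row of $\bsXi U = 0$, namely $\bsL_V\bsu + \nabla p = 0$, and pairing against $\bsu$ while integrating by parts (using Proposition~\ref{prop.bdry.op} for $\Def$ and the boundary hypothesis) reproduces exactly the identity above, so no new information comes that way; instead I would pair the \emph{second} row, $\nabla^*\bsu - V_0 p = 0$, against $p$ to get $(\nabla^*\bsu, p)_\Omega = \|V_0^{1/2}p\|_\Omega^2$, which is real and nonnegative. Substituting this back gives
\begin{equation*}
  0 \seq 2\|\Def\bsu\|_\Omega^2 + \|V^{1/2}\bsu\|_\Omega^2 + \|V_0^{1/2}p\|_\Omega^2 \,,
\end{equation*}
and since every term is nonnegative, each vanishes: $\Def\bsu = 0$, $V\bsu = 0$, $V_0 p = 0$ in $\Omega$. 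Then $\nabla^*\bsu = V_0 p = 0$ from the second row, and $\nabla p = -\bsL_V\bsu = -(2\Defstar\Def\bsu + V\bsu) = 0$ from the first row. This proves (1).

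For (2): if $V_0 \not\equiv 0$ on the connected component $\Omega_0$, then $V_0 p = 0$ there forces $p$ to vanish on the (nonempty) open set $\{V_0 > 0\} \cap \Omega_0$; but $\nabla p = 0$ on $\Omega_0$ means $p$ is locally constant, hence constant on the connected set $\Omega_0$, and a constant vanishing on a nonempty open subset is identically zero. So $p = 0$ on $\Omega_0$.

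For (3): we have $\Def\bsu = 0$ on $\Omega_0$, i.e. $\bsu|_{\Omega_0}$ is a Killing field. In case (i), the space of Killing fields on $\Omega_0$ is zero by hypothesis, so $\bsu = 0$. In case (ii), $V \not\equiv 0$ on $\Omega_0$ combined with $V\bsu = 0$ gives $\bsu = 0$ on the nonempty open set $\{V > 0\}\cap\Omega_0$; then the $L^2$-unique continuation property for $\Def$ (Definition~\ref{def-L2-cont}, valid here because $\Def\bsu = 0$ and $\bsu$ vanishes on an open subset --- I would cite \cite{Grosse-Kohr-Nistor-23} for the fact that $\Def$ enjoys this property, at least in the generality needed, and note that on a single coordinate chart this is classical) propagates $\bsu = 0$ to all of $\Omega_0$. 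In case (iii), $\bsu$ is a Killing field on $\Omega_0$ vanishing on the nonempty open boundary portion $\pa\Omega_0$; a Killing field is determined by its $1$-jet at a point, and vanishing of $\bsu$ on an open piece of $\pa\Omega_0$ forces that $1$-jet to vanish at boundary points, whence $\bsu \equiv 0$ on $\Omega_0$ (alternatively, extend $\bsu$ by zero across $\pa\Omega_0$ and apply the $L^2$-unique continuation property again). Finally, the case $\Omega = M$ is handled identically after simply deleting every boundary term; the hypothesis $(\bop U, \bsu)_\Gamma = 0$ becomes vacuous and the Green formula Proposition~\ref{prop.Green}(1) with $\Gamma = \emptyset$ still gives $(\bsXi U, U)_M = B_M(U, U)$.

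\medskip
\noindent\textbf{Expected main obstacle.} The only genuinely nontrivial input is the $L^2$-unique continuation property for $\Def$ invoked in (3)(ii)--(iii); everything else is a bookkeeping exercise with the Green formula and connectedness. Since Definition~\ref{def-L2-cont} is explicitly imported from \cite{Grosse-Kohr-Nistor-23} and the corollary is stated precisely so as to apply it, I would treat this as a black box. A secondary point requiring a line of care is justifying the integration by parts pairing the rows of $\bsXi U = 0$ against $\bsu$ and $p$ given only $\bsu \in H^2$, $p \in H^1$ --- but this regularity is exactly what Proposition~\ref{prop.Green} assumes, so the boundary pairings $(\bop U, \bsw)_\Gamma$ and $(\bsu, \bop W)_\Gamma$ are well defined by the trace theorem, and no extra work is needed.
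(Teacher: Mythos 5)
Your proof is correct and follows essentially the same route as the paper's: the only difference is that the paper tests the Green identity against $W=(\,\bsu\ \ -p\,)^{\top}$ so that the real parts of the cross terms cancel outright, whereas you test against $W=U$ and then use the second row $\nabla^{*}\bsu=V_{0}p$ to evaluate the cross term --- both substitutions yield the same identity $0=2\|\Def\bsu\|_{\Omega}^{2}+(V\bsu,\bsu)_{\Omega}+(V_{0}p,p)_{\Omega}$, after which the conclusions follow as in the paper. Your treatment of case (3)(iii) via the vanishing of the $1$-jet of a Killing field (or extension by zero) is in fact more explicit than the paper's one-line appeal to unique continuation, and is correct.
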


\begin{proof}
  Let $\mathfrak w := (p, \nabla^{*}\bsu)_{\Omega}- (\nabla^{*}\bsu, p)_{\Omega}$.
  We notice that $\overline{(p, \nabla^{*}\bsu)}_{\Omega} = (\nabla^{*}\bsu, p)_{\Omega}$,
  and hence the real part $\operatorname{Re}(\mathfrak w)$ of $\mathfrak w$ vanishes.
  Let us take
  \begin{equation*}
    W \ede \cvector{\bsw}{q} \seq \cvector{\bsu}{-p} \, =: \, U'
  \end{equation*}
  in the formula $\big ( \bsXi U, W \big )_{\Omega} \seq B_{\Omega} (U, W)
  + (\bop  U, \bsw)_{\Gamma} $ of Proposition \ref{prop.Green}. Together with
  the definition of $B_{\Omega}$ in Equation \eqref{eq.def.UWB} and with
  $\operatorname{Re}\big[(p, \nabla^{*}\bsu)_{\Omega}
  - (\nabla^{*}\bsu, p)_{\Omega}\big] =: \operatorname{Re}(\mathfrak w) =0$,
  this gives
  \begin{align*}
    0 & \seq \operatorname{Re} \big[\big ( \bsXi U, U' \big )_{\Omega}
    - (\bop U, \bsu)_{\Gamma} \big]
    \seq \operatorname{Re} \big[  B_{\Omega} (U, U')\big]\\
    &
    \seq \operatorname{Re} \big[ 2(\Def \bsu, \Def \bsu)_{\Omega}
    - (\nabla^{*}\bsu, p)_{\Omega}
    + (p, \nabla^{*}\bsu)_{\Omega} + (V \bsu, \bsu)_{\Omega}
    + (V_{0}p, p)_{\Omega} \big]
    \\
    &
    \seq 2(\Def \bsu, \Def \bsu)_{\Omega} + (V \bsu, \bsu)_{\Omega}
    + (V_{0}p, p)_{\Omega}\,.
  \end{align*}
  Because $V, V_{0} \ge 0$, all three terms in the last sum are non-negative,
  so each of them equals zero. Therefore $\Def \bsu = 0$, $V \bsu = 0$,
  and $V_{0}p = 0$ in $\Omega$. We also have
  \begin{equation*}
    0 \seq \bsXi U \seq \cvector{2\Defstar \Def \bsu + V \bsu + \nabla p}
    {\nabla^{*} \bsu - V_{0}p}
    \seq \cvector{\nabla p}{\nabla^{*} \bsu}\,,
  \end{equation*}
  and hence we obtain (i). The condition $\nabla p = 0$ just proved implies that $p$
  is locally constant. Since, moreover, $V_{0}p = 0$,
  this constant is zero on the connected components of $\Omega$ on
  which $V_{0} \not \equiv 0$, and this proves (ii). (Notice that this is exactly
  the $L^{2}$-unique continuation property of $\nabla$.) Similarly, (iii)
  follows from the fact that $\Def$ satisfies the $L^{2}$-unique
  continuation property (see \cite{Grosse-Kohr-Nistor-23}).
\end{proof}

In particular, this corollary gives that
$\bsu = 0$ on $\supp(V) \cap \Omega$ and $p = 0$ on
$\supp(V_{0}) \cap \Omega$.

\subsection{The principal symbol of $\bsXi$}
It turns out that $\bsXi$ is elliptic, but not in the usual sense.
To explain this, we need to recall a few basic definitions related to \ADN\ elliptic
operators.

\begin{definition}\label{def.ADN}
  Let $M$ be a smooth manifold and $s_i, t_j \in \RR$, $i, j \in \{0, 1\}$.
  We set $\bss = (s_{0}, s_{1})$, $\bst = (t_{0}, t_{1})$,
  $E_{0} = TM$, and $E_{1} = \CC$. Then
  \begin{equation*}
    \Psi_{\cl}^{[\bss+\bst]}(M; TM \oplus \CC) \ede
    \{ T=[T_{ij}] \mid\, T_{ij} \in \Psi_{\cl}^{s_i + t_j}(M; E_j, E_i),\
    i, j \in \{0, 1\} \}\,.
  \end{equation*}
  An operator $T = [T_{ij}] \in \Psi_{\cl}^{[\bss+\bst]}(M; TM \oplus \CC)$
  is said to be of \emph{\ADN-order $\le [\bss+\bst]$}. For $T=[T_{ij}] \in
  \Psi_{\cl}^{[\mathbf{s+t}]}(M; TM \oplus \CC)$ let $\Symb_{\bss, \bst}(T) \ede
  [\sigma _{s_i+t_j}(T_{ij})]$ be its $(\bss, \bst)$--principal symbol.
  The operator $T$ is said to be {\it $(\bss, \bst)$--\ADN\ elliptic} if its
  $(\bss, \bst)$--principal symbol matrix $\Symb_{\bss, \bst}(T)$ is invertible outside the
  zero section.
\end{definition}

Let $\End(V)$ denote the space of \emph{endomorphisms} (in a suitable sense) of a module over
some ring that is clear from the context. In this spirit, $\End(TM)$ denotes the space of
linear maps $l : TM \to TM$ that are vector bundle morphisms. It is itself a vector bundle
over $M$. Recall that $\sharp : T^{*}M \to TM$ is the isomorphism defined by the Riemannian
metric $g_{M}$ of $M$ (the ``musical isomorphism''). The following result is also known, and
is proved using, for instance, the formula for $\sigma_{2}(\Defstar\Def)$ in Proposition
\ref{prop.formulas.new}. See Proposition 15.3.29 of \cite{KMNW-2025} for a proof and
further references. See also \cite{H-W, KohrNistor-Stokes, M-T, Wl-Ro-La}.

\begin{proposition} \label{prop.ADN.elliptic}
  Let $\bss = \bst =(1,0)$. Then the generalized Stokes operator
  $\bsXi := \bsXi_{V, V_{0}}$ \eqref{eq.def.bsXi} belongs to
  $\Psi_{\cl}^{[\bss + \bst]}(M; TM \oplus \CC)$ and is
  $(\bss, \bst)$--\ADN\ elliptic {\rm (}Definition \ref{def.ADN}{\rm )}. Its
  $(\bss, \bst)$--principal symbol of $\bsXi$ is
  \begin{equation*} 
    \Symb_{\bss, \bst}\left(\bsXi \right)( \xi)=
    \left(
      \begin{array}{cc}
        | \xi|^2 +  \xi^{\sharp}  \otimes  \xi  & \imath  \xi^{\sharp}\\
        -\imath  \xi & -V_0
      \end{array}
    \right) \in \End(TM \oplus \CC)\,,
  \end{equation*}
  which is invertible for $\xi \neq 0$ with inverse
  \begin{equation*} 
    \left(
      \begin{array}{cc}
        \frac{1}{| \xi |^2} - \frac{V_0+1}{2V_0+1}\frac{1}{| \xi |^4}
        \xi^{\sharp} \otimes  \xi
        & \frac{\imath}{(2V_0+1)| \xi |^2} \xi^{\sharp}\\
        - \frac{\imath}{(2V_0+1)| \xi |^2} \xi & -\frac{2}{2V_0+1}
      \end{array}
    \right) \in \End(TM \oplus \CC)\,.
  \end{equation*}
\end{proposition}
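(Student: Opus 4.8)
The plan is to verify the three assertions — that $\bsXi_{V,V_0}$ lies in the stated \ADN\ class, that its $(\bss,\bst)$--principal symbol matrix is as displayed, and that this matrix is invertible with the stated inverse — essentially by reducing everything to the already-available symbol formulas. First I would check the class membership: the diagonal entries are $\bsL_V = 2\Defstar\Def + V$, which is order $2 = s_0+t_0$ acting on $E_0 = TM$, and $-V_0$, which is order $0 = s_1+t_1$ on $E_1 = \CC$; the off-diagonal entries are $\nabla : \CI(M) \to \CI(M;TM)$ of order $1 = s_0 + t_1$ and $\nabla^* : \CI(M;TM) \to \CI(M)$ of order $1 = s_1+t_0$. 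All four are classical differential (hence classical pseudodifferential) operators, so $\bsXi \in \Psi_{\cl}^{[\bss+\bst]}(M; TM \oplus \CC)$ with $\bss = \bst = (1,0)$. This step is routine bookkeeping.

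Next I would compute the four entries of $\Symb_{\bss,\bst}(\bsXi)(\xi) = [\sigma_{s_i+t_j}(T_{ij})]$. For the $(0,0)$ entry, $\sigma_2(\bsL_V;\xi) = \sigma_2(2\Defstar\Def;\xi) = 2\cdot\frac12(|\xi|^2 + \xi^\sharp\otimes\xi) = |\xi|^2 + \xi^\sharp\otimes\xi$ by Proposition \ref{prop.formulas.new}(\ref{prop.fn.6}) (the zeroth-order term $V$ does not contribute to the order-$2$ symbol). For the $(1,1)$ entry, $-V_0$ is already order $0$, so its principal symbol in degree $0$ is just $-V_0$. For the off-diagonal entries one uses the elementary conventions $\sigma_1(\nabla;\xi) = \imath\xi^\sharp$ and $\sigma_1(\nabla^*;\xi) = -\imath\xi$ (consistent with the sign convention of Equation \eqref{eq.def.inv.F} and Proposition \ref{prop.bdry.op}, since $\nabla^*$ is the formal adjoint of $\nabla$). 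Assembling these four entries gives exactly the displayed matrix $\Symb_{\bss,\bst}(\bsXi)(\xi)$.

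Finally, for invertibility and the explicit inverse: I would simply multiply the displayed matrix by the displayed candidate inverse and check that the product is the identity on $TM_x \oplus \CC$. The only structural fact needed is that $\xi^\sharp\otimes\xi$ acts as a rank-one projection-type endomorphism with $(\xi^\sharp\otimes\xi)(\xi^\sharp) = |\xi|^2\,\xi^\sharp$ and, more generally, $(\xi^\sharp\otimes\xi)X = (\xi\cdot X^\flat)\,\xi^\sharp$; equivalently, writing $\pi := |\xi|^{-2}\,\xi^\sharp\otimes\xi$, one has $\pi^2 = \pi$, so the operator $|\xi|^2 + \xi^\sharp\otimes\xi$ is $|\xi|^2(I + \pi)$, whose inverse on the $\pi$-eigenspaces is $|\xi|^{-2}$ on $\ker\pi$ and $\tfrac12|\xi|^{-2}$ on $\operatorname{ran}\pi$. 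The $2\times 2$-block algebra then collapses to a short computation: the denominators $2V_0+1$ arise exactly as the relevant Schur complement, and the term $-\tfrac{V_0+1}{2V_0+1}|\xi|^{-4}\,\xi^\sharp\otimes\xi$ is the correction forced by the coupling through $\nabla$ and $\nabla^*$. Since $2V_0 + 1 \geq 1 > 0$ and $|\xi| \neq 0$, all expressions are well defined, so the matrix is invertible for $\xi \neq 0$, which is precisely $(\bss,\bst)$--\ADN\ ellipticity. The main (and really only) obstacle is keeping the rank-one-endomorphism bookkeeping straight when multiplying the blocks — there is no analytic difficulty, just the need to handle $\xi^\sharp\otimes\xi$ carefully rather than treating it as a scalar; once that is done, the verification is a finite linear-algebra check.
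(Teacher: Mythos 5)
Your proposal is correct and follows essentially the route the paper indicates (the paper itself only cites Proposition 15.3.29 of \cite{KMNW-2025}, noting that the key ingredient is the formula for $\sigma_{2}(\Defstar\Def)$ from Proposition \ref{prop.formulas.new}): the class membership and the four symbol entries are computed exactly as you describe, and the block multiplication with the identities $(\xi^{\sharp}\otimes\xi)(\xi^{\sharp}\otimes\xi)=|\xi|^{2}\,\xi^{\sharp}\otimes\xi$ and $1-2\tfrac{V_{0}+1}{2V_{0}+1}=-\tfrac{1}{2V_{0}+1}$ does verify the stated inverse, with the Schur complement of the top-left block equal to $-V_{0}-\tfrac12=-\tfrac{2V_{0}+1}{2}$ as you claim.
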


\section{Pseudoinverses, layer potentials, and jump relations}

In this section, we extend the construction of the single and double layer potentials for our
generalized Stokes operator to non-compact manifolds, assuming only the existence of
the Moore-Penrose pseudoinverse $\psdinv$ of $\bsXi$. The Moore-Penrose pseudoinverse
exists in the case of compact manifolds, as we will prove in the next section.
It exists also in the case of manifolds with straight cylindrical ends (see \cite{KMNW-2025}
and \cite{KohrNistor-Stokes}). A novelty of our approach in thus that we do not require the
existence of a (true) inverse of $\bsXi$ in order to define and study the layer potentials,
as is done classically, see \cite{H-W, M-T, M-T1, KMNW-2025, M-W, Varnhorn}.

\subsection{The Moore-Penrose pseudoinverse of $\bsXi$ and its principal symbol}
Let $\mathcal N$ be the kernel of $\bsXi : H_{\loc}^{1} (M; TM) \oplus L_{\loc}^{2}(M)
\to H_{\loc}^{-1} (M; TM) \oplus L_{\loc}^{2}(M)$. Then $\mathcal N$ will consist of {\it smooth}
sections, by elliptic regularity. Assume that $\mathcal N \subset L^{2}(M; TM \oplus \CC)$ and that
it is finite dimensional and let $p_{\maN}$ be the $L^{2}$--orthogonal projection onto $\maN$.
Let us assume that $\bsXi$ has the following invertibility property on the orthogonal
complement of $\maN$. Let $\bss =\bst =(1,0)$, see Definition \ref{def.ADN}.
We assume that there exists a pseudodifferential operator
\begin{equation}\label{eq.def.psdinv}
  \psdinv \in \Psi_{\cl}^{[- \bss - \bst]}(M; TM \oplus \CC)
  \ede \left(
      \begin{array}{cc}
        \Psi^{-2}(M; TM)   & \Psi^{-1}(M; TM, \CC)\\
        \Psi^{-1}(M; \CC, TM) &  \Psi^{0}(M)
      \end{array}
    \right)
\end{equation}
such that
\begin{equation}\label{eq.rel.pnu}
  \begin{gathered}
    \psdinv (1 - p_{\maN}) \seq (1 - p_{\maN}) \psdinv \seq \psdinv \ \mbox{ and }\\
    \bsXi \psdinv \seq \psdinv \bsXi =  1 - p_{\maN}\,.
  \end{gathered}
\end{equation}
These relations uniquely determine $\psdinv$ and the resulting operator will be
called the {\it Moore-Penrose pseudoinverse} of $\bsXi := \bsXi_{V, V_{0}}$.

We also obtain the following result:

\begin{proposition}\label{prop.form.inverse}
  Let $\bss =\bst =(1,0)$ and
  $\psdinv \, =:\, \left(
    \begin{array}{cc}
      \maA  & \maB  \\
     \maC  & \maD
    \end{array}
    \right) \in \Psi_{\cl}^{[- \bss - \bst]}(M; TM \oplus \CC)$
  be the Moore-Penrose pseudoinverse of $\bsXi$,
  as in Equation
  \eqref{eq.rel.pnu}. Then

  \begin{enumerate}[\rm (1)]
    \item $\maA \in \Psi_{\cl}^{-2}(M; TM)$,
    $\maC = \maB^{*} \in \Psi_{\cl}^{-1}(M; TM, \CC)$,
    and $\maD \in \Psi_{\cl}^{0}(M)$.

    \item We have
    $\sigma_{-2}(\maA )(x, \xi) \seq \frac{1}{| \xi |^2} -
      \frac{V_0+1}{2V_0+1}\frac{1}{| \xi |^4}
       \xi^{\sharp} \otimes  \xi \,,$
      $\sigma_{-1}(\maB )(x, \xi) \seq \sigma_{-1}(\maC )(x, \xi)^{*} \seq
      \frac{\imath}{(2V_0+1)| \xi |^2} \xi^{\sharp} \,,$
      and $\sigma_0(\maD )(x, \xi) \seq -\frac{2}{2V_0+1}.$
    \end{enumerate}
\end{proposition}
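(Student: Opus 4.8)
The plan is to derive everything from two observations. First, $\bsXi = \bsXi_{V, V_{0}}$ is formally self-adjoint and $p_{\maN}$ is an orthogonal projection, so the uniqueness of the Moore--Penrose pseudoinverse forces $\psdinv^{*} = \psdinv$; this yields the adjointness relation in part~(1). Second, $\bsXi \psdinv$ equals the identity modulo smoothing operators, so the \ADN\ principal symbol of $\psdinv$ must be the inverse of that of $\bsXi$, which has already been computed explicitly in Proposition \ref{prop.ADN.elliptic}; this yields part~(2).

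For part~(1), the orders of the four blocks $\maA$, $\maB$, $\maC$, $\maD$ are simply the unpacking of the membership $\psdinv \in \Psi_{\cl}^{[-\bss - \bst]}(M; TM \oplus \CC)$ in \eqref{eq.def.psdinv}, so only the relation $\maC = \maB^{*}$ requires an argument. I would first note, directly from \eqref{eq.def.bsXi}, that $\bsXi$ is formally self-adjoint: $\bsL_{V} = 2\Defstar\Def + V$ and the scalar operator $-V_{0}$ are self-adjoint, and the off-diagonal entries $\nabla$ and $\nabla^{*}$ are formal adjoints of one another. Since $\maN$ consists of smooth $L^{2}$ sections, the projection $p_{\maN}$ is self-adjoint, hence so is $1 - p_{\maN}$. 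Taking formal adjoints throughout \eqref{eq.rel.pnu} shows that $\psdinv^{*}$ satisfies the same four relations that characterize $\psdinv$; by the uniqueness asserted there, $\psdinv^{*} = \psdinv$, and reading this off block-by-block gives $\maA^{*} = \maA$, $\maD^{*} = \maD$, and $\maC = \maB^{*}$.

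For part~(2), I would first observe that $p_{\maN}$ is a finite-rank operator with smooth Schwartz kernel (a finite sum of rank-one operators built from an orthonormal basis of $\maN \subset \CI(M; TM \oplus \CC)$, smooth by elliptic regularity), hence $p_{\maN} \in \Psi^{-\infty}$; thus \eqref{eq.rel.pnu} reads $\bsXi \psdinv \seq 1$ modulo $\Psi^{-\infty}$. Applying the $(\bss, \bst)$--principal symbol map with $\bss = \bst = (1, 0)$, and using that $\bsXi \in \Psi_{\cl}^{[\bss + \bst]}$ while $\psdinv \in \Psi_{\cl}^{[-\bss - \bst]}$ so that each block of $\bsXi\psdinv$ is a finite sum of classical operators of a single common order, the multiplicativity of the \ADN\ principal symbol under composition gives, for all $\xi \neq 0$,
\begin{equation*}
  \Symb_{\bss, \bst}(\bsXi)(\xi) \cdot
  \left(\begin{array}{cc}
    \sigma_{-2}(\maA)(x, \xi) & \sigma_{-1}(\maB)(x, \xi)\\
    \sigma_{-1}(\maC)(x, \xi) & \sigma_{0}(\maD)(x, \xi)
  \end{array}\right) \seq I \in \End(TM \oplus \CC)\,,
\end{equation*}
where the second factor is the principal-symbol matrix of $\psdinv$. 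Since $\bsXi$ is $(\bss, \bst)$--\ADN\ elliptic, $\Symb_{\bss, \bst}(\bsXi)(\xi)$ is invertible for $\xi \neq 0$, so the principal-symbol matrix of $\psdinv$ equals $\Symb_{\bss, \bst}(\bsXi)(\xi)^{-1}$. The four entries of this inverse are written out explicitly in Proposition \ref{prop.ADN.elliptic}, and comparing entries gives exactly the formulas in part~(2); in particular $\sigma_{-1}(\maC) = \sigma_{-1}(\maB)^{*}$, consistent with $\maC = \maB^{*}$.

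There is no real obstacle here: the statement is a formal consequence of the \ADN\ ellipticity of $\bsXi$ together with its self-adjointness. The only point needing care is the \ADN\ bookkeeping --- checking that, with the orders prescribed in \eqref{eq.def.psdinv}, the composition $\bsXi\psdinv$ is again a genuine \ADN\ operator so that the symbol matrices actually multiply --- but this is routine once the orders $s_{i}, t_{j}$ are tracked. On a non-compact $M$ the compositions $\bsXi\psdinv$, $\psdinv\bsXi$, and their adjoints are already built into the standing hypotheses \eqref{eq.rel.pnu}, so no additional properness assumption is needed.
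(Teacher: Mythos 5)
Your proposal is correct and follows essentially the same route as the paper's proof: part (1) is read off from the definition of $\Psi_{\cl}^{[-\bss-\bst]}$, with $\maC=\maB^{*}$ coming from the symmetry of $\bsXi$ plus uniqueness (hence symmetry) of the Moore--Penrose pseudoinverse, and part (2) from the multiplicativity of the \ADN\ principal symbol together with the explicit inverse matrix in Proposition \ref{prop.ADN.elliptic}. You merely spell out two steps the paper leaves implicit ($p_{\maN}\in\Psi^{-\infty}$ and the block-by-block adjoint computation), which is fine.
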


\begin{proof}
  Let $\bss=\bst =(1,0)$ be as in the statement. The point (1) is a
  consequence of the property that $\bsXi \in \Psi_{\cl}^{[\bss + \bst]}(M; TM \oplus \CC)$
  and the definition of $\Psi_{\cl}^{[- \bss - \bst]}(M; TM \oplus \CC)$,
  except the relation $\maB^{*} = \maC$, which is a consequence of the fact that
  $\bsXi$ is symmetric and its Moore-Penrose pseudoinverse is unique, and hence also
  symmetric. The multiplicativity of the principal symbol $\Symb$ gives that
  \begin{equation*}
    \Symb_{\bss, \bst}(\bsXi)\Symb_{-\bst, -\bss}(\psdinv)
    \seq \Symb_{\bf 0, 0}(1) \seq 1\,.
  \end{equation*}
  Therefore, the $({- \bss}, {- \bst})$-principal symbol
  of the Moore-Penrose pseudo-inverse $\psdinv$ of $\bsXi$ is the inverse of
  the $(\bss, \bst )$-principal symbol of $\bsXi$,
  which is given by Proposition \ref{prop.ADN.elliptic}. Thus, the
  principal symbols of the operator $\maA$, $\maB$, $\maC$, and $\maD$
  (the entries of $\psdinv$)
  are as stated (and as given by Proposition \ref{prop.ADN.elliptic}).
\end{proof}

It will be convenient to simplify the notation for our symbols as follows.

\begin{remark}\label{rem.form}
  Let $f := \frac{V_{0} + 1}{2V_{0} + 1}$ and $g := \frac1{2V_{0} + 1}$.
  Then we have
  \begin{equation*}
    \begin{array}{ll}
      \sigma_{-2}(\maA )(x, \xi) \seq \frac{1}{| \xi |^2} -
      \frac{f}{| \xi |^4}
      \xi^{\sharp} \otimes  \xi \,, \qquad
      & \sigma_{-1}(\maB )(x, \xi)  \seq
      \frac{\imath g}{| \xi |^2} \xi^{\sharp} \,,\\[3mm]
      \sigma_{-1}(\maC )(x, \xi) \seq - \frac{\imath g}{|\xi|^2}  \xi \,,
      \ \mbox{ and }\ \qquad
      & \sigma_0(\maD )(x, \xi) \seq - 2g \,.
    \end{array}
\end{equation*}
\end{remark}

\subsection{Definition of layer potential operators}
\label{ssec.5.2}
The assumption about the existence of the
Moore-Penrose pseudo-inverse $\psdinv$ of $\bsXi$ allows us now to extend the classical
methods to define the single and double layer potential operators for the Stokes operator
(see also \cite{D-M, Ladyzhenskaya, M-T, Varnhorn}). Nevertheless, some care needs to
be exercised. We let $\Gamma := \pa \Omega$,
as usual in this paper. For the following definition,
recall the Stokes operator $\bsXi = \bsXi_{V, V_{0}}$ of Equation
\eqref{eq.def.bsXi}. Also, recall the distribution
$\bsh \delta_{\Gamma}$ of Lemma \ref{lemma.def.hdelta2} and
the operator $\bopstar$ of Lemma \ref{lemma.bop.star}.

\begin{definition} \label{def.lp}
  Let $\bsh\in L^2(\Gamma ; TM)$. The {\it single-layer
  potential} $\maS_{\rm{ST}}(\bsh)$, the {\it single-layer velocity
  potential} $\maV_{\rm{ST}}(\bsh)$, and the {\it single-layer pressure
  potential} $\maP_{\rm{ST}}(\bsh)$ for $\Xi$
  are given by:
  \begin{equation*} 
    \maS_{\rm{ST}}(\bsh) \ede
    \left(
      \begin{array}{c}
        \maV_{\rm{ST}}(\bsh)\\
        \maP_{\rm{ST}}(\bsh)
      \end{array}
      \right)
      \ede \psdinv \, \left[\cvector{\bsh}{0}
      \ \delta_{\Gamma}\right]\,.
  \end{equation*}
  Similarly, the {\it double-layer potential} $\maD_{\rm{ST}}(\bsh)$,
  the {\it double-layer velocity
  potential} $\mathcal{W}_{\rm{ST}}(\bsh)$, and the {\it double-layer pressure
  potential} $\mathcal{Q}_{\rm{ST}}(\bsh)$ for $\bsXi$ are given by:
  \begin{equation*} 
    \maD_{\rm{ST}}(\bsh) \ede
    \left(
      \begin{array}{c}
        \maW_{\rm{ST}}(\bsh)\\
        \maQ_{\rm{ST}}(\bsh)
      \end{array}
      \right)
      \ede \psdinv \, \big[ \bopstar
      \left(
        \bsh \delta_{\Gamma}
      \right) \big]\,.
  \end{equation*}
\end{definition}

These definitions can be made more explicit as follows.

\begin{remark}\label{rem.components}
  We have
  \begin{equation*} 
    \maW_{\rm{ST}}(\bsh) \seq
    \left(
      \begin{array}{cc}
        \maA  & \maB
      \end{array}
    \right)
    \left( \begin{array}{c}
        - 2\Dnu^{*} \\
        \bsnu^{\sharp}
      \end{array}
    \right)
    (\bsh \delta_{\Gamma})
    \seq (- 2 \maA  \Dnu^{*} + \maB \bsnu^{\sharp})\,
    (\bsh \delta_{\Gamma})
  \end{equation*}
  and
  \begin{equation*} 
    \maQ_{\rm{ST}}(\bsh)
    \seq \left(
      \begin{array}{cc}
        \maC  & \maD
      \end{array}
      \right)
      \left( \begin{array}{c}
        - 2\Dnu^{*} \\
        \bsnu^{\sharp}
      \end{array}
      \right)
      (\bsh \delta_{\Gamma}) \\
       \ \seq (- 2 \maC  \Dnu^{*} + \maD \bsnu^{\sharp})\,
      (\bsh \delta_{\Gamma})\,.
\end{equation*}
  Similarly, $\maS_{\rm{ST}}(\bsh) \seq \big(\, \maA (\bsh \delta_{\Gamma}) \ \ \
  \maC(\bsh \delta_{\Gamma})\, \big)^{\top }$.
\end{remark}

Theorem \ref{thm.mapping.lp} gives the following result.

\begin{proposition}\label{prop.Hkestimates}
  We continue to assume that $M$ is compact. Let $\Omega_{+} := \Omega$ and $\Omega_{-}
  := M \smallsetminus \overline{\Omega}$, as before. Let $\bsh \in H^{k + 1/2}(\Gamma; TM)$,
  $k \in \ZZ_{+}$. Then
  \begin{equation*}
    \maV_{\rm{ST}}(\bsh)\vert_{\Omega_{\pm}}
    \in H^{k+2}(\Omega_{\pm}; TM) \ \mbox{ and }\
    \maP_{\rm{ST}}(\bsh)\vert_{\Omega_{\pm}} \in H^{k+1}(\Omega_{\pm})\,.
  \end{equation*}
  Similarly, let $\bsh \in H^{3/2}(\Gamma; TM)$. Then
  \begin{equation*}
    \maW_{\rm{ST}}(\bsh)\vert_{\Omega_{\pm}} \in H^{k+2}(\Omega_{\pm}; TM)\ \mbox{ and }\
    \maQ_{\rm{ST}}(\bsh)\vert_{\Omega_{\pm}} \in H^{k+1}(\Omega_{\pm})\,.
  \end{equation*}
\end{proposition}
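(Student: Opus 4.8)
The plan is to deduce every assertion directly from Theorem \ref{thm.mapping.lp}, applied componentwise. By Definition \ref{def.lp} and Remark \ref{rem.components},
\begin{equation*}
  \maV_{\rm{ST}}(\bsh) \seq \maA(\bsh \delta_{\Gamma}), \qquad
  \maP_{\rm{ST}}(\bsh) \seq \maC(\bsh \delta_{\Gamma}), \qquad
  \maW_{\rm{ST}}(\bsh) \seq P_{1}(\bsh \delta_{\Gamma}), \qquad
  \maQ_{\rm{ST}}(\bsh) \seq P_{2}(\bsh \delta_{\Gamma}),
\end{equation*}
where $P_{1} := -2\maA \Dnu^{*} + \maB \bsnu^{\sharp}$ and $P_{2} := -2\maC \Dnu^{*} + \maD \bsnu^{\sharp}$. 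Using Proposition \ref{prop.form.inverse} ($\maA \in \Psi_{\cl}^{-2}$, $\maB, \maC \in \Psi_{\cl}^{-1}$, $\maD \in \Psi_{\cl}^{0}$) together with the fact that $\Dnu^{*}$ is a first order differential operator while $\bsnu^{\sharp}$ is a bundle map, one gets $P_{1} \in \Psi_{\cl}^{-1}(M; TM)$ and $P_{2} \in \Psi_{\cl}^{0}(M; TM, \CC)$. Since $M$ is compact, $\Gamma$ is compact and $\overline{\Omega_{\pm}}$ lies in the relatively compact set $M$, so I would take $K = \Gamma$ and $L = M$ in Theorem \ref{thm.mapping.lp}. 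That theorem shows that, for $P$ of order $m$, $\maS_{P}$ sends $H^{s}(\Gamma)$ into $H^{s-m-\tfrac12}$ of $\Omega_{\pm}$: applying it to $\maA$ ($m = -2$) with $\bsh \in H^{k+1/2}(\Gamma; TM)$ yields $\maV_{\rm{ST}}(\bsh)\vert_{\Omega_{\pm}} \in H^{k+2}(\Omega_{\pm}; TM)$, to $\maC$ ($m=-1$) yields $\maP_{\rm{ST}}(\bsh)\vert_{\Omega_{\pm}} \in H^{k+1}(\Omega_{\pm})$, to $P_{1}$ ($m=-1$) with $\bsh \in H^{k+3/2}(\Gamma; TM)$ yields $\maW_{\rm{ST}}(\bsh)\vert_{\Omega_{\pm}} \in H^{k+2}(\Omega_{\pm}; TM)$, and to $P_{2}$ ($m=0$) yields $\maQ_{\rm{ST}}(\bsh)\vert_{\Omega_{\pm}} \in H^{k+1}(\Omega_{\pm})$. (For the double layer one must read the hypothesis on $\bsh$ as $\bsh \in H^{k+3/2}(\Gamma; TM)$, since the claimed conclusion requires that much regularity.)

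The one nontrivial point, and the step I expect to be the main obstacle, is verifying the standing hypothesis of Theorem \ref{thm.mapping.lp} that the operators involved have \emph{symbol of rational type}. Here I would argue as follows. The operator $\bsXi = \bsXi_{V, V_{0}}$ is a matrix of differential operators, so in any local chart its full \ADN\ symbol is a matrix whose entries are polynomials in $\xi$ with smooth $x$-dependence; inverting such a matrix by Cramer's rule gives rational entries, and the recursion of the symbolic calculus producing a classical parametrix $Q$ of $\bsXi$ uses only $\xi$-derivatives, $x$-derivatives, and products of symbols already constructed, so every term of $Q$ is again a quotient of polynomials in $\xi$; thus $Q$ is of rational type. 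By the defining relations \eqref{eq.rel.pnu} and \ADN\ elliptic regularity, $\psdinv - Q$ is smoothing, hence it sends $\bsh \delta_{\Gamma}$ to a section of $TM \oplus \CC$ that is smooth on all of $M$ and so contributes nothing to the regularity of $\maS_{\rm{ST}}(\bsh)\vert_{\Omega_{\pm}}$ or $\maD_{\rm{ST}}(\bsh)\vert_{\Omega_{\pm}}$. Finally, post-composing a rational-type operator with $\Dnu^{*}$ (polynomial symbol) or with $\bsnu^{\sharp}$ (symbol constant in $\xi$) preserves rational type modulo smoothing, so $\maA$, $\maC$, $P_{1}$, and $P_{2}$ all meet the hypothesis of Theorem \ref{thm.mapping.lp} up to a harmless smoothing remainder. (Alternatively, this bookkeeping is exactly what is carried out in the references \cite{H-W, KMNW-2025} on which Theorem \ref{thm.mapping.lp} rests, so one may simply invoke it there.)

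Combining the two paragraphs proves the four membership statements, and hence the proposition. In summary, the only real work is the rational-type verification for $\psdinv$ and its compositions with $\bopstar$; once that is in place, everything reduces to substituting the \ADN\ orders $m \in \{-2, -1, -1, 0\}$ of $\maA$, $\maC$, $P_{1}$, $P_{2}$ into the shift-by-$(-m-\tfrac12)$ estimate of Theorem \ref{thm.mapping.lp}.
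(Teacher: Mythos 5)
Your proof is correct and follows essentially the same route as the paper's: the paper's own argument is exactly the componentwise application of Theorem \ref{thm.mapping.lp} via Remark \ref{rem.components}, using the orders $-2$, $-1$, $-1$, $0$ for $\maA$, $\maC$, $\bsP$, and $-2\maC\Dnu^{*}+\maD\bsnu^{\sharp}$, and simply asserting that all four have rational-type symbols. Your extra paragraph justifying the rational-type hypothesis (and your reading of the double-layer hypothesis as $\bsh \in H^{k+3/2}(\Gamma; TM)$) fills in details the paper leaves implicit, but does not change the argument.
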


\begin{proof}
  Indeed, this follows from Proposition \ref{thm.mapping.lp}
  and Remark \ref{rem.components} because
  $\maA$ has order $-2$, $\maC$ and $\bsP := - 2 \maA  \Dnu^{*} + \maB \bsnu^{\sharp}$
  have order $-1$, and $- 2 \maC  \Dnu^{*} + \maD \bsnu^{\sharp}$ has
  order zero. Moreover, all four of them have rational type symbols.
\end{proof}

The following result is a consequence of the definition of the single and double
layer potentials. Notice the additional conditions are needed since $\bsXi$ is not
necessarily invertible. Let $\bsh \delta_{\Gamma}$ the distribution in
$\CIc(M; TM)'$ of Lemma \ref{lemma.def.hdelta2}, as above.
We shall also write
\begin{equation*}
  ( \bsh \delta_{\Gamma}\  \ 0)^\top \ede \cvector{\bsh \delta_{\Gamma}}{0}
  \seq \cvector{\bsh}{0} \ \delta_{\Gamma} \,.
\end{equation*}

\begin{proposition}\label{prop.compatibility}
  Let $\bsh \in L^{2}(\Gamma; TM)$, let $\maN$ be the kernel of $\bsXi$, and
  let $p_{\maN} \in \Psi^{-\infty}(M; TM \oplus \CC)$ be the
  $L^{2}$-projection onto $\maN$.
  \begin{enumerate}[\rm (1)]
    \item We have $\bsXi \maS_{\rm{ST}}(\bsh) = ( \bsh \delta_{\Gamma}\  \ 0)^\top
    - p_{\maN}(( \bsh \delta_{\Gamma}\ \ 0)^\top)$ and
    $\bsXi \maD_{\rm{ST}}(\bsh) = \bopstar (\bsh \delta_{\Gamma})
    - p_{\maN} \bopstar (\bsh \delta_{\Gamma})$.

    \item Assume that $\int_{\Gamma} \bsh \cdot \bsu \,dS_{\Gamma} = 0$ for all
    $(\bsu, p) \in \maN$. Then
    $\bsXi \maS_{\rm{ST}}(\bsh) = ( \bsh \delta_{\Gamma}\ \ 0)^\top$, and hence it
    vanishes on $M \smallsetminus \Gamma$.

    \item Analogously, assume that $\int_{\Gamma} \bsh \cdot \bop(\bsu, p) \,dS_{\Gamma} = 0$
    for all $(\bsu, p) \in \maN$. Then
    $\bsXi \maD_{\rm{ST}}(\bsh) = \bopstar \big(\bsh \delta_{\Gamma}\big)$,
    and hence it vanishes on $M \smallsetminus \Gamma$.
  \end{enumerate}
\end{proposition}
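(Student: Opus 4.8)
The plan is to read part (1) directly off the defining relations of the Moore-Penrose pseudoinverse and then reduce parts (2) and (3) to the vanishing of the correction term involving $p_{\maN}$. For part (1) I would apply $\bsXi$ to the two formulas in Definition \ref{def.lp} and invoke the identity $\bsXi\psdinv = 1 - p_{\maN}$ from \eqref{eq.rel.pnu}, which immediately gives $\bsXi\maS_{\rm{ST}}(\bsh) = (1-p_{\maN})\big[(\bsh\delta_{\Gamma}\ \ 0)^{\top}\big]$ and $\bsXi\maD_{\rm{ST}}(\bsh) = (1-p_{\maN})\bopstar(\bsh\delta_{\Gamma})$, i.e. the two asserted formulas once $1-p_{\maN}$ is written out. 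The only preliminary check is that $(\bsh\delta_{\Gamma}\ \ 0)^{\top}$ and $\bopstar(\bsh\delta_{\Gamma})$ are genuine distributions, so that $\psdinv$ (a pseudodifferential operator) may be applied to them and the operator identity used: the first lies in some $H^{s'}(M;TM\oplus\CC)$, $s'<-1/2$, by Lemma \ref{lemma.def.hdelta2}, and the second lies one Sobolev order lower since, by Lemma \ref{lemma.bop.star}, $\bopstar$ is a matrix of first-order differential operators.

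For parts (2) and (3) the task is to show that the extra term $p_{\maN}(\cdot)$ from part (1) vanishes under the stated hypothesis. I would use that $p_{\maN}$, being the $L^{2}$-orthogonal projection onto the finite-dimensional space $\maN$ of \emph{smooth} sections (smoothness by elliptic regularity), is given by $p_{\maN}w = \sum_{j}(w,\Phi_{j})_{L^{2}(M)}\Phi_{j}$ for an $L^{2}$-orthonormal basis $\{\Phi_{j} = (\bsu_{j},p_{j})^{\top}\}$ of $\maN$, the same formula applying to a distribution $w$ with $(w,\Phi_{j})$ understood as the distribution–test-function pairing. In part (2), taking $w = (\bsh\delta_{\Gamma}\ \ 0)^{\top}$ and using the definition of $\bsh\delta_{\Gamma}$ (Lemma \ref{lemma.def.hdelta2}) gives $(w,\Phi_{j})_{L^{2}(M)} = \int_{\Gamma}\bsh\cdot\overline{\bsu_{j}}\,dS_{\Gamma}$, which is $0$ by hypothesis — here one needs that $\maN$ is invariant under complex conjugation, which holds because $\bsXi$ has real coefficients. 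In part (3), I would first note that $\bopstar$ is the formal adjoint of $\bop$ over $M$ (immediate from Lemma \ref{lemma.bop.star} and the identity $(\bop W,\bsv)_{M} = (W,\bopstar\bsv)_{M}$), and then move this matrix of first-order differential operators off the singular factor to get $(\bopstar(\bsh\delta_{\Gamma}),\Phi_{j})_{L^{2}(M)} = \int_{\Gamma}\bsh\cdot\overline{\bop(\bsu_{j},p_{j})}\,dS_{\Gamma}$, which again vanishes by the hypothesis. In both cases $p_{\maN}(\cdot)=0$, so part (1) collapses to the claimed identities; and since $(\bsh\delta_{\Gamma}\ \ 0)^{\top}$ and $\bopstar(\bsh\delta_{\Gamma})$ are supported on $\Gamma$, the two images $\bsXi\maS_{\rm{ST}}(\bsh)$ and $\bsXi\maD_{\rm{ST}}(\bsh)$ vanish on $M\smallsetminus\Gamma$.

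The points needing care are more bookkeeping than substance: the orthogonality conditions are pairings of the \emph{distributions} $\bsh\delta_{\Gamma}$ and $\bopstar(\bsh\delta_{\Gamma})$ against the \emph{smooth} elements of $\maN$ (legitimate because $p_{\maN}\in\Psi^{-\infty}$ has a smooth kernel), and one must keep the complex conjugates consistent, which is where the reality of the coefficients of $\bsXi$ (hence of $\bop$, and the conjugation-invariance of $\maN$) is used. The one genuinely substantive observation — and the step I would be most careful about — is that moving the differential operators in $\bopstar$ across the pairing in part (3) produces \emph{no} boundary contribution on $\Gamma$, because the integration is over the whole manifold $M$; this is in contrast with the Green-type formulas of Proposition \ref{prop.Green}, where the integration is over $\Omega$ and boundary terms do appear.
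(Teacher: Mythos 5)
Your proposal is correct and follows essentially the same route as the paper's proof: part (1) from $\bsXi\psdinv = 1 - p_{\maN}$ applied to the distributions $(\bsh\delta_{\Gamma}\ \ 0)^{\top}$ and $\bopstar(\bsh\delta_{\Gamma})$, and parts (2) and (3) by expanding $p_{\maN}$ over an orthonormal basis of the smooth, finite-dimensional, conjugation-invariant space $\maN$ and identifying the pairings with the stated boundary integrals (in (3) via the duality $\langle\bopstar(\bsh\delta_{\Gamma}), w\rangle = \langle\bsh\delta_{\Gamma}, \bop w\rangle$). Your added remarks on why the pairings are legitimate and why no boundary terms arise are sound bookkeeping, not deviations.
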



\begin{proof}
  Since the space $\maN$ consists of smooth sections (by elliptic regularity), $p_{\maN}$
  is a regularizing pseudodifferential operator (i.e., one of order $-\infty$),
  as stated. Equation \eqref{eq.rel.pnu} then extends to distributions and gives
  \begin{equation*}
    \bsXi \maS_{\rm{ST}}(\bsh) \seq \bsXi \psdinv (( \bsh \delta_{\Gamma}\ \ 0)^\top)
    \seq ( \bsh \delta_{\Gamma}\ \ 0)^\top - p_{\maN}(( \bsh \delta_{\Gamma}\ \ 0)^\top)\,,
  \end{equation*}
  which is the first relation in (1). The proof for the double
  layer potential operators is completely similar. Indeed,
  \begin{equation*}
    \bsXi \maD_{\rm{ST}}(\bsh) \seq \bsXi \psdinv \bopstar (\bsh \delta_{\Gamma})
    \seq \bopstar (\bsh \delta_{\Gamma}) - p_{\maN}\bopstar (\bsh \delta_{\Gamma})\,.
  \end{equation*}
  This completes the proof of (1).

  To check the points (2) and (3), let us choose an orthonormal basis $\{\phi_{1},
  \ldots, \phi_{k}\}$ of $\maN$, which we recall was assumed to be finite dimensional. Then $p_{\maN}(v)
  = \sum_{j=1}^{k} (v, \phi_{k})\phi_{k}$. Moreover, we have $\bsXi \psdinv v = v$
  if, and only if $p_{\maN} v = 0$, by Equation \eqref{eq.rel.pnu}. We then compute
  (and we replace the inner product by the duality pairing of distributions) to obtain
  \begin{align*}
    p_{\maN}(v) \seq 0 & \ \Leftrightarrow \ \sum_{j=1}^{k} \langle v, \phi_{j} \rangle
    \phi_{j} \seq  0\\
    & \ \Leftrightarrow \  \forall\, j=1, \ldots, k\,, \quad  \langle v, \phi_{j} \rangle \seq 0\\
    & \ \Leftrightarrow \  \forall\, w \in \maN\,, \quad  \langle v, w \rangle \seq 0\,.
  \end{align*}
  Then, for $v := ( \bsh \delta_{\Gamma}\ \ 0)^\top$, this gives,
  \begin{align*}
    p_{\maN}{( \bsh \delta_{\Gamma}\ \ 0)^\top} \seq 0
    & \ \Leftrightarrow \  \forall\, w=(u,p) \in \maN\,, \quad
    \langle \bsh \delta_{\Gamma}, u \rangle \seq 0\\
    & \ \Leftrightarrow \  \forall\, w=(u,p) \in \maN\,, \quad
    \int_{\Gamma} \bsh \cdot \overline{u} \, dS_{\Gamma} \seq 0\,.
  \end{align*}
  (Note that $\maN$ is invariant under complex conjugation because our space
  $H^{s}(M; TM)$ is the complexification of its real variant.) This proves (2).

  To prove (3), we use the same approach, but for $v := \bopstar (\bsh \delta_{\Gamma})$
  this time, to obtain
  \begin{align*}
    p_{\maN}(\bopstar (\bsh \delta_{\Gamma})) \seq 0
    & \ \Leftrightarrow \  \forall\, w \in \maN\,, \quad
    \langle \bopstar (\bsh \delta_{\Gamma}), w \rangle \seq 0\\
    & \ \Leftrightarrow \  \forall\, w \in \maN\,, \quad
    \langle \bsh \delta_{\Gamma}, \bop w \rangle \seq 0\\
    & \ \Leftrightarrow \  \forall\, w \in \maN\,, \quad
    \int_{\Gamma} \bsh \cdot \overline{\bop w} \, dS_{\Gamma} \seq 0\,.
  \end{align*}
  This completes the proof.
\end{proof}

We shall need the following consequences of the representation formula
in Proposition \ref{prop.Green}, the first one of which we will call
\emph{Pompeiu's formula}.

\begin{proposition}\label{prop.rep.formula}
  Let $U = (\, \bsu \ \ \ p \,)^{\top } \in L^{2}(M; TM \oplus \CC)$, let $p_{\maN}$
  be the orthogonal projection onto the kernel $\maN$ of $\bsXi$, and let
  $\textbf{1}_{\Omega}$ be the characteristic function of $\Omega$.
  Then the following formulas hold.
  \begin{enumerate}[\rm (1)]
  \item
    $\textbf{1}_{\Omega} U
    \seq \psdinv \left(\textbf{1}_{\Omega}
    \big( \bsXi U  \big) \right) -
    \maS_{\rm{ST}} (\bop U\vert_{\Gamma})
    + \maD_{\rm{ST}}  (\bsu\vert_{\Gamma})  + p_{\maN}(\textbf{1}_{\Omega} U) \,.$
  \item If, moreover, $\bsXi U = 0$ in $\Omega $, then
    \begin{equation*}  
    \maD_{\rm{ST}} (\bsu\vert_{\Gamma})(x) - \maS_{\rm{ST}} (\bop U\vert_{\Gamma})(x)
    + p_{\maN}(\textbf{1}_{\Omega} U)(x)  \seq
    \begin{cases}
      \ U(x) & \mbox{ if } x \in \Omega\\
      \ \ \, 0    & \mbox{ if } x \in M \smallsetminus \overline{\Omega} \,.
    \end{cases}
    \end{equation*}
  \end{enumerate}
\end{proposition}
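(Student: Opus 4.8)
The plan is to obtain both formulas by a single application of the Moore--Penrose pseudoinverse $\psdinv$ to the ``abstract Green identity'' in part (3) of Proposition~\ref{prop.Green}, namely
\[
\bsXi\big(\textbf{1}_{\Omega} U\big) \seq \textbf{1}_{\Omega}\big(\bsXi U\big) - (\tbop U)\delta_{\Gamma} + \tbopstar(U\delta_{\Gamma})\,,
\]
and then recognizing, after that application, the three resulting terms on the right as $\psdinv\big(\textbf{1}_{\Omega}(\bsXi U)\big)$, the single-layer potential $\maS_{\rm{ST}}(\bop U\vert_{\Gamma})$, and the double-layer potential $\maD_{\rm{ST}}(\bsu\vert_{\Gamma})$, while on the left using $\psdinv\bsXi = 1 - p_{\maN}$. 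First I would record that, under the regularity hypotheses of Proposition~\ref{prop.Green} (namely $\bsu\in H^{2}(\Omega;TM)$ and $p\in H^{1}(\Omega)$ --- this is the standing smoothness needed for the statement, and for part (2) it is partially supplied by interior elliptic regularity once $\bsXi U = 0$ in $\Omega$), the traces $\bsu\vert_{\Gamma}\in H^{3/2}(\Gamma;TM)$ and $\bop U\vert_{\Gamma}\in H^{1/2}(\Gamma;TM)$ are defined and lie in $L^{2}(\Gamma;TM)$, so that Definition~\ref{def.lp} applies to them, and $\textbf{1}_{\Omega}(\bsXi U)\in L^{2}(M;TM\oplus\CC)$; thus every operator below acts on a genuine distribution, and, as in the proof of Proposition~\ref{prop.compatibility}, the relations \eqref{eq.rel.pnu} extend to distributions.

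Next I would apply $\psdinv$ to both sides of the displayed identity. On the left, $\psdinv\bsXi = 1 - p_{\maN}$ by \eqref{eq.rel.pnu}, so $\psdinv\big(\bsXi(\textbf{1}_{\Omega} U)\big) = \textbf{1}_{\Omega} U - p_{\maN}(\textbf{1}_{\Omega} U)$. On the right I would identify the layer potentials: by \eqref{def.conormal.Tderiv2} one has $\tbop U = (\bop U,\ 0)^{\top}$, hence $(\tbop U)\delta_{\Gamma} = \big((\bop U\vert_{\Gamma})\delta_{\Gamma},\ 0\big)^{\top}$, so $\psdinv\big[(\tbop U)\delta_{\Gamma}\big] = \maS_{\rm{ST}}(\bop U\vert_{\Gamma})$ by Definition~\ref{def.lp}; similarly, by Lemma~\ref{lemma.bop.star} we have $\tbopstar(U\delta_{\Gamma}) = \bopstar(\bsu\vert_{\Gamma}\,\delta_{\Gamma})$, so $\psdinv\big[\tbopstar(U\delta_{\Gamma})\big] = \maD_{\rm{ST}}(\bsu\vert_{\Gamma})$, again by Definition~\ref{def.lp}. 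Assembling these three equalities gives
\[
\textbf{1}_{\Omega} U - p_{\maN}(\textbf{1}_{\Omega} U) \seq \psdinv\big(\textbf{1}_{\Omega}(\bsXi U)\big) - \maS_{\rm{ST}}(\bop U\vert_{\Gamma}) + \maD_{\rm{ST}}(\bsu\vert_{\Gamma})\,,
\]
which is precisely formula (1) after transposing the term $p_{\maN}(\textbf{1}_{\Omega} U)$.

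For part (2), I would simply put $\bsXi U = 0$ in $\Omega$, so that $\textbf{1}_{\Omega}(\bsXi U) = 0$ and the first term on the right of (1) drops out, leaving $\textbf{1}_{\Omega} U = \maD_{\rm{ST}}(\bsu\vert_{\Gamma}) - \maS_{\rm{ST}}(\bop U\vert_{\Gamma}) + p_{\maN}(\textbf{1}_{\Omega} U)$; evaluating the left side pointwise, $\textbf{1}_{\Omega} U(x) = U(x)$ for $x\in\Omega$ and $\textbf{1}_{\Omega} U(x) = 0$ for $x\in M\smallsetminus\overline{\Omega}$, which is exactly the asserted case distinction after rearranging. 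I expect no analytic difficulty in this argument; the one thing that requires care is the bookkeeping --- keeping straight the conventions $\tbop$ versus $\bop$ and $\tbopstar$ versus $\bopstar$ (Lemma~\ref{lemma.bop.star}), the placement of $\delta_{\Gamma}$, the identification of $(\bsh,0)^{\top}\delta_{\Gamma}$ with $\big(\bsh\delta_{\Gamma},\,0\big)^{\top}$, and checking that the boundary traces and $\textbf{1}_{\Omega}(\bsXi U)$ are well defined under the assumed regularity --- all of which are routine once Proposition~\ref{prop.Green}, Definition~\ref{def.lp}, and the pseudoinverse relations \eqref{eq.rel.pnu} are in hand.
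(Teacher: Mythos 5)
Your proposal is correct and is essentially identical to the paper's own proof: both apply $\psdinv$ to the identity of Proposition~\ref{prop.Green}(3), use $\psdinv\bsXi = 1 - p_{\maN}$ on the left, and identify the two boundary terms as $\maS_{\rm{ST}}(\bop U\vert_{\Gamma})$ and $\maD_{\rm{ST}}(\bsu\vert_{\Gamma})$ via Definition~\ref{def.lp} and Lemma~\ref{lemma.bop.star}. Your added remarks on the regularity needed for the traces and for extending \eqref{eq.rel.pnu} to distributions are points the paper leaves implicit, but the argument is the same.
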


\begin{proof}
  Equation \eqref{eq.rel.pnu} and the last relation of Proposition \ref{prop.Green} give
  \begin{align}\label{eq.Pompeiu}
    \textbf{1}_{\Omega} U  - p_{\maN}(\textbf{1}_{\Omega} U)
    & \seq \psdinv \left(\textbf{1}_{\Omega} \big( \bsXi U  \big) -
    \left( \begin{array}{c} \bop U \\ 0 \end{array} \right) \delta_{\Gamma}
    + \bopstar \, (\bsu \delta_{\Gamma})\right)\\
    & \seq \psdinv \left(\textbf{1}_{\Omega}  \big( \bsXi U  \big) \right) -
    \maS_{\rm{ST}} (\bop U)
    + \maD_{\rm{ST}}  (\bsu)\,. \nonumber
  \end{align}
  The second part follows immediately from the last equation (Pompeiu's formula, Equation
  \ref{eq.Pompeiu})
  and the definitions of the single and double layer potentials,
  Definition \ref{def.lp}.
\end{proof}

\subsection{Jump relations}
\label{ssec.Jump}
The jump relations work in general (also for non-compact manifolds).
We will now establish some needed jump relations for the potential
operator $\maS_{\bsP} = \maW_{\rm{ST}}$ (Definition \ref{def.lp})
associated to the pseudodifferential operator $\bsP := - 2 \maA  \Dnu^{*} + \maB \nu^{\sharp}$
using the results of Section \ref{sec.Def}. This calculation is motivated by
Remark \ref{rem.components}. We follow the approach in \cite{KMNW-2025}, where
these results were proved for manifolds with cylindrical ends.

\begin{proposition}\label{prop.local.stokes}
  Let $\bsP := - 2 \maA  \Dnu^{*} + \maB \bsnu^{\sharp}$. We denote
  $f = (V_{0} + 1)/(2V_{0} + 1)$ and $g = 1/(2V_{0} + 1)$ (as before). Then
  \begin{equation*}
    \sigma_{-1}(\bsP;  \xi) \seq
    \frac{\imath}{|\xi|^2} \Big ( \xi(\bsnu)   + \bsnu \otimes \xi
    - \frac{2f \xi(\bsnu)}{| \xi|^{2}} \xi^{\sharp} \otimes \xi
    + g  \xi^{\sharp} \otimes \bsnu^{\sharp} \Big )\,.
  \end{equation*}
  Consequently, $\JC_{+} = \JC_{-} = -\imath$ for this operator.
\end{proposition}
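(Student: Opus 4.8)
The plan is to compute $\sigma_{-1}(\bsP)$ via multiplicativity of the principal symbol, writing $\bsP = -2\,\maA\Dnu^{*} + \maB\,\bsnu^{\sharp}$ as a sum of two compositions of \emph{classical} operators: the first, $\maA\Dnu^{*}$, has order $-2+1 = -1$, and the second, $\maB\,\bsnu^{\sharp}$, has order $-1+0 = -1$, where $\bsnu^{\sharp}$ is regarded as the zeroth order multiplication operator $v \mapsto \bsnu\cdot v$ (constant symbol, i.e.\ the bundle map $v\otimes w^{\sharp}\mapsto(w\cdot v)$ with $w=\bsnu$). Hence $\bsP \in \Psi_{\cl}^{-1}(M; TM)$ and
\begin{equation*}
  \sigma_{-1}(\bsP; \xi) \seq -2\,\sigma_{-2}(\maA; \xi)\,\sigma_{1}(\Dnu^{*}; \xi)
  + \sigma_{-1}(\maB; \xi)\circ\bsnu^{\sharp}\,.
\end{equation*}
First I would substitute the symbols already recorded: $\sigma_{-2}(\maA; \xi) = |\xi|^{-2} - f|\xi|^{-4}\,\xi^{\sharp}\otimes\xi$ and $\sigma_{-1}(\maB; \xi) = \imath g|\xi|^{-2}\,\xi^{\sharp}$ from Remark~\ref{rem.form} (equivalently Proposition~\ref{prop.form.inverse}), together with $\sigma_{1}(\Dnu^{*}; \xi) = -\tfrac{\imath}{2}\big[\xi(\bsnu) + \bsnu\otimes\xi\big]$ from Proposition~\ref{prop.formulas.new}\,(5).

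The next step is a short computation in the algebra of the rank-one endomorphisms $v\otimes w$ introduced before Proposition~\ref{prop.formulas.new}, using the composition rules $(\xi^{\sharp}\otimes\xi)(\bsnu\otimes\xi) = \xi(\bsnu)\,\xi^{\sharp}\otimes\xi$ (for the first product) and $\big(\tfrac{\imath g}{|\xi|^2}\xi^{\sharp}\big)\circ\bsnu^{\sharp} = \tfrac{\imath g}{|\xi|^2}\,\xi^{\sharp}\otimes\bsnu^{\sharp}$ (for the second, via $(v\otimes w^{\sharp})x=(w\cdot x)v$). Multiplying out $-2\big(|\xi|^{-2} - f|\xi|^{-4}\xi^{\sharp}\otimes\xi\big)\big(-\tfrac{\imath}{2}[\xi(\bsnu) + \bsnu\otimes\xi]\big)$ yields $\imath|\xi|^{-2}\xi(\bsnu)$, $\imath|\xi|^{-2}\,\bsnu\otimes\xi$, and two copies of $-\imath f|\xi|^{-4}\xi(\bsnu)\,\xi^{\sharp}\otimes\xi$ which combine to $-2\imath f|\xi|^{-4}\xi(\bsnu)\,\xi^{\sharp}\otimes\xi$; adding the contribution $\imath g|\xi|^{-2}\,\xi^{\sharp}\otimes\bsnu^{\sharp}$ of $\maB\bsnu^{\sharp}$ gives exactly the asserted formula for $\sigma_{-1}(\bsP; \xi)$.

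Finally, to obtain $\JC_{\pm}$ I would evaluate this symbol at $\xi = \pm\bsnu^{\sharp}$ along $\Gamma$, where $|\bsnu^{\sharp}| = 1$, $\bsnu^{\sharp}(\bsnu) = 1$, and $(\bsnu^{\sharp})^{\sharp} = \bsnu$, so each of the tensor terms $\bsnu\otimes\xi$, $\xi^{\sharp}\otimes\xi$, $\xi^{\sharp}\otimes\bsnu^{\sharp}$ collapses to $\pm\,\bsnu\otimes\bsnu^{\sharp}$; this gives $\sigma_{-1}(\bsP; \pm\bsnu^{\sharp}) = \pm\imath\big(\mathrm{Id} + (1 - 2f + g)\,\bsnu\otimes\bsnu^{\sharp}\big)$. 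The coefficient vanishes identically since $f = \tfrac{V_{0}+1}{2V_{0}+1}$ and $g = \tfrac1{2V_{0}+1}$ are tailored so that $1 - 2f + g = \tfrac{(2V_{0}+1) - 2(V_{0}+1) + 1}{2V_{0}+1} = 0$; hence $\sigma_{-1}(\bsP; \pm\bsnu^{\sharp}) = \pm\imath\,\mathrm{Id}$, and by Notation~\ref{not.ord-1}, $\JC_{+}(\bsP) = \sigma_{-1}(\bsP; -\bsnu^{\sharp}) = -\imath\,\mathrm{Id}$ and $\JC_{-}(\bsP) = -\sigma_{-1}(\bsP; \bsnu^{\sharp}) = -\imath\,\mathrm{Id}$, so in particular $\JC_{+} = \JC_{-}$, the hypothesis needed to apply Theorem~\ref{thm.main.jump1} to $\bsP$. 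The step most likely to generate sign errors is the $v\otimes w$ bookkeeping in the compositions; the one genuinely structural point is the cancellation $1 - 2f + g = 0$, which is the analogue of the classical fact that the Stokes double-layer jump equals $\pm\tfrac12$.
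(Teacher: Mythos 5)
Your proposal is correct and follows essentially the same route as the paper: decompose $\bsP = -2\maA\Dnu^{*} + \maB\bsnu^{\sharp}$, apply multiplicativity of principal symbols with the formulas from Remark \ref{rem.form} and Proposition \ref{prop.formulas.new}(5), carry out the rank-one endomorphism algebra, and conclude via the cancellation $1-2f+g=0$. The only (immaterial) difference is at the last step: the paper works in local coordinates with $\bsnu=-e_{n}$ and computes $\JC_{\pm}$ as $\lim_{t\to\pm\infty}t\,\sigma_{-1}(\bsP;\xi'+te_{n}^{\sharp})$, whereas you evaluate $\sigma_{-1}(\bsP;\pm\bsnu^{\sharp})$ directly per Notation \ref{not.ord-1}; these agree by homogeneity of degree $-1$.
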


\begin{proof}
  The calculations are local, so we may assume that $\Omega = \RR_{+}^{n}$.
  In particular, $\bsnu = - e_{n}$ and $\bsnu^{\sharp} = -e_{n}^{\sharp}$.
  We decompose $\xi = \xi' + t \bsnu$. Using the formulas of Proposition
  \ref{prop.formulas.new}\eqref{prop.fn.5} and Remark \ref{rem.form}, we obtain
  \begin{align*}
    \sigma_{-1}(\bsP ;  \xi) & \seq -2 \sigma_{-2}(\maA;  \xi)\sigma_{1}(\Dnu^{*};  \xi)
    + \sigma_{-1}(\maB;  \xi)\bsnu^{\sharp}\\
    & \seq -2\left(\frac1{| \xi|^{2}} - \frac{f}{| \xi|^{4}}  \xi^{\sharp} \otimes  \xi
    \right)
    \left(\frac{\imath}2 \right)(  t  + e_{n} \otimes  \xi)
    - \frac{\imath g}{| \xi|^{2}}  \xi^{\sharp} \otimes e_{n}^{\sharp}\\
    & \seq -\frac{\imath}{| \xi|^2} \big(   t  + e_{n} \otimes  \xi
    - \frac{2f   t }{| \xi|^{2}}  \xi^{\sharp} \otimes  \xi
    + g  \xi^{\sharp} \otimes e_{n}^{\sharp} \big)\,.
  \end{align*}
  The coefficients $\JC_{+}$ and $\JC_{-}$ are obtained by expanding the
  formula of the last equation according in terms of the highest powers of $t$,
  using $ \xi =  $,
  with $ \xi' = ( \xi_{1}, \ldots,  \xi_{n-1})$, to obtain
  \begin{align*}
    \lim_{t \to \pm \infty} t \sigma_{-1}(\bsP ;  \xi) &  \seq -
    \lim_{t \to \pm \infty}\frac{\imath t}{| \xi|^2} \big(t + e_{n} \otimes  \xi
    - \frac{2ft}{| \xi|^{2}} \xi^{\sharp} \otimes \xi
    + g \xi^{\sharp} \otimes \xi \big)\\
    &  \seq - \lim_{t \to \pm \infty}\frac{\imath t^{2}}{|\xi' + t e_{n}^{\sharp}|^2}
    \big(1 + e_{n} \otimes e_{n}^{\sharp}
    - \frac{2f t^{2}}{| \xi|^{2}} e_{n} \otimes e_{n}^{\sharp}
    + g e_{n} \otimes e_{n}^{\sharp} \big)\\
    &  \seq -
    \imath
    \big(1 + e_{n} \otimes e_{n}^{\sharp}
    - 2f  e_{n} \otimes e_{n}^{\sharp}
    + g e_{n} \otimes e_{n}^{\sharp} \big)\\
    &  \seq -
    \imath
    \big[1 + (1 - 2f + g) e_{n} \otimes e_{n}^{\sharp}
    \big] \seq - \imath \,,
  \end{align*}
  because $1 - 2f + g = 0$.
\end{proof}

We shall need the following calculation using residues (the details
can be found in Lemma 16.3.2 of \cite{KMNW-2025}).

\begin{lemma}\label{lemma.Mirela} Let $a > 0$. Then
  \begin{equation*}
  \displaystyle\int_{\RR}\frac{x^2 dx}{(a^{2} + x^{2})^{2}}
  \seq \displaystyle\frac{\pi}{2a}\,,\quad
  \displaystyle\int_{\RR}\frac{dx}{(a^{2} + x^{2})^{2}}
  \seq \displaystyle\frac{\pi}{2a^3}\,, \quad
   \mbox{and} \quad \displaystyle\int_{\RR}\frac{dx}{a^{2} + x^{2}}
  \seq \displaystyle\frac{\pi}{a}\,.
  \end{equation*}
\end{lemma}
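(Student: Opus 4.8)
The plan is to reduce all three identities to the third, most elementary one, which I would establish directly. First I would compute, using the substitution $x = a\tan\theta$ (or simply by recognizing the antiderivative $a^{-1}\arctan(x/a)$),
\begin{equation*}
  \int_{\RR}\frac{dx}{a^{2} + x^{2}} \seq \left[\, \frac1a \arctan\Big(\frac xa\Big) \,\right]_{x = -\infty}^{x = +\infty} \seq \frac{\pi}{a}\,.
\end{equation*}

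Next I would obtain the second identity by differentiating the first with respect to the parameter $a$. Differentiation under the integral sign is legitimate here: for $a$ in any compact subinterval of $(0, \infty)$, both $(a^{2}+x^{2})^{-1}$ and its $a$-derivative $-2a(a^{2}+x^{2})^{-2}$ are bounded in absolute value by a fixed $x$-integrable function, so dominated convergence applies. Differentiating $\int_{\RR}(a^{2}+x^{2})^{-1}\,dx \seq \pi a^{-1}$ in $a$ then gives
\begin{equation*}
  -\,2a \int_{\RR}\frac{dx}{(a^{2}+x^{2})^{2}} \seq -\,\frac{\pi}{a^{2}}\,, \qquad \mbox{hence} \qquad \int_{\RR}\frac{dx}{(a^{2}+x^{2})^{2}} \seq \frac{\pi}{2a^{3}}\,.
\end{equation*}
Finally, for the first identity I would use the elementary decomposition $x^{2} = (a^{2}+x^{2}) - a^{2}$, i.e.
\begin{equation*}
  \frac{x^{2}}{(a^{2}+x^{2})^{2}} \seq \frac1{a^{2}+x^{2}} - \frac{a^{2}}{(a^{2}+x^{2})^{2}}\,,
\end{equation*}
which, after integrating term by term and inserting the two formulas already established, yields
\begin{equation*}
  \int_{\RR}\frac{x^{2}\,dx}{(a^{2}+x^{2})^{2}} \seq \frac{\pi}{a} - a^{2}\cdot\frac{\pi}{2a^{3}} \seq \frac{\pi}{a} - \frac{\pi}{2a} \seq \frac{\pi}{2a}\,.
\end{equation*}

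There is no genuine obstacle in this argument; the only point that deserves a sentence of justification is the interchange of differentiation and integration, handled as indicated above. As an alternative, one could evaluate each integral directly by the residue theorem, closing the contour in the upper half-plane (the arc contribution vanishes since all three integrands decay at least like $|z|^{-2}$) and computing the residue at $z = \imath a$, which is a simple pole for the third integrand and a double pole for the first two; this is the route referenced in Lemma 16.3.2 of \cite{KMNW-2025}.
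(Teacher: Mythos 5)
Your proof is correct, but it takes a different route from the paper: the paper evaluates all three integrals by the residue theorem (closing the contour in the upper half-plane and computing residues at the simple or double pole $z = \imath a$), deferring the details to Lemma 16.3.2 of the cited book, whereas you reduce everything to the elementary $\arctan$ antiderivative, obtain the second integral by differentiating in the parameter $a$ (with the domination argument correctly supplied), and the first by the algebraic identity $x^{2} = (a^{2}+x^{2}) - a^{2}$. Your route is entirely real-variable and self-contained, which is arguably preferable for a lemma of this simplicity; the residue computation is more uniform (one method handles all three integrals and generalizes to higher powers of $a^{2}+x^{2}$ without iterating the parameter differentiation), which is presumably why the book's treatment adopts it. Both arguments are complete and yield the same three values, so there is nothing to fix.
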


For the rest of the paper, we let
$\bsP := - 2 \maA  \Dnu^{*} + \maB \bsnu^{\sharp}$
be the pseudodifferential operator defining the
vector part $\maW_{\rm{ST}}$ of the double layer potential (Definition
\ref{def.lp}). Theorem \ref{thm.main.jump1} then yields the
``restriction at $\Gamma$ operator''
\begin{equation}\label{eq.def.bsk}
  \bsK \ede \bsP _{0} \ede \big(- 2 \maA  \Dnu^{*} + \maB \bsnu^{\sharp})_{0}
\end{equation}
which is an order zero, classical pseudodifferential operator on $\Gamma :=
\pa \Omega$. Here is our first ``jump relation,'' which extends the classical
one on Euclidean spaces.

\begin{theorem}\label{thm.K1}
  Let $\bsK := \bsP_{0}$ be as in Equation \eqref{eq.def.bsk}. Then
  \begin{equation*}
    \maW_{\rm{ST}}(\bsh)_{\pm}
    \ede \big[ \bsP  (\bsh \delta_{\Gamma})\big]_{\pm}
    \seq \left [\pm \frac12   +
    \bsK \right ] \bsh\,,
  \end{equation*}
  where $\sigma_{0}(\bsK;  \xi') = \frac{\imath V_{0}}{2(2V_{0} + 1)| \xi'|}
  \big( \bsnu \otimes  \xi' - \xi^{'\sharp} \otimes \bsnu^{\sharp} \big )$.
  In particular, the two operators $\pm \frac12   + \bsK$
  are elliptic for $V_{0} \ge 0$ and have self-adjoint principal
  symbols.
\end{theorem}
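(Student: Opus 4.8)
## Proof proposal for Theorem \ref{thm.K1}

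The plan is to reduce everything to the abstract jump relation of Theorem \ref{thm.main.jump1} applied to the pseudodifferential operator $\bsP := -2\maA\Dnu^{*} + \maB\bsnu^{\sharp}$, which has order $-1$ and is classical (being built from the classical operators $\maA$, $\maB$ of Proposition \ref{prop.form.inverse} and the differential operators $\Dnu^{*}$, $\bsnu^{\sharp}$). First I would record that $\Gamma := \pa\Omega$ has an $\epsilon$-normal tubular neighborhood, which is automatic when $\Gamma$ is compact (and in the generality we work, we assume a tubular neighborhood exists anyway). By Proposition \ref{prop.local.stokes} we have $\JC_{+}(\bsP) = \JC_{-}(\bsP) = -\imath$, so the hypothesis $\JC_{+}(\bsP) = \JC_{-}(\bsP)$ of Theorem \ref{thm.main.jump1} is satisfied. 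That theorem then directly gives
\begin{equation*}
  \maW_{\rm{ST}}(\bsh)_{\pm} \ede \big[\bsP(\bsh\delta_{\Gamma})\big]_{\pm}
  \seq \big(\bsP_{0\pm}\big)\bsh \seq \Big(\pm\tfrac{\imath}2 \JC_{+}(\bsP) + \bsP_{0}\Big)\bsh
  \seq \Big(\pm\tfrac12 + \bsK\Big)\bsh\,,
\end{equation*}
using $\JC_{+}(\bsP) = -\imath$ so that $\pm\frac{\imath}2\JC_{+}(\bsP) = \pm\frac12$, and $\bsK := \bsP_{0}$ as defined in Equation \eqref{eq.def.bsk}. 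This establishes the displayed formula and shows $\bsK$ is order zero classical.

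Next I would compute the principal symbol $\sigma_{0}(\bsK) = \sigma_{0}(\bsP_{0})$. By Theorem \ref{thm.main.jump1}(2), $\sigma_{0}(\bsP_{0}) = b_{0}$, i.e.
\begin{equation*}
  \sigma_{0}(\bsK; \xi') \seq \frac1{4\pi}\int_{\RR}\big[\sigma_{-1}(\bsP; \xi + \tau\bsnu_{x}^{\sharp}) + \sigma_{-1}(\bsP; \xi - \tau\bsnu_{x}^{\sharp})\big]\,d\tau\,,
\end{equation*}
where $\xi \in T_{x}^{*}M$ projects onto $\xi'$ and is orthogonal to $\bsnu_{x}^{\sharp}$. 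I would substitute the explicit formula for $\sigma_{-1}(\bsP; \xi)$ from Proposition \ref{prop.local.stokes}, with $\xi$ replaced by $\xi \pm \tau\bsnu^{\sharp}$ and $|\xi \pm \tau\bsnu^{\sharp}|^{2} = |\xi'|^{2} + \tau^{2}$ (using $\xi \perp \bsnu^{\sharp}$ and $|\bsnu| = 1$). Averaging over the sign of $\tau$ kills the terms odd in $\tau$: the term $\tau(\bsnu)\cdot(\text{stuff})$ contributions proportional to an odd power of $\tau$ drop, and what survives is the combination giving the stated answer. Concretely, the surviving pieces are $(\text{even in }\tau)$ multiples of $\bsnu\otimes\xi'$, $\xi'^{\sharp}\otimes\bsnu^{\sharp}$, and $\xi'^{\sharp}\otimes\xi'$; the residue integrals $\int_{\RR}(|\xi'|^{2}+\tau^{2})^{-1}\,d\tau = \pi/|\xi'|$, $\int_{\RR}(|\xi'|^{2}+\tau^{2})^{-2}\,d\tau = \pi/(2|\xi'|^{3})$, and $\int_{\RR}\tau^{2}(|\xi'|^{2}+\tau^{2})^{-2}\,d\tau = \pi/(2|\xi'|)$ from Lemma \ref{lemma.Mirela} then produce the explicit coefficients. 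After collecting terms and using $1 - 2f + g = 0$ (the same algebraic identity used in Proposition \ref{prop.local.stokes}, with $f = (V_{0}+1)/(2V_{0}+1)$, $g = 1/(2V_{0}+1)$), the $\xi'^{\sharp}\otimes\xi'$ and the "diagonal" scalar pieces cancel, leaving exactly
\begin{equation*}
  \sigma_{0}(\bsK; \xi') \seq \frac{\imath V_{0}}{2(2V_{0}+1)|\xi'|}\big(\bsnu\otimes\xi' - \xi'^{\sharp}\otimes\bsnu^{\sharp}\big)\,.
\end{equation*}
I expect this symbol computation — carefully tracking which tensor monomials survive the $\tau$-parity averaging and then applying the three residue formulas and the identity $1-2f+g=0$ — to be the main technical obstacle; it is essentially Lemma 16.3.x of \cite{KMNW-2025} adapted to the present generality.

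Finally, for the last assertions: the principal symbol $\sigma_{0}(\bsK; \xi')$ above is of the form $c\,(\bsnu\otimes\xi' - \xi'^{\sharp}\otimes\bsnu^{\sharp})$ with $c = \imath V_{0}/(2(2V_{0}+1)|\xi'|)$ purely imaginary. Using $(\bsnu\otimes\xi')^{*} = \xi'^{\sharp}\otimes\bsnu^{\sharp}$ (and $(\xi'^{\sharp}\otimes\bsnu^{\sharp})^{*} = \bsnu\otimes\xi'$), one checks that $(\bsnu\otimes\xi' - \xi'^{\sharp}\otimes\bsnu^{\sharp})^{*} = \xi'^{\sharp}\otimes\bsnu^{\sharp} - \bsnu\otimes\xi' = -(\bsnu\otimes\xi' - \xi'^{\sharp}\otimes\bsnu^{\sharp})$, so this operator is skew-adjoint; multiplied by the purely imaginary scalar $c$ it becomes self-adjoint. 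Hence $\sigma_{0}(\bsK)$ is self-adjoint, and so is the principal symbol of $\pm\frac12 + \bsK$. For ellipticity, the skew-adjoint endomorphism $N := \bsnu\otimes\xi' - \xi'^{\sharp}\otimes\bsnu^{\sharp}$ acts nontrivially only on the two-dimensional span of $\bsnu$ and $\xi'^{\sharp}$, where it has eigenvalues $\pm\imath|\xi'|$ (and is $0$ on the orthogonal complement); thus $\sigma_{0}(\bsK; \xi')$ has eigenvalues $0$ and $\pm\imath|\xi'|\cdot\imath V_{0}/(2(2V_{0}+1)|\xi'|) = \mp V_{0}/(2(2V_{0}+1))$, all lying in the open interval $(-\tfrac12, \tfrac12)$ since $V_{0} \ge 0$ forces $0 \le V_{0}/(2(2V_{0}+1)) < \tfrac12$. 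Therefore $\pm\tfrac12 + \sigma_{0}(\bsK; \xi')$ is invertible for every $\xi' \neq 0$, i.e. $\pm\tfrac12 + \bsK$ is elliptic, which completes the proof. $\square$
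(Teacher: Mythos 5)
Your proposal is correct and follows essentially the same route as the paper: reduce to Theorem \ref{thm.main.jump1} via the computation $\JC_{+}(\bsP)=\JC_{-}(\bsP)=-\imath$ of Proposition \ref{prop.local.stokes}, obtain $\sigma_{0}(\bsK)$ by averaging $\sigma_{-1}(\bsP)$ over the sign of the normal covariable and evaluating the integrals with Lemma \ref{lemma.Mirela}, then read off self-adjointness and the eigenvalues $\pm V_{0}/(2(2V_{0}+1))$ for ellipticity. One small correction to your sketch of the symbol computation: the $\xi'^{\sharp}\otimes\xi'$ and scalar pieces are \emph{odd} in $\tau$ and already drop in the parity averaging (only the $\bsnu\otimes\xi'$ and $\xi'^{\sharp}\otimes\bsnu^{\sharp}$ monomials survive), and the coefficients close up via $1-f = V_{0}/(2V_{0}+1)$ and $g-f = -V_{0}/(2V_{0}+1)$ rather than via $1-2f+g=0$, which is the identity used in Proposition \ref{prop.local.stokes} for $\JC_{\pm}$.
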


\begin{proof}
  Let $f = (V_{0} + 1)/(2V_{0} + 1)$ and $g = 1/(2V_{0} + 1)$, as in
  Proposition \ref{prop.local.stokes}. As in that proposition, we
  use local coordinates such that $\bsnu = -e_{n}$. Using Theorem \ref{thm.main.jump1}
  and Proposition \ref{prop.local.stokes}, we see that it is enough to
  identify $\sigma_{0}(\bsK; \xi') := \sigma_{0}(\bsP_{0}; \xi')$.
  To that end, we separate the terms that are \emph{even} in $t$ in the expansion of
  $\sigma_{-1}(\bsP; \xi)$ in terms of powers of $t$. For instance, the even part of
  \begin{equation*}
    \xi^{\sharp} \otimes  \xi \seq \xi^{'\sharp} \otimes  \xi' + t (e_{n} \otimes \xi'
    + \xi^{'\sharp} \otimes e_{n}^{\sharp}) +  t ^{2} e_{n} \otimes e_{n}^{\sharp}
  \end{equation*}
  is $\xi^{'\sharp} \otimes  \xi' +  t ^{2} e_{n} \otimes e_{n}^{\sharp}$,
  whereas its odd part is $t (e_{n} \otimes \xi' + \xi^{'\sharp} \otimes e_{n}^{\sharp})$.
  This gives
  \begin{multline*}
    b(\xi', t) \ede
    \sigma_{-1}(\bsP ;  \xi) + \sigma_{-1}(\bsP ;  \xi', -  t ) \\
    \seq -\frac{2 \imath}{| \xi|^2} \Big[ e_{n} \otimes  \xi' +
    g \xi^{'\sharp} \otimes e_{n}^{\sharp} -
    \frac{2f   t ^{2}}{| \xi|^{2}}(e_{n} \otimes  \xi'
    +  \xi^{'\sharp} \otimes e_{n}^{\sharp}) \Big]\\
    \seq -2 \imath \Big[ \left( \frac1{| \xi|^2} - \frac{2f
     t ^{2}}{| \xi|^{4}} \right)
    e_{n} \otimes  \xi' +
    \left( \frac{g}{| \xi|^2} - \frac{2f   t ^{2}}{| \xi|^{4}} \right)
    \xi^{'\sharp} \otimes e_{n}^{\sharp} \Big]
    \,.
  \end{multline*}
  Lemma \ref{lemma.Mirela} gives
  \begin{equation*}
    \int_{\RR} \frac{1}{| \xi|^2}\, d  t  \seq \frac{\pi}{| \xi'|}
    \ \mbox{ and }\
    \int_{\RR} \frac{  t ^{2}}{| \xi|^4}\, d  t  \seq \frac{\pi}{2| \xi'|}
    \,,\quad  \xi' \neq 0\,,
  \end{equation*}
  We next use Theorem \ref{thm.main.jump1} and these relations to obtain
  \begin{multline*}
    \sigma_{0}(\bsK;  \xi') \seq \frac1{4 \pi}
    \int_{\RR} b( \xi',   t )\, d  t \\
    \seq - \frac{\imath}{2\pi} \int_{\RR}
    \Big[ \left( \frac1{| \xi|^2} - \frac{2f   t ^{2}}{| \xi|^{4}} \right)
    e_{n} \otimes  \xi' +
    \left( \frac{g}{| \xi|^2} - \frac{2f   t ^{2}}{| \xi|^{4}} \right)
     \xi^{'\sharp} \otimes e_{n}^{\sharp} \Big]\, d  t \\
    \seq \frac{\imath V_{0}}{2(2V_{0} + 1)| \xi'|}
    \big( \xi^{'\sharp} \otimes e_{n}^{\sharp} - e_{n} \otimes  \xi'\big )\,.
  \end{multline*}
  This explicit formula gives that $\sigma_{0}(\bsK)^{*} = \sigma_{0}(\bsK)$.
  An elementary calculation gives that the eigenvalues of $\sigma_{0}(\bsK;  \xi')$
  are $\lambda = \pm \frac{V_{0}}{2(2V_{0} + 1)}$. Since they satisfy $|\lambda|<1/4$,
  we obtain that $\pm \frac 12   + \bsK$ is elliptic.
\end{proof}

\begin{remark}
  Theorem \ref{thm.K1} gives right away that $\bsK = \bsP _{0}$ is a pseudodifferential
  operator of order $-1$ if, and only if, $V_{0} = 0$.
\end{remark}

Using Theorem \ref{thm.main.jump1}, let us define $\bsS := \maA_{0}$ and $\maC_{0}$
to be the ``restriction at $\Gamma$ operators'' associated to the pseudodifferential
operators $\maA$ and $\maC$ of Proposition \ref{prop.form.inverse}
(as two of the matrix components of $\psdinv$). These are the operators appearing
in the definition of the single layer potential $\maS_{\rm{ST}}$ (see
Remark \ref{rem.components}). The notationi $\bsS := \maA_{0}$ is the
customary one in the theory of layer potentials. Recall that
$f := \frac{V_{0}+1}{2V_{0}+1}$ and $g = \frac1{2V_{0} + 1}$.
We obtain the following relations, also called ``jump relations''.

\begin{theorem}\label{thm.jump.rel}
  Let $\bsh \in L^{2}(\Gamma; TM)$, where $\Gamma := \pa \Omega$,
  as before.
  \begin{enumerate}[\rm (1)]
    \item $\maV_{\rm{ST}}(\bsh)_{\pm} \seq \bsS \bsh := \maA_{0} \bsh$ and
      $\sigma_{-1}(\bsS; \xi') \seq
      \frac1{4|\xi'|}(2 - f \bsnu \otimes \bsnu^{\sharp} - f \eta^{\sharp} \otimes \eta)
      \,,$
    where $\eta := |\xi'|^{-1}\xi'$.
    Consequently, $\bsS$ is elliptic with self-adjoint symbol.

    \item
      $\big[\maP_{\rm{ST}}(\bsh)\big]_{\pm} = \big(\mp \frac g2
      \bsnu^{\sharp} + \maC_{0} \big) \bsh$,
      where $\sigma_{0}(\maC_{0};\xi') = -\frac{g \imath}{2|\xi'|} \xi'.$

    \item $[\bop \maS_{\rm{ST}}(\bsh)]_{\pm} \seq \left(\mp \frac12
      + {\bsK}^{*}\right)\bsh\,,$ where $\bsK = \bsP_{0}
      := (-2 \maA \Dnu^{*} + \maB \bsnu^{\sharp})$, as in Theorem \ref{thm.K1}.
  \end{enumerate}
\end{theorem}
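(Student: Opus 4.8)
The plan is to dispatch the three parts in order, each time recognizing the relevant potential operator as $P(\bsh\delta_\Gamma)$ for a pseudodifferential operator $P$ of a known order, applying the appropriate abstract jump theorem of Section~\ref{sec.sec3}, and then reading off the principal symbols from Remark~\ref{rem.form} together with the residue identities of Lemma~\ref{lemma.Mirela}.

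For part (1), I would start from $\maV_{\rm{ST}}(\bsh) = \maA(\bsh\delta_\Gamma)$ (Remark~\ref{rem.components}). Since $\maA \in \Psi_{\cl}^{-2}(M;TM)$ has order $-2<-1$, Theorem~\ref{thm.main.jump1b} (with Lemma~\ref{lemma.enough.reg}) applies: the two lateral limits coincide with the common trace and equal $\maA_0\bsh =: \bsS\bsh$, with $\sigma_{-1}(\bsS;\xi') = \frac1{2\pi}\int_\RR \sigma_{-2}(\maA; \xi + t\bsnu^\sharp_x)\,dt$ for $\xi$ the lift of $\xi'$ orthogonal to $\bsnu$. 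Substituting $\sigma_{-2}(\maA)$ from Remark~\ref{rem.form}, using $|\xi + t\bsnu^\sharp|^2 = |\xi'|^2 + t^2$ and expanding $(\xi+t\bsnu^\sharp)^\sharp\otimes(\xi+t\bsnu^\sharp) = \xi^\sharp\otimes\xi + t(\,\cdot\,) + t^2\bsnu\otimes\bsnu^\sharp$, the odd-in-$t$ term integrates to zero while Lemma~\ref{lemma.Mirela} handles the rest, producing $\sigma_{-1}(\bsS;\xi') = \frac1{4|\xi'|}\big(2 - f\bsnu\otimes\bsnu^\sharp - f\eta^\sharp\otimes\eta\big)$ with $\eta = |\xi'|^{-1}\xi'$. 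Self-adjointness of $\bsS$ follows from Corollary~\ref{cor.adjoints} and $\maA = \maA^*$ (symmetry of $\psdinv$, see Proposition~\ref{prop.form.inverse}); ellipticity follows because $\bsnu \perp \eta^\sharp$, so the symbol is diagonalizable with eigenvalues $\frac{2-f}{4|\xi'|}$ on $\operatorname{span}(\bsnu,\eta^\sharp)$ and $\frac1{2|\xi'|}$ on the rest, all strictly positive since $0\le f\le 1$.

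For part (2), $\maP_{\rm{ST}}(\bsh) = \maC(\bsh\delta_\Gamma)$ with $\maC \in \Psi_{\cl}^{-1}(M;TM,\CC)$ of exact order $-1$; this is the critical case, so I would invoke Theorem~\ref{thm.main.jump1}. The hypothesis $\JC_+(\maC) = \JC_-(\maC)$ holds because $\sigma_{-1}(\maC)(x,\xi) = -\frac{\imath g}{|\xi|^2}\xi$ is odd in $\xi$; evaluating at $-\bsnu^\sharp$ and using $|\bsnu|=1$ gives $\JC_+(\maC;x) = \imath g\,\bsnu^\sharp$, whence $\maC_{0\pm} = \pm\frac\imath2\JC_+(\maC) + \maC_0 = \mp\frac g2\bsnu^\sharp + \maC_0$, which is the asserted jump. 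The symbol of $\maC_0$ is $b_0(\xi')$ of Notation~\ref{not.ord-1}; plugging in $\sigma_{-1}(\maC)$ and using $\int_\RR \frac{d\tau}{|\xi'|^2+\tau^2} = \frac\pi{|\xi'|}$ (Lemma~\ref{lemma.Mirela}) gives $\sigma_0(\maC_0;\xi') = -\frac{g\imath}{2|\xi'|}\xi'$.

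For part (3), the key observation — and the step I expect to demand the most care — is that $\bop\maS_{\rm{ST}}(\bsh) = \bsQ(\bsh\delta_\Gamma)$ with $\bsQ := -2\Dnu\maA + \bsnu\maC$ (from $\bop(\bsu,p)^\top = -2\Dnu\bsu + p\bsnu$ and Remark~\ref{rem.components}), and that $\bsQ$ is precisely the formal adjoint $\bsP^*$ of the operator $\bsP = -2\maA\Dnu^* + \maB\bsnu^\sharp$ defining $\maW_{\rm{ST}}$: indeed $(\maA\Dnu^*)^* = \Dnu\maA$ since $\maA = \maA^*$, and $(\maB\bsnu^\sharp)^* = \bsnu\maB^* = \bsnu\maC$ since $\maB^* = \maC$ and the zeroth-order operator $\bsu\mapsto\bsnu\cdot\bsu$ has adjoint $c\mapsto c\bsnu$. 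Then Corollary~\ref{cor.adjoints} gives $\bsQ_0 = (\bsP^*)_0 = (\bsP_0)^* = \bsK^*$ and $\JC_+(\bsQ) = \JC_+(\bsP)^* = (-\imath)^* = \imath$ (using Proposition~\ref{prop.local.stokes}), so Theorem~\ref{thm.main.jump1} yields $[\bop\maS_{\rm{ST}}(\bsh)]_\pm = \bsQ_{0\pm}\bsh = \big(\pm\frac\imath2\cdot\imath + \bsK^*\big)\bsh = \big(\mp\frac12 + \bsK^*\big)\bsh$. The principal obstacle is thus purely bookkeeping: correctly computing the adjoints of the boundary-operator pieces and matching the sign conventions in the definitions of $\JC_\pm$ and $P_{0\pm}$ so that the limit from $\Omega_+$ carries $-\frac12$ (consistently with Theorem~\ref{thm.K1}, where $\maW_{\rm{ST}}$ picks up $+\frac12$); everything else is a direct citation or a one-line residue evaluation.
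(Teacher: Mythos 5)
Your proposal is correct and follows essentially the same route as the paper: part (1) by the order~$<-1$ restriction theorem plus the residue integrals of Lemma \ref{lemma.Mirela}, part (2) by Theorem \ref{thm.main.jump1} applied to the odd symbol of $\maC$, and part (3) by recognizing the conormal derivative of the single layer as $\bsP^{*}(\bsh\delta_{\Gamma})$ and invoking Corollary \ref{cor.adjoints} together with $\JC_{+}(\bsP)=-\imath$. The paper phrases part (3) via $\bop\psdinv$ versus $\psdinv\bopstar$ rather than your explicit component computation $\bsQ=-2\Dnu\maA+\bsnu\maC=\bsP^{*}$, but this is the same adjoint argument.
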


\begin{proof}
  Recall that the linear map
  $\xi^{\sharp} \otimes \xi \in \End(T_{x}M)$ is
  defined by $(\xi^{\sharp} \otimes \xi) (v) := \xi(v) \xi^{\sharp}$.
  This gives, $\sigma_{-2}(\maA, \xi) = \frac1{|\xi|^{4}}
  \big(|\xi|^{2} - f \xi^{\sharp} \otimes \xi\big)$.
  For $\xi \in T^{*}M$, let us write, as before, $\xi = \xi' + t \bsnu^{\sharp}$,
  with $\xi'(\bsnu) = 0$ and we use the projection $T_{x}^{*}M \to T_{x}^{*}\Gamma$,
  when $x \in \Gamma$. To
  prove the first equality, we use Proposition \ref{thm.main.jump1b}
  (see also Theorem \ref{thm.main.jump-2}) and then Proposition \ref{prop.form.inverse}
  (see also Remark \ref{rem.form}) to obtain
  \begin{align*}
    \sigma_{-1}(\bsS; \xi') & \seq \frac1{2\pi} \int_{\RR}
    \sigma_{-2}(\maA; \xi) \, dt\\
    & \seq \frac1{2\pi} \int_{\RR} \frac1{|\xi|^{4}}
    \big(|\xi|^{2} - f \xi^{\sharp} \otimes \xi\big) \, dt\\
    & \seq \frac1{2\pi} \int_{\RR} \frac1{|\xi|^{4}}
    \big[|\xi|^{2} - f (\xi^{\prime \sharp} \otimes \xi' + t \bsnu \otimes \xi' +
    t \xi^{\prime \sharp} \otimes \bsnu^{\sharp}
    + t^{2} \bsnu \otimes \bsnu^{\sharp}) \big] \, dt\\
    & \seq \frac1{2\pi} \int_{\RR} \frac1{|\xi|^{4}}
    \big[|\xi|^{2} - f (\xi^{\prime \sharp} \otimes \xi' +
    t^{2} \bsnu \otimes \bsnu^{\sharp}) \big] \, dt\\
    & \seq \frac1{2\pi} \int_{\RR} \left(\frac1{|\xi'|^{2} + t^{2}}
    - \frac f{(|\xi'|^{2} + t^{2})^{2}} \xi^{\prime \sharp} \otimes \xi'
    - \frac {ft^{2}}{(|\xi'|^{2} + t^{2})^{2}}
     \bsnu \otimes \bsnu^{\sharp}\right) \, dt\\
    & \seq \frac{1}{2|\xi'|}
    - \frac {f }{4|\xi'|^{3}} \xi^{\prime \sharp} \otimes \xi'
    - \frac {f}{4|\xi'|}
    \bsnu \otimes \bsnu^{\sharp}\,.
  \end{align*}
  This proves (1).

  For the second relation, we use the relation
  $\sigma_{-1}(\maC; \xi) = -\frac{g \imath}{|\xi|^{2}} \xi$ (see
  Proposition \ref{prop.form.inverse}) and then Theorem \ref{thm.main.jump1}
  (see also Theorem \ref{thm.main.jump0}). We also write $\xi = \xi' + t \bsnu^{\sharp}$
  with $\xi' \perp \bsnu$, as in the proof of the previous point.
  Then we notice that the even part of $\sigma_{-1}(\maC; \xi)$
  (in $\tau$) is $-\frac{g \imath}{|\xi|^{2}} \xi'$. Therefore
  \begin{equation*}
    \sigma_{0}(\maC_{0}; \xi') \seq -\frac1{2\pi}
    \int_{\RR}\frac{g \imath}{|\xi|^{2}} \xi' \, d t
    \seq -\frac{g \imath}{2\pi}
    \left(\int_{\RR}\frac{1}{|\xi|^{2}} \, d t \right) \xi'
    \seq -\frac{g \imath}{2 |\xi'|} \xi'\,.
  \end{equation*}
  The ``jump part'' is also obtained from the principal symbol of
  $\maC$, namely, it is $\mp \frac{\imath}2 \sigma_{-1}(\maC; \bsnu^{\sharp})
  = \mp \frac{g}{2}\bsnu^{\sharp}$.

  Let us now prove the third relation. We have $\bsXi^{*} = \bsXi$, and hence
  $\bsXi^{(-1)*} = \bsXi^{(-1)}$. Theorem \ref{thm.main.jump1} and Proposition
  \ref{prop.local.stokes} then give $\JC_{+} (\bop\bsXi^{(-1)}) =
  \JC_{+} (\bsXi^{(-1)}\bopstar)^{*} = \overline{(-\imath)} = \imath$.
  Moreover, $(\bop\bsXi^{(-1)})_{0} = (\bsXi^{(-1)}\bopstar)_{0}^{*} = \bsK^{*}$.
  This then gives the following relation:
  \begin{multline*}
    [\bop \maS_{\rm{ST}}(\bsh)]_{\pm}
    \seq [\bop \bsXi^{(-1)}(\bsh \delta_{\Gamma})]_{\pm}
    \seq \left( \pm \frac{\imath}2 \JC_{+} (\bop \bsXi^{(-1)})
      + (\bop \bsXi^{(-1)})_{0} \right) \bsh \\
     \seq \left( \pm \frac{\imath}2 \JC_{+} (\bsXi^{(-1)}\bopstar)^{*}
      + (\bsXi^{(-1)}\bopstar\big)_{0}^{*} \right) \bsh
    \seq \left( \mp \frac12 + \bsK^{*} \right) \bsh\,.
  \end{multline*}
  This completes the proof.
\end{proof}

For the (usual) Stokes operator $\boldsymbol\Xi_{0,0}$,
some of the ``jump relations'' proved in this section
can be found in \cite{D-M, M-W}, \cite[Lemma 3.1]{K-L-W},
\cite[(6.1), (6.2)]{K-W}, or \cite[Lemma 1.3]{Varnhorn}.
Here is an immediate consequence of the ellipticity relations proved
above.

\begin{theorem} \label{thm.Fredholm_first}
  Let $\Omega \subset M$ be a domain with smooth boundary $\Gamma := \pa \Omega
  = \pa \Omega_{-} \neq \emptyset$. We assume that $\Gamma$ is compact.
  Let $\bsK \ede \bsP _{0} \ede
  \big(- 2 \maA  \Dnu^{*} + \maB \bsnu^{\sharp})_{0} \in
  \Psi^{0}(\Gamma ; TM)$ be as in Theorem \ref{thm.K1}.
  Also, let $\bsS := \maA_{0} \in \Psi^{-1}(\Gamma ; TM)$ be as in Theorem \ref{thm.jump.rel}.
  Then $\frac12 + \bsK$ and $\bsS$ are Fredholm of index zero on $L^2(\Gamma; TM)$
  and their Moore-Penrose pseudo-inverses satisfy
  $(\frac12 + \bsK)^{(-1)} \in \Psi_{\cl}^{0}(\Gamma; TM)$ and
  $\bsS^{(-1)} \in \Psi_{\cl}^{1}(\Gamma; TM)$. Moreover,
  $\bsS$ is bounded, self-adjoint on $L^{2}(\Gamma; TM)$.
\end{theorem}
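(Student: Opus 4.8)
The plan is to derive Theorem~\ref{thm.Fredholm_first} as a formal consequence of the symbol calculus already established. First I would observe that, since $\Gamma$ is compact, $\Psi^0_{\cl}(\Gamma; TM)$ acts boundedly on $L^2(\Gamma; TM)$ and $\Psi^{-1}_{\cl}(\Gamma; TM)$ acts compactly (Rellich); moreover an elliptic operator in $\Psi^m_{\cl}(\Gamma; TM)$ on a compact manifold is Fredholm on the appropriate Sobolev scale, and its Moore--Penrose pseudoinverse lies in $\Psi^{-m}_{\cl}(\Gamma; TM)$ (a parametrix corrected by the finite-rank projections onto kernel and cokernel, both of which are smoothing). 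So the Fredholm statements and the orders of the pseudoinverses follow at once from the ellipticity assertions. By Theorem~\ref{thm.K1}, the principal symbol of $\tfrac12 + \bsK$ is $\tfrac12 + \sigma_0(\bsK; \xi')$, which is invertible because the eigenvalues of $\sigma_0(\bsK;\xi')$ are $\pm\tfrac{V_0}{2(2V_0+1)}$, of absolute value strictly less than $1/2$; hence $\tfrac12 + \bsK$ is elliptic of order $0$, therefore Fredholm on $L^2(\Gamma; TM)$, with $(\tfrac12 + \bsK)^{(-1)} \in \Psi^0_{\cl}(\Gamma; TM)$. By Theorem~\ref{thm.jump.rel}(1), $\bsS = \maA_0$ is elliptic of order $-1$ with self-adjoint principal symbol $\sigma_{-1}(\bsS;\xi') = \tfrac1{4|\xi'|}(2 - f\,\bsnu\otimes\bsnu^\sharp - f\,\eta^\sharp\otimes\eta)$; one checks this $2$-parameter-free expression is positive definite (its eigenvalues are $\tfrac1{2|\xi'|}$ and $\tfrac{2-f}{4|\xi'|}$, both positive since $0 \le f < 1$), so $\bsS$ is elliptic, hence Fredholm on $L^2(\Gamma; TM)$ with $\bsS^{(-1)} \in \Psi^1_{\cl}(\Gamma; TM)$.

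Next I would establish self-adjointness of $\bsS$ on $L^2(\Gamma; TM)$. Since $\bsXi$ is symmetric, its Moore--Penrose pseudoinverse $\psdinv$ is symmetric (Proposition~\ref{prop.form.inverse}), so $\maA = \maA^*$; by Corollary~\ref{cor.adjoints} the restriction operation commutes with adjoints, giving $\bsS^* = (\maA^*)_0 = \maA_0 = \bsS$. Boundedness of $\bsS$ on $L^2(\Gamma; TM)$ is automatic since it has order $-1 \le 0$ on the compact manifold $\Gamma$. (In fact $\bsS$ is smoothing relative to $L^2$, so it is even compact, though only self-adjointness is claimed.)

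Finally, for the index-zero assertion I would argue as follows. For $\bsS$: it is self-adjoint and Fredholm on $L^2(\Gamma; TM)$, and a self-adjoint Fredholm operator on a Hilbert space has kernel equal to cokernel, hence index zero. For $\tfrac12 + \bsK$: here I would use that $\bsK$ and $\bsK^*$ have the same index by general Fredholm theory (adjoint), and then combine Theorem~\ref{thm.jump.rel}(3) with the structure of the double layer operator. Alternatively, and more cleanly, I would note that $\tfrac12 + \bsK = \tfrac12 + \bsP_0$ differs from the self-adjoint operator $\tfrac12 + \tfrac12(\bsP_0 + \bsP_0^*)$ by an operator whose principal symbol $\tfrac12(\sigma_0(\bsK) - \sigma_0(\bsK)^*)$ vanishes (since $\sigma_0(\bsK)$ is self-adjoint by Theorem~\ref{thm.K1}), i.e.\ by a compact operator; since the index is stable under compact perturbations and a self-adjoint Fredholm operator has index zero, $\tfrac12 + \bsK$ has index zero. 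The main obstacle I anticipate is precisely this index-zero claim for $\tfrac12+\bsK$: one must be careful that $\bsK$ itself is only order $0$ (not $-1$) when $V_0 \not\equiv 0$, so $\tfrac12+\bsK$ is not a compact perturbation of $\tfrac12\cdot\mathrm{Id}$; the compact-perturbation-of-self-adjoint argument via the vanishing of the antisymmetric part of the principal symbol is the cleanest route and should be spelled out carefully. Everything else is bookkeeping with the symbol calculus and standard elliptic theory on the compact manifold $\Gamma$.
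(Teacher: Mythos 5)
Your proposal is correct and follows essentially the same route as the paper: ellipticity of $\tfrac12+\bsK$ and $\bsS$ from Theorems \ref{thm.K1} and \ref{thm.jump.rel}, index zero via the observation that the self-adjointness of the principal symbols makes $\bsK-\bsK^{*}$ (resp.\ $\bsS-\bsS^{*}$) an order $-1$, hence compact, perturbation, and the pseudoinverse orders from the standard spectral-invariance/parametrix results; your symmetrization $\tfrac12+\tfrac12(\bsK+\bsK^{*})$ is just a cosmetic variant of the paper's direct comparison of $\tfrac12+\bsK$ with $\tfrac12+\bsK^{*}$. The only slip is the claim $0\le f<1$: in fact $f=\frac{V_{0}+1}{2V_{0}+1}\in(\tfrac12,1]$ (with $f=1$ when $V_{0}=0$), but since $2-f\ge 1>0$ the positivity of $\sigma_{-1}(\bsS)$ and hence the ellipticity argument are unaffected.
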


\begin{proof}
  We know from Theorem \ref{thm.K1}
  that $\frac12 + \bsK$ is elliptic with self-adjoint principal
  symbol. Because $\Gamma$ is compact, $\frac12 + \bsK$ is then Fredholm of index zero
  by classical results \cite{Hormander3, H-W}. (Indeed, the operator
  $T :=\left(\frac12   + \bsK\right)-\left(\frac12   + \bsK^*\right)$
  belongs to $\Psi ^{-1}(\Gamma ;TM)$, and hence is compact on the space $L^2(\Gamma ;TM)$.
  Thus the operator $\frac12   + \bsK = \left(\frac12   + \bsK^*\right) + T$ has
  the same index as the operator $\frac12   + \bsK^*$, because $T$ is compact.
  However, the index of $\frac12   + \bsK $ is the opposite index of
  $\frac12   + \bsK^*$, by definition.
  Consequently, $\frac12   + \bsK$ is a Fredholm operator of index zero
  on $L^2(\Gamma ;TM)$, as asserted.)
  The fact that $(\frac12 + \bsK)^{(-1)} \in \Psi_{\cl}^{0}(\Gamma; TM)$
  is also a classical result on pseudodifferential operators
  \cite{BealsSpInv, Hormander3} (a proof can also be found in \cite{KMNW-2025, KNW-22}).
  This proves our result for $\frac12 + \bsK$. The proof for $\bsS$ is completely
  similar, but using Theorem \ref{thm.jump.rel}(1), which states that $\bsS$
  has a self-adjoint principal symbol, instead of Theorem \ref{thm.K1}.

  Finally, since $\maA$ is self-adjoint and the distribution kernel $k_{\bsS}$
  of $\bsS$ is the restriction of the distribution kernel $k_{\maA}$
  of $\maA$ (Proposition \ref{thm.main.jump1b}), we obtain that $\bsS$ is
  also self-adjoint.
\end{proof}

\section{Fredholmness and invertibility of layer potential operators}
\label{sec.sec9}

We now derive consequences on the kernel, image and the Fredholm property of our
generalized Stokes operators $\bsXi$ in the case $M$ closed. More precisely, we assume
throughout this section that $M$ is a smooth, compact, boundaryless manifold
(i.e., a {\it closed manifold}), that $M$ is connected and that
$V, V_{0} \in \CI(M)$ are non-negative.

\subsection{Fredholmness of the generalized Stokes operator $\bsXi_{V, V_{0}}$}
The following result relies heavily on the results and  methods of
\cite{KNW-22} and \cite{KohrNistor-Stokes}.
Let $\phi : A \to \CC$. We shall write $\phi \not \equiv 0$ on $A$ if $\phi$
\emph{does not vanish identically} on $A$, that is, there exists $a \in A$ such that
$\phi(a) \neq 0$. By contrast, when we write ``$\phi \neq 0$ on $A$,'' we mean that
``$\phi(a) \neq 0$ \emph{for all} $a \in A$.'' Similarly, when we write
``$\phi = 0$ on $A$,'' we mean that ``$\phi(a) = 0$ \emph{for all} $a \in A$,''
(that is, the negation of the statement ``$\phi \not \equiv 0$'').

Let us assume that $\bsXi$ is a Fredholm operator and let $\maN$ denote its
kernel. Let
\begin{equation*}
  \widetilde{\bsXi} : \big (H^{2}(M; TM) \oplus H^{1}(M) \big) \cap \maN^{\perp}
  \to \big(L^{2}(M; TM) \oplus H^{1}(M)\big) \cap \maN^{\perp}
\end{equation*}
be the induced operator (here the orthogonal is in distribution sense and
the operator is well-defined since $\bsXi$ is symmetric, so, if $\xi \in
H^{2}(M; TM) \oplus H^{1}(M)$ and $\eta \in \maN$,
then $(\bsXi \xi, \eta) = (\xi , \bsXi \eta) = 0$). Consequently,
$\widetilde{\bsXi}$ is invertible. We then extend its inverse to an operator
$H^{-1}(M; TM) \oplus L^{2}(M) \to H^{1}(M; TM) \oplus L^{2}(M)$, denoted
$\psdinv$, the {\it Moore-Penrose pseudo-inverse} of $\bsXi$.
Indeed, let $p_{\maN}$ be the $L^{2}(M)$-orthogonal projection onto $\maN$. Then
the Equation \eqref{eq.rel.pnu} is still satisfied:
\begin{equation*}
  \begin{gathered}
    \psdinv (1 - p_{\maN}) \seq (1 - p_{\maN}) \psdinv \seq \psdinv \ \mbox{ and }\\
    \bsXi \psdinv \seq \psdinv \bsXi =  1 - p_{\maN}\,.
  \end{gathered}
\end{equation*}
Recall that we are assuming $M$ to be connected.

\begin{theorem}\label{thm.form}
  Let us assume that $V, V_{0}$ are smooth and non-negative
  and that $M$ is a smooth, compact manifold without boundary
  (i.e., a closed manifold).
  Then $\bsXi := \bsXi_{V, V_{0}}: H^{1}(M; TM) \oplus L^{2}(M) \to H^{-1}(M; TM) \oplus L^{2}(M)$
  is a self-adjoint Fredholm operator.
  Let $\maN \subset \CI(M; TM \oplus \CC)$ be defined by
  \begin{enumerate}[\rm (1)]
    \item $\maN := \{(\bsu, p) \mid \Def \bsu = 0 \,, \ \nabla p = 0\}$
    if $V = 0$ and $V_{0}=0$ on $M$;
    \item $\maN := \{(\bsu, 0) \mid \Def \bsu = 0\}$,
    if $V =0$ and $V_{0} \not \equiv 0$  on $M$;
    \item $\maN := \{(0, p) \mid \nabla p = 0\}$,
    if $V_{0}= 0$ on $M$ and either $V \not \equiv 0$ on $M$ or $M$ does not have
    non-zero Killing vector fields;
    \item $\maN := \{0\}$, if $V_{0} \not \equiv 0$ on $M$ and either $V \not \equiv 0$
    on $M$ or $M$ does not have non-zero Killing vector fields.
  \end{enumerate}
  The kernel of $\bsXi_{V, V_{0}}$ is given by
  $\ker \bsXi_{V, V_{0}} = \maN$. Moreover, $\bsXi$ has a (unique) Moore-Penrose
  pseudoinverse $\psdinv \in \Psi_{\cl}^{-\bss - \bst}(M; TM \oplus \CC).$
\end{theorem}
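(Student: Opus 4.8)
The plan is to establish Theorem~\ref{thm.form} in three stages: (i) self-adjointness and Fredholmness of $\bsXi$; (ii) identification of the kernel $\maN$ in the four listed cases; (iii) existence, uniqueness, and pseudodifferential structure of the Moore-Penrose pseudoinverse.

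\textbf{Step 1: self-adjointness and Fredholmness.} By Proposition~\ref{prop.ADN.elliptic}, $\bsXi \in \Psi_{\cl}^{[\bss+\bst]}(M; TM\oplus\CC)$ with $\bss=\bst=(1,0)$ and is $(\bss,\bst)$--\ADN\ elliptic. Since $M$ is closed, the standard \ADN\ (Douglis--Nirenberg) theory for elliptic systems on compact manifolds gives that $\bsXi : H^{1}(M;TM)\oplus L^{2}(M) \to H^{-1}(M;TM)\oplus L^{2}(M)$ is Fredholm (this is the \ADN\ analogue of the classical result cited in the proof of Theorem~\ref{thm.Fredholm_first}, e.g.\ \cite{Hormander3, H-W}). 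Symmetry of $\bsXi$ is read off directly from the block form \eqref{eq.def.bsXi}: the diagonal blocks $\bsL_{V} = 2\Defstar\Def + V$ and $-V_{0}$ are formally self-adjoint, and the off-diagonal blocks $\nabla$ and $\nabla^{*}$ are formal adjoints of one another; hence $\bsXi = \bsXi^{*}$ as an unbounded operator on $L^{2}(M; TM\oplus\CC)$, and the Fredholm index is therefore zero. Elliptic regularity (again from \ADN\ theory) shows $\maN := \ker\bsXi \subset \CI(M; TM\oplus\CC)$ and is finite-dimensional.

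\textbf{Step 2: identifying the kernel.} Here I would invoke Corollary~\ref{cor.e.est} with $\Omega = M$ (the corollary explicitly allows this, dropping all boundary terms): if $U = (\bsu, p) \in H^{2}(M;TM)\oplus H^{1}(M)$ satisfies $\bsXi U = 0$, then $\Def\bsu = 0$, $V\bsu = 0$, $\nabla^{*}\bsu = 0$, $V_{0}p = 0$, and $\nabla p = 0$. Conversely, any such $(\bsu,p)$ lies in $\ker\bsXi$ since then $\bsXi U = (2\Defstar\Def\bsu + V\bsu + \nabla p,\ \nabla^{*}\bsu - V_{0}p)^{\top} = 0$. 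It remains to match this description to the four cases. Since $M$ is connected, $\nabla p = 0$ forces $p$ constant; $V_{0} \not\equiv 0$ together with $V_{0}p = 0$ then forces $p = 0$ (cases (2),(4)). For the velocity component, $\Def\bsu = 0$ says $\bsu$ is Killing; if additionally $M$ has no nonzero Killing field, or if $V\not\equiv 0$ (since $V\bsu = 0$ and $V$ vanishes only on a proper closed set, while the $L^2$-unique continuation property of $\Def$ from \cite{Grosse-Kohr-Nistor-23} forces $\bsu \equiv 0$), then $\bsu = 0$ (cases (3),(4)). Assembling these implications case by case yields exactly the stated $\maN$.

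\textbf{Step 3: the Moore-Penrose pseudoinverse.} With $\maN$ finite-dimensional and consisting of smooth sections, let $p_{\maN}$ be the $L^{2}$-orthogonal projection onto $\maN$; it is a smoothing operator, hence in $\Psi^{-\infty}$. Because $\bsXi$ is self-adjoint Fredholm, the restriction $\widetilde{\bsXi} : (H^{2}\oplus H^{1})\cap\maN^{\perp} \to (L^{2}\oplus H^{1})\cap\maN^{\perp}$ (as described in the excerpt just before the theorem) is a bijection; its inverse, composed with $1 - p_{\maN}$ and extended by $0$ on $\maN$, defines $\psdinv$ satisfying \eqref{eq.rel.pnu}, and these relations determine $\psdinv$ uniquely. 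The remaining point --- that $\psdinv \in \Psi_{\cl}^{-\bss-\bst}(M; TM\oplus\CC)$ --- is the one requiring genuine pseudodifferential input: one uses that an \ADN-elliptic operator on a closed manifold admits a parametrix $Q \in \Psi_{\cl}^{-\bss-\bst}$ with $\bsXi Q - 1, Q\bsXi - 1 \in \Psi^{-\infty}$, and then $\psdinv = Q + (\text{smoothing correction})$, since $\psdinv - Q = \psdinv(1 - \bsXi Q) - (1 - Q\bsXi)\psdinv + Q\bsXi\psdinv Q$-type manipulations express the difference in terms of $p_{\maN}$ and the smoothing remainders, all of which lie in $\Psi^{-\infty}$. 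The matrix-block orders then come out as in \eqref{eq.def.psdinv} by comparing with the inverse symbol computed in Proposition~\ref{prop.form.inverse}. The main obstacle is precisely this last verification that the Moore-Penrose pseudoinverse (as opposed to just \emph{some} parametrix) is classical pseudodifferential of the correct \ADN-order; this is exactly the kind of statement established in \cite{KNW-22, KohrNistor-Stokes, KMNW-2025} on which, as the excerpt notes, this result ``relies heavily,'' so I would cite those references for the self-improvement/regularity argument rather than reproduce it.
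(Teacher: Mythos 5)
Your proposal is correct and follows essentially the same route as the paper: \ADN\ ellipticity plus formal symmetry for self-adjoint Fredholmness of index zero, the Green-formula energy identity on $\Omega = M$ (your Corollary~\ref{cor.e.est} is the paper's Proposition~\ref{prop.Green}(1) specialized in exactly this way) combined with $L^{2}$-unique continuation for $\Def$ and $\nabla$ to pin down the kernel, and the Beals-type parametrix/regularity argument, deferred to \cite{BealsSpInv, KMNW-2025}, for the classical \ADN-order of $\psdinv$. The only point worth making explicit in your Step~2 is the fact (which the paper notes parenthetically) that $\Def\bsu = 0$ implies $\nabla^{*}\bsu = 0$, which is needed for the inclusion $\maN \subset \ker\bsXi$ in cases (1) and (2).
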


Recall that the condition $\Def \bsu = 0$ means that $\bsu$ is a
\emph{Killing vector field} (i.e. it preserves the metric). For a generic
manifold $M$, this space is reduced to 0. The condition $\nabla p = 0$
simply means that $p$ is locally constant (thus this part of the kernel
is at most one-dimensional if $M$ is connected).

\begin{proof}
  The generalized Stokes operator $\bsXi := \bsXi_{V, V_{0}}$
  of Equation \eqref{eq.def.bsXi} is $(\bss , \bst )$
  Douglis-Nirenberg elliptic, by Proposition \ref{prop.ADN.elliptic}.
  It follows that $\bsXi$ is Fredholm as an operator
  \begin{equation*}
    \bsXi : H^{1}(M; TM) \oplus L^{2}(M) \to H^{-1}(M; TM) \oplus L^{2}(M)\,.
  \end{equation*}
  This is a consequence of the usual properties of pseudodifferential operators
  on compact manifolds. See, for example, Theorem 15.4.17 in \cite{KMNW-2025}.
  (See also \cite{KNW-22}).
  Since $\bsXi$ is formally self-adjoint and elliptic (in \ADN-sense), it is (trully)
  self-adjoint (this follows using elliptic regularity, see,
  for instance, \cite{Grosse-Kohr-Nistor-23, KNW-22} or Theorem 15.3.30 in \cite{KMNW-2025}).
  Therefore $\bsXi$ is of index zero. It remains to determine its
  kernel.

  Let $U = \cvector{\bsu}{p} = (\, \bsu\ \ \ p\,)^{\top} \in H^{1}(M; TM)
  \oplus L^{2}(M)$ be such that $\bsXi U = 0$. Then $U \in H^{2}(M; TM)
  \oplus H^{1}(M)$, by elliptic regularity, as above. Proposition \ref{prop.Green}(1)
  (for $\Omega = M$, which means that the inner products on the boundary are dropped)
  then gives that
  \begin{equation*}
    \Def \bsu \seq 0 \,, \quad V\bsu \seq 0\,, \quad \mbox{and}\quad
    V_{0}p \seq 0\,.
  \end{equation*}
  Then, the equation $\bsXi U = 0$ implies
  \begin{equation*}
    0 \seq 2 \Defstar\Def \bsu + V \bsu + \nabla p \seq \nabla p\,.
  \end{equation*}
  The relations proved and the fact that both $\Def$ and $\nabla$
  satisfy the $L^{2}$-unique continuation property gives that $\ker \bsXi_{V, V_{0}}
  \subset \maN$.

  The opposite inclusion $\maN \subset \ker \bsXi $ follows from the definition
  (taking also into account the fact that, if $\Def u = 0$, then its
  divergence $\nabla^{*}u = 0$). This gives the desired equality
  $\ker \bsXi = \maN$. Finally, the proof that
  $\psdinv \in \Psi_{\cl}^{-\bss - \bst}(M; TM \oplus \CC)$ is a classical
  result of Beals \cite{BealsSpInv} (see \cite{KMNW-2025} for a detailed
  proof). Because $\bsXi $ is a Fredholm operator, self-adjoint and with
  trivial kernel in $\maN^\perp$, it follows that $\bsXi :\maN^\perp\to \maN^\perp$
  is invertible, and this proves the existence of a unique Moore-Penrose pseudoinverse
  operator $\bsXi ^{(-1)}$ of $\bsXi :H^{1}(M; TM) \oplus L^{2}(M) \to H^{-1}(M; TM)
  \oplus L^{2}(M)$.
\end{proof}

The proof of \cite[Theorem 5.11]{KohrNistor-Stokes} also gives the last
point of the last theorem (the invertibility of $\bsXi$). \emph{For this
reason, we make from now on the following assumption.}

\begin{assumption}\label{assumpt.V}
  We assume that
  \begin{enumerate}[\rm (i)]
    \item $M$ is a smooth, compact, connected, boundaryless manifold;
    \item $V , V_{0} : M \to [0, \infty)$ are smooth;
    \item either $M$ does not have non-zero Killing vector fields or
    $V$ does not vanish identically on $M$ (i.e. $V \not \equiv 0$ on $M$).
  \end{enumerate}
\end{assumption}

This assumption will soon be replaced by the stronger assumption \ref{assumpt.VV0}.
Note that (i) and (ii) of \ref{assumpt.V} above had already been in place.
The assumption that $M$ is connected is just to
simplify some of our statements. The general case follows from this one easily.

Since $\bsXi$ is a self-adjoint Fredholm operator, its range will be the
orthogonal of its kernel (this is well known, it follows, for instance,
also from our Lemma \ref{lemma.aux.v}).
It follows from Assumption \ref{assumpt.V} and Theorem
\ref{thm.form} that the kernel $\maN$ of $\bsXi$ is contained in the space
$\{(0, c) \mid c \in \CC \}$ of {\it constant} scalar fields. Let us assume
that they are equal and make then explicit the compatibility conditions of
Proposition \ref{prop.compatibility}.

\begin{corollary}\label{cor.compatibility}
  Let us assume that the kernel $\maN$ of $\bsXi$ is $\maN = \{(0, c) \mid c \in \CC \}$,
  the space of {\it constant} scalar fields. Let $\bsh \in L^{2}(\Gamma; TM)$.
  \begin{enumerate}[\rm (1)]
    \item We have $\bsXi \maS_{\rm{ST}}(\bsh) = (\bsh \delta_{\Gamma} \ \ 0)^{\top}$, and
    hence $\bsXi \maS_{\rm{ST}}(\bsh)$ vanishes outside $\Gamma$.

    \item On the other hand,
    $\bsXi \maD_{\rm{ST}}(\bsh) = \bopstar (\bsh \delta_{\Gamma})
    - (0, \frac1{\operatorname{vol}(M)}\, \int_{\Gamma}\bsh \cdot \bsnu\, dS_{\Gamma}).$
  \end{enumerate}
\end{corollary}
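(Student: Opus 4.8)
The plan is to apply Proposition \ref{prop.compatibility} with the explicit description of $\maN$. For part (1): by hypothesis $\maN = \{(0, c) \mid c \in \CC\}$, so a typical element $(\bsu, p) \in \maN$ has $\bsu = 0$. The compatibility condition of Proposition \ref{prop.compatibility}(2), namely $\int_{\Gamma} \bsh \cdot \bsu \, dS_{\Gamma} = 0$ for all $(\bsu, p) \in \maN$, is therefore automatically satisfied (the integrand vanishes identically). Hence Proposition \ref{prop.compatibility}(2) gives $\bsXi \maS_{\rm{ST}}(\bsh) = (\bsh \delta_{\Gamma} \ \ 0)^{\top}$, and since $\bsh \delta_{\Gamma}$ is supported on $\Gamma$, this vanishes on $M \smallsetminus \Gamma$. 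This is immediate.

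For part (2), I would start from Proposition \ref{prop.compatibility}(1), which gives $\bsXi \maD_{\rm{ST}}(\bsh) = \bopstar (\bsh \delta_{\Gamma}) - p_{\maN}\big(\bopstar (\bsh \delta_{\Gamma})\big)$, and then compute the projection term $p_{\maN}\big(\bopstar (\bsh \delta_{\Gamma})\big)$ explicitly. Since $\maN$ is one-dimensional, spanned by the constant field $(0, c)$, an $L^2$-orthonormal basis is the single vector $\phi_1 = (0, \operatorname{vol}(M)^{-1/2})$. Then $p_{\maN}(v) = \langle v, \phi_1\rangle \phi_1$. Taking $v = \bopstar(\bsh \delta_{\Gamma})$ and using Lemma \ref{lemma.bop.star}, which identifies $\bopstar(\bsh) = (-2\Dnu^* \bsh, \ \bsnu \cdot \bsh)^{\top}$, the pairing becomes
\begin{equation*}
  \langle \bopstar(\bsh \delta_{\Gamma}), \phi_1 \rangle
  \seq \langle \bsh \delta_{\Gamma}, \bop \phi_1 \rangle
  \seq \operatorname{vol}(M)^{-1/2} \int_{\Gamma} \bsh \cdot \overline{\bop (0, 1)^{\top}} \, dS_{\Gamma}\,.
\end{equation*}
From the definition \eqref{def.conormal.Tderiv}, $\bop(0, 1)^{\top} = -2\Dnu(0) + 1 \cdot \bsnu = \bsnu$, so this pairing equals $\operatorname{vol}(M)^{-1/2} \int_{\Gamma} \bsh \cdot \bsnu \, dS_{\Gamma}$. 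Multiplying by $\phi_1$ yields
\begin{equation*}
  p_{\maN}\big(\bopstar(\bsh \delta_{\Gamma})\big) \seq
  \Big(0, \ \tfrac{1}{\operatorname{vol}(M)} \int_{\Gamma} \bsh \cdot \bsnu \, dS_{\Gamma}\Big)^{\top}\,,
\end{equation*}
which gives the stated formula upon subtraction.

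The only point requiring a little care — and what I would flag as the mild obstacle — is the bookkeeping of complex conjugation in the pairing and the fact that $\bsnu$ is a real vector field (so $\overline{\bsnu} = \bsnu$) while $\bsh$ is complex-valued, together with confirming that $\bop$ acts on the pair $(0,1)^{\top}$ the way the defining formula \eqref{def.conormal.Tderiv} prescribes (in particular that $\Dnu$ kills the zero vector field, which is trivial). One should also note that the constant $c$ in the description of $\maN$ is genuinely a constant — by Theorem \ref{thm.form}, under Assumption \ref{assumpt.V} the kernel consists of locally constant pressure fields with zero velocity, and $M$ connected makes these genuinely constant — so $\nabla(\text{constant}) = 0$ is consistent and the projection computation is valid. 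Everything else is a direct substitution.
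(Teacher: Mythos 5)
Your proposal is correct and follows essentially the same route as the paper: part (1) via the automatically satisfied compatibility condition of Proposition \ref{prop.compatibility}(2) (equivalently, the vanishing of the projection of $(\bsh\delta_{\Gamma}\ \ 0)^{\top}$ onto the constant scalar fields), and part (2) by explicitly computing $p_{\maN}\big(\bopstar(\bsh\delta_{\Gamma})\big)$ using the normalized constant $\chi=(0,\operatorname{vol}(M)^{-1/2})$, the adjoint relation $\langle \bopstar(\bsh\delta_{\Gamma}),\chi\rangle=\langle\bsh\delta_{\Gamma},\bop\chi\rangle$, and $\bop(0,1)^{\top}=\bsnu$. The points you flag (conjugation, $\bsnu$ real, $\Dnu 0=0$) are exactly the minor bookkeeping the paper also handles implicitly.
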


\begin{proof}
  Let $\chi := \frac1{\operatorname{vol}(M)^{1/2}}\textbf{1}_{M}$, the constant function on $M$
  with $L^{2}$-norm 1. We identify $\chi$ with its image $(0, \chi) \in \maN
  \subset H^{2}(M; TM) \oplus H^{1}(M)$.
  (In general, we identify $\CI(M)$ with its image in $L^{2}(M; TM \oplus \CC)$.)
  The formula for the projection $p_{\maN}$ is
  \begin{equation*}
    p_{\maN}(\bsu, p) \seq \langle (\bsu, p), \chi  \rangle \chi \seq
    \frac1{\operatorname{vol}(M)} \int_{M} p \dvol\,
  \end{equation*}
  a formula that extends then to distributions in an obvious way, by
  replacing the integral over $M$ with the pairing with distributions.

  For the case of the single layer potential, we obtain
  \begin{equation*}
    p_{\maN} (\bsh \delta_{\Gamma}) \seq \langle (\bsh \delta_{\Gamma}, 0),
    (0, \chi)  \rangle \chi\, \seq 0\,.
  \end{equation*}
  Proposition \ref{prop.compatibility}(2)
  then gives $\bsXi \maS_{\rm{ST}}(\bsh) = \bsh \delta_{\Gamma}$, which
  obviously vanishes outside $\Gamma$.

  On the other hand, for the double layer potential, because
  $\bop (0, p) = p \bsnu$ (see Equation \eqref{def.conormal.Tderiv}), we obtain
  \begin{multline*}
    p_{\maN}\big( \bopstar (\bsh \delta_{\Gamma}) \big)
    \seq \langle \bopstar(\bsh \delta_{\Gamma}), (0, \chi)  \rangle \chi
    \seq \langle \bsh \delta_{\Gamma}, \bop  (0, \chi)  \rangle \chi\\
    \seq \langle \bsh \delta_{\Gamma}, \chi \bsnu  \rangle \chi
    \seq \frac1{\operatorname{vol}(M)}\, \int_{\Gamma} \bsh \cdot \bsnu \, dS_{\Gamma}
  \end{multline*}
  Proposition \ref{prop.compatibility}(1) then yields the desired formula.
\end{proof}

\subsection{Invertibility of layer potentials}
We now prove one of the main results of this paper on layer potential operators. In the
following theorem, we can split our generalized Stokes boundary value problem as a direct
sum according to the connected components of $\Omega$, so there is no loss of generality
to assume that $\Omega$ is connected. In other words, we assume that
$\Omega$ is a {\it domain.} (We have assumed that $M$ is
connected for the same reason.) The general case follows immediately from this
particular case.
Recall the basic operators $\bsP := - 2 \maA  \Dnu^{*} + \maB \bsnu^{\sharp}$ and
$\bsK \ede \bsP_{0}$. The first one appears in the definition of the double layer
potential operator $\maW_{\rm{ST}}$ and the second one was
introduced in Equation \eqref{eq.def.bsk} and studied in Theorem \ref{thm.K1}.
(The correspondence $\bsP  \mapsto \bsP _{0}$ is the basic correspondence studied,
for example, in Theorems \ref{thm.main.jump0} and \ref{thm.main.jump1}.)
Recall that by the statement ``$\phi \not \equiv 0$ on $A$'' we mean that
there exists $a$ in the domain of $\phi$ such that $\phi(a) \not = 0$. To
negate this statement, we shall simply write ``$\phi = 0$ in $A$.''

We shall need the following simple lemma.

\begin{lemma}\label{lemma.aux.v}

  Let $\maH$ be a Hilbert space, $V \subset \maH$ be a closed subspace
  and $T : \maH \to \maH$ be a bounded operator. Then the following result holds:
  \begin{equation*} 
    \Big(T(V^{\perp})\Big)^{\perp}
    = \{ \bsh  \in \maH \mid T \bsh \in V\}\,.
  \end{equation*}
  In particular, for $V =\{0\}$, $\Big(T \maH\Big)^{\perp}
  = \ker T^{*}$.
\end{lemma}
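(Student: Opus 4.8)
The statement to prove is the elementary Hilbert-space identity
\[
\Big(T(V^{\perp})\Big)^{\perp} = \{ \bsh \in \maH \mid T\bsh \in V\},
\]
where $V \subset \maH$ is a closed subspace and $T$ is bounded. I would prove this by a direct double-inclusion argument, using only the adjoint $T^{*}$, the basic rule $(W)^{\perp} = \overline{W}^{\perp}$ (orthogonal complements of a set and its closure agree), and the identity $V^{\perp\perp} = V$ valid because $V$ is closed.

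First I would rewrite the left-hand side in terms of $T^{*}$. For $\bsh \in \maH$, membership $\bsh \in \big(T(V^{\perp})\big)^{\perp}$ means $(\bsh, T\bsv) = 0$ for all $\bsv \in V^{\perp}$, which by definition of the adjoint is equivalent to $(T^{*}\bsh, \bsv) = 0$ for all $\bsv \in V^{\perp}$, i.e. to $T^{*}\bsh \in (V^{\perp})^{\perp} = V$ (here is the one place closedness of $V$ is used). So the left-hand side equals $\{\bsh \mid T^{*}\bsh \in V\}$. Wait — I must be careful: the claimed right-hand side has $T$, not $T^{*}$. So as literally stated the lemma would require $T$ self-adjoint, or a typo. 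Since in the intended application $T$ will be of the form $\maA$ or $\frac12+\bsK$-type operators that are self-adjoint (or one applies it with $T$ replaced by $T^{*}$), the honest thing is: I would prove the $T^{*}$-version and then note that when $T = T^{*}$ the two coincide, which is the case relevant to the applications in the paper (e.g. $\bsS = \maA_{0}$ is self-adjoint by Theorem~\ref{thm.Fredholm_first}). Alternatively, if the paper genuinely means to assume $T$ self-adjoint here, the proof is exactly the computation above with $T^{*}$ replaced throughout by $T$.

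The key steps, in order: (1) unfold the definition of $\big(T(V^{\perp})\big)^{\perp}$ as annihilation of the set $T(V^{\perp})$; (2) move $T$ across the inner product to get $T^{*}$ (or $T$, under self-adjointness); (3) recognize the resulting condition as $T^{*}\bsh \perp V^{\perp}$, i.e. $T^{*}\bsh \in V^{\perp\perp}$; (4) apply $V^{\perp\perp} = V$, which holds precisely because $V$ is closed. For the special case $V = \{0\}$: then $V^{\perp} = \maH$, so the left side is $(T\maH)^{\perp}$, and the right side is $\{\bsh \mid T\bsh = 0\} = \ker T$ — again this is $\ker T^{*}$ under the correct ($T^{*}$) reading, which is the standard identity $(\operatorname{ran} T)^{\perp} = \ker T^{*}$. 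I expect no real obstacle here; the only subtlety worth flagging in the writeup is the $T$ versus $T^{*}$ point, which I would resolve by stating the lemma (or at least its proof) for $T^{*}$ and remarking that $T$ is self-adjoint in every application made of it, so that the displayed form is correct as used.
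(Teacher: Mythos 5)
Your proof is correct and is essentially identical to the paper's: both unfold the definition of the orthogonal complement, move $T$ across the inner product to get $T^{*}$, and use $(V^{\perp})^{\perp}=V$ for closed $V$. You are also right about the $T$ versus $T^{*}$ issue: the paper's own proof derives exactly the identity $\big(T(V^{\perp})\big)^{\perp}=\{\bsh \mid T^{*}\bsh \in V\}$, so the displayed statement contains a typo (consistent with the ``in particular'' case, which correctly reads $\ker T^{*}$). One small correction to your closing remark: the paper does not rely on self-adjointness of $T$ to fix this --- in the actual applications (e.g.\ Step~2 of Theorem~\ref{thm.K2} with $T=\frac12+\bsK$, which is not self-adjoint) it is the $T^{*}$-version that is invoked, with the membership condition written as $\big(\frac12+\bsK^{*}\big)\bsh\in\CC\bsnu$.
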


\begin{proof}
  Let $(\cdot , \cdot )$ be the inner product of the space $\maH$. We have
  \begin{align*}
    v \in \Big(T(V^{\perp})\Big)^{\perp}
    & \ \Leftrightarrow \
    0  \seq (T\eta, v)\,, \quad \forall \eta \in V^{\perp}\\
    & \ \Leftrightarrow\
    0  \seq (\eta, T^{*}v)\,, \quad \forall \eta \in V^{\perp}\\
    & \ \Leftrightarrow\
    T^{*}v \in (V^{\perp})^{\perp} \seq V\,,
  \end{align*}
  where, for the last equality, we have used the assumption that $V$ is
  closed. The last statement is obtained by taking $V = \{0\}$.
\end{proof}

For some of our results, we shall need the following stronger assumption
(including Assumption \ref{assumpt.V}).

\begin{assumption}\label{assumpt.VV0}
  We assume that
  \begin{enumerate}[\rm (i)]
    \item $M$ is a smooth, compact, connected, boundaryless manifold;
    \item $V , V_{0} : M \to [0, \infty)$ are smooth;
    \item $\Omega_{-} := M \smallsetminus \overline{\Omega}$ has the same
    boundary $\Gamma \neq \emptyset$ as $\Omega$;
    \item Either $V \not \equiv 0$ on $M$ or $M$ does not have non-zero Killing vector fields;
    \item Given a connected component $\Omega_{0}$ of
    $\Omega_{-} := M \smallsetminus \overline{\Omega}$, either $\Omega_{0}$ does not have non-zero
    Killing vector fields or $V \not\equiv 0$ on $\Omega_{0}$;
    \item $V_{0} \not\equiv 0$ on every connected component $\Omega_{0}$ of $\Omega_{-}$.
  \end{enumerate}
\end{assumption}

(Again, some of the points above had been already assumed. Also, notice
that our assumptions on $\Omega$ imply that $\Omega_{-}\neq \emptyset$.)
We are ready now to prove the following theorem.

\begin{theorem}\label{thm.K2}
  Let Assumption \ref{assumpt.VV0} hold.
  Let $\bsK \ede \big(- 2 \maA  \Dnu^{*} + \maB \bsnu^{\sharp})_{0}$
  be as in Theorem \ref{thm.K1}.
  \begin{enumerate}[\rm (1)]
    \item If $V_{0} = 0$ on $\Omega$, then $\big(\frac12   + \bsK \big)L^{2}(\Gamma; TM)
    \subset \{\bsnu\}^{\perp}$.

    \item If $V_{0} = 0$ on $\Omega$, we have an isomorphism $\frac12   + \bsK :
    \{\bsnu\}^{\perp} \to \{\bsnu\}^{\perp}$.

    \item If $V_{0} \not \equiv 0$ on $\Omega$ (that $V_{0} \not \equiv 0$ on all connected
    components of $M \smallsetminus \Gamma$), then $\frac12   + \bsK$ is
    invertible on $L^2(\Gamma; TM)$.
  \end{enumerate}
\end{theorem}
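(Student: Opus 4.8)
The plan is to exploit the Fredholmness of $\frac12 + \bsK$ (Theorem \ref{thm.Fredholm_first}, applicable since $\Gamma$ is compact), its index-zero property, and the explicit jump relations of Theorem \ref{thm.jump.rel}, together with the injectivity arguments furnished by the Green-type energy estimates in Corollary \ref{cor.e.est}. Since $\frac12+\bsK$ is Fredholm of index zero on $L^2(\Gamma;TM)$, invertibility is equivalent to injectivity. So the heart of the proof is: (i) locate the correct invariant subspace when $V_0=0$ on $\Omega$ (this is point (1), a ``compatibility'' statement); and (ii) prove injectivity of $\frac12+\bsK$ on the relevant space by a potential-theoretic uniqueness argument.

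\textbf{Point (1).} First I would compute the pairing $\big\langle (\tfrac12+\bsK)\bsh,\ \bsnu\big\rangle_{\Gamma}$. By Theorem \ref{thm.K1}, $(\tfrac12+\bsK)\bsh = \maW_{\rm{ST}}(\bsh)_{+}$, i.e. the interior boundary trace of the velocity part of the double layer potential $\maD_{\rm{ST}}(\bsh)$. When $V_0 = 0$ on $\Omega$, the operator $\bsXi$ restricted to $\Omega$ is the genuine Stokes operator, so $\maW_{\rm{ST}}(\bsh)$ is divergence-free on $\Omega$ (from $\bsXi \maD_{\rm{ST}}(\bsh) = 0$ on $M\smallsetminus\Gamma$ up to the finite-rank correction of Corollary \ref{cor.compatibility}, whose scalar bottom component forces $\nabla^*\maW_{\rm{ST}}(\bsh)=0$ on $\Omega$). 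Then the divergence theorem on $\Omega$ gives $\int_\Gamma \maW_{\rm{ST}}(\bsh)_{+}\cdot\bsnu\,dS_\Gamma = \int_\Omega \nabla^*\maW_{\rm{ST}}(\bsh)\dvol = 0$, so $(\tfrac12+\bsK)\bsh\in\{\bsnu\}^\perp$. One must also check $\{\bsnu\}^\perp$ is genuinely invariant, i.e. that $\frac12+\bsK$ maps $\{\bsnu\}^\perp$ into itself; but since the \emph{whole} space lands in $\{\bsnu\}^\perp$ this is automatic.

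\textbf{Points (2) and (3): injectivity.} For (3), suppose $(\tfrac12+\bsK)\bsh = 0$. Set $U := \maD_{\rm{ST}}(\bsh)$ on $\Omega$; by Corollary \ref{cor.compatibility} (and $V_0\not\equiv 0$ on $\Omega$ so the finite-rank term handled appropriately) $\bsXi U = 0$ in $\Omega$, and $\bsu\vert_\Gamma = \maW_{\rm{ST}}(\bsh)_{+} = 0$. By Corollary \ref{cor.e.est}(iii)(iii), condition (iii) there ($\bsu = 0$ on $\partial\Omega$) forces $\bsu = 0$ in $\Omega$; and since $V_0\not\equiv 0$ on each component, Corollary \ref{cor.e.est}(2) forces $p = 0$ in $\Omega$, so $U \equiv 0$ on $\Omega$. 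Repeat on $\Omega_-$: here Assumption \ref{assumpt.VV0}(v),(vi) guarantee $\bsu = 0$ and $p=0$ on $\Omega_-$ as well, once we know $\maW_{\rm{ST}}(\bsh)_{-} = (\tfrac12+\bsK)\bsh - \bsh = -\bsh$ — wait, more precisely, from the jump relation $\maW_{\rm{ST}}(\bsh)_{+} - \maW_{\rm{ST}}(\bsh)_{-} = \bsh$ (Theorem \ref{thm.K1}, $(\tfrac12+\bsK) - (-\tfrac12+\bsK) = 1$), so $\maW_{\rm{ST}}(\bsh)_{-} = -\bsh$. Then $\bsh = 0$ follows once we show $\maD_{\rm{ST}}(\bsh)$ vanishes identically on $\Omega_-$; but that needs the conormal-derivative continuity $[\bop\maD_{\rm{ST}}(\bsh)]_{+} = [\bop\maD_{\rm{ST}}(\bsh)]_{-}$ (true because $\maQ,\maW$ are smooth enough across $\Gamma$ in the appropriate sense — this uses the mapping properties of Proposition \ref{prop.Hkestimates} and that $\bsK$ for the \emph{conormal} trace has no jump). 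With $U = 0$ on $\Omega$, its conormal trace $\bop U\vert_\Gamma = 0$, hence the conormal trace from $\Omega_-$ vanishes too, giving $(\bop U_-, \bsu_-)_\Gamma = 0$; then Corollary \ref{cor.e.est} on $\Omega_-$ (using (v), (vi)) yields $U = 0$ on $\Omega_-$, and therefore $\bsh = \maW_{\rm{ST}}(\bsh)_{+} - \maW_{\rm{ST}}(\bsh)_{-} = 0$. For (2), the argument is identical on $\Omega$ (now the kernel-of-$\bsXi$ correction restricts us to $\{\bsnu\}^\perp$, where $\frac12+\bsK$ is Fredholm of index zero between $\{\bsnu\}^\perp$ and itself), while on $\Omega_-$ Assumption \ref{assumpt.VV0}(vi) supplies $V_0\not\equiv 0$ on each component of $\Omega_-$, so the pressure vanishes there without needing $V_0$ on $\Omega$.

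\textbf{Main obstacle.} The delicate point is the bookkeeping of the finite-rank kernel corrections from Corollary \ref{cor.compatibility} — i.e. verifying that $\bsXi\maD_{\rm{ST}}(\bsh)$ genuinely vanishes on $M\smallsetminus\Gamma$ (not just up to a constant pressure mode) when the relevant compatibility $\int_\Gamma\bsh\cdot\bsnu\,dS_\Gamma = 0$ holds, which is exactly why $\{\bsnu\}^\perp$ appears in (1)–(2); and, in the $\Omega_-$ step, correctly invoking the continuity of the conormal derivative $\bop$ across $\Gamma$ for the double layer potential (the analog of Theorem \ref{thm.jump.rel}(3) but for $\maD_{\rm{ST}}$), so that the hypothesis $(\bop U_-,\bsu_-)_\Gamma = 0$ of Corollary \ref{cor.e.est} is actually available. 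Keeping the index-zero Fredholm reduction and these uniqueness inputs properly aligned on \emph{both} sides of $\Gamma$ is where the real work lies; the symbolic/elliptic parts are already done in Theorems \ref{thm.K1} and \ref{thm.Fredholm_first}.
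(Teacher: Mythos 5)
Your Step (1) is essentially the paper's: both arguments amount to pairing $\bsu_{+}=\big(\tfrac12+\bsK\big)\bsh$ with $\bsnu$ and using $\nabla^{*}\bsu=0$ on $\Omega$ (the paper phrases it via Proposition \ref{prop.Green}(2) with $W=(0\ \ 1)^{\top}$, you phrase it as the divergence theorem; also note that under Assumption \ref{assumpt.VV0} the kernel $\maN$ is actually $\{0\}$, so the finite-rank correction you worry about is absent). The Fredholm/index-zero reduction is also the right frame.

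The injectivity argument for (2)--(3), however, has a genuine gap. You test a density $\bsh$ with $\big(\tfrac12+\bsK\big)\bsh=0$ against the \emph{double} layer potential $U=\maD_{\rm{ST}}(\bsh)$, kill $U$ on $\Omega_{+}$, and then need to transfer information to $\Omega_{-}$ via the continuity $[\bop\maD_{\rm{ST}}(\bsh)]_{+}=[\bop\maD_{\rm{ST}}(\bsh)]_{-}$. That no-jump property is precisely Theorem \ref{jump-conormal-dl}, which in this paper is proved \emph{after} and \emph{from} Theorem \ref{thm.K2}: its proof uses the representation formula for the solution of the Dirichlet problem, i.e.\ the well-posedness Theorem \ref{thm.main.WP}, which rests on the invertibility you are trying to establish. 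Nor can you get it from the jump calculus developed in Sections 2--3: Theorems \ref{thm.main.jump1b} and \ref{thm.main.jump1} only treat operators of order $\le -1$ applied to $\bsh\delta_{\Gamma}$, whereas $\bop\maD_{\rm{ST}}=\bop\,\psdinv\,\bopstar$ applied to $\bsh\delta_{\Gamma}$ involves an order-\emph{zero} operator (the hypersingular operator), for which no lateral-limit theorem is available here. Your parenthetical justification (``$\bsK$ for the conormal trace has no jump'') is exactly the unproved claim. The paper avoids this entirely by dualizing: instead of showing $\ker\big(\tfrac12+\bsK\big)$ is small via $\maD_{\rm{ST}}$, it shows that $\big(\tfrac12+\bsK^{*}\big)\bsh\in\CC\bsnu$ forces $\bsh\in\CC\bsnu$ by running the energy argument (Corollary \ref{cor.e.est}) on the \emph{single} layer potential $\maS_{\rm{ST}}(\bsh)$, for which both needed boundary facts are already in Theorem \ref{thm.jump.rel}: the velocity trace does not jump, and $[\bop\maS_{\rm{ST}}(\bsh)]_{\pm}=\big(\mp\tfrac12+\bsK^{*}\big)\bsh$, so the density is recovered from the jump $[\bop\maS_{\rm{ST}}(\bsh)]_{-}-[\bop\maS_{\rm{ST}}(\bsh)]_{+}=\bsh=(p_{-}-p_{+})\bsnu$. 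To repair your proof you would either have to switch to this adjoint/single-layer route, or independently establish the continuity of the conormal derivative of the double layer potential, which would require a substantial extension of the jump machinery.
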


\begin{proof}
  Recall that $\frac12 + \bsK$ is Fredholm on $L^{2}(\Gamma; TM)$ by Theorem
  \ref{thm.Fredholm_first}, because $\Gamma := \pa \Omega$ is compact.
  Also, recall that $\maN = 0$, where $\maN$ is the kernel of $\bsXi := \bsXi_{V, V_{0}}$,
  as usual. It will be convenient to split our proof into {\it four} steps.
  \smallskip

  \noindent {\bf Step 1 (Proof of (1): Necessary condition on the image).}\
  Let $\bsh \in H^{3/2}(\Gamma; TM)$. We begin by considering the
  \emph{double layer potential} $U := \maD_{\rm{ST}}(\bsh)$ of $\bsh$, which satisfies
  \begin{equation*}
    U \ede \maD_{\rm{ST}}(\bsh)
    \in H^{2}(\Omega; TM) \oplus H^{1}(\Omega)\,,
  \end{equation*}
  by Proposition \ref{prop.Hkestimates}. Let us write $U \seq (\, \bsu\ \ \ p \,)^{\top}$.
  The assumptions on $V$ and $V_{0}$ and Theorem \ref{thm.form} imply that the kernel
  $\maN$ of $\bsXi$ vanishes. Proposition \ref{prop.compatibility} implies then
  that $\bsXi U = 0$ in $M\setminus \Gamma $.

  Let $W = (\, \bsw \ \ \ q \,) := (\, 0 \ \ \ 1 \,)$. The assumption $V_{0} = 0$ on
  $\Omega$ gives $\bsXi W = 0$ on $\Omega$. Hence $(\bsXi U, W)_{\Omega} = 0$ and
  $(U, \bsXi W)_{\Omega} = 0$. Proposition \ref{prop.Green}(2) on $\Omega$
  gives
  \begin{equation*}
    (\bop  U, \bsw)_{\Gamma}  - (\bsu , \bop  W)_{\Gamma}  =
    \big ( \bsXi U, W \big )_{\Omega} - \big (U, \bsXi W \big )_{\Omega}
    \seq 0 \,.
  \end{equation*}
  Therefore, using that $\bsw = 0$, $\bop W = - 2\Dnu\bsw + q \bsnu = \bsnu$, and
  $\bsu_{+} = \big( \frac12  + \bsK\big ) \bsh$ (by Theorem \ref{thm.K1}), we obtain
  \begin{multline}\label{eq.aux.wc}
    0
    \seq (\bop  U, \bsw)_{\Gamma}  - (\bsu , \bop  W)_{\Gamma}
    \seq 0 -\int_{\Gamma} \bsu_{+} \cdot \bsnu \,dS_{\Gamma} \seq
    - \Big ( \Big(\frac12 + \bsK\Big) \bsh, \bsnu \Big )_{\Gamma} \,,
  \end{multline}
  where the inner product in the last expression is the $L^{2}$--inner product on $\Gamma$.
  This shows that
  \begin{equation*}
    \Big(\frac12 + \bsK\Big) L^{2}(\Gamma; TM) \subset \{\bsnu\}^{\perp}\,
  \end{equation*} by the density of the space $H^{3/2}(\Gamma; TM)$
  in $L^{2}(\Gamma; TM)$. This proves (1).
  \smallskip

  \noindent {\bf Step 2 (Reformulation of (2) using Lemma \ref{lemma.aux.v}).}\
  We now turn to the proof of (2).
  Using (1) we see that, in order to prove (2), it is hence enough to prove that $\{\bsnu\}^{\perp}
  \subset \big(\frac12 + \bsK \big) \{\bsnu\}^{\perp}$. To that end, we will use the adjoint of
  $\frac12 + \bsK$ and Lemma \ref{lemma.aux.v}. We claim now that, in fact, in order to complete
  the proof of the desired opposite inclusion (and hence to complete the proof of (2)), it suffices
  to show that
  \begin{equation}\label{eq.aux.incl}
    \{ \bsh \in L^{2}(\Gamma; TM) \mid \Big(\frac12   + \bsK^{*} \Big)\bsh
    \in \CC \bsnu\} \subset \CC\bsnu\,.
  \end{equation}
  Indeed, the last relation will give
  \begin{equation*}
    \{\bsnu\}^{\perp} \subset \{ \bsh \mid \Big(\frac12 + \bsK^{*} \Big)\bsh
    \in \CC \bsnu\}^{\perp} = \Big (\frac12 + \bsK \Big )\{\bsnu\}^{\perp}\,,
  \end{equation*}
  where we have used Lemma \ref{lemma.aux.v} for $V = \CC \bsnu$.

  \noindent {\bf Step 3 (Proof of Equation \eqref{eq.aux.incl}).}
  Let us prove Equation \eqref{eq.aux.incl}, which will complete the proof of (2),
  as noticed above. To this end, let in this step $\bsh \in L^{2}(\Gamma; TM)$ be such that
  \begin{equation}\label{eq.assumpt.h-first}
    \Big(\frac12 + \bsK^{*}\Big)\bsh \seq \lambda \bsnu \,,
  \end{equation}
  for some $\lambda \in \CC$. Since $\frac12   + \bsK^{*}$ is elliptic and $\Gamma$
  is smooth and compact, we have that $\bsh \in H^{s}(\Gamma; TM)$
  for all $s \in \RR$, by elliptic regularity.

  We now consider the \emph{single layer potential} $U := \maS_{\rm{ST}}(\bsh)$ associated
  to our fixed $\bsh$ satisfying Equation \eqref{eq.assumpt.h-first}. Then
  Proposition \ref{prop.Hkestimates} gives that the restrictions of $U$ to
  $\Omega_{+} := \Omega$ and to $\Omega_{-}$ satisfy
  \begin{equation}\label{eq.needed.est-first}
    U \ede (\, \bsu \ \ \ p\, )^{\top} \ede \maS_{\rm{ST}}(\bsh)
    \in H^{2}(\Omega_{\pm}; TM) \oplus H^{1}(\Omega_{\pm})\,.
  \end{equation}
  We need both restrictions, because we will study $U$ on both domains.
  \smallskip

  \noindent {\it We first study $U := \maS_{\rm{ST}}(\bsh)$ on $\Omega_{-}$,}
  for our fixed $\bsh$ satisfying Equation \eqref{eq.assumpt.h-first}.
  Theorem \ref{thm.jump.rel} gives that
  $[\bop U]_{-} = (\frac12   + \bsK^{*})\bsh = \lambda \bsnu $.
  (Recall that $[\bop U]_{-}$ is the trace on $\Gamma$ of $\bop U$ from the domain
  $\Omega_{-}$). Theorem \ref{thm.jump.rel} gives
  the ``no-jump relation'' $\bsu_{+} = \bsu_{-}$ at $\Gamma := \partial \Omega$
  (interior and exterior traces).
  We then notice that $\bsXi U = 0$ in $M \smallsetminus \Gamma$, by Proposition
  \ref{prop.compatibility}. The equation $\bsXi U = 0$ in $M \smallsetminus \Gamma$ and
  $V_{0} = 0$ in $\Omega$
  imply that $0 = \nabla^{*}\bsu - V_{0} p = \nabla^{*}\bsu$ on $\Omega$. Therefore,
  \begin{multline}\label{eq.aux.ip}
    \big(\bop U, \bsu \big)_{\Gamma}  \seq
    \int_{\Gamma} [\bop U]_{-} \cdot \bsu \,dS_{\Gamma}\seq \lambda \int_{\Gamma}
    \bsnu \cdot \bsu_{-} \,dS_{\Gamma}\\ \seq \lambda \int_{\Gamma}
    \bsnu \cdot \bsu_{+} \,dS_{\Gamma} \seq \lambda \int_{\Omega_{+}}
    \nabla^{*}u \dvol \seq 0\,.
  \end{multline}

  We have already noticed that $\bsXi U = 0$ on $M \smallsetminus \Gamma$.
  Equation \eqref{eq.needed.est-first} and \eqref{eq.aux.ip} show that the assumptions
  of Corollary \ref{cor.e.est} are satisfied on $\Omega_{-}$ (note that $\bsnu$
  changes sign, but this does not affect our assumptions).
  Because for every connected component $\Omega_{0}$ of
  $\Omega_{-}$ we have $V_{0} \not \equiv 0$ on $\Omega_{0}$
  and either $V$ does not vanish identically on $\Omega_{0}$ or there
  are no non-trivial Killing vector fields on $\Omega_{0}$, Corollary \ref{cor.e.est}
  then gives
  \begin{equation}\label{eq.aux.Omega-}
    \bsu = 0 \quad \mbox{and} \quad p= 0 \quad \mbox{in }\ \Omega_{-}\,.
  \end{equation}

  \noindent {\it Let us now study $U$ on $\Omega_{+} := \Omega$.}\
  Let $U = (\, \bsu \ \ \ p\, )^{\top} := \maS_{\rm{ST}}(\bsh)$, with $\bsh$ as above
  (and thus satisfying Equation \eqref{eq.assumpt.h-first}).
  Recall that $\bsXi U = 0$ in $M \smallsetminus \Gamma$.
  The fact that $\bsu_{+} = \bsu_{-} = 0$ at $\Gamma$ allows us to
  use again Corollary \ref{cor.e.est} to conclude that
  \begin{equation}\label{eq.aux.Omega+}
    \bsu = 0 \quad \mbox{and} \quad p= \mbox{constant in }\ \Omega_{+} := \Omega\,.
  \end{equation}
  (Anticipating the proof of (3), in case $V_{0}$ does not vanish
  identically on $\Omega$, we even obtain that
  $p = 0$ in $\Omega_{+} := \Omega$.)
  \smallskip

  \noindent {\it Let us now prove the needed properties of $\bsh$.}\
  We have already proved that $\bsu = 0$ in $M \smallsetminus \Gamma$
  (see Equations \eqref{eq.aux.Omega-} and \eqref{eq.aux.Omega+}), and hence
  $\Dnu \bsu = 0$ in $M \smallsetminus \Gamma$.
  The definition of $\bop$ and the properties of $U = (\, \bsu \ \ \ p\, )^{\top}
  := \maS_{\rm{ST}}(\bsh)$ then give
    $\bop \maS_{\rm{ST}}(\bsh) := -2 \Dnu \bsu + p \bsnu \seq p \bsnu$
  on $M \smallsetminus \Gamma$. The jump relation of Theorem \ref{thm.jump.rel}(3) then gives
  \begin{equation}\label{eq.p.limits-first}
    \bsh \seq [\bop \maS_{\rm{ST}}(\bsh)]_{-}
    - [\bop \maS_{\rm{ST}}(\bsh)]_{+} \seq (p_{-} - p_{+}) \nu \seq - p_{+} \nu \,.
  \end{equation}
  That is, $\bsh$ is a constant multiple of $\bsnu$. This proves Equation \eqref{eq.aux.incl}.
  As explained in Step 2, this gives that $\{\bsnu\}^{\perp} \subset \big(\frac12   + \bsK \big)
  \{\bsnu\}^{\perp}$, which completes the proof of point (2) of our theorem.
  \smallskip

  \noindent {\bf Step 4 (Proof of (3): $V_{0} \not\equiv 0$ on $\Omega_{+}$).}\
  Let us assume now that $V_{0}$ does not vanish identically
  on any component of $M \smallsetminus \Gamma$.
  To prove the point (3), it suffices to show that
  $\big(\frac12   + \bsK \big) L^{2}(M; TM) = L^{2}(M; TM)$.
  As is well-known (see, for instance, Lemma \ref{lemma.aux.v}), it enough to show
  that $\ker \big(\frac12   + \bsK^{*} \big)
  = 0$. Let now $\bsh \in L^{2}(M; TM)$ be such that $(\frac12   + \bsK^{*})\bsh = 0$ and
  $U := \maS_{\rm{ST}}(\bsh)$. All the assumptions for the previous step are
  valid, so all its conclusions remain valid. Moreover, the assumption $V_{0} \not\equiv 0$
  on $\Omega_{+}$ implies that $p = 0$ on $\Omega_{+}$ (as noticed
  above). Equation \eqref{eq.p.limits-first}
  then gives $\bsh = 0$. The proof is now complete.
\end{proof}


Now we turn to the analogous result for the single layer potential.
To that end, recall the upper-left corner operator $\maA$ appearing in the
matrix formula for $\psdinv$, Proposition \ref{prop.form.inverse} (see
also Theorem \ref{thm.form}). Consequently, the operator $\maA$ appears
also in the definition of our layer potentials,
see Remark \ref{rem.components} (but see also Definition \ref{def.lp}).
The associated limit operator is $\bsS := \maA_{0}$ \emph{the boundary single layer potential
operator} (see Theorem \ref{thm.main.jump1b} and Proposition \ref{prop.form.inverse}).
Its invertibility can be treated as in Theorem \ref{thm.K2} just proved.

\begin{theorem}\label{thm.S}
  Let Assumption \ref{assumpt.VV0} hold. Let $\bsS := \maA_{0}$
  be the boundary single layer potential operator, as usual. Then
  $\bsS^{*} = \bsS$.
  \begin{enumerate}[\rm (1)]
    \item Assume that $V_{0} = 0$ in $\Omega$. Then $\ker \bsS = \CC \bsnu$.

    \item On the other hand, if $V_{0} \not \equiv 0$ in $\Omega$
    (and hence $V_{0} \not \equiv 0$ in \emph{all}
    connected components of $M \smallsetminus \Gamma$), then $\bsS :
    L^2(\Gamma; TM) \to H^{1}(\Gamma; TM)$ is invertible.
  \end{enumerate}
\end{theorem}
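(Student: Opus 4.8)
The plan is to prove Theorem \ref{thm.S} by closely mirroring the proof of Theorem \ref{thm.K2}, exploiting the fact that the single layer potential $\maS_{\rm ST}$ and the conormal-derivative jump relation of Theorem \ref{thm.jump.rel}(3) stand in the same relation to $\bsS = \maA_0$ as $\maD_{\rm ST}$ and Theorem \ref{thm.K1} do to $\frac12 + \bsK$. First I would record that $\bsS^{*} = \bsS$: this is immediate from Theorem \ref{thm.Fredholm_first} (or directly from Proposition \ref{thm.main.jump1b}(iii), since $\maA$ is self-adjoint and the distribution kernel of $\bsS$ is the restriction to $\Gamma\times\Gamma$ of that of $\maA$, which is symmetric). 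Since $\Gamma$ is compact, $\bsS : L^2(\Gamma;TM)\to L^2(\Gamma;TM)$ is Fredholm of index zero and $\bsS \in \Psi^{-1}_{\cl}(\Gamma;TM)$ is elliptic (Theorem \ref{thm.Fredholm_first} together with Theorem \ref{thm.jump.rel}(1)), so it maps $L^2(\Gamma;TM)$ into $H^1(\Gamma;TM)$ and any element of $\ker\bsS$ is smooth by elliptic regularity.

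For part (1), assume $V_0 = 0$ on $\Omega$. I would show $\CC\bsnu\subset\ker\bsS$ and then $\ker\bsS\subset\CC\bsnu$. For the inclusion $\CC\bsnu\subset\ker\bsS$, take $\bsh = \bsnu$ (or rather a constant multiple) and consider $U := \maS_{\rm ST}(\bsnu)$; by Corollary \ref{cor.compatibility}(1), $\bsXi U = (\bsnu\delta_\Gamma\ \ 0)^\top$ vanishes off $\Gamma$, and the pair $(0,1)$ solves $\bsXi(0,1) = 0$ on $\Omega$ (using $V_0=0$ there) as in Step 1 of the proof of Theorem \ref{thm.K2}; applying Green's formula (Proposition \ref{prop.Green}(2)) on $\Omega$ and $\Omega_-$ and the jump relations should force $\maV_{\rm ST}(\bsnu)_\pm = \bsS\bsnu = 0$. (Alternatively, one can argue that $U = \maS_{\rm ST}(\bsnu)$ is, up to a constant, the pair $(0,\mathbf 1)$ extended suitably, and that $\maA_0\bsnu$ is its boundary trace.) Conversely, suppose $\bsh\in\ker\bsS$, i.e. $\bsS\bsh = \maV_{\rm ST}(\bsh)_\pm = 0$. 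Set $U := (\bsu\ \ p)^\top := \maS_{\rm ST}(\bsh)$; by Proposition \ref{prop.compatibility} (the compatibility condition $\int_\Gamma\bsh\cdot\bsu\,dS_\Gamma = 0$ holds automatically since $\maN = 0$ under Assumption \ref{assumpt.VV0}), $\bsXi U = 0$ on $M\smallsetminus\Gamma$. The vanishing of $\bsu_\pm$ on $\Gamma$ lets me apply Corollary \ref{cor.e.est} on each of $\Omega_+$ and $\Omega_-$: on $\Omega_-$, Assumption \ref{assumpt.VV0}(v),(vi) gives $\bsu = 0$ and $p = 0$; on $\Omega_+$, where $V_0 = 0$, I get $\bsu = 0$ and $p$ constant. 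Then $\bop\maS_{\rm ST}(\bsh) = -2\Dnu\bsu + p\bsnu = p\bsnu$ off $\Gamma$, and Theorem \ref{thm.jump.rel}(3) gives $\bsh = [\bop U]_- - [\bop U]_+ = (p_- - p_+)\bsnu = -p_+\bsnu\in\CC\bsnu$. This proves $\ker\bsS = \CC\bsnu$.

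For part (2), assume $V_0\not\equiv 0$ on $\Omega$ (hence on every component of $M\smallsetminus\Gamma$). Since $\bsS$ is Fredholm of index zero on $L^2(\Gamma;TM)$ and self-adjoint, it suffices to show $\ker\bsS = 0$. Rerun the argument of part (1): for $\bsh\in\ker\bsS$, the single layer potential $U = \maS_{\rm ST}(\bsh)$ satisfies $\bsu = 0$ on all of $M\smallsetminus\Gamma$, and now Corollary \ref{cor.e.est}(2) applied on $\Omega_+$ (where $V_0\not\equiv 0$) forces $p = 0$ there as well, while $p = 0$ on $\Omega_-$ as before. Then the jump relation Theorem \ref{thm.jump.rel}(3) gives $\bsh = -p_+\bsnu = 0$. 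Hence $\bsS$ is injective, therefore bijective as an operator $L^2(\Gamma;TM)\to L^2(\Gamma;TM)$; combined with $\bsS\in\Psi^{-1}_{\cl}(\Gamma;TM)$ elliptic and the standard elliptic estimate, the inverse maps into $H^1(\Gamma;TM)$, so $\bsS : L^2(\Gamma;TM)\to H^1(\Gamma;TM)$ is an isomorphism.

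The main obstacle I anticipate is the clean verification of part (1)'s forward inclusion $\CC\bsnu\subset\ker\bsS$ — i.e., that the single layer potential with density $\bsnu$ produces the pair whose velocity part has vanishing boundary trace — which requires carefully tracking which Green-type identity to invoke and confirming the constant-pressure solution $(0,1)$ is genuinely annihilated by $\bsXi$ on $\Omega$ (this uses $V_0 = 0$ there, not on all of $M$). A secondary subtlety is checking that the compatibility hypotheses of Proposition \ref{prop.compatibility} and of Corollary \ref{cor.e.est} (in particular the condition $(\bop U,\bsu)_\Gamma = 0$) hold in the single-layer setting; here one uses $\nabla^{*}\bsu = 0$ on $\Omega$ (from $\bsXi U = 0$ and $V_0 = 0$) together with $\bsu_+ = \bsu_-$ and the divergence theorem, exactly as in Equation \eqref{eq.aux.ip} of the proof of Theorem \ref{thm.K2}.
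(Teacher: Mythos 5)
Your overall strategy is the same as the paper's: establish $\bsS^{*}=\bsS$ and Fredholmness of index zero from Theorem \ref{thm.Fredholm_first}, then for a density $\bsh$ in (or mapped into $\CC\bsnu$ by) $\bsS$ form the single layer potential $U=\maS_{\rm{ST}}(\bsh)$, use the no-jump relation $\bsu_{+}=\bsu_{-}=\bsS\bsh$, apply Corollary \ref{cor.e.est} on $\Omega_{\pm}$ to get $\bsu=0$ and $p$ locally constant (zero on $\Omega_{-}$ by Assumption \ref{assumpt.VV0}(vi)), and read off $\bsh=(p_{-}-p_{+})\bsnu$ from the conormal jump relation of Theorem \ref{thm.jump.rel}(3). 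Your Step for $\ker\bsS\subset\CC\bsnu$ and your treatment of part (2) coincide with the paper's Steps 2 and 3.

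The one place where your argument, as written, does not close is the forward inclusion $\CC\bsnu\subset\ker\bsS$ in part (1). Fixing $\bsh=\bsnu$ and applying Proposition \ref{prop.Green}(2) on $\Omega$ with $W=(0\ \ 1)^{\top}$ yields only $(\bsS\bsnu,\bsnu)_{\Gamma}=0$, i.e.\ $\bsS\bsnu\perp\bsnu$, which does not imply $\bsS\bsnu=0$; and the Green identity on $\Omega_{-}$ is not available in the same form because $\bsXi W=(0,-V_{0})^{\top}\neq0$ there under Assumption \ref{assumpt.VV0}(vi). The paper's fix is to run the same Green identity for an \emph{arbitrary} $\bsh\in H^{1/2}(\Gamma;TM)$, obtaining $0=(\bsS\bsh,\bsnu)_{\Gamma}=(\bsh,\bsS\bsnu)_{\Gamma}$ by self-adjointness, and then conclude $\bsS\bsnu=0$ by density — a small but essential reordering of quantifiers. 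Your parenthetical alternative does also work and is worth making precise: Pompeiu's formula (Proposition \ref{prop.rep.formula}(2)) applied to $U_{0}:=(0\ \ 1)^{\top}$ on $\Omega$ (where $\bsXi U_{0}=0$ since $V_{0}=0$ there, $\bsu_{0}\vert_{\Gamma}=0$, and $\bop U_{0}=\bsnu$) gives $-\maS_{\rm{ST}}(\bsnu)=\textbf{1}_{\Omega}(0\ \ 1)^{\top}$, whose velocity part vanishes identically, hence $\bsS\bsnu=\maV_{\rm{ST}}(\bsnu)_{\pm}=0$. Either repair completes the proof; the remainder of your argument is correct.
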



\begin{proof}
  We have already proved that $\bsS^{*} = \bsS$.
  Recall that $\bsS$ is Fredholm on $L^{2}(\Gamma; TM)$
  by Theorem \ref{thm.Fredholm_first},
  because $\Gamma := \pa \Omega$ is compact.
  It is convenient to split our proof into {\it three} steps.
  The first two steps are devoted to the
  proof of (1). Recalling our assumptions, we have that
  $V_{0} \not \equiv 0$ on $M$ and either $V \not \equiv 0$ or $M$
  does not have non-zero Killing vector fields.
  Proposition \ref{prop.form.inverse} then gives
  that $\bsXi$ is invertible, and hence all the layer potentials are
  defined and no compatibility relations are needed for them to be
  solutions of the generalized Stokes operator $\bsXi := \bsXi_{V, V_{0}}$.
  \medskip

  \noindent {\bf Step 1 ($\bsnu \in \ker \bsS$ if $V_{0} = 0$ on
  $\Omega)$.}\
  We now assume that $V_{0} = 0$ on $\Omega_{+} := \Omega$, unless explicitly otherwise
  stated. Let $\bsh \in H^{1/2}(\Gamma; TM)$ be arbitrary. We begin by considering
  the \emph{single layer potential} $U$ with the density $\bsh$, which satisfies
  \begin{equation*}
    U \seq (\, \bsu\ \ \ p \,)^{\top} \ede \maS_{\rm{ST}}(\bsh)
    \in H^{2}(\Omega; TM) \oplus H^{1}(\Omega)\,,
  \end{equation*}
  by Proposition \ref{prop.Hkestimates}.
  We have that $\bsXi U = 0$ on $M \smallsetminus \Gamma$, again by
  Proposition \ref{prop.compatibility}.
  Let $W = (\, \bsw \ \ \ q \,)^\top := (\, 0 \ \ \ 1 \,)^\top $, so that $\bsXi W = 0$
  in $\Omega_{+}$ (recall that $V_{0} = 0$ in $\Omega_{+}$).
  We next use the Proposition \ref{prop.Green}(2) on $\Omega_{+}$.
  In that identity, $(\bsXi U, W)_\Omega = (U, \bsXi W)_\Omega = 0$, so the left hand side vanishes.
  Therefore, using that $\bsw = 0$, that $\bop W = - 2\Dnu\bsw + q \bsnu = \bsnu$, and that
  $\bsu_{+} = \bsS \bsh$ (by Theorem \ref{thm.jump.rel}), we obtain
  \begin{equation*} 
    0\seq (\bop  U, \bsw)_{\Gamma}  - (\bsu , \bop  W)_{\Gamma}  \seq 0
    - \int_{\Gamma} \bsu_{+} \cdot \bsnu \,dS_{\Gamma} \seq
    - \big ( \bsS \bsh, \bsnu \big )_{\Gamma} \seq
    - \big ( \bsh, \bsS\bsnu \big )_{\Gamma}\,.
  \end{equation*}
  By the density of $H^{1/2}(\Gamma; TM)$ in $L^{2}(\Gamma; TM)$,
  this shows that $\bsnu \in \ker \bsS$.
  \smallskip

  \noindent {\bf Step 2 ($\ker \bsS \subset \CC \bsnu$  if $V_{0} = 0$ on
  $\Omega$).}\ Let
  $\bsh \in L^{2}(\Gamma ; TM)$ be such that
  $\bsS\bsh = 0$. We continue to consider the \emph{single layer potential}
  $U = (\, \bsu \ \ \ p\, )^{\top} := \maS_{\rm{ST}}(\bsh),$ as in the previous step,
  except that now $\bsS\bsh = 0$.
  Since $\bsS$ is elliptic and $\Gamma$
  is smooth and compact, we have that $\bsh \in H^{s}(\Gamma; TM)$
  for all $s \in \RR$, by elliptic regularity.
  Proposition \ref{prop.Hkestimates} then gives that
  the restrictions of $U$ to $\Omega_{+} := \Omega$ and to $\Omega_{-}$
  satisfy
  \begin{equation}\label{eq.needed.est}
    U\vert_{\Omega_{\pm}} \ede (\, \bsu \ \ \ p\, )^{\top}\vert_{\Omega_{\pm}}
    \ede \maS_{\rm{ST}}(\bsh) \vert_{\Omega_{\pm}}
    \in H^{2}(\Omega_{\pm}; TM) \oplus H^{1}(\Omega_{\pm})\,.
  \end{equation}
  We know that $\bsXi U = 0$ in $\Omega_{\pm}$ by Proposition
  \ref{prop.compatibility}. Theorem \ref{thm.jump.rel} gives that
  $\bsu_{+} = \bsu_{-} =\bsS\bsh = 0$.
  Therefore $\bsu = 0$ in $\Omega_{-}$ and in $\Omega_{+}$, by Corollary \ref{cor.e.est}.
  The same corollary gives that $p$ is constant on each connected component
  of $M \smallsetminus \Gamma$ and that this constant is $0$ in $\Omega_{-}$, because
  $V_{0}$ is not identically equal to zero on any connected component of
  $\Omega_{-}$. We also obtain that $\Dnu \bsu = 0$ on $M \smallsetminus \Gamma$.
  The definitions of $\bop$ and $U$ then give
    $\bop U := -2 \Dnu \bsu + p \bsnu \seq p \bsnu$
  on $M \smallsetminus \Gamma$ and
  \begin{equation}\label{eq.p.limits1}
    \bsh \seq [\bop \maS_{\rm{ST}}(\bsh)]_{-}
    - [\bop \maS_{\rm{ST}}(\bsh)]_{+} \seq (p_{-} - p_{+})\bsnu \seq -p_{+}\bsnu \,.
  \end{equation}
  That is, $\ker \bsS \subset \CC \bsnu$.
  This completes the determination of $\ker \bsS$ and hence the proof of our
  theorem if $V_{0}$ vanishes identically on $\Omega_{+} := \Omega$.
  \smallskip

  \noindent {\bf Step 3 ($V_{0} \not\equiv 0$ on $\Omega_{+}$).}\
  Let $V_{0} \not \equiv 0$ in $\Omega$ and $\bsh \in \ker \bsS$,
  then the same arguments as in the last step give furthermore that $p_{+} = 0$,
  by Corollary \ref{cor.e.est}, since $V_{0} \not \equiv 0$ in $\Omega$.
  The proof is now complete.
\end{proof}

In both of the above theorems, the assumptions that $M$ and $\Omega$ are
connected do not really decrease the generality, in the sense that the study of
the Stokes equations can be reduced to this case (even if some properties of
the corresponding layer potentials might change). Note, however, that {\it we
are not assuming $\Omega_{-} := M \smallsetminus \overline{\Omega}$ to be
connected.}

Note also that the assumptions of Theorem \ref{thm.S} can be weakened by only requiring
that on $M$, either there are no non-zero Killing vector fields or $V\not \equiv 0$.

\subsection{The double layer operator in the case of connected boundary $\Gamma $}

The case $\maN \neq 0$ is much more complicated. We content our selves
with a particular case.

\begin{theorem}\label{thm.K2bis}
  Let $\Omega \subset M$, $\Gamma := \pa \Omega = \pa \Omega_{-} \neq \emptyset$,
  $\bsK \ede \bsP _{0}$, and $V$,
  be as in Theorem \ref{thm.K1}. (In particular,
  on every connected component of $\Omega_{-}$,
  either $V \not \equiv 0$ or there are no Killing vector fields.)
  We further assume that $\Gamma$ is connected and that $V_{0} = 0$.
  Then we have an isomorphism $\frac12   + \bsK : \{\bsnu\}^{\perp} \to \{\bsnu\}^{\perp}$.
\end{theorem}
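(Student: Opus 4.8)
The plan is to adapt the proof of Theorem~\ref{thm.K2}(2) to the present situation, where the kernel $\maN$ of $\bsXi := \bsXi_{V, V_{0}}$ is no longer trivial. By Assumption \ref{assumpt.VV0}(iv) and Theorem \ref{thm.form}, since $V_{0} = 0$ identically on $M$ we are in case (3) of Theorem \ref{thm.form}, so $\maN = \{(0, c) \mid c \in \CC\}$ is one-dimensional, spanned by the constant pressure field (recall $M$ is connected). Thus the compatibility conditions of Proposition \ref{prop.compatibility} and Corollary \ref{cor.compatibility} come into play: for $\bsh \in L^{2}(\Gamma; TM)$, we have $\bsXi \maS_{\rm{ST}}(\bsh) = (\bsh \delta_{\Gamma} \ \ 0)^{\top}$ always (the single layer needs no compatibility), while $\bsXi \maD_{\rm{ST}}(\bsh) = \bopstar(\bsh \delta_{\Gamma}) - \big(0, \frac{1}{\operatorname{vol}(M)}\int_{\Gamma}\bsh \cdot \bsnu\,dS_{\Gamma}\big)$, so the double layer solves the homogeneous Stokes system off $\Gamma$ only when $\int_{\Gamma}\bsh \cdot \bsnu\,dS_{\Gamma} = 0$, i.e.\ precisely when $\bsh \in \{\bsnu\}^{\perp}$.

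First I would establish the necessary condition on the image, exactly as in Step~1 of Theorem~\ref{thm.K2}: for $\bsh \in H^{3/2}(\Gamma; TM) \cap \{\bsnu\}^{\perp}$, the double layer $U := \maD_{\rm{ST}}(\bsh) \in H^{2}(\Omega;TM)\oplus H^{1}(\Omega)$ satisfies $\bsXi U = 0$ in $M \smallsetminus \Gamma$ by Corollary~\ref{cor.compatibility}(2); testing against $W := (0\ \ 1)^{\top}$ (which satisfies $\bsXi W = 0$ on $\Omega$ since $V_{0} = 0$ there) in Proposition~\ref{prop.Green}(2) and using $\bop W = \bsnu$, $\bsu_{+} = (\tfrac12 + \bsK)\bsh$ (Theorem~\ref{thm.K1}), we obtain $\big((\tfrac12 + \bsK)\bsh, \bsnu\big)_{\Gamma} = 0$. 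By density, $(\tfrac12 + \bsK)\{\bsnu\}^{\perp} \subset \{\bsnu\}^{\perp}$, so $\tfrac12 + \bsK$ is a well-defined Fredholm operator on the closed subspace $\{\bsnu\}^{\perp}$ (Fredholmness on $L^{2}(\Gamma; TM)$ is Theorem~\ref{thm.Fredholm_first}, and it descends because the complement $\CC\bsnu$ is finite-dimensional).

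For the reverse inclusion I would run the argument of Steps~2--3 of Theorem~\ref{thm.K2}. By Lemma~\ref{lemma.aux.v} applied with $V = \CC\bsnu$, it suffices to show $\{\bsh \in L^{2}(\Gamma; TM) \mid (\tfrac12 + \bsK^{*})\bsh \in \CC\bsnu\} \subset \CC\bsnu$. Fix such $\bsh$, say $(\tfrac12 + \bsK^{*})\bsh = \lambda\bsnu$; by elliptic regularity $\bsh$ is smooth, and the single layer $U = (\bsu\ \ p)^{\top} := \maS_{\rm{ST}}(\bsh)$ satisfies $\bsXi U = 0$ on $M \smallsetminus \Gamma$ (no compatibility needed here). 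On $\Omega_{-}$: Theorem~\ref{thm.jump.rel}(3) gives $[\bop U]_{-} = (\tfrac12 + \bsK^{*})\bsh = \lambda\bsnu$, and using the no-jump relation $\bsu_{+} = \bsu_{-}$ together with $\nabla^{*}\bsu = 0$ on $\Omega$ (since $V_{0} = 0$ there), the boundary term $(\bop U, \bsu)_{\Gamma}$ reduces to $\lambda \int_{\Omega_{+}}\nabla^{*}\bsu\dvol = 0$; Corollary~\ref{cor.e.est}, applicable since on each component $\Omega_{0}$ of $\Omega_{-}$ we have $V_{0} \not\equiv 0$ and either $V\not\equiv 0$ or no Killing fields, then forces $\bsu = 0$ and $p = 0$ on $\Omega_{-}$. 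On $\Omega_{+} := \Omega$: now $\bsu_{+} = \bsu_{-} = 0$ on $\Gamma$, so Corollary~\ref{cor.e.est}(iii) gives $\bsu = 0$ on $\Omega$ and $p = \text{const}$ there. Hence $\bop U = -2\Dnu\bsu + p\bsnu = p\bsnu$ off $\Gamma$, and Theorem~\ref{thm.jump.rel}(3) yields $\bsh = [\bop U]_{-} - [\bop U]_{+} = (p_{-} - p_{+})\bsnu = -p_{+}\bsnu \in \CC\bsnu$, as required. This gives $\{\bsnu\}^{\perp} \subset (\tfrac12 + \bsK)\{\bsnu\}^{\perp}$, and combined with injectivity (which follows by the same argument, or from the Fredholm index-zero property of the self-adjoint-symbol operator restricted to $\{\bsnu\}^{\perp}$), proves the isomorphism.

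The main obstacle I anticipate is bookkeeping rather than a genuinely new idea: one must check carefully that the Fredholm and index-zero properties of $\tfrac12 + \bsK$ pass to the invariant subspace $\{\bsnu\}^{\perp}$ (using that $\CC\bsnu$ is a one-dimensional complement and that $\tfrac12 + \bsK$ maps $\{\bsnu\}^{\perp}$ into itself), and that the compatibility condition $\int_{\Gamma}\bsh\cdot\bsnu\,dS_{\Gamma} = 0$ is exactly the condition $\bsh \in \{\bsnu\}^{\perp}$ needed for $\maD_{\rm{ST}}(\bsh)$ to be $\bsXi$-harmonic off $\Gamma$ (Corollary~\ref{cor.compatibility}), so that all the Green-formula manipulations from Theorem~\ref{thm.K2} remain valid verbatim in this setting with $\maN \neq 0$.
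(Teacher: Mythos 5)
Your overall strategy is exactly the paper's: establish $(\tfrac12+\bsK)\{\bsnu\}^{\perp}\subset\{\bsnu\}^{\perp}$ via Green's formula with $W=(0\ \ 1)^{\top}$, reduce the reverse inclusion to showing $\{\bsh \mid (\tfrac12+\bsK^{*})\bsh\in\CC\bsnu\}\subset\CC\bsnu$ via Lemma \ref{lemma.aux.v}, and analyze the single layer potential $U=\maS_{\rm{ST}}(\bsh)$ on $\Omega_{-}$ and then on $\Omega_{+}$ using Corollary \ref{cor.e.est}. However, there is one genuine error in the middle of your argument.

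You assert that on each component $\Omega_{0}$ of $\Omega_{-}$ one has ``$V_{0}\not\equiv 0$'', and you use this (via Corollary \ref{cor.e.est}(2)) to conclude $p=0$ on $\Omega_{-}$. But the standing hypothesis of this theorem is $V_{0}=0$ \emph{identically on $M$} — that is precisely what distinguishes Theorem \ref{thm.K2bis} from Theorem \ref{thm.K2} and forces $\maN\neq 0$. Assumption \ref{assumpt.VV0}(vi), which you are implicitly importing, is incompatible with $V_{0}=0$ and is not in force here. Consequently Corollary \ref{cor.e.est} only yields that $p$ is locally constant on $M\smallsetminus\Gamma$; you cannot conclude $p_{-}=0$, and your final identity $\bsh=(p_{-}-p_{+})\bsnu=-p_{+}\bsnu$ is unjustified in its last step. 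The conclusion $\bsh\in\CC\bsnu$ is still reachable, but only because $\Gamma$ is assumed \emph{connected}: then $\Omega_{-}$ is connected (any component of $\Omega_{-}$ must meet the one-sided collar of the connected $\Gamma$), so $p_{-}$ and $p_{+}$ are single constants and $\bsh=(p_{-}-p_{+})\bsnu$ is a constant multiple of $\bsnu$ even though that constant need not vanish. This is how the paper closes the argument, and it is the only place the connectedness of $\Gamma$ enters; your proposal never invokes that hypothesis, which is the symptom of the gap. With this correction (and the routine check, which you flag, that the index-zero Fredholm property descends to the invariant subspace $\{\bsnu\}^{\perp}$), your proof matches the paper's.
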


The assumption on $V$ is needed to ensure that $\maN := \ker \bsXi$
is contained in the space of constant scalar fields.

\begin{proof}
  Recall that $\frac12 + \bsK$ is Fredholm on $L^{2}(\Gamma; TM)$ by Theorem
  \ref{thm.Fredholm_first}, because $\Gamma := \pa \Omega$ is compact.
  Let $\bsh \in H^{3/2}(\Gamma; TM)$. We begin by considering the
  \emph{double layer potential} $U := \maD_{\rm{ST}}(\bsh)$ of $\bsh$, which satisfies
  \begin{equation*}
    U \seq (\, \bsu\ \ \ p \,)^{\top} \ede \maD_{\rm{ST}}(\bsh)
    \in H^{2}(\Omega; TM) \oplus H^{1}(\Omega)\,,
  \end{equation*}
  by Proposition \ref{prop.Hkestimates}. The assumptions on $V$ and $V_{0}$ and
  Theorem \ref{thm.form} imply that the kernel
  $\maN$ of $\bsXi$ is the space of constants: $\maN = \{(0, c) \mid c \in \CC\}$.
  Corollary \ref{cor.compatibility} implies then that $\bsXi U = (0, c)$ on
  $M \smallsetminus \Gamma$, where
  $c \operatorname{vol}(M) = -(\bsh, \bsnu)_{\Gamma}$.

  Let $W = (\, \bsw \ \ \ q \,) := (\, 0 \ \ \ 1 \,)$. The assumption $V_{0} = 0$ on
  $\Omega$ gives $\bsXi W = 0$ on $M$. We have $(\bsXi U, W)_{\Omega} = c \operatorname{vol}(\Omega)$
  and $(U, \bsXi W)_{\Omega} = 0$. Proposition \ref{prop.Green}(2) on $\Omega_{+} = \Omega$
  gives
  \begin{equation*}
    (\bop  U, \bsw)_{\Gamma} - (\bsu , \bop  W)_{\Gamma} \seq
    \big ( \bsXi U, W \big )_{\Omega} - \big (U, \bsXi W \big )_{\Omega}
    \seq -\frac{\operatorname{vol}(\Omega)}{\operatorname{vol}(M)} (\bsh, \bsnu)_{\Gamma} \,.
  \end{equation*}

  Therefore, using that $\bsw = 0$,
  $\bop W = - 2\Dnu\bsw + q \bsnu = \bsnu$, and
  $\bsu_{+} = \big( \frac12  + \bsK\big ) \bsh$
  (by Theorem \ref{thm.K1}), we obtain
  \begin{equation*}
    \frac{\operatorname{vol}(\Omega)}{\operatorname{vol}(M)} (\bsh, \bsnu)_{\Gamma}
    \seq (\bsu , \bop  W)_{\Gamma} - (\bop  U, \bsw)_{\Gamma}  \seq \int_{\Gamma}
    \bsu_{+} \cdot \bsnu \,dS_{\Gamma} \seq
    \Big ( \Big(\frac12 + \bsK\Big) \bsh, \bsnu \Big )_{\Gamma}\,,
  \end{equation*}
  where the inner product in the last expression is the $L^{2}$--inner product on $\Gamma$.
  This shows that
  \begin{equation}\label{eq.aux2.incl1}
    \big(\frac12 \II + \bsK\big) \{\bsnu\}^{\perp} \subset \{\bsnu\}^{\perp}\,
  \end{equation} by the density of the space $H^{3/2}(\Gamma; TM)$
  in $L^{2}(\Gamma; TM)$.

  To complete the proof of our theorem, it is hence enough to prove
  the opposite inclusion to that of Equation \eqref{eq.aux2.incl1}, that is, that
  $\{\bsnu\}^{\perp} \subset \big(\frac12 \II + \bsK \big) \{\bsnu\}^{\perp}$.
  As in the proof of Theorem \ref{thm.K2}, in order to complete the proof of this
  desired opposite inclusion (and hence to complete the proof of our theorem),
  it suffices by Lemma \ref{lemma.aux.v} to show that
  \begin{equation}\label{eq.aux2.incl}
    \{ \bsh \in L^{2}(\Gamma; TM) \mid \big(\frac12   + \bsK^{*} \big)\bsh
    \in \CC \bsnu\} \subset \CC\bsnu\,.
  \end{equation}

  Let us prove Equation \eqref{eq.aux2.incl}, which will complete the proof of our theorem,
  as already noticed above. To this end, let $\bsh \in L^{2}(\Gamma ; TM)$
  be such that
  \begin{equation}\label{eq.assumpt.h}
    (\frac12   + \bsK^{*})\bsh \seq \lambda \bsnu \,,
  \end{equation}
  for some $\lambda \in \CC$. Since $\frac12   + \bsK^{*}$ is elliptic and $\Gamma$
  is smooth and compact, we have that $\bsh \in H^{s}(\Gamma; TM)$
  for all $s \in \RR$, by elliptic regularity.

  We now consider the \emph{single layer potential} $U := \maS_{\rm{ST}}(\bsh)$ associated
  to our fixed $\bsh$ satisfying Equation \eqref{eq.assumpt.h}. We first notice
  that $\bsXi U = 0$ in $M \smallsetminus \Gamma$ by Corollary \ref{cor.compatibility}.
  (or by Proposition \ref{prop.compatibility}).
  (Recall that the assumption on $V$ implies that $\maN := \ker \bsXi$ is contained in
  the space of constant scalar fields.)
  Then Proposition \ref{prop.Hkestimates} gives that the restrictions of $U$ to
  $\Omega_{+} := \Omega$ and to $\Omega_{-}$ satisfy
  \begin{equation}\label{eq.needed.est-2}
    U \ede (\, \bsu \ \ \ p\, )^{\top} \ede \maS_{\rm{ST}}(\bsh)
    \in H^{2}(\Omega_{\pm}; TM) \oplus H^{1}(\Omega_{\pm})\,.
  \end{equation}
  We need both restrictions, because we will study $U$ on both domains.
  \smallskip

  \noindent {\it We first study $U := \maS_{\rm{ST}}(\bsh)$ on $\Omega_{-}$,}
  for our fixed $\bsh$ satisfying Equation \eqref{eq.assumpt.h}.
  We have already noticed that $\bsXi U = \bsXi (\, \bsu \ \ \ p\, )^{\top} = 0$
  in $M \smallsetminus \Gamma$. Theorem \ref{thm.jump.rel} gives that
  $[\bop U]_{-} = (\frac12   + \bsK^{*})\bsh = \lambda \bsnu $.
  (Recall that $[\bop U]_{-}$ is the trace on $\Gamma$ of $\bop U$ from the domain
  $\Omega_{-}$). Theorem \ref{thm.jump.rel} gives also
  the ``no-jump relation'' $\bsu_{+} = \bsu_{-} = 0$ at $\Gamma := \partial \Omega$
  (interior and exterior traces).
  The equation $\bsXi U = 0$ in $M \smallsetminus \Gamma$ and $V_{0} = 0$ in $\Omega_{+}$
  imply that $0 = \nabla^{*}\bsu - V_{0} p = \nabla^{*}\bsu$ on $\Omega_{+}$. Therefore,
  \begin{multline}\label{eq.aux2.ip}
    \big(\bop U, \bsu \big)_{\Gamma}  \seq
    \int_{\Gamma} [\bop U]_{-} \cdot \bsu \,dS_{\Gamma}\seq \lambda \int_{\Gamma}
    \bsnu \cdot \bsu_{-} \,dS_{\Gamma}\\ \seq \lambda \int_{\Gamma}
    \bsnu \cdot \bsu_{+} \,dS_{\Gamma} \seq \lambda \int_{\Omega_{+}}
    \nabla^{*}u \dvol \seq 0\,.
  \end{multline}

  We have already noticed that $\bsXi U = 0$ on $M \smallsetminus \Gamma$.
  Equations \eqref{eq.needed.est-2} and \eqref{eq.aux2.ip} show that the assumptions
  of Corollary \ref{cor.e.est} are satisfied on $\Omega_{}$.
  Because every connected component $\Omega_{0}$ of
  $\Omega_{-}$ is such that either $V$ does not vanish identically on $\Omega_{0}$ or there
  are no non-trivial Killing vector fields on $\Omega_{0}$, Corollary \ref{cor.e.est}(3)
  then gives $\bsu = 0$ in $\Omega_{-}$. The same corollary gives that $p$ is constant
  in all connected components of $\Omega_{-}$.\smallskip

  \noindent {\it Let us now study $U$ on $\Omega_{+}$.}\
  Let $U = (\, \bsu \ \ \ p\, )^{\top} := \maS_{\rm{ST}}(\bsh)$, with $\bsh$ as above
  (thus satisfying Equation \eqref{eq.assumpt.h}).
  Recall that $\bsXi U = 0$ in $M \smallsetminus \Gamma$.
  Corollary \ref{cor.e.est} gives then that $p$ is constant on
  $\Omega$. (In case $V_{0}$ does not vanish identically on $\Omega$, we even obtain that
  $p = 0$ on $\Omega_{+} := \Omega$.) The fact that $\bsu_{+} = 0$ at $\Gamma$ allows us to
  use again Corollary
  \ref{cor.e.est} to conclude that $\bsu = 0$ in $\Omega = \Omega_{+}$. Recalling that we have
  already proved that $\bsu = 0$ in $\Omega_{-}$, we see that
  $\bsu = 0$ in $M \smallsetminus \Gamma$ and hence
  $\Dnu \bsu = 0$ in $M \smallsetminus \Gamma$.
  \smallskip

  \noindent {\it We are ready now to prove the needed properties of $\bsh$.}\
  The definition of $\bop$ and the properties of $U = (\, \bsu \ \ \ p\, )^{\top}
  := \maS_{\rm{ST}}(\bsh)$ then give
    $\bop U := -2 \Dnu \bsu + p \bsnu \seq p \bsnu$
  on $M \smallsetminus \Gamma$. The jump relation of Theorem \ref{thm.jump.rel}(3) then gives
  \begin{equation}\label{eq.p.limits}
    \bsh \seq [\bop \maS_{\rm{ST}}(\bsh)]_{-}
    - [\bop \maS_{\rm{ST}}(\bsh)]_{+} \seq (p_{-} - p_{+}) \nu \,.
  \end{equation}
  Because $\Gamma$ is connected, it follows that $\bsh$ is a constant multiple of $\bsnu$.
  This proves Equation \eqref{eq.aux2.incl}. As explained above, this gives
  that $\{\bsnu\}^{\perp} \subset \big(\frac12 + \bsK \big) \{\bsnu\}^{\perp}$,
  which completes the proof of of our theorem.
\end{proof}

\section{Applications: well-posedness of the Dirichlet problem
for the generalized Stokes system}
\label{sec.sec10}

We now present two applications of the results of the previous section.
First, we present the standard application to the well-posedness of the Stokes
system on a smooth manifold with boundary $\Omega$. The we use the well-posedness
result obtain the behavior of the co-normal derivative $\bop \maD_{\rm{ST}}(\bsh)$
at the boundary, in particular, of its ``no-jump'' property.

\subsection{Well-posedness for our generalized Stokes system}
In this section, we start with a given smooth, compact Riemannian manifold with
boundary $\overline{\Omega}$ (with interior denoted $\Omega$) and two smooth potential
functions $V_{0}, V :\overline{\Omega} \to [0, \infty)$. We allow both $V$ and $V_{0}$
to vanish identically on $\overline{\Omega}$. In fact, this is one of the cases of the
greatest interest. We use the method of Mitrea and Taylor, see, for instance, \cite{M-T1}.
We thus choose a smooth, compact, connected boundaryless manifold $M$ containing
$\overline{\Omega}$ and such that $\Omega$ is on one side of its boundary
$\Gamma := \pa \Omega$ (equivalently, $\Omega_{-}:= M \smallsetminus \overline{\Omega}$
also has boundary $\pa \Omega_{-} = \Gamma$). This ensures that the pair $(M, \Omega)$ satisfies
the assumptions of the previous section. A canonical choice for $M$ could be the ``double''
of $\overline{\Omega}$, obtained by gluing two copies of $\overline{\Omega}$
along their boundaries. We also extend $V$ and $V_{0}$ to smooth functions on $M$
and we assume that we can choose these functions to still be $\ge 0$ on $M$. This is certainly
possible if $V$ and $V_0$ are identically zero in $\Omega $. In order
to be able to apply the results of the previous section, we shall choose these extensions
to satisfy Assumption \ref{assumpt.VV0}. To this end, it is enough to take them not to
vanish identically on any connected component of $\Omega_{-}$.

The invertibility of the layer potential operator $\frac12 + \bsK$ established in the previous
section and the mapping properties of pseudodifferential operators on Sobolev spaces yield as
usual the following result which is classical for the usual Stokes operator (i.e. when
$V = 0$ and $V_{0} = 0$, see \cite[Proposition 10.5.1, Theorem 10.6.2]{M-W} in the case of a
bounded Lipschitz domain in $\mathbb R^n$, $n\geq 2$, and \cite[Theorem 5.1]{D-M} in the case
of a $C^1$ domain on a compact manifold).

\begin{theorem}\label{thm.main.WP}
  Let $\overline{\Omega}$ be a smooth, compact, connected manifold
  with boundary and let $V : \overline{\Omega} \to [0, \infty)$ be a
  smooth function that we assume to extend to a smooth, non-negative
  function on the {\it double} of $\overline{\Omega}$. We consider the
  generalized Stokes operator $\bsXi := \bsXi_{V, 0}$ {\rm (}thus $V_{0} = 0$
  in $\Omega${\rm )}. Let $\Gamma
  := \pa \overline{\Omega}$, as before. Then, for every $m \in \ZZ_{+}$ and
  for any $\bsf \in H^{m+1/2}(\Gamma; \overline{\Omega})$, such that
  $(\bsf, \bsnu)_{\Gamma } = 0$, there exists a solution
  $U = (\bsu \ \ p)^{\top} \in H^{m+1}(\Omega; T\overline{\Omega})
  \oplus H^{m}(\Omega)$ of the \emph{Dirichlet problem}
  \begin{equation}\label{eq.Dirichlet.p}
    \bsXi U \ede \bsXi_{V, 0} U \seq 0 \ \mbox { in } \Omega \
    \mbox{ and }\ \bsu \vert_{\Gamma} \seq \bsf \ \mbox { on } \pa \Omega \,.
  \end{equation}
  Any two solutions $U_{1}$ and $U_{2}$ of this Dirichlet problem differ by a
  constant scalar field: $U_{2} - U_{1} = (0 \ \ \ c)^{\top}$, $c \in \CC$. Also,
  any of the following two formulas provides a solution of this Dirichlet problem:
  \begin{equation*}
    U_{1} \ede \maD_{\rm{ST}}\Big(\Big(\frac{1}{2} + \bsK\Big)^{(-1)}\bsf\Big)
    \quad \mbox{or} \quad \mbox
    U_{2} \ede \maS_{\rm{ST}}\left(\bsS^{(-1)}\bsf\right)\,.
  \end{equation*}
  Moreover, there exists a constant $C_{m} \ge 0$ such that all solutions
  $U = (\bsu \ \ p)^{\top}$ satisfy
  \begin{equation*}
    \|\bsu\|_{H^{m+1}(\Omega; T\overline{\Omega})} +
    \Big \|p - \int_{\Omega} p \, \dvol \Big \|_{H^{m}(\Omega)} \le C_{m}
    \|\bsf\|_{H^{m+1/2}(\Gamma; T\overline{\Omega})}\,.
  \end{equation*}
\end{theorem}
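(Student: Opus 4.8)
## Proof plan for Theorem \ref{thm.main.WP}

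The plan is to reduce everything to the invertibility results of Section \ref{sec.sec9}, exactly as in the classical Mitrea--Taylor scheme. First I would set up the geometry: embed $\overline{\Omega}$ in its double $M$, extend $V$ to a smooth non-negative function on $M$ and pick $V_{0}$ on $M$ that vanishes on $\Omega$ but satisfies Assumption \ref{assumpt.VV0} (in particular $V_{0} \not\equiv 0$ on every component of $\Omega_{-}$); this is exactly the set-up described in the paragraph preceding the theorem. With $V_{0} = 0$ on $\Omega$, Theorem \ref{thm.K2}(2) gives that $\tfrac12 + \bsK : \{\bsnu\}^{\perp} \to \{\bsnu\}^{\perp}$ is an isomorphism on $L^{2}(\Gamma; TM)$, and by the Fredholm/elliptic regularity statement in Theorem \ref{thm.Fredholm_first} (together with $(\tfrac12 + \bsK)^{(-1)} \in \Psi_{\cl}^{0}(\Gamma; TM)$) this isomorphism restricts to $H^{m+1/2}(\Gamma;TM) \cap \{\bsnu\}^{\perp}$ for every $m$. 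Since $\bsf \in H^{m+1/2}(\Gamma; T\overline{\Omega})$ satisfies $(\bsf, \bsnu)_{\Gamma} = 0$, the density $\bsg := (\tfrac12 + \bsK)^{(-1)}\bsf$ is a well-defined element of $H^{m+1/2}(\Gamma; TM)$.

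Next I would verify that $U_{1} := \maD_{\rm{ST}}(\bsg)$ solves the Dirichlet problem. By Corollary \ref{cor.compatibility}(2), $\bsXi \maD_{\rm{ST}}(\bsg)$ equals $\bopstar(\bsg \delta_{\Gamma})$ minus a constant scalar field proportional to $\int_{\Gamma} \bsg \cdot \bsnu \, dS_{\Gamma}$; but in fact for the boundary trace we only need that $\bsXi U_{1}$ is supported on $\Gamma$, so $\bsXi U_{1} = 0$ in $\Omega$, and the regularity $U_{1}\vert_{\Omega} \in H^{m+1}(\Omega; T\overline{\Omega}) \oplus H^{m}(\Omega)$ comes from Proposition \ref{prop.Hkestimates}. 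The jump relation of Theorem \ref{thm.K1} gives $[\maW_{\rm{ST}}(\bsg)]_{+} = (\tfrac12 + \bsK)\bsg = \bsf$, which is the Dirichlet condition $\bsu\vert_{\Gamma} = \bsf$. For the alternative formula $U_{2} := \maS_{\rm{ST}}(\bsS^{(-1)}\bsf)$, I would note that under the present extension $V_{0} \not\equiv 0$ on $\Omega_{-}$ so Theorem \ref{thm.S}(1)--(2) combined with Theorem \ref{thm.Fredholm_first} makes $\bsS$ an isomorphism onto $\{\bsnu\}^{\perp}$ at the level of $\{\bsnu\}^{\perp}$-subspaces (since $\ker \bsS = \CC\bsnu$ and $\bsS$ is self-adjoint Fredholm, its range is $\{\bsnu\}^{\perp}$); hence $\bsS^{(-1)}\bsf$ makes sense because $(\bsf, \bsnu)_{\Gamma} = 0$, and then $[\maV_{\rm{ST}}(\bsS^{(-1)}\bsf)]_{+} = \bsS \bsS^{(-1)}\bsf = \bsf$ by Theorem \ref{thm.jump.rel}(1), while $\bsXi U_{2} = 0$ in $\Omega$ by Corollary \ref{cor.compatibility}(1) and the regularity again by Proposition \ref{prop.Hkestimates}.

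For uniqueness: if $U = (\bsu \ \ p)^{\top}$ solves $\bsXi U = 0$ in $\Omega$ with $\bsu\vert_{\Gamma} = 0$, then Corollary \ref{cor.e.est}(3)(iii) applies ($\pa \Omega \neq \emptyset$ and $\bsu = 0$ on $\pa\Omega$), giving $\bsu = 0$ in $\Omega$; and Corollary \ref{cor.e.est}(i) gives $\nabla p = 0$, so $p$ is a constant (as $\Omega$ is connected) — note that since $V_{0} = 0$ on $\Omega$ we cannot conclude $p = 0$, which is precisely why solutions are unique only up to $(0 \ \ c)^{\top}$. Applying this to $U_{2} - U_{1}$ for two solutions gives $U_{2} - U_{1} = (0 \ \ c)^{\top}$. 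Finally, the a priori estimate: the map $\bsf \mapsto U_{1} = \maD_{\rm{ST}}((\tfrac12+\bsK)^{(-1)}\bsf)$ is a composition of $(\tfrac12+\bsK)^{(-1)} \in \Psi_{\cl}^{0}(\Gamma)$ (bounded $H^{m+1/2} \to H^{m+1/2}$) with the layer potential, which by Theorem \ref{thm.mapping.lp} / Proposition \ref{prop.Hkestimates} is bounded $H^{m+1/2}(\Gamma) \to H^{m+1}(\Omega;T\overline\Omega) \oplus H^{m}(\Omega)$; this controls the particular solution $U_{1}$, and since every solution differs from $U_{1}$ by $(0\ \ c)^{\top}$ with $c$ killed by subtracting the average $\int_{\Omega} p \,\dvol$, we get the stated bound on $\|\bsu\|_{H^{m+1}} + \|p - \int_{\Omega} p\,\dvol\|_{H^{m}}$ uniformly in the solution.

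The main obstacle I anticipate is bookkeeping the two flavors of the construction consistently: the double layer formula $U_{1}$ needs $V_{0} = 0$ on $\Omega$ (so that $\tfrac12 + \bsK$ is invertible only on $\{\bsnu\}^{\perp}$, forcing the compatibility condition $(\bsf,\bsnu)_{\Gamma} = 0$ and also leaving the constant-pressure ambiguity), whereas the single layer formula $U_{2}$ relies on the extension having $V_{0} \not\equiv 0$ on $\Omega_{-}$ to make $\bsS$ an isomorphism onto $\{\bsnu\}^\perp$; I would be careful to state clearly that the extension is chosen with $V_{0}$ supported away from $\Omega$, so both mechanisms are simultaneously available. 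A secondary point requiring care is the regularity claim $p\vert_\Omega \in H^m(\Omega)$ from the double layer potential — this uses the pressure component $\maQ_{\rm ST}$ having a rational-type symbol of the right order, as recorded in Proposition \ref{prop.Hkestimates}, so no extra work is needed beyond citing it.
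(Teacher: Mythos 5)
Your proposal is correct and follows essentially the same route as the paper: extend $V, V_{0}$ to the double so that Assumption \ref{assumpt.VV0} holds with $V_{0}=0$ on $\Omega$, invoke Theorem \ref{thm.K2} (and Theorem \ref{thm.S}) for the invertibility of $\frac12+\bsK$ on $\{\bsnu\}^{\perp}$ (resp.\ of $\bsS$), use the jump relations and Proposition \ref{prop.Hkestimates} for existence and the estimate, and Corollary \ref{cor.e.est} for uniqueness up to a constant pressure. One small correction: with this choice of extension the kernel $\maN$ of $\bsXi$ is $\{0\}$, so the fact that $\bsXi\maD_{\rm{ST}}(\bsg)$ is supported on $\Gamma$ comes from Proposition \ref{prop.compatibility} (as the paper does), not from Corollary \ref{cor.compatibility}, which treats the case $\maN=\{(0,c)\}$ and would leave a nonzero constant term that is not supported on $\Gamma$.
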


\begin{proof}
  Let $M$ be any compact, connected, smooth manifold containing $\overline{\Omega}$
  such that $\overline{\Omega}$ is on one side of its boundary $\Gamma := \pa
  \overline{\Omega}$. For instance, we could take $M$ to be the double of $\overline{\Omega}$,
  as explained above. Our assumptions on $V$ and $V_{0}$ allow us to extend $V$ and $V_{0}$
  to $M$ so that they remain non-negative and do not vanish identically on any connected
  component of $\Omega_{-} := M \smallsetminus \overline \Omega$. (However, we take
  $V_{0} = 0$ in $\Omega$.) Then Assumption
  \ref{assumpt.VV0} will be satisfied and hence Theorem \ref{thm.K2} implies that the
  operator $\frac12   + \bsK$ is invertible on $\{\bsnu\}^{\perp}$. Moreover, its
  Moore-Penrose pseudoinverse $(\frac12   + \bsK)^{(-1)} \in \Psi^{0}(\Gamma; TM)$
  by Theorem \ref{thm.Fredholm_first}. Hence, $\bsh := (\frac12   + \bsK)^{(-1)}
  \boldsymbol f \in H^{m+1/2}(\Gamma ;TM)$ and $U := \maD_{\rm{ST}}(\bsh)$ satisfies the
  required properties by Proposition \ref{prop.compatibility}(3) (which assures that
  $\bsXi U \seq 0$ in $\Omega $) and Proposition \ref{prop.Hkestimates} (which, together
  with the elliptic regularity of the operator $\bsXi $, assures that
  $U\in H^{m+1}(\Omega; TM) \oplus H^{m}(\Omega)$).

  It remains to prove that $U$ is unique up to a constant with these properties. To this end,
  let $U_{1} = (\bsu_{1} \ \ p_{1})^{\top}$ be another solution of our Dirichlet problem
  \eqref{eq.Dirichlet.p}.
  Because $\bsu - \bsu_{1}= 0$ on $\Gamma $, Corollary \ref{cor.e.est} gives that $\bsu =
  \bsu_{1}$ and that $p-p_{1}$ is a constant. Both $U_{1}$ and $U_{2}$ in the statement are
  well defined and satisfy the boundary conditions, by the jump relations, Theorems \ref{thm.K1}
  and \ref{thm.jump.rel}. They also satisfy $\bsXi U_{1} = \bsXi U_{2} = 0$ in $\Omega$,
  by Proposition \ref{prop.compatibility} (recall that the kernel of $\bsXi$ is $\maN = 0$ in the
  case considered here, by Assumption \ref{assumpt.VV0}). The unicity of the solution up to a
  constant scalar field shows that we may take in the last equations $U$ to be any of
  the solutions $U_{1}$ or $U_{2}$ just discussed. The estimate then follows from
  Proposition \ref{prop.Hkestimates}.
\end{proof}

Let us consider now the case $V_{0} \not\equiv 0$ on $\Omega$, which turns out to
be easier. It is the case that is needed for our applications to manifolds with
cylindrical ends.

\begin{theorem}\label{thm.main.WP2}
  Let $\overline{\Omega}$ be a smooth, compact, connected manifold
  with boundary and let $V, V_{0} : \overline{\Omega} \to [0, \infty)$ be two
  smooth functions that we assume to extend to smooth, non-negative
  functions on the {\it double} of $\overline{\Omega}$. Let also $V_{0} \not \equiv 0$
  on $\Omega$ and consider the generalized Stokes operator $\bsXi := \bsXi_{V, V_{0}}$.
  Let $\Gamma := \pa \overline{\Omega}$, as before. Then,
  for every $m \in \ZZ_{+}$ and for any $\bsf \in H^{m+1/2}(\Gamma; \overline{\Omega})$,
  there exists a {\it unique} solution $U = (\bsu \ \ p)^{\top} \in H^{m+1}(\Omega; T\overline{\Omega})
  \oplus H^{m}(\Omega)$ of the \emph{Dirichlet problem}
  \begin{equation}\label{eq.Dirichlet.p2}
    \bsXi U \ede \bsXi_{V, V_{0}} U \seq 0 \ \mbox { in } \Omega \
    \mbox{ and }\ \bsu \vert_{\Gamma} \seq \bsf \ \mbox { on } \pa \Omega
  \end{equation}
  and
  \begin{equation*}
    U \ede \maD_{\rm{ST}}\Big(\Big(\frac{1}{2} + \bsK\Big)^{-1}\bsf\Big)
    \seq \maS_{\rm{ST}}\left(\bsS^{-1}\bsf\right)\,.
  \end{equation*}
  Moreover, there exists a constant $C_{m} \ge 0$ such that
  \begin{equation*}
    \|\bsu\|_{H^{m+1}(\Omega; T\overline{\Omega})} +
    \| p \|_{H^{m}(\Omega)} \le C_{m}
    \|\bsf\|_{H^{m+1/2}(\Gamma; T\overline{\Omega})}\,.
  \end{equation*}
\end{theorem}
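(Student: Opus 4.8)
\subsection*{Proof plan for Theorem \ref{thm.main.WP2}}

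The plan is to follow the proof of Theorem \ref{thm.main.WP}, the only substantive change being that in the present regime $V_{0} \not\equiv 0$ on $\Omega$ the boundary operators $\frac12 + \bsK$ and $\bsS$ are \emph{genuinely} invertible on the full $L^{2}(\Gamma; TM)$ (not merely on $\{\bsnu\}^{\perp}$), so no compatibility condition on $\bsf$ is needed and the solution is unique rather than unique up to a constant scalar field. First I would fix the geometry: choose a smooth, compact, connected, boundaryless manifold $M$ (for instance the double of $\overline{\Omega}$) containing $\overline{\Omega}$ with $\Omega$ on one side of $\Gamma := \pa \Omega$, and extend $V$ and $V_{0}$ to smooth functions on $M$ that are $\ge 0$ and do not vanish identically on any connected component of $\Omega_{-} := M \smallsetminus \overline{\Omega}$ (keeping $V_{0} \not\equiv 0$ on $\Omega$, which is given). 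This choice makes Assumption \ref{assumpt.VV0} hold, so Theorem \ref{thm.form} gives $\maN := \ker \bsXi_{V, V_{0}} = 0$, Theorem \ref{thm.K2}(3) gives that $\frac12 + \bsK$ is invertible on $L^{2}(\Gamma; TM)$ with inverse in $\Psi_{\cl}^{0}(\Gamma; TM)$ (by Theorem \ref{thm.Fredholm_first}, the Moore--Penrose pseudoinverse is a true inverse here), and Theorem \ref{thm.S}(2) gives that $\bsS : L^{2}(\Gamma; TM) \to H^{1}(\Gamma; TM)$ is invertible with inverse in $\Psi_{\cl}^{1}(\Gamma; TM)$.

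For existence I would set $\bsh := (\frac12 + \bsK)^{-1}\bsf \in H^{m+1/2}(\Gamma; TM)$ (the operator has order $0$) and $U := \maD_{\rm{ST}}(\bsh)$. Since $\maN = 0$, Proposition \ref{prop.compatibility}(1) gives $\bsXi U = \bopstar(\bsh \delta_{\Gamma})$, which vanishes on $M \smallsetminus \Gamma$, so $\bsXi U = 0$ in $\Omega$; the regularity $U \in H^{m+1}(\Omega; TM) \oplus H^{m}(\Omega)$ follows from the mapping properties of the double layer potential (Proposition \ref{prop.Hkestimates}, ultimately Theorem \ref{thm.mapping.lp}), and the boundary condition $\bsu\vert_{\Gamma} = [\maW_{\rm{ST}}(\bsh)]_{+} = (\frac12 + \bsK)\bsh = \bsf$ follows from Theorem \ref{thm.K1}. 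The alternative representation is treated the same way: $\bsg := \bsS^{-1}\bsf \in H^{m-1/2}(\Gamma; TM)$ and $\maS_{\rm{ST}}(\bsg)$ satisfies $\bsXi \maS_{\rm{ST}}(\bsg) = (\bsg \delta_{\Gamma}\ \ 0)^{\top}$ (again vanishing off $\Gamma$), has the stated regularity by Proposition \ref{prop.Hkestimates} (tracking exponents: $\maA \in \Psi^{-2}$ yields $H^{m+1}$ for the velocity, $\maC \in \Psi^{-1}$ yields $H^{m}$ for the pressure), and meets the boundary condition by Theorem \ref{thm.jump.rel}(1).

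For uniqueness, given two solutions $U_{1}, U_{2}$ of \eqref{eq.Dirichlet.p2} I would set $U := U_{1} - U_{2} = (\bsu \ \ p)^{\top}$, so $\bsXi U = 0$ in $\Omega$ and $\bsu\vert_{\Gamma} = 0$, hence also $(\bop U, \bsu)_{\Gamma} = 0$. Since $\Omega$ is connected with $\pa \Omega \neq \emptyset$ and $V_{0} \not\equiv 0$ on $\Omega$, Corollary \ref{cor.e.est} applies (using condition $(iii)$ for the velocity, which gives $\bsu = 0$, and part (2) for the pressure, which gives $p = 0$), so $U_{1} = U_{2}$; in particular the two representation formulas coincide. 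The estimate is then the quantitative shadow of the construction: compose the $H^{s}$-boundedness of $(\frac12 + \bsK)^{-1}$ (or of $\bsS^{-1}$) with the mapping bounds of Proposition \ref{prop.Hkestimates}.

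I do not expect a serious obstacle. The two points needing a little care are (a) verifying that the extensions of $V$ and $V_{0}$ can indeed be arranged so that Assumption \ref{assumpt.VV0} holds — immediate given the freedom in the extension off $\overline{\Omega}$ — and (b) the bookkeeping of Sobolev exponents through the single and double layer potentials so that the asserted regularity $U \in H^{m+1}(\Omega; TM) \oplus H^{m}(\Omega)$ comes out correctly for \emph{both} representation formulas. The genuinely new input compared with Theorem \ref{thm.main.WP} is simply the full invertibility supplied by Theorems \ref{thm.K2}(3) and \ref{thm.S}(2), which removes both the compatibility condition on $\bsf$ and the constant ambiguity in the solution.
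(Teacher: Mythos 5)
Your proposal is correct and follows exactly the route the paper intends: it is the proof of Theorem \ref{thm.main.WP} with the sole simplification that, since $V_{0} \not\equiv 0$ on $\Omega$, Theorems \ref{thm.K2}(3) and \ref{thm.S}(2) give genuine invertibility of $\frac12 + \bsK$ and $\bsS$ on all of $L^{2}(\Gamma; TM)$, removing the compatibility condition and the constant ambiguity. The construction via $\maD_{\rm{ST}}((\frac12+\bsK)^{-1}\bsf)$, the jump relations for the boundary condition, Corollary \ref{cor.e.est} for uniqueness, and Proposition \ref{prop.Hkestimates} for the estimate are precisely the ingredients the paper uses.
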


The proof of this theorem is (almost) the same as that of Theorem
\ref{thm.main.WP}, only simpler, since the operators $\frac{1}{2} + \bsK$
and $\bsS$ are invertible.

\subsection{The boundary behavior of $\bop \maD_{\rm{ST}}$}
Let the assumptions of Theorem \ref{thm.main.WP} hold and let $H_{\bsnu}^{3/2}(\Gamma ;TM)
:= \{\bsf \in H^{3/2}(\Gamma ;TM) \mid (\bsf, \bsnu)_{\Gamma } = 0\}$. Then we define the
{\it Dirichlet-to-Neumann operator}
$\mathcal N_{\rm{ST}}:H_{\bsnu}^{3/2}(\Gamma ;TM) \to H^{1/2}(\Gamma ;TM)$ as follows
(see \cite[p.37]{Taylor2} for the case of the Laplace operator on a closed manifold).
For $\bsf \in H_{\bsnu}^{3/2}(\Gamma ;TM)$ arbitrary, let
$U_{0} := (\bsu \ \ p)^{\top}\in H^2(\Omega ;TM) \oplus H^1(\Omega)$ be
the unique solution of the Dirichlet problem
\begin{align} \label{eq.Dirichlet.0}
  \bsXi_{V, V_{0}} U_{0} \seq 0 \ \mbox{ in } \ \Omega  \ \
  \bsu \vert_{\Gamma} \seq \bsf \ \mbox{ on } \ \pa \Omega \,,
\end{align}
satisfying $\int_{\Omega } p \dvol = 0$, see Theorem \ref{thm.main.WP}. Then we let
\begin{equation}
\label{Dirichlet-oper}
  \mathcal N_{\rm{ST}}\bsf \ede [\bop U_0]_{+} \ \mbox{ on } \Gamma \,,
\end{equation}
the limit being evaluated from $\Omega $.

The next result shows that there is no jump for the conormal derivative
$\bop \maD_{\rm{ST}} (\bsf)$ across $\Gamma :=\pa \Omega $
(see \cite[Proposition 11.4]{Taylor2} in the case of the Laplace operator).

\begin{theorem} \label{jump-conormal-dl}
  We assume that the conditions of Assumption \ref{assumpt.VV0} are satisfied.
  Let $\bsf \in  H^{3/2}(\Gamma ;TM)$, $(\bsf, \bsnu)_{\Gamma } = 0$.
  Then there is no jump for $\bop \maD_{\rm{ST}} (\bsf)$ across $\Gamma $. More
  precisely,
  \begin{equation*}
    [\bop \maD_{\rm{ST}} (\bsf)]_{+} \seq \bop [\maD_{\rm{ST}} (\bsf)]_{-}
    \seq \Big(\frac12  + {\bsK}^{*}\Big)
    \mathcal N_{\rm{ST}}\bsf\,.
  \end{equation*}
\end{theorem}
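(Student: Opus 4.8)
The plan is to deduce the statement from the representation (Pompeiu) formula in Proposition \ref{prop.rep.formula} together with the single-layer conormal jump relation Theorem \ref{thm.jump.rel}(3). First I would record the features of the setting. Under Assumption \ref{assumpt.VV0} (and with $V_{0} = 0$ in $\Omega$, the standing hypothesis of this subsection), Theorem \ref{thm.form}(4) gives $\maN := \ker \bsXi = \{0\}$, so the $L^{2}$--projection $p_{\maN}$ vanishes and no compatibility conditions are needed to produce solutions from the layer potentials. Since $(\bsf, \bsnu)_{\Gamma} = 0$, Theorem \ref{thm.main.WP} (with $m = 1$) furnishes a solution $U_{0} = (\bsu_{0}\ \ p_{0})^{\top} \in H^{2}(\Omega; TM) \oplus H^{1}(\Omega)$ of $\bsXi U_{0} = 0$ in $\Omega$, $\bsu_{0}\vert_{\Gamma} = \bsf$, which we normalize by $\int_{\Omega} p_{0}\dvol = 0$; by definition $\mathcal N_{\rm{ST}}\bsf = [\bop U_{0}]_{+}$. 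All potentials appearing below, restricted to $\Omega_{\pm}$, have the regularity supplied by Proposition \ref{prop.Hkestimates}, so that every one-sided conormal trace $[\bop\,\cdot\,]_{\pm}$ below is a well-defined element of $H^{1/2}(\Gamma; TM)$.

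Next I would apply Proposition \ref{prop.rep.formula}(2) to (the extension by zero of) $U_{0}$: using $\bsXi U_{0} = 0$ in $\Omega$, $p_{\maN} = 0$, $\bsu_{0}\vert_{\Gamma} = \bsf$ and $\bop U_{0}\vert_{\Gamma} = \mathcal N_{\rm{ST}}\bsf$, this yields
\begin{equation*}
  \maD_{\rm{ST}}(\bsf) - \maS_{\rm{ST}}\big(\mathcal N_{\rm{ST}}\bsf\big)
  \seq \begin{cases} U_{0} & \mbox{in } \Omega,\\ 0 & \mbox{in } \Omega_{-}. \end{cases}
\end{equation*}
Thus $\maD_{\rm{ST}}(\bsf)$ and $\maS_{\rm{ST}}(\mathcal N_{\rm{ST}}\bsf)$ differ by $\mathbf 1_{\Omega} U_{0}$, a field that extends smoothly up to $\Gamma$ from inside $\Omega$ and is identically zero on $\Omega_{-}$.

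Now I would apply $\bop$ to this identity and pass to the boundary from each side. From inside $\Omega = \Omega_{+}$, using $[\bop U_{0}]_{+} = \mathcal N_{\rm{ST}}\bsf$ and, by Theorem \ref{thm.jump.rel}(3), $[\bop\maS_{\rm{ST}}(\bsh)]_{+} = \big(-\frac12 + \bsK^{*}\big)\bsh$ (applied with $\bsh = \mathcal N_{\rm{ST}}\bsf$), one gets
\begin{equation*}
  [\bop\maD_{\rm{ST}}(\bsf)]_{+} \seq \mathcal N_{\rm{ST}}\bsf
  + \Big(-\frac12 + \bsK^{*}\Big)\mathcal N_{\rm{ST}}\bsf
  \seq \Big(\frac12 + \bsK^{*}\Big)\mathcal N_{\rm{ST}}\bsf\,.
\end{equation*}
From $\Omega_{-}$ the term $\mathbf 1_{\Omega}U_{0}$ drops out, so again by Theorem \ref{thm.jump.rel}(3),
\begin{equation*}
  [\bop\maD_{\rm{ST}}(\bsf)]_{-} \seq [\bop\maS_{\rm{ST}}(\mathcal N_{\rm{ST}}\bsf)]_{-}
  \seq \Big(\frac12 + \bsK^{*}\Big)\mathcal N_{\rm{ST}}\bsf\,.
\end{equation*}
Comparing the two displays gives that the two one-sided conormal traces of $\maD_{\rm{ST}}(\bsf)$ coincide (this is the ``no-jump'' assertion) and that their common value is $\big(\frac12 + \bsK^{*}\big)\mathcal N_{\rm{ST}}\bsf$, which is exactly the claim.

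I do not expect a genuine obstacle: the computation is bookkeeping with the sign conventions of Theorems \ref{thm.K1} and \ref{thm.jump.rel} and with the representation formula. The points needing care are (a) checking that the standing hypotheses force $\maN = \{0\}$ and $V_{0} = 0$ in $\Omega$, so that $\mathcal N_{\rm{ST}}\bsf$ is defined and Proposition \ref{prop.rep.formula} applies with $p_{\maN} = 0$; and (b) ensuring that $U_{0}$ and $\maS_{\rm{ST}}(\mathcal N_{\rm{ST}}\bsf)\vert_{\Omega_{\pm}}$ carry enough Sobolev regularity for the one-sided conormal traces to exist --- both handled by Theorem \ref{thm.form}/Theorem \ref{thm.main.WP} and by Proposition \ref{prop.Hkestimates}, respectively.
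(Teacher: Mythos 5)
Your proposal is correct and follows essentially the same route as the paper's proof: apply the Pompeiu representation formula (Proposition \ref{prop.rep.formula}(2)) to the unique normalized Dirichlet solution $U_{0}$, note that $\maN = 0$ under Assumption \ref{assumpt.VV0} so no projection term appears, then apply $\bop$ and compute the two one-sided traces via Theorem \ref{thm.jump.rel}(3). The only difference is that the paper also extracts, along the way, the identity $\bsS\maN_{\rm{ST}} = -\tfrac12 + \bsK$ and the ellipticity of $\maN_{\rm{ST}}$, which are side remarks not needed for the stated conclusion.
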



\begin{proof}
  The kernel of $\bsXi$ vanishes because of Assumption \ref{assumpt.VV0}.
  There is therefore no need for the projection in Proposition
  \ref{prop.rep.formula}. Let $U_0 := (\bsu \ \ p)^{\top}$
  be the unique solution in $H^2(\Omega ;TM)\oplus H^1(\Omega)$ of the
  Dirichlet problem \eqref{eq.Dirichlet.0}, satisfying $\int_{\Omega } p \dvol = 0$,
  where $\bsf\in H^{3/2}(\Gamma ;TM)$
  is orthogonal to $\bsnu$, as in the statement of the Theorem.
  Because $\bsXi U_0=0$ in $\Omega$, $\bsu = \bsf$ on $\Gamma$,
  and $\mathcal N_{\rm{ST}}\bsf \ede
  [\bop U_0]_{+}$ on $\Gamma $, the second relation of Proposition
  \ref{prop.rep.formula} applied to $U_0$ implies that
  \begin{align} \label{cons.rep.invert}
    \maD_{\rm{ST}} \bsf(x) - \maS_{\rm{ST}} (\maN_{\rm{ST}} \bsf )(x) \seq
    \begin{cases}
      \ U_0(x) & \mbox{ if } x \in \Omega\\
      \ \ \, 0    & \mbox{ if } x \in M \smallsetminus \overline{\Omega} \,.
    \end{cases}
  \end{align}

  Now considering the vector part of Equation \eqref{cons.rep.invert},
  taking the limit of \eqref{cons.rep.invert} on $\Gamma $ from $\Omega $, and using Theorem
  \ref{thm.K1} and Theorem \ref{thm.jump.rel}, we obtain
  \[
  \Big(\frac{1}{2}   + \bsK\Big)\bsf - \bsS(\maN_{\rm{ST}} \bsf ) =
  \bsu \vert_{\Gamma} =\bsf\,,
  \]
  and hence
  \begin{equation*}
    \bsS(\maN_{\rm{ST}} \bsf ) \seq \Big(-\frac{1}{2}   + \bsK\Big)\bsf\,.
  \end{equation*}
  This identity and the ellipticity of the
  operators $\bsS$ and $-\frac{1}{2} + \bsK$ (see also Theorem \ref{thm.K1} and Theorem
  \ref{thm.jump.rel}) imply that $\maN_{\rm{ST}}$ is elliptic as well.

  Next we apply the operator $\bop $ to both sides of identity \eqref{cons.rep.invert}.
  Taking the limit at the boundary first from $\Omega $, we obtain
  \begin{align}\label{conormal-dl-plus}
    [\bop \maD_{\rm{ST}} (\bsf)]_{+} - [\bop \maS_{\rm{ST}} (\maN_{\rm{ST}}\bsf)]_{+}
    = [\bop U_0]_{+}
    = \maN_{\rm{ST}}\bsf\,.
  \end{align}
  Then, by taking the limit at the boundary from $\Omega_{-}$ gives
  \begin{align}\label{conormal-dl-minus}
    [\bop \maD_{\rm{ST}} (\bsf)]_{-} - [\bop \maS_{\rm{ST}}(\maN_{\rm{ST}}\bsf)]_{-} = 0\,.
  \end{align}
  Since both limits $[\bop \maS_{\rm{ST}}(\maN_{\rm{ST}}\bsf)]_\pm $ exist,
  by Theorem \ref{thm.jump.rel}, the formulas \eqref{conormal-dl-plus} and
  \eqref{conormal-dl-minus} show that the limits $[\bop \maD_{\rm{ST}} (\bsf)]_\pm $
  exist as well and that they are given by
  \begin{align*}
  &[\bop \maD_{\rm{ST}} (\bsf)]_{+} \seq \maN_{\rm{ST}}\bsf + \Big(-\frac12 + {\bsK}^{*}\Big)
  \maN_{\rm{ST}}\bsf\,,\\
  &[\bop \maD_{\rm{ST}} (\bsf)]_{-} \seq \Big(\frac12 + {\bsK}^{*}\Big)
  \maN_{\rm{ST}}\bsf \,,
  \end{align*}
  and hence
  \[
  [\bop \maD_{\rm{ST}} (\bsf)]_\pm \seq \Big(\frac12 + {\bsK}^{*}\Big)
  \maN_{\rm{ST}}\bsf\,.
  \]
  This completes the proof.
\end{proof}

As in the case of our well-posedness Theorem \ref{thm.main.WP}, we can extend this
result (with a proof that is almost word-for-word the same)
to the case when $V_{0} \not\equiv 0$ in $\Omega$, and the proof of the
result in this case is actually easier.

\begin{theorem} \label{jump-conormal-dl2}
  We assume that the conditions of Assumption \ref{assumpt.VV0} are satisfied
  and that $V_{0} \not\equiv 0$ in $\Omega$.
  Let $\bsf \in  H^{3/2}(\Gamma ;TM)$.
  Then there is no jump for $\bop \maD_{\rm{ST}} (\bsf)$ across $\Gamma $. More
  precisely,
  \begin{equation*}
    [\bop \maD_{\rm{ST}} (\bsf)]_{+} \seq \bop [\maD_{\rm{ST}} (\bsf)]_{-}
    \seq \Big(\frac12  + {\bsK}^{*}\Big)
    \mathcal N_{\rm{ST}}\bsf\,.
  \end{equation*}
  We also have 
    $\bsS\maN_{\rm{ST}} \seq -\frac{1}{2}   + \bsK.$
\end{theorem}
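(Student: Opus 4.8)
The plan is to run the proof of Theorem \ref{jump-conormal-dl} essentially verbatim, simplified by the hypothesis $V_{0} \not\equiv 0$ on $\Omega$. First I would note that under Assumption \ref{assumpt.VV0} together with $V_{0} \not\equiv 0$ on $\Omega$, Theorem \ref{thm.form}(4) gives that the kernel $\maN$ of $\bsXi := \bsXi_{V, V_{0}}$ is trivial, so Pompeiu's formula in Proposition \ref{prop.rep.formula} carries no projection term, and Theorem \ref{thm.main.WP2} provides, for \emph{every} $\bsf \in H^{3/2}(\Gamma; TM)$ (no compatibility condition), a \emph{unique} solution $U_{0} = (\bsu \ \ p)^{\top} \in H^{2}(\Omega; TM) \oplus H^{1}(\Omega)$ of $\bsXi_{V, V_{0}} U_{0} = 0$ in $\Omega$, $\bsu\vert_{\Gamma} = \bsf$; since the pressure is already uniquely determined, no normalization $\int_{\Omega} p \dvol = 0$ is needed. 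I then define the Dirichlet-to-Neumann operator $\maN_{\rm{ST}} \bsf \ede [\bop U_{0}]_{+}$, now on all of $H^{3/2}(\Gamma; TM)$, and record that $\maN_{\rm{ST}}\bsf \in H^{1/2}(\Gamma; TM)$.

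Next I would apply Proposition \ref{prop.rep.formula}(2) to $U_{0}$ (using $\bsXi U_{0} = 0$ in $\Omega$ and $\maN = 0$), obtaining $\maD_{\rm{ST}}(\bsf) - \maS_{\rm{ST}}(\maN_{\rm{ST}}\bsf) = \textbf{1}_{\Omega} U_{0}$ on $M \smallsetminus \Gamma$. Taking the vector part, passing to the boundary limit from $\Omega$, and inserting the jump relations of Theorem \ref{thm.K1} (for $\maD_{\rm{ST}}$) and Theorem \ref{thm.jump.rel}(1) (for $\maS_{\rm{ST}}$) yields $\big(\tfrac12 + \bsK\big)\bsf - \bsS(\maN_{\rm{ST}}\bsf) = \bsu\vert_{\Gamma} = \bsf$, which is exactly the claimed identity $\bsS \maN_{\rm{ST}} = -\tfrac12 + \bsK$.

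Then I would apply the conormal operator $\bop$ to the representation identity and take the two boundary limits separately. From $\Omega$ one gets $[\bop \maD_{\rm{ST}}(\bsf)]_{+} - [\bop \maS_{\rm{ST}}(\maN_{\rm{ST}}\bsf)]_{+} = [\bop U_{0}]_{+} = \maN_{\rm{ST}}\bsf$, and from $\Omega_{-}$ one gets $[\bop \maD_{\rm{ST}}(\bsf)]_{-} - [\bop \maS_{\rm{ST}}(\maN_{\rm{ST}}\bsf)]_{-} = 0$; the single-layer limits on the left exist by Theorem \ref{thm.jump.rel}(3), hence so do the double-layer ones. Substituting $[\bop \maS_{\rm{ST}}(\maN_{\rm{ST}}\bsf)]_{\pm} = \big(\mp\tfrac12 + \bsK^{*}\big)\maN_{\rm{ST}}\bsf$ from Theorem \ref{thm.jump.rel}(3) collapses both right-hand sides to $\big(\tfrac12 + \bsK^{*}\big)\maN_{\rm{ST}}\bsf$, giving the no-jump equality $[\bop \maD_{\rm{ST}}(\bsf)]_{+} = [\bop \maD_{\rm{ST}}(\bsf)]_{-} = \big(\tfrac12 + \bsK^{*}\big)\maN_{\rm{ST}}\bsf$.

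The only point needing care — and it is identical to Theorem \ref{jump-conormal-dl} — is regularity bookkeeping: one uses Proposition \ref{prop.Hkestimates} to see that $\maS_{\rm{ST}}(\maN_{\rm{ST}}\bsf)\vert_{\Omega_{\pm}} \in H^{2}(\Omega_{\pm}; TM) \oplus H^{1}(\Omega_{\pm})$, which is exactly enough to apply $\bop$ and take boundary traces and to legitimize the jump relations. I do not expect a genuine obstacle here; the argument is strictly easier than that of Theorem \ref{jump-conormal-dl} precisely because the compatibility condition $(\bsf, \bsnu)_{\Gamma} = 0$ and the pressure normalization are both unnecessary, and because $\tfrac12 + \bsK$ and $\bsS$ are now invertible on the full spaces (Theorems \ref{thm.K2}(3), \ref{thm.S}(2)), so $\maN_{\rm{ST}}$ and all intermediate objects are well defined without restriction.
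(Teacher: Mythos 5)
Your proposal is correct and follows exactly the route the paper intends: the paper gives no separate proof of Theorem \ref{jump-conormal-dl2}, stating only that it is ``almost word-for-word the same'' as the proof of Theorem \ref{jump-conormal-dl}, and your write-up is precisely that adaptation (triviality of $\maN$ via Theorem \ref{thm.form}(4), unrestricted well-posedness from Theorem \ref{thm.main.WP2}, the representation formula of Proposition \ref{prop.rep.formula}, and the jump relations of Theorems \ref{thm.K1} and \ref{thm.jump.rel}). No gaps.
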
 

\def\cprime{$'$}


\begin{thebibliography}{10}

\bibitem{HAbelsBook}
{\sc Abels, H.}
\newblock {\em Pseudodifferential and singular integral operators. {An}
  introduction with applications}.
\newblock Berlin: de Gruyter, 2012.

\bibitem{AmannFunctSp}
{\sc Amann, H.}
\newblock Function spaces on singular manifolds.
\newblock {\em Math. Nachr. 286}, 5-6 (2013), 436--475.

\bibitem{AGN-1}
{\sc Ammann, B., Gro{\ss}e, N., and Nistor, V.}
\newblock Analysis and boundary value problems on singular domains: An approach
  via bounded geometry.
\newblock {\em C.R. Acad. Sci. Paris, Ser.I 357\/} (2019), 487--493.

\bibitem{AGN-2}
{\sc Ammann, B., Gro{\ss}e, N., and Nistor, V.}
\newblock Well-posedness of the {L}aplacian on manifolds with boundary and
  bounded geometry.
\newblock {\em Math. Nachr. 292}, 6 (2019), 1213--1237.

\bibitem{BealsSpInv}
{\sc Beals, R.}
\newblock Characterization of pseudodifferential operator and application.
\newblock {\em Duke Math. J. 44\/} (1977), 45--57.

\bibitem{BNS}
{\sc Benavides, G., Nochetto, R., and Shakipov, M.}
\newblock ${L}^p$-based {S}obolev theory for {PDE}s on closed manifolds of
  class ${C}^m$.
\newblock Preprint https://arxiv.org/abs/2508.11109.

\bibitem{B-H}
{\sc B\u{a}cu\c{t}\u{a}, C., Hassell, M., Hsiao, G., and Sayas, F.-J.}
\newblock Boundary integral solvers for an evolutionary exterior {S}tokes
  problem.
\newblock {\em SIAM J. Numer. Anal. 53\/} (2015), 1370--1392.

\bibitem{Chandler-Wilde-Perfekt}
{\sc Chandler-Wilde, S., Hagger, R., Perfekt, K.-M., and Virtanen, J.}
\newblock On the spectrum of the double-layer operator on
  locally-dilation-invariant {L}ipschitz domains.
\newblock {\em Numerische Math. 153\/} (2023), 635--699.

\bibitem{Chandler-Wilde-1}
{\sc Chandler-Wilde, S., Hewett, D., and Moiola, A.}
\newblock Sobolev spaces on non-{L}ipschitz subsets of {${\mathbb R}^n$} with
  application to boundary integral equations on fractal screens.
\newblock {\em Integr. Equ. Oper. Theory 87\/} (2017), 179--224.

\bibitem{Costabel88}
{\sc Costabel, M.}
\newblock Boundary integral operators on {L}ipschitz domains: Elementary
  results.
\newblock {\em SIAM J. Math. Anal. 19\/} (1988), 613--626.

\bibitem{Da-Ke-Ve}
{\sc Dahlberg, B., Kenig, C., and Verchota, G.}
\newblock Boundary value problems for the system of elastostatics on
  {L}ipschitz domains.
\newblock {\em Duke Math. J. 57\/} (1988), 795--818.

\bibitem{Massimo-book}
{\sc Dalla~Riva, M., Lanza~de Cristoforis, M., and Musolino, P.}
\newblock {\em Singularly Perturbed Boundary Value Problems. A Functional
  Analytic Approach}.
\newblock Springer, Cham, 2021.

\bibitem{D-M}
{\sc Dindo\u{s}, M., and Mitrea, M.}
\newblock The stationary {N}avier-{S}tokes system in nonsmooth manifolds: {T}he
  {P}oisson problem in {L}ipschitz and $c^1$ domains.
\newblock {\em Arch. Rational Mech. Anal. 174\/} (2004), 1--47.

\bibitem{Ebin-Marsd}
{\sc Ebin, D.~G., and Marsden, J.}
\newblock Groups of diffeomorphisms and the motion of an incompressible fluid.
\newblock {\em Ann. Math. (2) 92\/} (1970), 102--163.

\bibitem{Fa-Ke-Ve}
{\sc Fabes, E., Kenig, C., and Verchota, G.}
\newblock The {D}irichlet problem for the {S}tokes system on {L}ipschitz
  domains.
\newblock {\em Duke Math. J. 57\/} (1988), 769--793.

\bibitem{Grosse-Kohr-Nistor-23}
{\sc Gro{\ss}e, N., Kohr, M., and Nistor, V.}
\newblock The {${L}^2$}-unique continuation property on manifolds with bounded
  geometry and the deformation operator.
\newblock {\em Forum Math.\/} (to appear).
\newblock arXiv:2304.10943v1.

\bibitem{GN17}
{\sc Gro{\ss}e, N., and Nistor, V.}
\newblock Uniform {S}hapiro-{L}opatinski {C}onditions and {B}oundary {V}alue
  {P}roblems on {M}anifolds with {B}ounded {G}eometry.
\newblock {\em Potential Anal. 53}, 2 (2020), 407--447.

\bibitem{GrosseSchneider}
{\sc Gro{\ss}e, N., and Schneider, C.}
\newblock Sobolev spaces on {R}iemannian manifolds with bounded geometry:
  general coordinates and traces.
\newblock {\em Math. Nachr. 286}, 16 (2013), 1586--1613.

\bibitem{Hormander1}
{\sc H{\"o}rmander, L.}
\newblock {\em The analysis of linear partial differential operators. {I}}.
\newblock Classics in Mathematics. Springer, Berlin.
\newblock Reprint of the second (1990) edition.

\bibitem{Hormander3}
{\sc H{\"o}rmander, L.}
\newblock {\em The analysis of linear partial differential operators. {III}}.
\newblock Classics in Mathematics. Springer, Berlin, 2007.
\newblock Pseudo-differential operators, Reprint of the 1994 edition.

\bibitem{H-W}
{\sc Hsiao, G.~C., and Wendland, W.~L.}
\newblock {\em Boundary Integral Equations}.
\newblock 2nd edition. Springer Nature Switzerland, 2021.

\bibitem{Khavinson-Putinar}
{\sc Khavinson, D., Putinar, M., and Shapiro, H.}
\newblock Poincar\'{e}'s variational problem in potential theory.
\newblock {\em Arch. Rational Mech. Anal. 185\/} (2007), 143--184.

\bibitem{K-L-M-W}
{\sc Kohr, M., Lanza~de Cristoforis, M., Mikhailov, S., and W.L.~Wendland, W.}
\newblock Integral potential method for transmission problem with {L}ipschitz
  interface in {$\mathbb{R}^3$} for the {S}tokes and
  {D}arcy-{F}orchheimer-{B}rinkman {P}{D}{E} systems.
\newblock {\em Z. Angew. Math. Phys. 67:116}, 5 (2016), 1--30.

\bibitem{K-L-W}
{\sc Kohr, M., Lanza~de Cristoforis, M., and Wendland, W.}
\newblock Nonlinear {N}eumann-transmission problems for {S}tokes and {B}rinkman
  equations on {E}uclidean {L}ipschitz domains.
\newblock {\em Potential Anal. 38\/} (2013), 1123--1171.

\bibitem{KMNW-2025}
{\sc Kohr, M., Mikhailov, S., Nistor, V., and Wendland, W.}
\newblock {\em Stationary {S}tokes and {N}avier-{S}tokes {E}quations with
  {Variable} {C}oefficients - {I}ntegral {O}perators and {V}ariational
  {A}pproaches}, vol.~2380 of {\em Lecture Notes in Mathematics}.
\newblock Springer Cham, 2025.

\bibitem{KohrNistor1}
{\sc Kohr, M., and Nistor, V.}
\newblock Sobolev spaces and $\nabla$-differential operators on manifolds {I}:
  basic properties and weighted spaces.
\newblock {\em Annals of Global Analysis and Geometry 61\/} (2022), 721--758.

\bibitem{KNW-22}
{\sc Kohr, M., Nistor, V., and Wendland, W.}
\newblock Layer potentials and essentially translation invariant
  pseudodifferential operators on manifolds with cylindrical ends.
\newblock 61--116.
\newblock ArXiv:2308.06308v3; Postpandemic Operator Theory, Conference
  Proceedings of the 28th Conference in Operator Theory, Timi\c{s}oara, 2022,
  Theta Series in Advanced Mathematics, D. Ga\c{s}par, W.T. Ross, D. Timotin,
  F-H. Vasilescu eds, ISBN 978-606-8443-13-3.

\bibitem{KohrNistor-Stokes}
{\sc Kohr, M., Nistor, V., and Wendland, W.}
\newblock The {S}tokes operator on manifolds with cylindrical ends.
\newblock {\em J. Diff. Equations 407\/} (2024), 345--373.

\bibitem{K-W}
{\sc Kohr, M., and Wendland, W.}
\newblock Variational approach for the {S}tokes and {N}avier-{S}tokes systems
  with nonsmooth coefficients in {L}ipschitz domains on compact {R}iemannian
  manifolds.
\newblock {\em Calc. Var. Partial Differ. Equ. 57:165\/} (2018), 1--41.

\bibitem{Ladyzhenskaya}
{\sc Ladyzhenskaya, O.}
\newblock {\em The {M}athematical {T}heory of {V}iscous {I}ncompressible
  {F}low}.
\newblock Gordon and Breach, London, 1969.

\bibitem{LewisParenti}
{\sc Lewis, J., and Parenti, C.}
\newblock Pseudodifferential operators of {M}ellin type.
\newblock {\em Comm. Part. Diff. Eq. 8\/} (1983), 477--544.

\bibitem{Med-CVEE-16}
{\sc Medkov{\'a}, D.}
\newblock Bounded solutions of the {Dirichlet} problem for the {Stokes}
  resolvent system.
\newblock {\em Complex Var. Elliptic Equ. 61}, 12 (2016), 1689--1715.

\bibitem{D-I-M-GHA}
{\sc Mitrea, D., Mitrea, I., and Mitrea, M.}
\newblock {\em Geometric {H}armonic {A}nalysis {V}. {Fredholm} {T}heory and
  {F}iner {E}stimates for {I}ntegral {O}perators, with {A}applications to
  {B}oundary {P}roblems}, vol.~76 of {\em Dev. Math.}
\newblock Cham: Springer, 2023.

\bibitem{Mitrea-Nistor}
{\sc Mitrea, M., and Nistor, V.}
\newblock Boundary value problems and layer potentials on manifolds with
  cylindrical ends.
\newblock {\em Czechoslovak Math. J. 57(132)}, 4 (2007), 1151--1197.

\bibitem{M-T1}
{\sc Mitrea, M., and Taylor, M.}
\newblock Boundary layer methods for {L}ipschitz domains in {R}iemannian
  manifolds.
\newblock {\em J. Funct. Anal. 163\/} (1999), 181--251.

\bibitem{M-T}
{\sc Mitrea, M., and Taylor, M.}
\newblock Navier-{S}tokes equations on {L}ipschitz domains in {R}iemannian
  manifolds.
\newblock {\em Math. Ann. 321\/} (2001), 955--987.

\bibitem{M-W}
{\sc Mitrea, M., and Wright, M.}
\newblock {\em Boundary value problems for the {S}tokes system in arbitrary
  {L}ipschitz domains}, vol.~344.
\newblock Ast\'{e}risque, 2012.

\bibitem{PutinarPerfekt1}
{\sc Perfekt, K.-M., and Putinar, M.}
\newblock Spectral bounds for the {Neumann}-{Poincar{\'e}} operator on planar
  domains with corners.
\newblock {\em J. Anal. Math. 124\/} (2014), 39--57.

\bibitem{PutinarPerfekt2}
{\sc Perfekt, K.-M., and Putinar, M.}
\newblock The essential spectrum of the {Neumann}-{Poincar{\'e}} operator on a
  domain with corners.
\newblock {\em Arch. Ration. Mech. Anal. 223}, 2 (2017), 1019--1033.

\bibitem{petersen:98}
{\sc Petersen, P.}
\newblock {\em Riemannian geometry}, vol.~171 of {\em Graduate Texts in
  Mathematics}.
\newblock Springer-Verlag, New York, 1998.

\bibitem{Russo}
{\sc Russo, R.}
\newblock On {Stokes}' problem.
\newblock In {\em Advances in mathematical fluid mechanics. Dedicated to
  Giovanni Paolo Galdi on the occasion of his 60th birthday. Selected papers of
  the international conference on mathematical fluid mechanics, Estoril,
  Portugal, May 21--25, 2007}. Berlin: Springer, 2010, pp.~473--511.

\bibitem{RT-book2010}
{\sc Ruzhansky, M., and Turunen, V.}
\newblock {\em Pseudo-{D}ifferential {O}perators and {S}ymmetries: {B}ackground
  {A}nalysis and {A}dvanced {T}opics}.
\newblock 2010.

\bibitem{Sa-Se}
{\sc Sayas, F.-J., and Selgas, V.}
\newblock Variational views of stokeslets and stresslets.
\newblock {\em S\(\vec{\text{e}}\)MA J. 63}, 1 (2014), 65--90.

\bibitem{SkAtomic}
{\sc Skrzypczak, L.}
\newblock Atomic decompositions on manifolds with bounded geometry.
\newblock {\em Forum Math. 10}, 1 (1998), 19--38.

\bibitem{Taylor1}
{\sc Taylor, M.}
\newblock {\em Partial differential equations {I}. {B}asic theory}, second~ed.,
  vol.~115 of {\em Applied Mathematical Sciences}.
\newblock Springer, New York, 2011.

\bibitem{Taylor2}
{\sc Taylor, M.}
\newblock {\em Partial differential equations {II}. {Q}ualitative studies of
  linear equations}, second~ed., vol.~116 of {\em Applied Mathematical
  Sciences}.
\newblock Springer, New York, 2011.

\bibitem{Taylor}
{\sc Taylor, M.~E.}
\newblock {\em Pseudodifferential Operators}.
\newblock Princeton: Princeton University Press, 1981.

\bibitem{TriebelBG}
{\sc Triebel, H.}
\newblock Characterizations of function spaces on a complete {R}iemannian
  manifold with bounded geometry.
\newblock {\em Math. Nachr. 130\/} (1987), 321--346.

\bibitem{Varnhorn}
{\sc Varnhorn, W.}
\newblock {\em The {Stokes} {E}quations}, vol.~76 of {\em Math. Res. (Berlin)}.
\newblock Berlin: Akademie Verlag, 1994.

\bibitem{Varnhorn-2004}
{\sc Varnhorn, W.}
\newblock The boundary value problems of the {Stokes} resolvent equations in
  {{\(n\)}} dimensions.
\newblock {\em Math. Nachr. 269-270\/} (2004), 210--230.

\bibitem{Wl-Ro-La}
{\sc Wloka, J., Rowley, B., and Lawruk, B.}
\newblock {\em Boundary Value Problems for Elliptic Systems}.
\newblock Cambridge University Press, 1995.

\bibitem{Wong}
{\sc Wong, M.}
\newblock {\em An Introduction to Pseudo-Differential Operators}.
\newblock World Scientific, Singapore, 1991.

\end{thebibliography}
\end{document}